\newtheorem{theorem}{Theorem}[subsection]
\newtheorem{lemma}[theorem]{Lemma}
\newtheorem{proposition}[theorem]{Proposition}
\newtheorem{corollary}[theorem]{Corollary}
\theoremstyle{definition}
\newtheorem{definition}[theorem]{Definition}
\tikzstyle{none}=[inner sep=1pt]
\tikzstyle{None}=[inner sep=1pt, fill=white]
\tikzstyle{dashedcircle}=[circle, draw=gray, dashed, inner sep=6pt]
\tikzstyle{box}=[draw=black, fill=white, inner sep=.5ex, rounded corners=.1ex]
\tikzstyle{roundedbox}=[draw=black, fill=white, inner sep=.5ex, rounded corners=1ex]
\tikzstyle{cross}=[preaction={draw=white, -, line width=3pt}]
\tikzstyle{arrow}=[postaction=decorate]
\newcommand{\markat}{0.5}
\newcommand{\markwithsym}{>}
\newcommand{\markwith}{{\arrow[black]{\markwithsym}}}
\tikzset{decoration={markings, mark=at position \markat with \markwith}}
\DeclareSymbolFont{tipa}{T3}{cmr}{m}{n}
\DeclareMathAccent{\bend}{\mathalpha}{tipa}{16}
\newcommand{\Q}{\mathcal Q}
\newcommand{\W}{\mathcal W}
\newcommand{\X}{\mathcal X}
\newcommand{\Y}{\mathcal Y}
\newcommand{\Z}{\mathcal Z}
\newcommand{\CC}{\mathbb C}
\newcommand{\RR}{\mathbb R}
\newcommand{\<}{\langle}
\renewcommand{\>}{\rangle}
\renewcommand{\:}{\colon}
\newcommand{\inv}{^{-1}}
\newcommand{\subsetof}{\subseteq}
\newcommand{\suchthat}{\,|\,}
\newcommand{\tensor}{\mathbin{\otimes}}
\newcommand{\stensor}{\mathbin{\bar{\otimes}}}
\newcommand{\iso}{\cong}
\newcommand{\AND}{\wedge}
\newcommand{\OR}{\vee}
\newcommand{\NOT}{\neg}
\newcommand{\IMPLIES}{\rightarrow}
\newcommand{\IFF}{\leftrightarrow}
\newcommand{\feq}{\!=\!}
\newcommand{\fin}{\!\in\!}
\newcommand{\ftimes}{\!\times\!}
\newcommand{\Tr}{\mathrm{Tr}}
\newcommand{\At}{\mathrm{At}}
\newcommand{\Rel}{\mathrm{Rel}}
\newcommand{\Pred}{\mathrm{Pred}}
\renewcommand{\[}{\llbracket\,}
\renewcommand{\]}{\,\rrbracket}
\newcommand{\counit}{\epsilon}
\newcommand{\inc}{\mathrm{inc}}
\newcommand{\incdag}{\mathrm{inc}^\dagger}
\newcommand{\incnag}{\mathrm{inc}^{\phantom \dagger}}
\newcommand{\sascon}{\mathbin{\&}}
\newcommand{\ghost}{\vphantom{\cdot}}
\newcommand{\EV}{\quad\Longleftrightarrow\quad}
\newcommand{\RRm}{{[0,\infty]}}
\newcommand{\idhom}{\mathnormal{1}}
\newcommand{\Rep}{\mathrm{Rep}}
\newcommand{\atensor}{\mathbin{\odot}}
\newcommand{\Mor}{\mathrm{Mor}}
\newcommand{\Mult}{\mathrm{Mult}}
\newcommand{\cat}{\mathbf}
\newcommand{\evaluatesto}{\curvearrowright}
\begin{document}

\title{Discrete quantum structures}
\author{Andre Kornell}
\address{Department of Computer Science, Tulane University, New Orleans, Louisiana 70118}
\email{akornell@tulane.edu}
\thanks{This work was partially supported by the AFOSR under MURI grant FA9550-16-1-0082.}

\begin{abstract}
A majority of established quantum generalizations of discrete structures are shown to be instances of a single quantum generalization. In particular, the quantum graphs of Duan, Severini and Winter, the quantum metric spaces of Kuperberg and Weaver, the quantum isomorphisms of Atserias, Man\v{c}inska, Roberson, \v{S}\'{a}mal, Severini and Varvitsiotis, and the quantum groups of Woronowicz that are all discrete in the sense that the underlying von Neumann algebra is hereditarily atomic are shown to be subclasses of a single class of discrete quantum structures. Such a discrete quantum structure is defined to be a discrete quantum space equipped with relations and functions of various arities. Weaver's quantum predicate logic, a generalization of the quantum propositional logic of Birkhoff and von Neumann, provides canonical quantum generalizations for a large class of properties. The equality relation on discrete quantum spaces that is introduced here plays a central role in this approach to mathematical quantization.
\end{abstract}

\maketitle

\section{Introduction}

\subsection{Equality}\label{introduction.A}

This paper establishes a connection between quantum logic and discrete noncommutative mathematics. The study of quantum logic was initiated by Birkhoff and von Neumann, who drew an analogy between the lattice of projection operators in a von Neumann algebra and the lattice of measurable subsets of a measure space, modulo null sets \cite{BirkhoffVonNeumann}*{secs.~5, 6}, providing our interpretation of the Boolean connectives $\neg$, $\wedge$ and $\vee$. The lattice of projection operators was then investigated as a propositional logic, providing our interpretation of the Boolean connective $\rightarrow$ \cite{Sasaki}\cite{Kunsemuller}\cite{Finch}\cite{Hardegree}. Weaver extended this quantum propositional logic to a quantum predicate logic, providing our interpretation of the quantifiers $\forall$ and $\exists$ \cite{Weaver}*{sec.~2.6}. Motivated by the same physical and logical considerations, we continue this line of research by suggesting an interpretation of the equality relation.

Noncommutative mathematics in the sense of noncommutative geometry may be said to originate with the observation of Gelfand and Na\u{\i}mark that commutative unital C*-algebras are in duality with compact Hausdorff spaces \cite{GelfandNaimark}*{Lem.~1}. The notion of a locally compact quantum space, i.e., a pseudospace, as an object that is formally dual to a C*-algebra was first clearly enunciated by Woronowicz \cite{Woronowicz}*{sec.~1}. The notion of a discrete quantum space as an object that is formally dual to a $c_0$-direct sum of full matrix algebras then appeared implicitly in the work of Podle\'s and Woronowicz on Potryagin duality for compact quantum groups \cite{PodlesWoronowicz}*{sec.~2}. The discrete quantum structures considered in this paper are all essentially discrete quantum spaces equipped with additional structure.

Discrete quantum structures generalize the structures of many-sorted first-order logic \cite{Schmidt}, which consist of sets equipped with functions and relations, e.g., groups, graphs and vector spaces. Discrete quantum structures have not been previously considered in full generality, but their definition is already implicit in the established generalizations of noncommutative mathematics. For technical simplicity, we generalize sets to von Neumann algebras that are $\ell^\infty$-direct sums of full matrix algebras, rather than to C*-algebras that are $c_0$-direct sums of full matrix algebras. These are the hereditarily atomic von Neumann algebras \cite{Kornell}*{Prop.~5.4}.
We generalize the Cartesian product to the spatial tensor product, we generalize relations to projections, and we generalize functions to unital normal $*$-homomorphisms in the opposite direction.

The only complication to this straightforward narrative is that sometimes the order of multiplication in a von Neumann algebra $M$ is unexpectedly reversed. For example, the multiplication of a discrete quantum group is a unital normal $*$-homomorphism $M \to M \stensor M$, but the adjacency relation of a discrete quantum graph is a projection in $M \stensor M^{op}.$ Of course, if $M$ is commutative, then $M^{op} = M$, so this complication is a phenomenon that is peculiar to the quantum setting.

For each von Neumann algebra $M$, we interpret the equality relation to be the largest projection $\delta_M \in M \stensor M^{op}$ that is orthogonal to $p \tensor (1-p)$ for every projection $p \in M$. If $M = \ell^\infty(A)$ for some set $A$, then $\delta_M$ is the projection that corresponds to the diagonal of the Cartesian square $A \times A$. However, if $M = L^\infty(\RR)$, then $\delta_M = 0$. The intuitive explanation for this phenomenon is that the diagonal of the Cartesian square $\RR \times \RR$ has Lebesgue measure zero. We show that the equality relation $\delta_M$ is suitably nondegenerate for precisely the class of hereditarily atomic von Neumann algebras:

\begin{theorem}[also \ref{appendix.D.2}]
Let $M$ be a von Neumann algebra, and let $\delta_M \in M \stensor M^{op}$ be the largest projection that is orthogonal to $p \tensor (1-p)$ for every projection $p \in M$. Then, the following are equivalent:
\begin{enumerate}
\item $\delta_M$ is not orthogonal to $p \tensor p$ for any nonzero projection $p \in M$;
\item $M$ is hereditarily atomic.
\end{enumerate}
\end{theorem}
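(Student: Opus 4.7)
The plan is to analyze $\delta_M$ on the standard representation of $M$ and reduce to a factor-by-factor study.

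First, I derive two preliminaries. The \emph{substitution property}: the defining orthogonality against $p \otimes (1-p)$ together with the symmetric orthogonality against $(1-p) \otimes p$ gives $\delta_M(p \otimes 1) = \delta_M(p \otimes p) = \delta_M(1 \otimes p)$ for every projection $p \in M$, and spectral integration extends this to $\delta_M(a \otimes 1) = \delta_M(1 \otimes a^{op})$ for every $a \in M$. The \emph{central reduction}: taking $a$ to be a central projection $z$ shows that $\delta_M$ is block-diagonal in the central decomposition $M = zM \oplus (1-z)M$, so $\delta_M = \delta_{zM} \oplus \delta_{(1-z)M}$, and conditions (1) and (2) both reduce to central summands. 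The core analysis uses the standard form: for $M$ a factor on $L^2(M)$, the faithful embedding $\pi\colon M \stensor M^{op} \hookrightarrow B(L^2(M))$ sends $a \otimes 1$ to left multiplication and $1 \otimes a^{op}$ to right multiplication, and the substitution property forces the range of $\pi(\delta_M)$ to lie in $K = \{\eta \in L^2(M) : a\eta = \eta a \text{ for every } a \in M\}$, the subspace of ``tracial'' vectors. This equals $\CC\Omega$ when $M$ is a finite factor (with trace vector $\Omega$) and $\{0\}$ when $M$ is infinite.

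For (2) $\Rightarrow$ (1): after central reduction to $M = M_n(\CC)$, $M_n \stensor M_n^{op} = B(L^2(M_n))$ is a type $I$ factor, so the rank-one projection onto $\CC I$ lies in $M_n \stensor M_n^{op}$ and must be $\delta_{M_n}$. Then $\langle I, (p \otimes p^{op}) I\rangle = \mathrm{tr}(p) > 0$ for every nonzero projection $p$, giving (1). For the contrapositive of (1) $\Rightarrow$ (2): if $M$ is not hereditarily atomic, central reduction produces a nonzero central projection $z$ onto a summand of one of three bad types: (a) an infinite factor, where $K = \{0\}$ forces $\pi(\delta_{zM}) = 0$ and hence $\delta_{zM} = 0$; (b) a $II_1$ factor, where $K = \CC\Omega$ is one-dimensional but $zM \stensor zM^{op}$ is itself a $II_1$ factor having no rank-one projections, again forcing $\delta_{zM} = 0$; or (c) a summand with non-atomic center $L^\infty(X, \mu)$, where the supremum $\bigvee\{r \otimes (1-r) : r \in L^\infty(X) \text{ projection}\}$ equals $1 - \chi_\Delta = 1$ in $L^\infty(X \times X, \mu \times \mu)$ because the diagonal has $(\mu\times\mu)$-measure zero, so $\delta_{zM} \leq 1 - 1 = 0$. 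In each case $p = z$ is a nonzero projection with $\delta_M(p \otimes p) = 0$, contradicting (1).

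The main obstacle is setting up the standard-form argument uniformly across the infinite-factor types. For type $III$, the cyclic vector from a KMS state is not tracial, so $K$ cannot be described via an invariant trace; one must identify it intrinsically as the fixed subspace of the bimodule action, and argue that a nonzero vector $\eta \in L^2(M)$ satisfying $a\eta = \eta a$ for all $a$ supplies a finite tracial functional $a \mapsto \langle \eta, a\eta\rangle$ on $M$, which exists only in finite factors. A second subtlety is extending case (c) from the central subalgebra $L^\infty(X)$ to all of $zM$: this requires observing that each $r \otimes (1-r)$ remains a central projection in $zM \stensor zM^{op}$, so the supremum taken in the larger algebra agrees with the measure-theoretic supremum in $L^\infty(X \times X)$.
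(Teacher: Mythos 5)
Your central device fails to exist in exactly the cases where you need it. You evaluate $\delta_M$ under a ``faithful embedding'' $\pi\colon M \stensor M^{op} \hookrightarrow L(L^2(M))$ sending $a \otimes 1$ to left multiplication and $1 \otimes a^{op}$ to right multiplication. For a factor $M$ in standard form, left and right multiplications generate $L(L^2(M))$ (since $JMJ = M'$ and $(M \vee M')' = Z(M) = \CC 1$), so the image of any unital normal $*$-homomorphism $\pi$ with these prescribed values would be the ultraweakly closed algebra $M \vee M' = L(L^2(M))$; moreover $M \stensor M^{op}$ is a factor, so the kernel of $\pi$, being an ultraweakly closed two-sided ideal not containing $1$, is zero, and $\pi$ would be an isomorphism $M \stensor M^{op} \cong L(L^2(M))$. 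When $M$ is of type $II_1$, $II_\infty$ or $III$, the left-hand side is of type $II_1$, $II_\infty$ or $III$ while the right-hand side is type I, so no normal $\pi$ exists; only the type I cases survive (for $M = L(H)$ the map is the usual isomorphism onto $L(HS(H))$). Consequently $\pi(\delta_M)$ is undefined, and the ``range of $\pi(\delta_M)$ lies in the tracial subspace $K$'' analysis collapses precisely for your cases (a) beyond $I_\infty$ and (b); the type III subtlety you flag is not the real obstacle. Your case (c) also has a smaller gap: the center of a general von Neumann algebra need not be a separable $L^\infty(X,\mu)$, so ``the diagonal has measure zero'' is not literally available, and the identity $\bigvee_r \, r \otimes (1-r) = 1$ for a diffuse commutative algebra itself requires an argument (testing against normal states and partitioning their support projections into pieces of small measure). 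By contrast, your $M_n(\CC)$ direction, the $I_\infty$ sub-case, and the block-diagonal reduction over the center are fine.

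The paper avoids standard forms entirely: it splits $M = M_0 \oplus M_1$ with $M_0$ hereditarily atomic and $M_1$ having no finite type I summand, notes that any such $M_1$ contains a diffuse commutative unital ultraweakly closed $*$-subalgebra $N$ (a factor that is not finite type I contains a diffuse abelian subalgebra, and the diffuse-center case is immediate), and observes that $(p_1 \otimes p_1)\delta \neq 0$ would yield a normal state $\varphi$ on $M_1 \stensor M_1^{op}$ vanishing on every $p \otimes (1-p)$; restricting $\varphi$ to $N \stensor N$ (where $N^{op} = N$) contradicts a commutative lemma proved by exactly the partition/support-projection argument mentioned above. That lemma simultaneously rigorizes your case (c). If you want to salvage your own framework for the $II_1$ case, argue directly with the trace: for a partition of unity into $n$ projections $p_i$ of trace $1/n$ inside a diffuse abelian subalgebra, $\delta \leq \bigwedge_i (p_i \otimes p_i + (1-p_i)\otimes(1-p_i)) = \sum_i p_i \otimes p_i$, so the faithful normal state $\tau \otimes \tau$ gives $(\tau\otimes\tau)(\delta) \leq 1/n \to 0$, whence $\delta = 0$; but for $II_\infty$ and $III$ summands you still need the diffuse-abelian-subalgebra route rather than the standard-form picture.
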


This theorem provides an additional justification for our focus on this class of von Neumann algebras. It also provides an additional characterization of this class \cite{Kornell}*{Prop.~5.4}. In contrast, it is routine to verify that $0$ is the only projection in $M_2(\CC) \stensor M_2(\CC)$ that is orthogonal to $p \tensor (1-p)$ for every projection $p \in M_2(\CC)$.

The projection $\delta_M$ may be equivalently defined as the infimum of all projections of the form $p \tensor p + (1-p) \tensor (1-p)$, where $p$ is a projection in $M$. To interpret this definition physically, we regard von Neumann algebras as abstract physical systems and their projections as Boolean observables on those physical systems. Hereditarily atomic von Neumann algebras correspond to those physical systems that are discrete in the sense that each observable admits a complete set of pairwise orthogonal eigenstates \cite{Kornell}*{Prop.~5.4}. The spatial tensor product of two hereditarily atomic von Neumann algebras corresponds to the composition of two spatially separated discrete physical systems, because the spatial tensor product coincides with the categorical tensor product in this case \cite{Guichardet}*{Prop.~8.6}.

Let $M$ be a hereditarily atomic von Neumann algebra. From the physical perspective, $\delta_M$ is a Boolean observable on the composite physical system $M \stensor M^{op}$ that guarantees equal outcomes for pairs of equal Boolean observables on $M$ and $M^{op}$. Indeed, $M$ and $M^{op}$ formally have the same projections. Furthermore, $\delta_M$ is the largest Boolean observable with this property. Thus, $\delta_M$ may be characterized as the Boolean observable that is true in exactly those states on $M \stensor M^{op}$ that guarantee equal outcomes for pairs of equal Boolean observables. Such states are used in perfect quantum strategies for synchronous games \cite{AtseriasMancinskaRobersonSamalSeveriniVarvitsiotis}*{sec.~5.2}\cite{MancinskaRobersonVaritsiotis}\cite{PaulsenSeveriniStahlkeTodorovWinter}\cite{CameronMontanaroNewmanSeveriniWinter}.

Viewed as abstract physical systems, $M$ and $M^{op}$ have exactly the same states and observables. Probed separately, their physics is indistinguishable. However, the composite systems $M\stensor M^{op}$ and $M \stensor M$ exhibit different physics. Of course, $M\stensor M^{op}$ and $M \stensor M$ are isomorphic as von Neumann algebras because $M$ is hereditarily atomic, but there is generally no isomorphism between them that fixes the projections of both tensor factors, i.e., the Boolean observables of both subsystems, as the example $M = M_2(\CC)$ demonstrates. Fancifully, we might regard $M$ and $M^{op}$ as otherwise isomorphic physical systems that are oriented oppositely in time. 

To motivate this intuition, we consider the example of an electron-positron pair that is produced by a neutral pion decay \cite{Drell}. We may model the spin of the electron by $M_2(\CC)$ and the spin of the positron by $M_2(\CC)^{op}$. The conservation of angular momentum then implies that their magnetic moments are equal along any axis of measurement. Thus, the composite system $M_2(\CC) \stensor M_2(\CC)^{op}$ is in the unique state such that a measurement of $\delta_{M_2(\CC)}$ is guaranteed to yield $1$. Furthermore, this observation demonstrates that the spin of the positron must be modeled by the opposite operator algebra, because the composite system must have a state with zero total angular momentum. Of course, a positron is sometimes regarded as an electron traveling backward in time \cite{Stueckelberg}\cite{Feynman}.

\subsection{Quantum sets}

The development in this paper proceeds in terms of quantum sets, their functions and their binary relations. The reader may choose to view each quantum set $\X$ as an object that is formally dual to a hereditarily atomic von Neumann algebra, in the same way that a pseudospace is formally dual to a C*-algebra \cite{Woronowicz}. In this account, the category of quantum sets and functions is defined to be the opposite of the category of hereditarily atomic von Neumann algebras and unital normal $*$-homomorphisms, and the category of quantum sets and binary relations is defined to be the opposite of the category of hereditarily atomic von Neumann algebras and Weaver's quantum relations \cite{Weaver2}. The former category is then included into the latter category via the equivalence in \cite{Kornell2}.

Formally, we instead define a quantum set $\X$ to be an object whose data consists of a set $\At(\X)$ of nonzero finite-dimensional Hilbert spaces called the atoms of $\X$ \cite{Kornell}. The corresponding hereditarily atomic von Neumann algebra $\ell^\infty(\X)$ is then defined to be the $\ell^\infty$-direct sum of the factors $L(X)$ for $X \in \At(\X)$. Each quantum set $\X$ has a dual $\X^*$ that is obtained by dualizing all the atoms of $\X$, and $\ell^\infty(\X^*)$ is naturally isomorphic to $\ell^\infty(\X)^{op}$. Each pair of quantum sets, $\X$ and $\Y$, has a Cartesian product $\X \times \Y$ that is obtained by forming all possible tensor products of an atom of $\X$ with an atom of $\Y$, and $\ell^\infty(\X \times \Y)$ is naturally isomorphic to $\ell^\infty(\X) \stensor \ell^\infty(\Y)$.

A binary relation $R$ from a quantum set $\X$ to a quantum set $\Y$ is just a choice of subspaces $R(X,Y) \leq L(X,Y)$ for all $X \in \At(\X)$ and $Y \in \At(\Y)$. Binary relations from $\X$ to $\Y$ are in one-to-one correspondence with projections in the hereditarily atomic von Neumann algebra $\ell^\infty(\X) \stensor \ell^\infty(\Y)^{op}$ \cite{Weaver2}*{Prop.~2.23}. The one-to-one correspondence between binary relations from $\X$ to $\Y$ and  quantum relations from $\ell^\infty(\Y)$ to $\ell^\infty(\X)$ in Weaver's sense is verified in Appendix \ref{appendix.E}. The category $\cat{qRel}$ of quantum sets and binary relations is dagger compact \cite{Kornell}*{Thm.~3.6}, i.e., strongly compact, and this enables our extensive use of the graphical calculus \cite{AbramskyCoecke}\cite{Penrose}.

The semantics that we define in this paper assigns an interpretation to each nonduplicating term and to each nonduplicating formula in a language of many-sorted first-order logic that draws its nonlogical symbols from the category $\cat{qRel}$. The qualifier ``nonduplicating'' refers to a syntactic constraint that reflects the absence of a diagonal function for quantum sets and for quantum spaces more generally \cite{Woronowicz} and the impossibility of broadcasting quantum states \cite{BarnumCavesFuchsJozsaSchumacher}. The sorts of our language are quantum sets. Its relation symbols are binary relations into the monoidal unit $\mathbf 1$, and its function symbols are binary relations that are functions \cite{Kornell}*{Def.~4.1}. The equality symbol for sort $\X$ is a binary relation from $\X \times \X^*$ to $\mathbf 1$. It is both the counit of the dagger compact structure of $\cat{qRel}$ and the binary relation into $\mathbf 1$ that corresponds canonically to the projection $\delta_{\ell^\infty(\X)}$ that was defined in subsection \ref{introduction.A}.

The semantics interprets each nonduplicating formula $\phi(x_1, \ldots, x_n)$, whose distinct free variables $x_1, \ldots x_n$ are of sorts $\X_1, \ldots, \X_n$, respectively, as a binary relation $\[\phi(x_1, \ldots, x_n)\]$ from $\X_1 \times \cdots \times \X_n$ to $\mathbf 1$. In the graphical calculus, this is depicted as follows:
$$
      \begin{tikzpicture}[scale=1]
	\begin{pgfonlayer}{nodelayer}
		\node [style=box] (0) at (0,0) {$\[\phi(x_1, \ldots, x_n)\]$};
		\node [style=none] (B) at (1.3,-0.25) {};
		\node  (B0) at (1.3,-1) {$\scriptstyle\X_n$};
		\node [style=none] (A) at (-1.3,-0.25) {};
		\node  (A0) at (-1.3,-1) {$\scriptstyle\X_1$};
		\node [style=none] (10) at (0,-0.6) {$\cdots$};
		\node (C0) at (-0.9,-1) {$\scriptstyle \phantom {\X_i}$};
		\node [style=none] (C) at (-0.9,-0.25) {};
		\node (D0) at (0.9,-1) {$\scriptstyle \phantom {\X_i}$};
		\node [style=none] (D) at (0.9,-0.25) {};
	\end{pgfonlayer}
	\begin{pgfonlayer}{edgelayer}
	    \draw[arrow,markat=0.5] (B0) to (B);
	    \draw[arrow,markat=0.5] (A0) to (A);
	    \draw[arrow,markat=0.5] (C0) to (C);
	    \draw[arrow,markat=0.5] (D0) to (D);
	\end{pgfonlayer}
      \end{tikzpicture}
$$
Our core computational device relates the equality relation to the graphical calculus:

\begin{theorem}[also \ref{computation.D.2}]
Let $\phi(x_1, x_2, x_3, \ldots, x_n)$ be a nonduplicating formula, whose distinct free variables $x_1, x_2, x_3, \ldots, x_n$ are of sorts $\X_1, \X_2, \X_3, \ldots, \X_n$, respectively. Assume that $\X_2 = \X_1^*$ and let $\psi(x_3, \ldots, x_n)$ be the nonduplicating formula $\NOT (\forall x_1)\, (\forall x_2)\, (E_{\X_1}(x_1, x_2) \rightarrow \NOT \phi(x_1, x_2, x_3, \ldots, x_n))$. Then,
\begin{align*}
\begin{aligned}
      \begin{tikzpicture}[scale=1]
	\begin{pgfonlayer}{nodelayer}
		\node [style=box] (0) at (0,0) {$\[\psi(x_3, \ldots, x_n)\]$};
		\node [style=none] (B) at (1.3,-0.25) {};
		\node  (B0) at (1.3,-1) {$\scriptstyle\X_n$};
		\node [style=none] (A) at (-1.3,-0.25) {};
		\node  (A0) at (-1.3,-1) {$\scriptstyle\X_3$};
		\node [style=none] (10) at (0,-0.6) {$\cdots$};
		\node (C0) at (-0.9,-1) {$\scriptstyle \phantom {\X_i}$};
		\node [style=none] (C) at (-0.9,-0.25) {};
		\node (D0) at (0.9,-1) {$\scriptstyle \phantom {\X_i}$};
		\node [style=none] (D) at (0.9,-0.25) {};
	\end{pgfonlayer}
	\begin{pgfonlayer}{edgelayer}
	    \draw[arrow,markat=0.5] (B0) to (B);
	    \draw[arrow,markat=0.5] (A0) to (A);
	    \draw[arrow,markat=0.5] (C0) to (C);
	    \draw[arrow,markat=0.5] (D0) to (D);
	\end{pgfonlayer}
      \end{tikzpicture}
\end{aligned}
\quad = \quad
\begin{aligned}
      \begin{tikzpicture}[scale=1]
	\begin{pgfonlayer}{nodelayer}
		\node [style=box] (0) at (-0.4,0) {$\[\phi(x_1, x_2, x_3, \ldots, x_n)\]$};
		\node [style=none] (B) at (1.3,-0.25) {};
		\node  (B0) at (1.3,-1) {$\scriptstyle\X_n$};
		\node [style=none] (A) at (-1.3,-0.25) {};
		\node  (A0) at (-1.3,-1) {$\scriptstyle\X_3$};
		\node [style=none] (10) at (0,-0.6) {$\cdots$};
		\node (C0) at (-0.9,-1) {$\scriptstyle \phantom {\X_i}$};
		\node [style=none] (C) at (-0.9,-0.25) {};
		\node (D0) at (0.9,-1) {$\scriptstyle \phantom {\X_i}$};
		\node [style=none] (D) at (0.9,-0.25) {};
		\node [style=none] (Y) at (-1.7,-0.25) {};
		\node [style=none] (X) at (-2.1,-0.25) {};
	\end{pgfonlayer}
	\begin{pgfonlayer}{edgelayer}
	    \draw[arrow,markat=0.5] (B0) to (B);
	    \draw[arrow,markat=0.5] (A0) to (A);
	    \draw[arrow,markat=0.5] (C0) to (C);
	    \draw[arrow,markat=0.5] (D0) to (D);
	    \draw[arrow,  bend left = 90, looseness=2.5, markat=0.6] (Y) to (X);
	\end{pgfonlayer}
      \end{tikzpicture}
\end{aligned}\; .
\end{align*}
\end{theorem}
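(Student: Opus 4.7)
The plan is to unfold the semantic clauses for $\psi(x_3, \ldots, x_n)$ into a projection-level expression and then identify it with the graphical cap composition on the RHS. The key conceptual input is that $E_{\X_1}$, by its definition via the projection $\delta_{\ell^\infty(\X_1)}$ of Subsection \ref{introduction.A} together with the correspondence between binary relations into $\mathbf 1$ and projections, coincides with the counit $\counit_{\X_1}\:\X_1 \times \X_1^* \to \mathbf 1$ of the dagger compact structure on $\cat{qRel}$; the RHS is then the composition of $\[\phi\]$ with the unit $\eta_{\X_1}\:\mathbf 1 \to \X_1 \times \X_1^*$ on the first two wires.

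Writing $P = \[\phi(x_1, \ldots, x_n)\]$ and $E = \[E_{\X_1}(x_1,x_2)\]$ (tacitly extended by $1$ on the remaining factors), I would apply the semantic clauses in turn. First, $\[\neg \phi\] = P^\perp$. Second, by the Sasaki arrow, $\[E \to \neg\phi\] = E^\perp \vee (E \wedge P^\perp)$. Third, by Weaver's clause for $\forall$ as the right Galois adjoint of $(\cdot)\otimes 1$, the double universal quantifier produces the largest projection $Q$ on $\ell^\infty(\X_3 \times \cdots \times \X_n)$ with $Q \otimes 1 \leq E^\perp \vee (E \wedge P^\perp)$. Fourth, the outer $\neg$ gives $\[\psi\] = Q^\perp$. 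Taking orthocomplements via the orthomodular de Morgan law and the $\forall/\exists$ duality, $\[\psi\]$ equals the Weaver existential $(\exists x_1)(\exists x_2)[E \wedge (E^\perp \vee P)]$, i.e., the smallest projection $R$ satisfying $E \wedge (E^\perp \vee P) \leq R \otimes 1$. Separately, unfolding the definition of composition for quantum relations identifies the RHS cap composition with $(\exists x_1)(\exists x_2)(E \wedge P)$, the smallest projection $R$ with $E \wedge P \leq R \otimes 1$. The theorem thus reduces to the equality of these two existentials.

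The main obstacle is precisely this equality, which is genuinely non-classical: when $E$ and $P$ fail to commute, the Sasaki hook $E \wedge (E^\perp \vee P)$ is in general strictly larger than $E \wedge P$, so the equality of their existential closures is not a general lattice-theoretic truth. It is forced by the very special structure of $E$ as the counit of a dagger compact category, namely the rank-one projection onto the maximally entangled vector at each atom of $\X_1$. I would discharge this by reducing to the case where $\X_1$ consists of a single atom $X_1$, so that $E$ is the rank-one projection onto the maximally entangled vector in $L(X_1) \otimes L(X_1^*)$, and then verifying the equality by a direct finite-dimensional calculation in which the snake identity shows that $E \wedge (E^\perp \vee P)$ and $E \wedge P$ have the same image under the partial trace against $E$. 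Alternatively, one may run the argument entirely in the graphical calculus, invoking earlier \emph{Computation}-section lemmas on the diagrammatic interpretations of $\neg$, $\wedge$ and $\forall$ and applying the snake identities to collapse both sides of the desired identity to the same diagram.
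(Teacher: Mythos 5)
Your unfolding of the left-hand side is correct: writing $P = \[\phi(x_1,\ldots,x_n)\]$ and $E = E_{\X_1} \times \top_{\X_3} \times \cdots \times \top_{\X_n}$, the clauses for $\NOT$, the Sasaki arrow and $\forall$, together with De Morgan duality, give $\[\psi(x_3,\ldots,x_n)\] = \inf\{R \in \Rel(\X_3,\ldots,\X_n) \suchthat E \AND (\NOT E \OR P) \leq \top_{\X_1}\times\top_{\X_1^*}\times R\}$, the standard existential closure of the Sasaki projection of $P$ onto $E$. The genuine gap is your identification of the right-hand side: the cap composition $\[\phi\]\circ(E_{\X_1}^\dagger\times I_{\X_3}\times\cdots\times I_{\X_n})$ is \emph{not} the existential closure of $E \AND P$. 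Composition of binary relations between quantum sets is computed by operator products of subspaces, and for relations into $\mathbf 1$ it detects non-orthogonality, not a nonzero meet. Already with $n=2$ and $\At(\X_1)=\{\CC^2\}$, let $P$ be the rank-one projection onto a vector neither parallel nor orthogonal to the maximally entangled vector spanning $E_{\X_1}$: then $E \AND P = \bot$, so your candidate right-hand side is $\bot$, while $P \circ E_{\X_1}^\dagger = \top$; and indeed $\[\psi\] = \top$, since $E \AND \NOT P = \bot$ forces $\[E_{\X_1}(x_1,x_2)\IMPLIES\NOT\phi\] = \NOT E$, whose universal closure is $\bot$. So the ``equality of the two existentials'' to which you reduce the theorem is false even for the counit $E$, and no partial-trace or snake-identity computation can rescue it; the fact you note, that the Sasaki hook strictly exceeds the meet, is exactly the obstruction, and the special form of $E$ does not remove it (in the example the two existential closures are $\top$ and $\bot$).

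What is true---and is in substance the theorem, given your correct unfolding of the left-hand side---is that the cap composition equals the existential closure of the Sasaki projection $E \AND (\NOT E \OR P)$, so the bridge to build is between composition with $E_{\X_1}^\dagger$ and the Sasaki projection, not between the Sasaki projection and the bare meet. The paper's route does this in two moves. First, Proposition \ref{computation.D.1} uses the Sasaki adjunction (the map $S \mapsto S \sascon Q$ is left adjoint to $S \mapsto Q \IMPLIES S$), together with $(\top_{\X_1}\times\top_{\X_1^*}\times R)\sascon(E_{\X_1}\times\top\times\cdots\times\top) = E_{\X_1}\times R$, to show that the diagonal universal quantifier is $\sup\{R \suchthat E_{\X_1}\times R \leq \[\phi\]\}$. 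Second, dualizing, $\[\psi\] = \NOT\sup\{R \suchthat E_{\X_1}\times R \leq \NOT\[\phi\]\}$; the condition $E_{\X_1}\times R \leq \NOT\[\phi\]$ is rewritten as the orthogonality $E_{\X_1}\times R \perp \[\phi\]$, i.e.\ $\[\phi\]\circ(E_{\X_1}^\dagger\times R^\dagger)=\bot$, i.e.\ $R \perp \[\phi\]\circ(E_{\X_1}^\dagger\times I_{\X_3}\times\cdots\times I_{\X_n})$, and since the supremum of all relations orthogonal to a fixed relation is its negation, double negation yields the stated composition. If you want to keep your outline, replace the misidentified right-hand side with this orthogonality dictionary; the rest of your unfolding then slots into the paper's argument.
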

\noindent In the graphical calculus, the object $\X_1^*$ may be depicted as an upward-directed wire labeled $\X_1^*$ or as a downward-directed wire labeled $\X_1$. Thus, we may connect the wire depicting $\X_1$ with the wire depicting $\X_2 = \X_1^*$, as shown.

The quantum sets in this paper are essentially just discrete quantum spaces, which first arose in the study of compact quantum groups  \cite{PodlesWoronowicz}. Finite quantum sets, which are formally dual to finite-dimensional C*-algebras, were identified even earlier \cite{Woronowicz}. Finite quantum sets appear naturally in the study of quantum symmetry \cite{Wang} and quantum information \cite{MustoReutterVerdon}. In \cite{MustoReutterVerdon}, finite quantum sets appear as special symmetric dagger Frobenius algebras in the category of finite-dimensional Hilbert spaces \cite{Vicary}. The class of finite quantum sets is sufficient and convenient for many applications, but the class of all quantum sets has two significant advantages: discrete quantum groups are generally infinite, and the category of all quantum sets and functions is closed monoidal \cite{Kornell}*{Thm.~9.1}. 

Two closely related quantum generalizations of sets have been proposed. Giles generalized sets essentially to atomic von Neumann algebras, calling them q-spaces \cite{Giles}. Rump has also recently proposed a striking geometric definition of quantum sets \cite{Rump}. For both of these definitions and for our definition, a quantum set is an object that may be partitioned into irreducible components, and up to isomorphism, the predicates on each component are the closed subspaces of some Hermitian space \cite{Holland}, forming a complete atomic orthomodular lattice. For the moment, our choice of definition appears to be the most compatible with the established body of noncommutative generalizations \cite{PodlesWoronowicz}\cite{Kornell}. Other quantum analogs of set theory exist, but they are less closely related \cite{Schlesinger}\cite{Takeuti}.

\subsection{Examples}

The many quantum generalizations that underlie noncommutative mathematics are motivated by diverse considerations, but they are nevertheless mutually compatible, and this is true even within discrete noncommutative mathematics. For example, discrete quantum groups arose in the program of extending Potryagin duality to include noncommutative groups, more than two decades after its inception \cite{Kac}\cite{Kac2}\cite{Takesaki}\cite{Vallin}\cite{PodlesWoronowicz}, and the notion of quantum isomorphism between simple graphs arose quite independently in the study of quantum nonlocality originating from the Kochen-Specker Theorem \cite{KochenSpecker}\cite{HeywoodRedhead}\cite{GalliardWolf}\cite{CameronMontanaroNewmanSeveriniWinter}\cite{MancinskaRoberson}\cite{AtseriasMancinskaRobersonSamalSeveriniVarvitsiotis}, but these two quantum generalizations are now understood to be related \cite{Bichon}\cite{Banica}\cite{LupiniMancinskaRoberson}\cite{MustoReutterVerdon2}\cite{BrannanChirvasituEiflerHarrisPaulsenSuWasilewski}. 

This compatibility between quantum generalizations of disparate origins demands an explanation, and the simplest possible explanation is that the quantum generalizations that underlie noncommutative mathematics are all instances of a single quantum generalization. Such an explanation requires a general notion of quantum structure and furthermore a method for extending each class of ordinary structures to a class of quantum structures. For comparison, in \cite{Woronowicz}, Woronowicz not only defines the general notion of a locally compact quantum space, i.e. a pseudospace, but also extends the classes of finite, finite-dimensional and compact locally compact spaces to classes of finite, finite-dimensional and compact locally compact quantum spaces, respectively. The problem is to extend this approach to encompass all possible quantum structures and all possible properties.

This paper proposes a solution to this problem in the special case of discrete structures. We may define a discrete structure to consist of sets, relations and functions, and similarly, we may define a discrete quantum structure to consist of quantum sets, relations and functions \cite{Kornell}. The role of quantum predicate logic with equality is then to extend each class of ordinary structures to a class of quantum structures. The nonduplicating formulas are the classes that we are extending, and the semantics is the method by which we are extending these classes. Formally, a formula of many-sorted first-order logic is defined to be \emph{nonduplicating} if no variable appears more than once in any atomic subformula.
This restriction is weaker than it might appear; it is routine to show that every formula of many-sorted first-order logic with equality is equivalent over discrete structures to a nonduplicating formula.

Many established classes of discrete quantum structures are unified in this way. In this paper, we treat quantum graphs, quantum metric spaces, quantum posets, quantum graph homomorphisms, quantum graph isomorphisms, quantum permutations and quantum groups, all discrete in the sense that the underlying von Neumann algebra is hereditarily atomic.

The quantum graphs and quantum metric spaces that are considered here originate in the study of quantum error correction. Quantum graphs were introduced in \cite{DuanSeveriniWinter} as the confusability graphs of quantum channels, and they were generalized to arbitrary von Neumann algebras in \cite{Weaver2}. These quantum graphs are not closely related to metric graphs equipped Shr\"{o}dinger operators on each edge, which are also called quantum graphs \cite{GnutzmannSmilansky}\cite{Pauling}. Quantum metric spaces in the sense of von Neumann algebras \cite{KuperbergWeaver} generalize quantum graphs in a way that quantifies error. That research led to the notion of a quantum relation \cite{Weaver2}, which in turn led to the research in the present paper. There are other quantum generalizations of metric structure \cite{Weaver5}\cite{Rieffel}\cite{Latremoliere}, which we do not consider here.

There are multiple natural notions of a quantum poset, even in the finite-dimensional case. A quantum partial order on $M_n(\CC)$ may be defined to be an antisymmetric subalgebra of $M_n(\CC)$ \cite{Weaver2}, a hereditarily antisymmetric subalgebra of $M_n(\CC)$ \cite{Weaver4} or a nilpotent subalgebra of $M_n(\CC)$ \cite{Weaver4}. We axiomatize both the first and the last of these notions with almost identical sets of nonduplicating formulas. The difference between them illustrates two natural generalizations of conjunction to the quantum setting \cite{BirkhoffVonNeumann}\cite{Sasaki}. Discrete quantum posets of the first kind form a well-behaved category \cite{KornellLindenhoviusMislove2} that may be used to model recursion in the quantum setting \cite{KornellLindenhoviusMislove}.

Quantum graph homomorphisms and quantum graph isomorphisms originate in the study of quantum nonlocality. These terms refer to quantum analogs of relationships between simple graphs, rather than to relationships between quantum graphs. The notion of quantum graph homomorphism was defined in \cite{MancinskaRoberson}. In \cite{MustoReutterVerdon}, quantum graph homomorphisms were identified with the morphisms of a 2-category, and in \cite{Kornell}, quantum graph homomorphisms were interpreted as quantum families of graph homomorphisms within the framework of noncommutative mathematics. The notion of quantum graph isomorphism was defined in \cite{AtseriasMancinskaRobersonSamalSeveriniVarvitsiotis}, and quantum graph isomorphisms were identified with certain morphisms in \cite{MustoReutterVerdon}. Both notions also have analogs in the quantum commuting framework \cite{PaulsenTodorov}\cite{PaulsenSeveriniStahlkeTodorovWinter}\cite{OritzPaulsen}\cite{AtseriasMancinskaRobersonSamalSeveriniVarvitsiotis}, which coincides with the usual tensor product framework when all the measurement operators are taken from a hereditarily atomic von Neumann algebra \cite{Guichardet}*{Prop.~8.6}\cite{Kornell}*{Prop.~5.4}.

Quantum permutations are just quantum automorphisms of edgeless graphs, but they predate quantum isomorphisms considerably \cite{Wang}. A quantum permutation of a finite set is also called a magic unitary \cite{BanicaBichonCollins}\cite{MustoReutterVerdon}. A subclass of quantum permutations, the quantum Latin squares, have been used for the construction of unitary error bases \cite{MustoVicary}. In the language of quantum information theory, Wang's quantum permutation group \cite{Wang} may be regarded as a universal quantum permutation of the given finite set, and in the language of noncommutative mathematics, it may be regarded as the compact quantum group of all permutations of the given finite set.

As in the case of our other examples, the quantum groups that we consider are the discrete members of a larger class; discrete quantum groups are discrete locally compact quantum groups \cite{KustermansVaes}\cite{KustermansVaes2}. Infinite discrete quantum groups first occurred as the Pontryagin duals of compact matrix quantum group \cite{Woronowicz2}\cite{PodlesWoronowicz}. The class of all discrete quantum groups was then implicitly defined along with the class of all compact quantum groups \cite{Woronowicz3}. The first explicit definition of discrete quantum groups appears to have been given by Effros and Ruan \cite{EffrosRuan}, but it falls slightly outside the standard approach of starting with an operator algebra of complex-valued functions on a putative quantum space. We work with the definition of Van Daele \cite{VanDaele}. The proof given here establishing discrete quantum groups as an example is essentially due to Vaes \cite{Vaes}; any flaws are the fault of the author.

Not all established classes of discrete quantum structures are claimed to be naturally definable in quantum predicate logic. Most notably, unital normal complete positive maps, which formalize quantum channels \cite{Kraus}, are not treated in this paper. Additionally, some structures such as metric spaces and posets have more than one proposed quantum generalization, not all of which have been axiomatized.

\subsection{Many-sorted logic}\label{introduction.D}
Some quantum groups are commutative, and others are not. Some quantum graphs are complete, and others are not. The core definition of this paper unifies these two notions and many others; it specifies whether or not a quantum structure possesses a classical property for a large class of quantum structures that includes both discrete groups and discrete graphs and for a large class of classical properties that includes both commutativity and completeness. Specifically, it applies to discrete quantum structures, which consist of quantum sets, binary relations and functions, and to the classical properties that are formalized by the \emph{nonduplicating} sentences of many-sorted first-order logic. The notion of a nonduplicating formula is defined in Definition \ref{definition.G.3}. Thus, the purpose of logical formulas in this paper is to speak precisely about classical properties as would enable us to state and prove theorems at this level of generality.

More formally, we quantize the semantics of many-sorted first-order logic \cite{Schmidt}. We presently review this semantics in a form that is convenient to this goal. For simplicity, we work with a single fixed many-sorted structure that consists of all sets and the relations and functions between them. Our language includes infinitely many \textit{variables} for each set. The class of all \emph{terms} is defined recursively: a variable of sort $A$ is a term of sort $A$, and for each function $f\: A_1\times \cdots \times A_m \to B$, if $t_1, \ldots, t_m$ are terms of sorts $A_1, \ldots, A_m$, respectively, then the expression $f(t_1, \ldots, t_m)$ is a term of sort $B$. An \emph{atomic formula} is then defined to be an expression of the form $R(t_1, \cdots, t_n)$, where $R \subseteq A_1 \times \cdots \times A_n$ and $t_1, \ldots, t_n$ are terms of sorts $A_1, \ldots, A_n$, respectively. The natural numbers $m$ and $n$ may be equal to $0$. Finally, the class of all \emph{formulas} is defined recursively: an atomic formula is a formula, and if $\phi$ and $\psi$ are formulas and $v$ is a variable, then the expressions $\neg \phi$, $\phi \wedge \psi$, $\phi \vee \psi$, $\phi \rightarrow \psi$, $(\forall v)\, \phi$ and $(\exists v)\, \phi$ are formulas. A \emph{sentence} is defined to be simply a formula with no free variables.

The sentences that we have defined are mathematical objects like groups and topological spaces. Tarski's analysis of semantics \cite{Tarski}\cite{TarskiVaught} leads to the definition of truth as a property of sentences, in the same sense that commutativity and compactness are properties of groups and topological spaces, respectively. To formulate this definition, we first formalize the specification of subsets by properties. For each sequence of variables $v_1, \ldots, v_n$ of sorts $A_1, \ldots, A_n$, respectively, and each formula $\phi$ whose free variables are among $v_1, \ldots, v_n$, we define the subset
$$
\[(v_1\ldots, v_n)  \suchthat \phi \] \subsetof A_1 \times \cdots \times A_n,
$$
the set of all tuples $(a_1, \ldots, a_n)\in A_1 \times \cdots \times A_n$ that \emph{satisfy} $\phi$. This definition proceeds by recursion on the class of all formulas, and in the end we obtain a partial class function whose first argument is a tuple of variables and whose second argument is a formula.

A sentence $\phi$ is then defined to be \emph{true} if $\[() \suchthat \phi \] = \{()\}$. In effect, we have that $$ \[(v_1\ldots, v_n)  \suchthat \phi \] = \{(a_1,\ldots, a_n) \in A_1 \times \cdots \times A_n  \suchthat \phi(a_1, \dots, a_n)\text{ is true}\}.$$
This equation emphasizes that the formula $\phi$ is a formula of the object language, a mathematical object about which we may state theorems, rather than a formula of the metalanguage, the language in which we state our theorems. The notation $\phi(a_1, \ldots, a_n)$ expresses the substitution of the elements $a_1, \ldots, a_n$ for the variables $v_1, \ldots, v_n$, where each element $a_i \in A_i$ is regarded as a function from the singleton set $\{\ast\}:=\{()\}$ to the set $A_i$.

\subsection{Conventions.}
Each variable is of a unique sort, which is a quantum set. However, as an aid to memory and intuition, we write $(\exists x \fin \X)\, \phi$ in place of $(\exists x)\, \phi$ and $\[x \in \X \suchthat \phi\]$ in place of $\[(x) \suchthat \phi\]$, where $\X$ is the sort of $x$. If $\phi$ is a formula, then we write $\phi(x_1, \ldots, x_n)$ to indicate that the free variables of $\phi$ are among $x_1, \ldots, x_n$ and that variables $x_1, \ldots, x_n$ are pairwise distinct. We do likewise for terms.


Let $H$ and $K$ be Hilbert spaces. We write $L(H,K)$ for the set of all bounded operators from $H$ to $K$, we write $L(H)$ for the set $L(H,H)$ of all bounded operators on $H$, and we write $H^*$ for the set $L(H, \CC)$ of all bounded functionals on $H$. Let $a$ be a linear operator from $H$ to $K$. We write $a^\dagger \in L(K,H)$ for the Hermitian adjoint of $a$, we write $a^* \in L(K^*, H^*)$ for the Banach space transpose of $a$, and we write $a_* \in L(H^*, K^*)$ for the ``conjugate'' $(a^\dagger)^*$ of $a$. Note that if $A$ is an operator algebra on $H$, then $A^*$ is canonically isomorphic to the opposite of $A$, that is, to the algebra $A$ with the order of multiplication reversed. We retain the stock term ``$*$-homomorphism'' to mean a homomorphism that respects the Hermitian adjoint operation $a \mapsto a^\dagger$.

We write $\CC a$ for the linear span of a bounded operator $a$. If $a$ and $b$ are bounded operators, then we write $a \cdot b$ for the product of $a$ and $b$, in order to separate the factors visually and to make the operator product readily distinguishable from function application. If $V$ and $W$ are subspaces of bounded operators, then we write $V \cdot W$ for the linear span of $\{v\cdot w \suchthat v \in V, w \in W\}$.

Let $A$ and $B$ be sets. We regard each binary relation from $A$ to $B$ foremost as a morphism from $A$ to $B$ in the category of sets and binary relations. Similarly, we regard a relation of arity $(A_1, \ldots, A_n)$ foremost as morphism from $A_1 \times \cdots \times A_n$ to $\{\ast\}$, the monoidal unit of the canonical monoidal structure on the category of sets and binary relations. Thus, a binary relation on $A$ is essentially the same thing as a relation of arity $(A,A)$, but we regard the former as morphism from $A$ to $A$, and we regard the latter as a morphism from $A \times A$ to $\{\ast\}$. In the same vein, we may regard any element $a \in A$ as a morphism from $\{\ast\}$ to $A$.

We use the adjective ``ordinary'' to emphasize that we are using a noun in its standard mathematical sense. Thus, an ordinary set is just a set.

\section{definition}\label{definition}

We now expound the interpretation of nonduplicating first-order formulas over quantum sets. We recall quantum sets in subsection \ref{definition.A}, and we define their relations in subsection \ref{definition.B}. We define the interpretation of primitive formulas in subsection \ref{definition.C}, and we extend this interpretation to all nonduplicating relational formulas in subsection \ref{definition.D}. Then, we define quantifiers over the diagonal in subsection \ref{definition.E}, and we use these quantifiers to define function graphs in subsection \ref{definition.F}. Finally, we define the interpretation of arbitrary nonduplicating formulas in subsection \ref{definition.G}.

\subsection{Quantum sets.}\label{definition.A} A quantum set is essentially just a set of nonzero finite-dimensional Hilbert spaces, intuitively, a union of indecomposable quantum sets. This section is a brief summary of some relevant definitions from \cite{Kornell}.

\begin{definition}\label{definition.A.1}
A \emph{quantum set} $\X$ is uniquely determined by a set $\At(\X)$ of nonzero finite-dimensional Hilbert spaces, called the \emph{atoms} of $\X$.
\end{definition}

Each quantum set $\X$ is associated to the von Neumann algebra $\ell^\infty(\X) = \bigoplus_{X \in \At(\X)}L(X)$, which intuitively consists of all bounded complex-valued functions on $\X$. This algebra is typically not commutative, and thus the elements of $\X$ are figures of speech, rather like the points of a quantum space. Formally, $\X$ is equal to $\At(\X)$, but intuitively, they are distinct objects, and this notational distinction affects the meaning of our expressions. For example, $\ell^\infty(\X)$ is generally not isomorphic to $\ell^\infty(\At(\X))$. Indeed, the former von Neumann algebra is generally not commutative, but the latter von Neumann algebra is always commutative, because $\At(\X)$ is just an ordinary set, which happens to consist of Hilbert spaces. This explains the circuitous language in Definition \ref{definition.A.1}.

In quantum mathematics, we should recover the classical theory whenever the relevant operator algebras are all commutative. This is the definitional feature of any quantum generalization in the sense of noncommutative geometry. In our case, we observe that $\ell^\infty(\X)$ is commutative if and only if each atom of $\X$ is one-dimensional. Intuitively, such atoms correspond to those elements of $\X$ which exist individually, apart from the other elements. This gloss clarifies how ordinary sets should be incorporated into the picture.

\begin{definition}\label{definition.A.2}
To each ordinary set $A$, we associate a quantum set $`A$ whose atoms are one-dimensional Hilbert spaces, with one such atom for each element of $A$. More generally, we say that a quantum set $\X$ is \emph{classical} if and only if each of its atoms is one-dimensional.
\end{definition}

\noindent We may gloss the first sentence of Definition \ref{definition.A.2} by the equation $\At(`A) = \{\CC_a\suchthat a \in A\}$, where $\CC_a$ denotes a one-dimensional Hilbert space that is somehow labeled by the element $a$. The exact formalization of this labeling is inconsequential; it is only important that distinct elements $a_1$ and $a_2$ correspond to distinct Hilbert spaces $\CC_{a_1}$ and $\CC_{a_2}$, so that we have a canonical bijection $A \to \At(`A)$.

A property of a quantum set $\X$ generalizes a property of an ordinary set $A$ if we obtain the latter from the former by replacing $\X$ by $`A$, and it is likewise for operations. For example, the Cartesian product of quantum sets generalizes the Cartesian product of ordinary sets:

\begin{definition}\label{definition.A.3}
The \emph{Cartesian product} $\X \times \Y$ of quantum sets $\X$ and $\Y$ is defined by $\At(\X \times \Y) = \{ X \otimes Y \suchthat X \in \At(X), Y \in \At(\Y)\}$.
\end{definition}

\noindent This is not an exact quantum generalization because $`A \times ` B$ may be formally distinct from $`(A \times B)$. However, $`A \times ` B$ is isomorphic to  $`(A \times B)$ in the obvious sense. The Cartesian product of quantum sets corresponds to the spatial tensor of von Neumann algebras in the sense that $\ell^\infty(\X \times \Y) \iso \ell^\infty(\X) \stensor \ell^\infty(\Y)$ for all quantum sets $\X$ and $\Y$.

Each quantum set $\X$ has a \emph{dual} $\X^*$ that is defined by the equation $\At(\X^*) = \{X^* \suchthat X \in \At(\X)\}$ \cite{Kornell}*{Def.~3.4}. Dualization in this sense corresponds to reversing the order of multiplication in a von Neumann algebra: $\ell^\infty(\X)^{op}$ is canonically isomorphic to $\ell^\infty(\X^*)$ via the map $a \mapsto a^*$, where $a^*$ is the transpose rather than the adjoint of a bounded operator $a$. Formally, we define $a^*(X) = a(X)^*$ for each atom $X \in \At(\X)$. The quantum sets $\X$ and $\X^*$ are distinct but closely related. The example of pair production in subsection \ref{introduction.A} suggests the intuition that the elements of $\X^*$ are the antielements of $\X$. Like the elements of $\X$, the antielements of $\X$ are just figures of speech; they are not mathematical objects.

\subsection{Relations on quantum sets}\label{definition.B}
Intuitively, we may view each quantum set as the phase space of an abstract physical system that is discrete in the sense that each observable admits an orthonormal basis of eigenvectors \cite{Kornell}*{Prop.~5.4}. Thus, the predicates, i.e., unary relations on a quantum set $\X$ should be in bijection with the projections in $\ell^\infty(\X)$. Such a projection is formally a family of projections $p_X \in L(X)$, for $X \in \At(\X)$, so we may define a predicate $P$ on $\X$ to be simply a family of subspaces $P(X) \leq X$, for $X \in \At(\X)$, as it is done in \cite{Kornell}*{App.~B}. However, for technical and intuitive reasons, we prefer to work with subspaces of the dual Hilbert spaces.

\begin{definition}\label{definition.B.1}
Let $\X$ be a quantum set. A \emph{predicate} $P$ on $\X$ is a function assigning a subspace $P(X) \leq L(X, \CC)$ to each atom $X$ of $\X$. 
\end{definition}

\noindent The canonical one-to-one correspondence between predicates $P$ on $\X$ and projections $p$ in $\ell^\infty(\X)$ is defined by $P(X) = L(X, \CC) \cdot p(X)$, for $X \in \At(\X)$.

For each atom $X$, the subspaces of $L(X, \CC)$ form a modular orthomodular lattice, and thus, the predicates on $\X$ themselves form a modular orthomodular lattice $\mathrm{Pred}(\X)$, with its operations defined atomwise. This is essentially the orthomodular lattice of projections in $\ell^\infty(\X)$. We use the standard notations $\AND$ and $\OR$ for the meets and the joins, respectively, as well as $\bot_\X$ and $\top_\X$ for the smallest and largest predicates on a quantum set $\X$, respectively, but we notate the orthocomplementation by $\neg$.

Each predicate $P$ on $\X$ has a \emph{conjugate} $P_*$, a predicate on $\X^*$ that is defined by 
$P_*(X^*) = \{ \xi_* \suchthat \xi \in P(X)\}$ for each atom $X \in \At(\X)$. The functional $\xi_*$ is formally defined by $\xi_* = (\xi^*)^\dagger$, and since $\xi \in X^*$, we may also characterize $\xi_*$ by the equation $\xi_*(\eta)= \langle \xi | \eta \rangle$, for $\eta \in X^*$. Intuitively, $P_*$ holds of those elements of $\X^*$ such that $P$ holds of their counterparts in $\X$. We thus obtain an isomorphism of orthomodular lattices $\Pred(\X) \to \Pred(\X^*)$.

We now define the \emph{Cartesian product} of two predicates, generalizing the Cartesian product of two subsets to the quantum setting.

\begin{definition}\label{definition.B.2}
If $P$ and $Q$ are predicates on quantum sets $\X$ and $\Y$, respectively, then the predicate $P \times Q$ on $\X \times \Y$ is defined by $(P\times Q)(X \otimes Y) = P(X) \otimes Q(Y)$, for $X \in \At(\X)$ and $Y \in \At(\Y)$.
\end{definition}

\noindent Both $P(X)$ and $Q(Y)$ are vector spaces of functionals, and $P(X) \tensor Q(Y)$ denotes another vector space of functionals, so we have suppressed the canonical isomorphism $\CC \otimes \CC \iso \CC$. Thus, $(P \times Q)(X \tensor Y)$ is essentially just the span of bilinear functionals $(x,y) \mapsto \xi(x)\eta(y)$, for $\xi \in P(X)$ and $\eta \in Q(Y)$. The construction $(P,Q) \mapsto P \times Q$ corresponds to the tensor product of two projections, i.e., to the conjunction of two Boolean observables on the composite of two abstract physical systems.

Finally, we define relations on quantum sets, generalizing the relations of ordinary many-sorted logic:

\begin{definition}\label{definition.B.3}
Let $\X_1, \ldots, \X_n$ be quantum sets. A \emph{relation} of arity $(\X_1, \ldots, \X_n)$ is a predicate on the Cartesian product $\X_1 \times \cdots \times \X_n$, with $n \geq 0$.
\end{definition}

\noindent Thus, a relation of arity $(\X_1, \ldots, \X_n)$ essentially just assigns a vector space of multilinear functionals $X_1 \times \cdots \times X_n \to \CC$ to each choice of atoms $X_1 \in \At(\X_1)$, $X_2 \in \At(\X_2)$, etc. We write $\Rel(\X_1, \ldots, \X_n)$ for the set of all relations of arity $(\X_1, \ldots, \X_n)$.
Permuting the quantum sets $\X_1, \ldots, \X_n$ according to some permutation $\pi$ of the index set $\{1, \ldots, n\}$, we expect and obtain a bijection between $\Rel(\X_1, \ldots, \X_n)$ and $\Rel(\X_{\pi(1)}, \ldots, \X_{\pi(n)})$:

\begin{definition}\label{definition.B.4}
Let $\X_1, \ldots, \X_n$ be quantum sets, and let $\pi$ be a permutation of the index set $\{1, \ldots, n\}$. For each relation $R$ of arity $(\X_{\pi(1)}, \ldots, \X_{\pi(n)})$, define the relation $\pi_\#(R)$ of arity $(\X_{1}, \ldots, \X_{n})$ by $$\pi_\#(R)(X_{1} \tensor \cdots \tensor X_{n}) = R(X_{\pi(1)} \tensor \cdots \tensor X_{\pi(n)}) \cdot u_\pi,$$ for all $X_1 \in \At(\X_1)$, $X_2, \in \At(\X_2)$, etc., where $u_\pi\: X_{1} \tensor \cdots \tensor X_{n}\to X_{\pi(1)} \tensor \cdots \tensor X_{\pi(n)}$ is the unitary operator that permutes the tensor factors according to $\pi$.
\end{definition}

The construction $R \mapsto \pi_\#(R)$ is clearly a bijection $\Rel(\X_{\pi(1)}, \ldots, \X_{\pi(n)}) \to \Rel(\X_{1}, \ldots, \X_{n})$, with inverse $S \mapsto (\pi\inv)_\#(S)$. Furthermore, it is an isomorphism of orthomodular lattices. Its effect on the projections corresponding to these relations is given by the canonical unital normal $*$-homomorphism $ \ell^\infty(\X_{\pi(1)}) \stensor \cdots \stensor \ell^\infty(\X_{\pi(n)}) \to \ell^\infty(\X_1) \stensor \cdots \stensor \ell^\infty(\X_n) .$

\subsection{Interpreting primitive formulas.}\label{definition.C} 
We work with the language of many-sorted first-order logic whose sorts are the quantum sets of subsection \ref{definition.A} and whose relation symbols are the relations of subsection \ref{definition.B}. Each sort, that is, each quantum set is assigned an infinite stock of variables, intuitively ranging over that quantum set. Within formulas, we write $x \in \X$ to annotate that $x$ has sort $\X$, replacing the more traditional notation $x: \X$. We will incorporate function symbols in subsection \ref{definition.F}.

\begin{definition}\label{definition.C.1}
A \emph{primitive atomic formula} is an expression of the form $R(x_1, \ldots, x_n)$, where the $R$ is a relation of some arity $(\X_1, \ldots, \X_n)$ and $x_1 \ldots, x_n$ are distinct variables of sorts $\X_1, \ldots, \X_n$, respectively. The class of \emph{primitive formulas} is defined recursively: each primitive atomic formula is a primitive formula, and if $\phi$ and $\psi$ are primitive formulas and $x$ is a variable of some sort $\X$, then the expressions $\neg \phi$, $\phi\AND \psi$ and $(\forall x \fin \X)\, \phi$ are primitive formulas. A \emph{primitive sentence} is a primitive formula with no free variables. We will sometimes write $\phi(x_1, \ldots,x_n)$ in place of $\phi$ to indicate that the free variables of $\phi$ are among $x_1, \ldots, x_n$ and that the variables $x_1, \ldots, x_n$ are pairwise distinct.
\end{definition}

For each sequence of distinct variables $x_1, \ldots, x_n$ of sorts $\X_1, \ldots, \X_n$, respectively, and each primitive formula $\phi(x_1, \ldots, x_n)$, we now define a relation 
$$\[(x_1, \ldots, x_n) \in \X_1 \times \cdots \times \X_n \suchthat \phi(x_1, \ldots, x_n)  \] $$
of arity $(\X_1, \ldots, \X_n)$ to be our \emph{interpretation} of $\phi$ in the \emph{context} $x_1 \in \X_1, \ldots, x_n \in \X_n$. We will occasionally simply write $\[\phi(x_1, \ldots, x_n)\]$ when the context is obvious.

The notation 
$
\[ x_1: X_1,\ldots, x_n : X_n \vdash \phi(x_1, \ldots, x_n)\]
$
is more or less standard to categorical logic, but it is not as intuitive in this setting. The chosen notation is intended to suggest the standard notation for defining subsets, e.g., $\{ (x,y)\in \RR \times \RR\suchthat x^2 + y^2 =1 \}$. We use brackets rather than braces because the familiar bijection between subsets and predicates does not survive the quantum generalization \cite{Kornell}*{sec.~10}. We are defining predicates.

\begin{definition}\label{core}\label{definition.C.2}
Let $\X_1, \ldots, \X_n$ be quantum sets, and let $x_1, \ldots, x_n$ be distinct variables of sorts $\X_1, \ldots, \X_n$, respectively. For each permutation $\pi$ of $\{1, \ldots, n\}$, and each relation $R$ of arity $(\X_{\pi(1)}, \ldots, \X_{\pi(m)})$, for some $m \leq n$, we define
\begin{align*}\[(x_1,\ldots, x_n) \in \X_{1} \times \cdots \times  \X_{n} \suchthat R(x_{\pi(1)}, \ldots, x_{\pi(m)})\]
=
\pi_\#(R \times \top_{\X_{\pi(m+1)}} \times \cdots \times \top_{\X_{\pi(n)}}).
\end{align*}
\noindent It is straightforward to verify that this relation depends only on the values of $\pi$ on $ \{1, \ldots, m\}$.
Furthermore, for arbitrary primitive formulas $\phi(x_1, \ldots, x_n)$ and $\psi(x_1, \ldots, x_n)$, we define
\vspace{1ex}
\begin{enumerate}[(1)]  \setlength{\itemsep}{2ex} 
\item $\[ (x_1, \ldots, x_n)\in \X_1 \times \cdots \times \X_n \suchthat \NOT \phi (x_1, \ldots, x_n)\]$ \\
$ = \neg \[(x_1, \ldots, x_n)\in \X_1 \times \cdots \times  \X_n \suchthat \phi (x_1, \ldots, x_n)\]$;
\item
$\[(x_1, \ldots, x_n)\in \X_1 \times \cdots \times \X_n \suchthat  \phi (x_1, \ldots, x_n) \AND \psi (x_1, \ldots, x_n)  \]$ \\ $= \[(x_1, \ldots, x_n)\in \X_1 \times \cdots \times \X_n \suchthat \phi (x_1, \ldots, x_n)\] \AND \[(x_1, \ldots, x_n)\in \X_1 \times \cdots \times \X_n \suchthat \psi (x_1, \ldots, x_n)\]$;
\item
$\[(x_2, \ldots, x_{n})\in \X_2 \times \cdots \times \X_{n} \suchthat (\forall x_1 \fin \X_1) \, \phi (x_1, \ldots, x_n)\] $
\\ $= \sup\{R \in \Rel(\X_2, \ldots, \X_{n}) \suchthat \top_{\X_1} \times R \leq \[(x_1, \ldots, x_n)\in \X_1 \times \cdots \times  \X_n \suchthat \phi (x_1, \ldots, x_n)\]  \}$.
\end{enumerate}
\end{definition}

The quantum sets $\X_1, \ldots, \X_n$ in Definition \ref{definition.C.2} are arbitrary, as are the variables $x_1, \ldots, x_n$, so we have defined the interpretation of all primitive formulas by recursion over that class.

Our interpretation of universal quantification may be justified by observing that it is a straightforward generalization of the classical interpretation. A further argument was given by Weaver when he introduced this definition \cite{Weaver}. In essence, Weaver drew an analogy between the elements of a set and the pure normal states of a type I factor, and we could do the same here. The same analogy was later drawn by Rump \cite{Rump}. However, we do not view pure states as a direct analogue of elements, instead holding fast to the orthodox understanding of elements and points in noncommutative mathematics that is expressed in subsection \ref{definition.A}.

The interpretation of a primitive formula in the empty context is a relation of arity $()$, i.e., a predicate on the empty Cartesian product of quantum sets. By convention, this empty Cartesian product is the quantum set $\mathbf 1$ whose only atom is the field $\CC$ of complex numbers, considered as a one-dimensional Hilbert space. It has exactly two predicates, the predicate $\top = \top_{\mathbf 1}$, defined by $\top(\CC) = L(\CC, \CC)$, and $\bot = \bot_{\mathbf 1}$, defined by $\bot(\CC) = 0$. It is natural to say that a formula $\phi()$, which has no free variables, is \emph{true} if $\[\phi()\] = \top$.

\begin{proposition}[also \ref{appendix.B.2}]\label{definition.C.3}
Let $\X_1, \ldots, \X_p$ be quantum sets, and let $x_1, \ldots x_p$ be distinct variables of sorts $\X_1, \ldots, \X_p$, respectively. For each permutation $\sigma$ of $\{1, \ldots, p\}$, and each primitive formula $\phi(x_1, \ldots, x_n)$, with $n \leq p$, we have that
\begin{align*}
\[ (x_{\sigma(1)}, & \ldots, x_{\sigma(p)})  \in \X_{\sigma(1)} \times \cdots \times \X_{\sigma(p)}
\suchthat
\phi (x_1, \ldots, x_n) \]
\\ & =
(\sigma \inv)_\# (
\[ (x_1, \ldots, x_n) \in \X_1 \times \cdots \times \X_n
\suchthat
\phi(x_1, \ldots, x_n) \]
\times \top_{\X_{n+1}} \times \cdots \times \top_{\X_{p}}).
\end{align*}
\end{proposition}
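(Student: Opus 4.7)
The plan is to induct on the structure of the primitive formula $\phi$, showing at each step that the interpretation in the enlarged, permuted context $x_{\sigma(1)} \fin \X_{\sigma(1)}, \ldots, x_{\sigma(p)} \fin \X_{\sigma(p)}$ is obtained from the interpretation in the base context by padding with top predicates in the slots $\X_{n+1}, \ldots, \X_p$ and then applying $(\sigma\inv)_\#$. Throughout, the two main structural facts used are that $(\sigma\inv)_\#$ is an isomorphism of orthomodular lattices (so it commutes with $\neg$, $\AND$, and arbitrary joins) and that the operation $Q \mapsto Q \times \top_{\X_{n+1}} \times \cdots \times \top_{\X_p}$ is a join-preserving embedding of orthomodular lattices; the latter rests on the atomwise identities $(V \tensor W^*)^\perp = V^\perp \tensor W^*$ and $(V_1 \tensor W^*) \cap (V_2 \tensor W^*) = (V_1 \cap V_2) \tensor W^*$ for subspaces $V_1, V_2 \leq X^*$ and the full dual $W^*$ of an atom.

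The base case handles primitive atomic formulas $\phi = R(x_{\pi(1)}, \ldots, x_{\pi(m)})$, where $\pi$ sends $\{1,\ldots,m\}$ injectively into $\{1,\ldots,n\}$. Here both sides unfold to a single application of Definition \ref{definition.C.2}'s displayed formula, and the claim reduces to the associativity/naturality of the action $\pi \mapsto \pi_\#$ under composition of permutations, specifically that $(\sigma \inv)_\# \circ (\pi \times \mathrm{id})_\# = (\sigma\inv \circ (\pi \times \mathrm{id}))_\#$, which is verified directly from Definition \ref{definition.B.4} using that the unitaries $u_\pi$ permuting tensor factors compose as $u_{\pi \circ \rho} = u_\pi u_\rho$. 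The propositional cases $\phi = \neg \psi$ and $\phi = \psi_1 \AND \psi_2$ follow immediately from the induction hypothesis together with the compatibility of $\neg$ and $\AND$ with both padding and $(\sigma\inv)_\#$ noted above.

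The interesting case is the universal quantifier, $\phi = (\forall y \fin \Y)\, \chi(y, x_1, \ldots, x_n)$. Let $\tau$ be the permutation of $\{0,1,\ldots,p\}$ that fixes $0$ (the slot for $y$) and acts as $\sigma$ on $\{1,\ldots,p\}$. By the induction hypothesis applied to $\chi$ in the enlarged context $(y, x_{\sigma(1)}, \ldots, x_{\sigma(p)})$, the interpretation of $\chi$ in that context equals $(\tau\inv)_\#(\[\chi\] \times \top^{p-n})$, where $\[\chi\]$ denotes the interpretation of $\chi$ in the context $(y, x_1, \ldots, x_n)$. Applying Definition \ref{definition.C.2}(3), the left-hand side of the proposition is then
\[
\sup\bigl\{ R \in \Rel(\X_{\sigma(1)}, \ldots, \X_{\sigma(p)}) \suchthat \top_\Y \times R \leq (\tau\inv)_\#(\[\chi\] \times \top^{p-n}) \bigr\}.
\]
Because $\tau$ fixes the $\Y$-slot, one has $\tau_\#(\top_\Y \times R) = \top_\Y \times \sigma_\#(R)$, so applying the lattice isomorphism $\tau_\#$ transforms the inequality into $\top_\Y \times \sigma_\#(R) \leq \[\chi\] \times \top^{p-n}$. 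The bijection $R \mapsto \sigma_\#(R)$ on $\Rel(\X_{\sigma(1)},\ldots,\X_{\sigma(p)}) \to \Rel(\X_1,\ldots,\X_p)$ is thus a lattice isomorphism carrying the indexing set of one supremum to the other, so the expression above equals $(\sigma\inv)_\#$ applied to the supremum of those $S \in \Rel(\X_1, \ldots, \X_p)$ with $\top_\Y \times S \leq \[\chi\] \times \top^{p-n}$. The remaining task, and the one I expect to be the main obstacle, is to show that this supremum equals $\[(\forall y)\,\chi\]_{(x_1,\ldots,x_n)} \times \top^{p-n}$; the $\leq$ direction follows since one candidate $S$ is precisely $\[(\forall y)\,\chi\] \times \top^{p-n}$, while the $\geq$ direction requires showing any such $S$ splits as $S' \times \top^{p-n}$ with $S'$ one of the relations sup'd over in the definition of $\[(\forall y)\,\chi\]_{(x_1,\ldots,x_n)}$. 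This splitting follows atomwise from the fact that if a subspace $U \leq X_1^* \tensor \cdots \tensor X_p^*$ satisfies $Y^* \tensor U \leq V \tensor \top^{p-n}$ for some $V \leq Y^* \tensor X_1^* \tensor \cdots \tensor X_n^*$, then $U$ itself is of the form $U' \tensor (X_{n+1}^* \tensor \cdots \tensor X_p^*)$, because the supremum of all such $U$ is achieved by a subspace with this tensor structure, combined with the same observation applied degree by degree.
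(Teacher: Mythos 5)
Your induction has the same skeleton as the paper's proof in Appendix \ref{appendix.B}: the atomic case via composition of the $\#$-actions, the propositional cases via compatibility of $\neg$ and $\AND$ with padding by $\top$ (the paper's Lemma \ref{appendix.B.1}(1)--(2)), and the quantifier case by extending $\sigma$ to fix the quantified slot and transporting the supremum through the lattice isomorphism. The gap sits exactly at the step you yourself flag as the main obstacle, and the justification you offer does not close it. The per-subspace claim you state --- that any $U$ with $Y^* \tensor U \leq V \tensor (X_{n+1}^* \tensor \cdots \tensor X_p^*)$ must itself split as $U' \tensor (X_{n+1}^* \tensor \cdots \tensor X_p^*)$ --- is false: take $V$ to be the full space, or take any $U$ strictly contained in a split solution but not itself of product form; such $U$ satisfies the constraint without splitting. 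What actually has to be shown is that the \emph{largest} admissible relation splits off the padded factors, and your argument for that is circular: ``the supremum of all such $U$ is achieved by a subspace with this tensor structure'' is precisely the statement to be proved, and ``applied degree by degree'' is not an argument.

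The missing idea is an invariance argument, which is the content of the paper's Lemma \ref{appendix.B.1}(3). Passing to projections, one enlarges from $\ell^\infty(\Y) \stensor \ell^\infty(\Z)$ to $\ell^\infty(\Y) \stensor L(H)$ (with $H$ the Hilbert space carrying $\ell^\infty(\Z)$) and observes that if $1 \tensor p \leq r \tensor 1$ then also $1 \tensor (1 \tensor u^\dagger)\cdot p \cdot (1\tensor u) \leq r \tensor 1$ for every unitary $u \in L(H)$, since the right-hand side is invariant; hence the supremum $\tilde s$ of all admissible projections is fixed by conjugation by $1 \tensor u$, lies in the commutant $(\CC \stensor L(H))'$, and therefore has the form $p_1 \tensor 1$. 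Then $1 \tensor p_1 \tensor 1 \leq r \tensor 1$ gives $1 \tensor p_1 \leq r$, so $p_1 \leq q$ and $s \leq q \tensor 1$, which is the splitting you need. Atomwise this is Schur's lemma: a subspace of $M \tensor Z$ invariant under $1_M \tensor U(Z)$ has the form $M' \tensor Z$ because the commutant of the action is $L(M) \tensor \CC$. With that lemma supplied, the remainder of your quantifier case (including the identity $\tau_\#(\top_\Y \times R) = \top_\Y \times \sigma_\#(R)$ and the transport of the supremum) goes through and coincides with the paper's argument; note also that your ``$\leq$'' and ``$\geq$'' labels for the two directions are swapped, though that is cosmetic.
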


This is the expected but necessary observation that permuting the context corresponds exactly to permuting the arity of the resulting relation and that additionally any unused variable $x$ of some sort $\X$ corresponds to a factor of $\top_\X$. This behavior is built in to the definition of our interpretation of primitive atomic formulas, but an inductive argument is necessary to show that it persists for primitive formulas of higher syntactic complexity. The proof is relegated to Appendix \ref{appendix.B}.

\subsection{Defined logical symbols.}\label{definition.D} As in classical logic, the disjunction connective $\OR$ and the existential quantifier $\exists$ may be expressed in terms of their duals.

\begin{definition}\label{definition.D.1}
For primitive formulas $\phi(x_1, \ldots, x_n)$ and $\psi(x_1, \ldots, x_n)$, we write $$\psi(x_1, \ldots, x_n) \OR \psi(x_1, \ldots, x_n)$$ as an abbreviation for $\NOT( \NOT \psi(x_1, \ldots, x_n) \AND  \NOT \psi(x_1, \ldots, x_n))$, and we write $$(\exists x_1 \fin \X_1)\, \phi(x_1, \ldots, x_n)$$ as an abbreviation for $\NOT (\forall x_1 \fin \X_1)\,\NOT \phi(x_1, \ldots, x_n)$.
\end{definition}

\begin{proposition}\label{definition.D.2}
Let $\X_1, \ldots, \X_n$ be quantum sets, and let $x_1, \ldots, x_n$ be distinct variables of sorts $\X_1, \ldots, \X_n$, respectively. For primitive formulas $\phi(x_1, \ldots, x_n)$ and $\psi(x_1, \ldots, x_n)$,
\begin{enumerate}[(1)]
\item $\[(x_1, \ldots, x_n)\in \X_1 \times \cdots \times \X_n \suchthat  \phi (x_1, \ldots, x_n) \OR \psi (x_1, \ldots, x_n)  \]$ \\ $= \[(x_1, \ldots, x_n)\in \X_1 \times \cdots \times \X_n \suchthat \phi (x_1, \ldots, x_n)\] \OR \[(x_1, \ldots, x_n)\in \X_1 \times \cdots \times \X_n \suchthat \psi (x_1, \ldots, x_n)\]$;
\item $\[(x_2, \ldots, x_{n})\in \X_2 \times \cdots \times \X_{n} \suchthat (\exists x_1 \fin \X_1)\, \phi (x_1, \ldots, x_n)\] $
\\ $= \mathrm{inf}\{R \in \Rel(\X_2, \ldots, \X_{n}) \suchthat \top_{\X_1} \times R \geq \[(x_1, \ldots, x_n)\in \X_1 \times \cdots \times  \X_n \suchthat \phi (x_1, \ldots, x_n)\]  \}$.
\end{enumerate}
\end{proposition}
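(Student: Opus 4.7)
The plan is to unfold the abbreviations from Definition \ref{definition.D.1} and reduce each claim to the corresponding clause of Definition \ref{definition.C.2} combined with one routine tensor identity in the orthomodular lattice $\Pred(\X_1 \times \cdots \times \X_n)$.

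For part (1), I would substitute the abbreviation $\phi \OR \psi \equiv \NOT(\NOT \phi \AND \NOT \psi)$ and apply clauses (1) and (2) of Definition \ref{definition.C.2} (negation twice, conjunction once) to obtain
\begin{align*}
\[(x_1,\ldots,x_n)\in \X_1\times\cdots\times\X_n \suchthat \phi \OR \psi\] \;=\; \neg\bigl(\neg \[\phi\] \AND \neg \[\psi\]\bigr).
\end{align*}
Since $\Pred(\X_1 \times \cdots \times \X_n)$ is an orthomodular lattice, the right-hand side equals $\[\phi\] \OR \[\psi\]$ by the De Morgan identity that defines $\OR$ in terms of $\AND$ and $\neg$.

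For part (2), I would substitute $(\exists x_1 \fin \X_1)\,\phi \equiv \NOT(\forall x_1 \fin \X_1)\,\NOT\phi$ and then apply clauses (1) and (3) of Definition \ref{definition.C.2} to get
\begin{align*}
\[(x_2,\ldots,x_n)\in\X_2\times\cdots\times\X_n \suchthat (\exists x_1\fin\X_1)\,\phi\] \;=\; \neg\sup\bigl\{R\in\Rel(\X_2,\ldots,\X_n) \suchthat \top_{\X_1}\times R \leq \neg\[\phi\]\bigr\}.
\end{align*}
The crucial intermediate identity is
\begin{align*}
\neg(\top_{\X_1}\times R) \;=\; \top_{\X_1}\times \neg R.
\end{align*}
This can be verified by passing to the associated projections: if $r$ is the projection in $\ell^\infty(\X_2\times\cdots\times\X_n)$ corresponding to $R$, then $\top_{\X_1}\times R$ corresponds to $1\tensor r$, whose orthocomplement is $1\tensor(1-r)$, which is exactly the projection corresponding to $\top_{\X_1}\times\neg R$.

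Using this identity, the condition $\top_{\X_1}\times R \leq \neg\[\phi\]$ is equivalent to $\[\phi\] \leq \top_{\X_1}\times\neg R$. Substituting $S = \neg R$ and using that orthocomplementation is an order-reversing bijection on $\Rel(\X_2,\ldots,\X_n)$, I would conclude
\begin{align*}
\sup\bigl\{R \suchthat \top_{\X_1}\times R \leq \neg\[\phi\]\bigr\} \;=\; \sup\bigl\{\neg S \suchthat \top_{\X_1}\times S \geq \[\phi\]\bigr\} \;=\; \neg\inf\bigl\{S \suchthat \top_{\X_1}\times S \geq \[\phi\]\bigr\},
\end{align*}
and applying the outer orthocomplement yields the right-hand side of the stated formula. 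The main obstacle, modest as it is, is the tensor identity $\neg(\top_{\X_1}\times R) = \top_{\X_1}\times \neg R$; once it is in hand, the rest is a mechanical De Morgan argument parallel to the classical case.
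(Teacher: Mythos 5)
Your proposal is correct and is essentially the routine argument the paper leaves as ``straightforward'': unfold the abbreviations, apply De Morgan in the orthomodular lattice of relations for (1), and for (2) use the order-reversing involution $\neg$ together with the identity $\neg(\top_{\X_1}\times R)=\top_{\X_1}\times\neg R$ to convert the supremum into an infimum. That tensor identity is the mirror image of Lemma \ref{appendix.B.1}(1) in the appendix and your projection-based verification of it is exactly the intended one.
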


\begin{proof}
Straightforward.
\end{proof}

In classical logic, an implication $\phi(x_1, \ldots, x_n) \IMPLIES \psi(x_1, \ldots, x_n)$ may be viewed as abbreviating the formula $\NOT \phi(x_1, \ldots, x_n) \OR \psi(x_1, \ldots, x_n)$, but it is now widely understood both that this expression is entirely unsatisfactory to the quantum setting and that no such expression is entirely satisfactory. Hardegree observed \cite{Hardegree} that there are exactly three polynomials $P \IMPLIES Q$ in propositional variables $P$ and $Q$, for the operations $\NOT$, $\AND$, and $\OR$, that satisfy the following requirements in every orthomodular lattice:
\begin{enumerate}
\item $P \AND (P \IMPLIES Q) \leq Q$,
\item $(P \IMPLIES Q) \AND \NOT Q \leq \NOT P$,
\item $P \IMPLIES Q = \top$ if and only if $P \leq Q$.
\end{enumerate}
They are as follows:
\begin{enumerate}
\item $\NOT P \OR (P \AND Q)$,
\item $(\NOT P \AND \NOT Q) \OR Q$,
\item $(P \AND Q) \OR (\NOT P \AND Q) \OR (\NOT P \AND \NOT Q)$.
\end{enumerate}
None of these expressions is entirely satisfactory because none of them satisfies the expected transitivity law $(P \IMPLIES Q) \AND (Q \IMPLIES R) \leq P \IMPLIES R$. In this article, we interpret the implication $P \IMPLIES Q$ to be the Sasaki arrow $\NOT P \OR (P \AND Q)$.

This choice may be motivated by physical considerations. If $P$ and $Q$ are propositions about a physical system \cite{BirkhoffVonNeumann} and $\NOT P \OR (P \AND Q)$ is true of the initial state with probability one, then a positive outcome in a measurement of the truth value of $P$ guarantees a positive outcome in a successive measurement of the truth value of $Q$. Moreover, $\NOT P \OR (P \AND Q)$ is the weakest such proposition in the sense that any other proposition with this property implies it \cite{Sasaki}\cite{Finch}. This choice of implication may also be pragmatically justified by its role in the proof of Proposition \ref{computation.D.1}.

\begin{definition}\label{definition.D.3}
For primitive formulas $\phi(x_1, \ldots, x_n)$ and $\psi(x_1, \ldots, x_n)$, we write $$\phi(x_1, \ldots, x_n) \to \psi(x_1, \ldots, x_n)$$ as an abbreviation for
$ \NOT \phi(x_1, \ldots, x_n) \OR (\phi(x_1, \ldots, x_n) \AND \psi(x_1, \ldots, x_n)).$ We also write $$\phi(x_1, \ldots, x_n) \leftrightarrow \psi(x_1, \ldots, x_n)$$ as an abbreviation for $(\phi(x_1, \ldots, x_n) \to \psi(x_1, \ldots, x_n)) \AND (\psi(x_1, \ldots, x_n) \to \phi(x_1, \ldots, x_n))$.
\end{definition}

Contradiction $\bot$ and equality $=$ are commonly regarded as logical symbols on the basis that their interpretation does not really depend on the structure being considered. This distinction between logical and nonlogical relations is not meaningful within our approach of interpreting primitive formulas in a single many-sorted structure, the class of all quantum sets equipped with all their relations. Each relation symbol is a relation that denotes itself, and we do not consider other structures in which that symbol may denote some other relation. However, with this semantics in hand, it is entirely straightforward to define the notion of a discrete quantum model that accommodates both a logical equality symbol and various nonlogical relations symbols \cite{Hodges}.

Contradiction $\bot$ is a relation of arity $()$; it was defined in subsection \ref{definition.C}. The equality relation $E_\X$ on a quantum set $\X$ is a relation of arity $(\X,\X^*)$, which we now define:

\begin{definition}\label{definition.D.4}
Let $\X$ be a quantum set. The equality relation on $\X$ is the relation $E_\X$ of arity $(\X, \X^*)$ defined by $E_\X(X \tensor X^*) = \CC \counit_X$ for all atoms $X \in \At(\X)$ and $E_\X(X_1 \tensor X_2^*) = 0$ for distinct atoms $X_1, X_2 \in \At(\X)$, where $\counit_X$ is the evaluation operator $X \otimes X^* \to \CC$.
\end{definition}

\noindent 

The equality projection $\delta_{\ell^\infty(\X)}$ that was defined in subsection \ref{introduction.A} may be regarded as an element of $\ell^\infty(\X \times \X^*)$; see subsection \ref{definition.A}. Viewed in this way, it is the projection that corresponds to the predicate $E_\X$ in the sense that $E_\X(X_1 \tensor X_2^*) = L(X_1 \tensor X_2^*, \CC)\cdot \delta_{\ell^\infty(\X)}$ for all $X_1, X_2 \in \At(\X)$. This is proved in Appendix \ref{appendix.H}. From the intuitive perspective that $\X^*$ consists of the antielements of $\X$, the relation $E_\X$ identifies elements of $\X$ with antielements of $\X$, and there is generally no way to identify the elements of one copy of $\X$ with the elements of another. As in subsection \ref{definition.A}, the elements and antielements of a quantum set are just figures of speech.

\subsection{Quantifying over the diagonal.}\label{definition.E} 

The characteristic feature of the equality relation in the quantum setting is its mixed arity. For this reason, axioms often quantify over both the underlying quantum set of a discrete quantum structure and over its dual, and the relations that constitute that structure often have mixed arity. For example, the reflexivity of a relation $R$ equipping a quantum set $\X$ is naturally expressed by the sentence $(\forall x_1 \fin \X)\,(\forall x_2 \fin \X^*)\,(E_\X(x_1, x_2) \IMPLIES  R(x_1,x_2))$. The variables $x_1$ and $x_2$ must have sorts $\X$ and $\X^*$, respectively, because $E_\X$ has arity $(\X, \X^*)$. It follows that $R$ should also have arity $(\X,\X^*)$. Thus, a reflexive relation on $\X$ should have arity $(\X, \X^*)$.

The formulation of reflexivity given in the above paragraph suggests a device for expressing the quantification of a variable ranging simultaneously over a quantum set $\X$ and over its dual $\X^*$. For greater convenience, we might modify our conventions to allow a single bound variable to appear once as an $\X$-sorted argument and once as an $\X^*$-sorted argument in any atomic formula. However, to avoid the risk of confusion and the cost of time borne by introducing this notation, we make do with a minor addition to our syntax that canonizes this device as a defined quantifier. We do so in part because this quantifier occurs frequently in the axiomatizations of already established quantum generalizations of discrete structures; see section \ref{examples}.

\begin{definition}\label{definition.E.1}
Let $\X$ be a quantum set. Let $\phi(x_1, x_2, x_3, \ldots, x_n)$ be a primitive formula with $x_1$ and $x_2$ of sorts $\X$ and $\X^*$, respectively. We write $$(\forall (x_1 \feq x_2) \fin \X \ftimes \X^*)\,\phi(x_1,  \ldots, x_n)$$ as an abbreviation for $(\forall x_2 \fin \X^*)\,(\forall x_1 \fin \X)\,(E_\X(x_1, x_2) \IMPLIES \phi(x_1,\ldots, x_n))$. We also write $$(\exists(x_1 \feq x_2) \fin \X \ftimes \X^*)\,\phi(x_1,  \ldots, x_n)$$ as an abbreviation for $\NOT (\forall (x_1 \feq x_2) \fin \X \ftimes \X^*)\,\NOT \phi(x_1, \ldots, x_n)$.
\end{definition}

For clarity, we will often decorate a variable that ranges over the dual of a given quantum set with an asterisk as a part of that symbol. For two variables that are paired by the quantifier that we have just defined, it is convenient for the variables to differ by exactly the asterisk, e.g., $(\forall (x \feq x_{*}) \fin \X \ftimes \X^*)\,R(x, x_{*})$, for $R$ a relation of arity $(\X, \X^*)$. The variables $x$ and $x_{*}$ are entirely distinct.

The example of pair production in subsection \ref{introduction.A} suggests a vivid intuition for the universal diagonal quantifier. We reframe this example in terms of quantum sets: The phase space of the electron's spin is a quantum set $\X$ that consists of a single two-dimensional atom $X$. The phase space of the positron's spin is then $\X^*$, and the phase space of the composite system is $\X \times \X^*$. Each relation $R$ of arity $(\X, \X^*)$ corresponds to a projection $r$ in $\ell^\infty(\X \times \X^*)$, i.e., to a Boolean observable on the composite system $\ell^\infty(\X) \stensor \ell^\infty(\X)^{op}$. We will soon show that $\[(\forall (x \feq x_*) \fin \X \ftimes \X^*)\, R(x, x_*)\] = \top$ if and only if $\delta_{\ell^\infty(\X)} \leq r$; see Proposition \ref{computation.C.2}. Therefore, $\[(\forall (x \feq x_*) \fin \X \ftimes \X^*)\, R(x, x_*)\] = \top$ if and only if a measurement of $r$ is guaranteed to yield $1$ whenever the composite system is prepared via a neutral pion decay.

The suggested intuition is that we may analogously produce element-antielement pairs from any nonempty quantum set $\X$ and that the universal diagonal quantifier refers to all element-antielements pairs produced in this way. To the extent that a variable $x$ of sort $\X$ may be regarded as naming an element of $\X$, the variable $x_*$ of sort $\X^*$ may be regarded as naming the corresponding antielement. However, the variables $x$ and $x_*$ are formally unrelated, and the elements and antielements of $\X$ are just figures of speech.

\subsection{Function graphs.}\label{functions}\label{definition.F}  Functions may be treated logically as relations. Classically, we may identify each function $f$ from a set $X$ to a set $Y$ with its graph relation $\[(x,y)\in X \times Y \suchthat f(x) = y\]$. We follow the same approach in the quantum setting.

Let us suppose that $F$ is a function from a quantum set $\X$ to a quantum set $\Y$ in some appropriate sense. As the variable $x$ ranges over $\X$, the term $F(x)$ ranges in $\Y$, so the graph relation of $F$ is a relation defined by the formula $E_\Y(F(x), y)$. The equality relation $E_\Y$ has arity $(\Y, \Y^*)$, so the variable $y$ must range over $\Y^*$, not $\Y$. Thus, the graph relation of a function $F$ from a quantum set $\X$ to a quantum set $\Y$ should be a relation of arity $(\X, \Y^*)$.  Therefore, we define a function graph from $\X$ to $\Y$ to be a relation $G$ of arity $(\X, \Y^*)$ that is univalent in $\X$ and total in $\X$, expressing both properties by primitive formulas:

\begin{definition}\label{definition.F.1}
Let $\X$ and $\Y$ be quantum sets. A relation $G$ of arity $(\X,\Y^*)$ is said to be a \emph{function graph} if it satisfies
\begin{enumerate}
\item $\[(\forall x \fin \X)\,(\exists y_* \fin \Y^*)\,G(x,y_*)\] = \top$, and 
\item $\[(\forall y_1 \fin \Y)\,(\forall y_{2*} \fin \Y^*)\,((\exists (x \feq x_*) \fin \X\ftimes \X^*)\,(G_*(x_*,y_{1}) \AND G(x, y_{2*})) \IMPLIES E_\Y(y_1, y_{2*}))\] \\ = \top$.
\end{enumerate}
\end{definition}

This definition is recognizable from ordinary logic. The placement of asterisks in the second formula is essentially dictated by the arity of $E_\Y$. We reason that the variables $y_1$ and $y_{2*}$ must be of sorts $\Y$ and $\Y^*$, respectively, so the first atomic subformula must use the conjugate relation $G_*$ in place of $G$. This implies that the variable $x_*$ in the first atomic subformula must be of sort $\X^*$. Likewise, the variable $x$ in the second atomic subformula must be of sort $\X$.

For each function graph $G$ of arity $(\X, \Y^*)$, we extend the language by adding a function symbol $\breve G$. Anticipating Theorem \ref{computation.E.2}, we formally define $\breve G$ to be the binary relation $(G \times I_\Y) \circ (I_\X \times E_\Y^*)$ from $\X$ to $\Y$ \cite{Kornell}*{sec.~3}. Reasoning graphically, as described in subsection \ref{computation.A}, it is easy to see that $R = E_\Y \circ (\breve R \times I_{\Y^*})$ for each relation $R$ of arity $(\X, \Y^*)$. Thus, the mapping $G \mapsto \breve G$ is injective, and we introduce no ambiguity by defining our function symbols in this way. 

We remark that a relation may have more than one arity. Finite Cartesian products are formally defined associating to the left, so a function graph $G$ of arity $(\X_1 \times \cdots \times \X_m, \Y^*)$ is also a relation of arity $(\X_1, \ldots, \X_m, \Y^*)$. Hence, we will use the function symbol $\breve G$ both with one argument and with $n$ arguments, Definition \ref{definition.F.1} notwithstanding. This is made precise in subsection \ref{definition.G}.

\subsection{Interpreting nonduplicating formulas}\label{definition.G}

We now define the class of nonduplicating formulas, and we extend the semantics of subsection \ref{definition.C} to this class.

\begin{definition}\label{definition.G.1}
The class of \emph{nonduplicating terms} is defined recursively: a variable of sort $\X$ is a nonduplicating term of sort $\X$, and for each function graph $G$ of arity $(\X_1, \ldots, \X_m, \Y^*)$, if $s_1, \ldots, s_m$ are nonduplicating terms of sorts $\X_1, \ldots, \X_m$, respectively, and no two of these terms have a variable in common, then the expression $\breve G(s_1, \ldots, s_m)$ is a nonduplicating term of sort $\Y$. Furthermore, for each relation $R$ of arity $(\Y_1, \ldots, \Y_n)$, if $t_1, \ldots, t_n$ are nonduplicating terms of sorts $\Y_1, \ldots, \Y_n$, respectively, and no two of these terms have a variable in common, then the expression $R(t_1, \ldots, t_n)$ is a \emph{nonduplicating atomic formula}.
\end{definition}

\begin{definition}\label{definition.G.2}
Let $\Y_1, \ldots, \Y_n$ be quantum sets, let $R$ be a relation of arity $(\Y_1, \ldots \Y_n)$. If $R(t_1, \ldots, t_n)$ is a nonduplicating atomic formula that is not primitive, then it abbreviates the nonduplicating formula
$$ (\exists (y_n \feq y_{n*}) \fin \Y_n \ftimes \Y_n^*)\cdots(\exists (y_1 \feq y_{1*}) \fin \Y_1  \ftimes \Y_1^*)\,
(R(y_1, \ldots, y_n) \AND t_1 \evaluatesto y_{1*} \AND \cdots \AND t_n \evaluatesto y_{n*}),$$
where the variables $y_1, \ldots, y_n$ and $y_{1*}, \ldots, y_{n*}$ are all new in the sense that they do not occur in the formula $R(t_1, \ldots, t_n)$. In this context, a formula of the form $t \evaluatesto y_*$, for $t$ of sort $\Y$ and $y_*$ of sort $\Y^*$, abbreviates $E_\Y(t, y_*)$ if $t$ is a variable and $G(s_1, \ldots, s_m, y_*)$ if $t$ is of the form $\breve G(s_1, \ldots, s_m)$, for some terms $s_1, \dots, s_m$.
\end{definition}

Thus, every nonduplicating atomic formula that is not primitive abbreviates a primitive formula. For example, the formula $P(\breve G(x))$, with $G$ a function graph of arity $(\X, \Y^*)$, abbreviates the formula $(\exists (y\feq y_*) \fin \Y \ftimes \Y^*)\,(P(y) \AND G(x, y_*))$, which in turn abbreviates the formula $(\exists y_* \fin \Y^*)\,(\exists y \fin \Y)\,(E_\Y(y, y_*) \IMPLIES ( P(y) \AND G(x,y_*)))$, which finally abbreviates the primitive formula $(\exists y_* \fin \Y^*)\,(\forall y \fin \Y)\,(\NOT E_\Y(y, y_*) \OR (E_\Y(y, y_*) \AND ( P(y) \AND G(x,y_*))))$. This observation extends easily to the class of all nonduplicating formulas:

\begin{definition}\label{definition.G.3}
The class of \emph{nonduplicating formulas} is defined recursively: each nonduplicating atomic formula is a nonduplicating formula, and if $\phi$ and $\psi$ are nonduplicating formulas and $x$ is variable of some sort $\X$, then the expressions $\neg \phi$, $\phi \AND \psi$, $\phi \OR \psi$, $\phi \rightarrow \psi$, $(\forall x \in \X)\,\phi$ and $(\exists x \in \X)\,\phi$ are nonduplicating formulas.
\end{definition}

The abbreviations that we have defined in section \ref{definition} together define a translation, i.e., a class function from nonduplicating formulas to primitive formulas, which fixes all of the primitive formulas. This extends our interpretation of primitive formulas to all nonduplicating formulas. Formally, for each nonduplicating formula $\phi(x_1, \ldots, x_n)$,
we define
$$\[(x_1, \ldots, x_n)\in\X_1 \times \cdots \times \X_n \suchthat \phi(x_1, \ldots, x_n)\] =  \[(x_1, \ldots, x_n)\in\X_1 \times \cdots \times \X_n \suchthat \tilde\phi(x_1, \ldots, x_n)\],$$
where $\tilde\phi(x_1, \ldots, x_n)$ is the translation of $\phi(x_1, \ldots, x_n)$. Note that the translation has exactly the same free variables as the original formula.

For the sake of the exposition, quantification over the diagonal remains an informal abbreviation, i.e., $(\forall (x \feq x_*) \fin \X \ftimes \X^*)\, \phi(x,x_*,y_1,\ldots,y_n)$ is the formula $(\forall x_* \fin \X^*)\, (\forall x \fin \X)\, (E_\X(x, x_*) \rightarrow \phi(x, x_*, y_1, \ldots, y_n))$ for each nonduplicating formula $\phi(x, x_*, y_1, \ldots, y_n)$, and it is likewise for $(\exists (x \feq x_*) \fin \X \ftimes \X^*)\, \phi(x,x_*,y_1,\ldots,y_n)$. Similarly, equivalence remains an informal abbreviation, i.e., $\phi(x_1, \ldots, x_n) \leftrightarrow \psi(x_1, \ldots, x_n)$ is the formula $(\phi(x_1, \ldots, x_n) \rightarrow \psi(x_1, \ldots, x_n)) \AND (\psi(x_1, \ldots, x_n) \rightarrow \phi(x_1, \ldots, x_n))$ for all nonduplicating formulas $\phi(x_1, \ldots, x_n)$ and $\psi(x_1, \ldots, x_n)$.

\section{computation}\label{computation}

Computation with the relations that we have defined is most easily performed with the aid of wire diagrams. A relation $R$ of some arity $(\X_1, \ldots, \X_n)$ is also a binary relation from $\X_1 \times \cdots \times \X_n$ to $\mathbf 1$ in the sense of \cite{Kornell}. Hence $R(X_1 \tensor \cdots \tensor X_n) = R(X_1 \tensor \cdots \tensor X_n, \CC)$ for all atoms $X_1 \in \At(\X_1)$, $X_2 \in \At(\X_2)$, etc.
The category of quantum sets and binary relations is compact closed and therefore supports a graphical calculus in which binary relations are depicted as boxes and quantum sets are depicted as wires \cite{AbramskyCoecke}. 

\subsection{Wire diagrams}\label{computation.A}
A binary relation $B$ from a product $\X_1 \times \cdots \times \X_n$ to a product $\Y_1 \times \cdots \times \Y_m$ is depicted as a box with $n$ wires entering the box from the bottom, each associated to one of the quantum sets $\X_1, \ldots, \X_n$, and with $m$ wires leaving the box from the top, each associated to one of the quantum sets $\Y_1, \ldots, \Y_m$. A relation $R$ of arity $(\X_1, \ldots, \X_n)$ is therefore depicted as a box with wires coming just from below. See Figure \ref{figure}, below.
\begin{figure}[h]
$$
      \begin{tikzpicture}[scale=1]
	\begin{pgfonlayer}{nodelayer}
		\node [style=box] (0) at (0,0) {\,\;\;$B$\;\;\,};
		\node [style=none] (B) at (0.4,0) {};
		\node [style=none] (B0) at (0.4,-0.7) {$\scriptstyle\X_n$};
		\node [style=none] (A) at (-0.4,0) {};
		\node [style=none] (A0) at (-0.4,-0.7) {$\scriptstyle\X_1$};
		\node [style=none] (10) at (0,-0.4) {$\cdots$};
		\node [style=none] (A1) at (-0.4, 0.7) {$\scriptstyle\Y_1$};
		\node [style=none] (B1) at (0.4, 0.7) {$\scriptstyle\Y_m$};
		\node [style=none] (11) at (0, 0.4) {$\cdots$};
	\end{pgfonlayer}
	\begin{pgfonlayer}{edgelayer}
	    \draw[arrow,markat=0.4] (B0) to (B);
	    \draw[arrow,markat=0.4] (A0) to (A);
	    \draw[arrow,markat=0.8] (A) to (A1);
	    \draw[arrow,markat=0.8] (B) to (B1);
	\end{pgfonlayer}
      \end{tikzpicture}
\qquad \qquad
      \begin{tikzpicture}[scale=1]
	\begin{pgfonlayer}{nodelayer}
		\node [style=box] (0) at (0,0) {\,\;\;$R$\;\;\,};
		\node [style=none] (B) at (0.4,0) {};
		\node [style=none] (B0) at (0.4,-0.7) {$\scriptstyle\X_n$};
		\node [style=none] (A) at (-0.4,0) {};
		\node [style=none] (A0) at (-0.4,-0.7) {$\scriptstyle\X_1$};
		\node [style=none] (10) at (0,-0.4) {$\cdots$};
	\end{pgfonlayer}
	\begin{pgfonlayer}{edgelayer}
	    \draw[arrow,markat=0.4] (B0) to (B);
	    \draw[arrow,markat=0.4] (A0) to (A);
	\end{pgfonlayer}
      \end{tikzpicture}
\qquad \qquad
      \begin{tikzpicture}[scale=1]
	\begin{pgfonlayer}{nodelayer}
		\node [style=none] (0) at (0,0) {$E_\X$};
		\node [style=none] (B) at (0.4,0) {};
		\node [style=none] (B1) at (0.4,0.1) {};
		\node [style=none] (B0) at (0.4,-0.7) {$\scriptstyle\X$};
		\node [style=none] (A) at (-0.4,0) {};
		\node [style=none] (A1) at (-0.4,0.1) {};
		\node [style=none] (A0) at (-0.4,-0.7) {$\scriptstyle\X$};
	\end{pgfonlayer}
	\begin{pgfonlayer}{edgelayer}
	    \draw (B0) to (B1);
	    \draw (A0) to (A1);
	    \draw [arrow, bend left=90, looseness=2.25] (A) to (B);
	\end{pgfonlayer}
      \end{tikzpicture}
\qquad \qquad
      \begin{tikzpicture}[scale=1]
	\begin{pgfonlayer}{nodelayer}
		\node [style=none] (A) at (0.5,0) {$I_\X$};
		\node (A1) at (0,0.7) {};
		\node [style=none] (A0) at (0,-0.7) {$\scriptstyle\X$};
	\end{pgfonlayer}
	\begin{pgfonlayer}{edgelayer}
	    \draw[arrow] (A0) to (A1);
	\end{pgfonlayer}
      \end{tikzpicture}
\qquad \qquad
      \begin{tikzpicture}[scale=1]
	\begin{pgfonlayer}{nodelayer}
		\node [style=none] (A) at (0,0) {};
		\node [style=none] (1) at (0,-0.1) {$\bullet$};
		\node [style=none] (A0) at (0,-0.7) {$\scriptstyle\X$};
		\node (0) at (0, 0.3) {$\top_\X$};
	\end{pgfonlayer}
	\begin{pgfonlayer}{edgelayer}
	    \draw[arrow] (A0) to (A);
	\end{pgfonlayer}
      \end{tikzpicture}
$$
\caption{Some depicted binary relations.}\label{figure}
\end{figure}
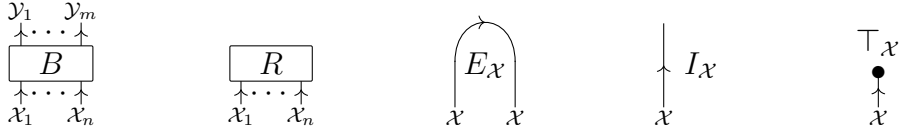

We orient each wire, with downward-oriented wires corresponding to dual quantum sets. In other words, a downward-oriented wire labeled $\X$ corresponds to the quantum set $\X^*$. The advantage of this notation is that the equality relation, which is also the counit of the dagger compact structure on the category of quantum sets and binary relations, can be depicted simply as an arc. For each quantum set $\X$, the identity binary relation $I_\X$ on $\X$ is depicted simply as a wire, and the maximum predicate $\top_\X$ is depicted by a ``loose end'', which we will sometimes ``pull away'', that is, completely omit. See Figure \ref{figure}, above.

In this diagrammatic calculus, the monoidal product of two morphisms, i.e., of two binary relations, is depicted by placing the corresponding diagrams side by side. Thus, for all quantum sets $\X$ and $\Y$, we have the equation 
$$
\begin{aligned}
      \begin{tikzpicture}[scale=1]
	\begin{pgfonlayer}{nodelayer}
		\node [style=none] (A) at (0,-0.3) {};
		\node [style=none] (1) at (0,-0.4) {$\bullet$};
		\node [style=none] (A0) at (0,-1) {$\scriptstyle\X \times \Y$};
	\end{pgfonlayer}
	\begin{pgfonlayer}{edgelayer}
	    \draw[arrow] (A0) to (A);
	\end{pgfonlayer}
      \end{tikzpicture}
\end{aligned}
      \quad
      =
      \quad
\begin{aligned}
      \begin{tikzpicture}[scale=1]
	\begin{pgfonlayer}{nodelayer}
		\node [style=none] (A) at (0,-0.3) {};
		\node [style=none] (A1) at (0,-0.4) {$\bullet$};
		\node [style=none] (A0) at (0,-1) {$\scriptstyle\X$};
		\node [style=none] (B) at (0.5,-0.3) {};
		\node [style=none] (B1) at (0.5,-0.4) {$\bullet$};
		\node [style=none] (B0) at (0.5,-1) {$\scriptstyle\Y$};
	\end{pgfonlayer}
	\begin{pgfonlayer}{edgelayer}
	    \draw[arrow] (A0) to (A);
  	    \draw[arrow] (B0) to (B);
	\end{pgfonlayer}
      \end{tikzpicture}
\end{aligned}
$$
because $\top_{\X \times \Y} = \top_\X \times \top_\Y$. Similarly, the composition of binary relations is depicted by placing one diagram above the other and tying together the correspond wires. For example, if $B$ is a binary relation from $\X$ to $\Y$, then the binary relation $\bend B := E_\Y \circ (B \times I_{\Y^*})$ form $\X \times \Y^*$ to $\mathbf 1$ is depicted in the following diagram:
$$
\begin{aligned}
      \begin{tikzpicture}[scale=1]
	\begin{pgfonlayer}{nodelayer}
		\node [style=box] (0) at (-0.4 ,0) {\,$B$\,};
		\node [style=none] (B) at (0.4,0) {};
		\node [style=none] (B1) at (0.4,0.1) {};
		\node [style=none] (B0) at (0.4,-1) {$\scriptstyle\Y$};
		\node [style=none] (A) at (-0.4,0) {};
		\node [style=none] (A0) at (-0.4,-1) {$\scriptstyle\X$};
	\end{pgfonlayer}
	\begin{pgfonlayer}{edgelayer}
	    \draw [arrow] (B1) to (B0);
	    \draw [arrow] (A0) to (A);
	    \draw [arrow, bend left=90, looseness=2.25] (A) to (B);
	\end{pgfonlayer}
      \end{tikzpicture}
      \end{aligned}\;.
$$

We will often use variables to label the wires of a diagram in order to distinguish various occurrences of the same quantum set, particularly when depicting the interpretation of a formula. For example, the relation $\[(x,x_*,y_1, y_2, y_3) \in \X \times \X^* \times \Y \times \Y \times \Y \suchthat E_\X(x, x_*)\]$ is depicted in the following diagram:
$$
\begin{aligned}
      \begin{tikzpicture}[scale=1]
	\begin{pgfonlayer}{nodelayer}
		\node [style=none] (0) at (0,0) {};
		\node [style=none] (B) at (0.4,0) {};
		\node [style=none] (B1) at (0.4,0.1) {};
		\node [style=none] (B0) at (0.4,-0.4) {$\scriptstyle x$};
		\node [style=none] (A) at (-0.4,0) {};
		\node [style=none] (A1) at (-0.4,0.1) {};
		\node [style=none] (A0) at (-0.4,-0.4) {$\scriptstyle x$};
		\node [style=none] (C0) at (1.2,-0.4) {$\scriptstyle y_1$};
		\node [style=none] (D0) at (2.0,-0.4) {$\scriptstyle y_2$};
		\node [style=none] (E0) at (2.8,-0.4) {$\scriptstyle y_3$};
		\node [style=none] (C1) at (1.2,0.1) {};
		\node [style=none] (D1) at (2.0,0.1) {};
		\node [style=none] (E1) at (2.8,0.1) {};
		\node [style=none] (C) at (1.2,0.5) {$\bullet$};
		\node [style=none] (D) at (2.0,0.5) {$\bullet$};
		\node [style=none] (E) at (2.8,0.5) {$\bullet$};
		\node [style=none] (C2) at (1.2,0.6) {};
		\node [style=none] (D2) at (2.0,0.6) {};
		\node [style=none] (E2) at (2.8,0.6) {};
	\end{pgfonlayer}
	\begin{pgfonlayer}{edgelayer}
	    \draw (B0) to (B1);
	    \draw (A0) to (A1);
	    \draw [arrow, bend left=90, looseness=2.25] (A) to (B);
	    \draw [arrow] (C0) to (C2);
	    \draw [arrow] (D0) to (D2);
	    \draw [arrow] (E0) to (E2);
	\end{pgfonlayer}
      \end{tikzpicture}
      \end{aligned}\;.
$$

The defining properties of a dagger compact category are such that wires may be deformed in the intuitive way. Boxes may be moved around or even turned upside down, which corresponds to dualization in the sense of the dagger compact structure. Thus, for any binary relation $B$ from a quantum set $\X$ to a quantum set $\Y$, we have the following:

$$
\begin{aligned}
\begin{tikzpicture}[scale=1]
	\begin{pgfonlayer}{nodelayer}
    \node [style=box] (0) at (0,0) {\,$B$\,};
    \node [style=none] (top) at (0,1) {$\scriptstyle\Y$};
    \node [style=none] (bot) at (0,-1) {$\scriptstyle\X$};
	\end{pgfonlayer}
	\begin{pgfonlayer}{edgelayer}
	\draw [arrow] (bot) to (0);
	\draw [arrow] (0) to (top);
	\end{pgfonlayer}
   \end{tikzpicture}
\end{aligned}
\qquad = \qquad
\begin{aligned}
\begin{tikzpicture}[scale=1]
	\begin{pgfonlayer}{nodelayer}
    \node [style=box] (0) at (0,0) {\rotatebox[origin=c]{180}{\,$B$\,}};
    \node [style=none] (top) at (1,1) {};
    \node [style=none] (bot) at (-1,-1) {};
    \node (left) at (-1,0) {$\scriptstyle \X$};
    \node (right) at (1,0) {$\scriptstyle\Y$};
	\end{pgfonlayer}
	\begin{pgfonlayer}{edgelayer}
	\draw [arrow] (bot) to (left);
	\draw [arrow] (right) to (top);
	\draw [arrow, bend left=90, looseness=2.25] (left) to (0);
	\draw [arrow, bend right=90, looseness=2.25] (0) to (right);
	\end{pgfonlayer}
   \end{tikzpicture}
\end{aligned}
\qquad = \qquad
\begin{aligned}
\begin{tikzpicture}[scale=1]
	\begin{pgfonlayer}{nodelayer}
    \node [style=box] (0) at (0,0) {\,$B^*$};
    \node [style=none] (top) at (1,1) {};
    \node [style=none] (bot) at (-1,-1) {};
    \node (left) at (-1,0) {$\scriptstyle \X$};
    \node (right) at (1,0) {$\scriptstyle\Y$};
	\end{pgfonlayer}
	\begin{pgfonlayer}{edgelayer}
	\draw [arrow] (bot) to (left);
	\draw [arrow] (right) to (top);
	\draw [arrow, bend left=90, looseness=2.25] (left) to (0);
	\draw [arrow, bend right=90, looseness=2.25] (0) to (right);
	\end{pgfonlayer}
   \end{tikzpicture}
\end{aligned}\;.
$$

The expression $\Rel(\X;\Y)$ denotes the set of all binary relations from a quantum set $\X$ to a quantum set $\Y$ in \cite{Kornell} and in the present paper. For quantum sets $\X_1, \ldots, \X_n$ and $\Y_1, \ldots, \Y_m$, monoidal closure yields a canonical bijection between $\Rel(\X_1 \times \cdots \times \X_n; \Y_1 \times \cdots \times \Y_m)$ and $\Rel(\X_1 \times \cdots \times \X_n \times \Y_1^* \times \cdots \times \Y_m^*; \mathbf 1)$, so whether a wire leaves the diagram upward or downward has no fundamental significance beyond sorting the factors between the domain and the codomain.  Thus, binary relations are all essentially predicates. Similarly, our wire diagrams are essentially diagrams in a space with no top or bottom; each box denotes some predicate, and each emanates wires according to the arity of that predicate.

One significant advantage of diagrammatic computation is the ease with which we can permute the variables of a context. Formally, we appeal to the following proposition, which is proved in Appendix \ref{appendix.J}:

\begin{proposition}[also \ref{appendix.J.1}]\label{computation.A.1}
Let $\X_1, \ldots, \X_n$ be quantum sets, and let $\pi$ be a permutation of $\{1, \ldots, n\}$. Let $U_\pi$ be the canonical isomorphism \cite{MacLane}*{Thm.~XI.1.1} from $\X_1\times \cdots \times \X_n$ to $\X_{\pi(1)} \times \cdots \times \X_{\pi(n)}$ in the symmetric monoidal category of quantum sets and binary relations \cite{Kornell}*{sec.~3}. Then, $\pi_\#(R) = R \circ U_\pi$ for all relations $R$ of arity $(\X_{\pi(1)}, \ldots, \X_{\pi(n)})$.
\end{proposition}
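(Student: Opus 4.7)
The plan is to reduce the claim to the case of an adjacent transposition and then verify it by unpacking both sides at the level of atoms, using the explicit description of composition and the symmetry of $\cat{qRel}$ recalled from \cite{Kornell}.

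First, I would observe that both operations respect composition of permutations in a compatible way. On the one hand, the canonical isomorphism satisfies $U_{\sigma \pi} = U_\pi \circ (U_\sigma \text{ suitably relabeled})$ by Mac Lane's coherence for symmetric monoidal categories \cite{MacLane}*{Thm.~XI.1.1}, so that if the desired equation holds for two permutations then it holds for their composite. On the other hand, the identity $(\sigma \pi)_\#(R)(X_1 \otimes \cdots \otimes X_n) = \pi_\#(\sigma_\#(R))(X_1 \otimes \cdots \otimes X_n)$ follows directly from Definition \ref{definition.B.4} together with the multiplicativity $u_{\sigma\pi} = u_\pi \cdot (u_\sigma$ on the permuted tensor factors$)$ of the tensor permutation unitaries. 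Since every permutation $\pi$ factors as a product of adjacent transpositions, it therefore suffices to prove the equation for $\pi = (i,i+1)$.

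Second, for an adjacent transposition $\pi = (i, i+1)$, the canonical isomorphism $U_\pi$ is the monoidal product $I_{\X_1} \times \cdots \times I_{\X_{i-1}} \times \sigma_{\X_{i+1}, \X_i} \times I_{\X_{i+2}} \times \cdots \times I_{\X_n}$, where $\sigma_{\X_{i+1}, \X_i}$ is the symmetry of $\cat{qRel}$. By the explicit description of $\cat{qRel}$ in \cite{Kornell}*{sec.~3}, the symmetry $\sigma_{\Y, \Z}$ assigns to atoms $Y \in \At(\Y)$ and $Z \in \At(\Z)$ the one-dimensional space spanned by the tensor-swap unitary $Y \otimes Z \to Z \otimes Y$. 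Composing with $R$ using the atomic formula for relation composition in \cite{Kornell} then gives
\[
(R \circ U_\pi)(X_1 \otimes \cdots \otimes X_n) = R(X_{\pi(1)} \otimes \cdots \otimes X_{\pi(n)}) \cdot u_\pi,
\]
which is precisely $\pi_\#(R)(X_1 \otimes \cdots \otimes X_n)$ from Definition \ref{definition.B.4}.

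The main obstacle will be the bookkeeping in the second step: one has to match the pointwise action of the monoidal product of binary relations (which is defined atom-by-atom in terms of spans of tensor products of matrix entries) with the image of the symmetry $\sigma_{\X_{i+1}, \X_i}$, and verify that the composition with $R$ genuinely produces a right-multiplication by $u_\pi$ rather than by its adjoint or some re-bracketed variant. Once this is carried out carefully for a single swap, the first step's reduction completes the argument, and the proposition follows.
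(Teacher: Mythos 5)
Your proposal is correct and follows essentially the same route as the paper's proof in Appendix \ref{appendix.J}: verify the identity for an adjacent transposition by an atom-level computation with the braiding $B_{\X_m,\X_{m+1}}$ (whose only nonzero component is $\CC u_\pi$), and then extend to arbitrary permutations using that both $\pi \mapsto U_\pi$ and $\pi \mapsto \pi_\#$ are multiplicative under composition of permutations. The only difference is the order of the two steps, which is immaterial.
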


\noindent Together, Propositions \ref{definition.C.3} and \ref{computation.A.1} allow us to quickly compute simplified diagrams depicting interpreted formulas. For example, writing $$R := \[(x_1, x_{2*}, y) \in \X \times \X^* \times \Y \suchthat E_\X(x_1, x_{2*})\],$$ we may compute that
$$
\[(x_{2*}, y, x_1) \in \X^* \times \Y \times \X \suchthat E_\X(x_1, x_{2*})\]
\; = \quad
\begin{aligned}
      \begin{tikzpicture}[scale=1]
	\begin{pgfonlayer}{nodelayer}
		\node [style=box] (0) at (0,-.1) {$\ghost\,\;\; R\;\;\,\ghost$};
		\node [style=none] (A) at (-0.4,-0.2) {};
		\node (A0) at (-0.4,-1.3) {$\scriptstyle x_{2}$};
		\node [style=none] (B) at (0, -0.2) {};
	    \node (B0) at (0,-1.3) {$\scriptstyle y$};
		\node [style=none] (C) at (0.4,-0.2) {};
		\node (C0) at (0.4,-1.3) {$\scriptstyle x_1$};
	\end{pgfonlayer}
	\begin{pgfonlayer}{edgelayer}
	    \draw [arrow, in=90, out=270, markat=0.8] (B.center) to (A0);
	    \draw [arrow, in=270, out=90, markat=0.3] (B0) to (C.center);
	    \draw [arrow, in=270, out=90, markat=0.3] (C0) to (A.center);
	\end{pgfonlayer}
      \end{tikzpicture}
\end{aligned}
\quad = \quad
\begin{aligned}
      \begin{tikzpicture}[scale=1]
	\begin{pgfonlayer}{nodelayer}
		\node [style=none] (A) at (-0.4,-0.2) {};
		\node (A0) at (-0.4,-1.3) {$\scriptstyle x_{2}$};
		\node [style=none] (B) at (0, -0.2) {};
	    \node (B0) at (0,-1.3) {$\scriptstyle y$};
		\node [style=none] (C) at (0.4,-0.2) {$\bullet$};
		\node (C0) at (0.4,-1.3) {$\scriptstyle x_1$};
	\end{pgfonlayer}
	\begin{pgfonlayer}{edgelayer}
	    \draw [arrow, in=90, out=270, markat=0.8] (B.center) to (A0);
	    \draw [arrow, in=270, out=90, markat=0.3] (B0) to (C.center);
	    \draw [arrow, in=270, out=90, markat=0.3] (C0) to (A.center);
	    \draw [arrow, bend left=90, looseness=2.25] (A.center)to (B.center);
	\end{pgfonlayer}
      \end{tikzpicture}
\end{aligned}
\quad = \quad
\begin{aligned}
      \begin{tikzpicture}[scale=1]
	\begin{pgfonlayer}{nodelayer}
		\node [style=none] (A) at (-0.4,-0.2) {};
		\node (A0) at (-0.4,-1) {$\scriptstyle x_{2}$};
		\node [style=none] (B) at (0, -0.2) {$\bullet$};
	    \node (B0) at (0,-1) {$\scriptstyle y$};
		\node [style=none] (C) at (0.4,-0.2) {};
		\node (C0) at (0.4,-1) {$\scriptstyle x_1$};
	\end{pgfonlayer}
	\begin{pgfonlayer}{edgelayer}
	    \draw [arrow, markat=0.75] (A.center) to (A0);
	    \draw [arrow] (B0) to (B.center);
	    \draw [arrow] (C0) to (C.center);
	    \draw [arrow, bend right=90, looseness=2.25] (C.center)to (A.center);
	\end{pgfonlayer}
      \end{tikzpicture}
\end{aligned}\;.
$$

\noindent The weave of wires below the box depicting the relation $R$ depicts the canonical isomorphism from $\X^* \times \Y \times \X$ to $\X \times \X^* \times \Y$ that is derived from the symmetric monoidal structure of the category of quantum sets and binary relations. 

\subsection{Standard quantifiers.}\label{computation.C} We establish two basic propositions about the standard quantifiers $\forall$ and $\exists$.

\begin{lemma}\label{computation.C.1}
Let $\phi(x_1, \ldots, x_n)$ be a nonduplicating formula, with $x_1, \dots, x_n$ of sorts $\X_1, \ldots, \X_n$, respectively. For all $m \in \{0, \ldots, n\}$, we have
\begin{align*}
\[ & (x_{m+1} \ldots, x_n)  \in \X_{m+1} \times \cdots \times \X_n
\suchthat
(\forall x_m \fin \X_m)\cdots (\forall x_1 \fin \X_1)\,\phi(x_1, \ldots, x_n)\]
\\ & =
\sup\{
R \in \Rel(\X_{m+1}, \ldots, \X_n)
\suchthat
\top_{\X_1} \times \cdots \times \top_{\X_m} \times R 
\\ & \hspace{40ex} \leq
\[ (x_1, \ldots, x_n) \in \X_1 \times \cdots \times \X_n \suchthat\phi(x_1, \ldots, x_n)\]
\}.
\end{align*}
The special case $m=n$ shows that $\[(\forall x_n \fin \X_n)\cdots (\forall x_1 \fin \X_1)\,\phi(x_1, \ldots,x_n)\] = \top$ if and only if $\[(x_1, \ldots, x_n) \in \X_1 \times \cdots \times \X_n \suchthat \phi(x_1, \ldots,x_n)\]$ is the maximum relation of arity $(\X_1, \ldots, \X_n)$.
\end{lemma}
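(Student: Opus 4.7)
I would proceed by induction on $m$. The base case $m=0$ is immediate from Definition \ref{definition.C.2}: the proposed formula reduces to
$$
\sup\{R \in \Rel(\X_1, \ldots, \X_n) \suchthat R \leq \[\phi(x_1,\ldots,x_n)\]\},
$$
which is simply $\[\phi(x_1,\ldots,x_n)\]$ itself. Nothing nontrivial happens, because no quantifiers are applied and $\[\phi\]$ is its own largest subrelation.

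For the inductive step, I would assume the claim for $m-1$ and abbreviate
$\psi := \[(x_m,\ldots,x_n) \in \X_m \times \cdots \times \X_n \suchthat (\forall x_{m-1}\fin \X_{m-1})\cdots(\forall x_1\fin \X_1)\,\phi\]$,
so that by the inductive hypothesis
$$
\psi = \sup\{T \in \Rel(\X_m,\ldots,\X_n) \suchthat \top_{\X_1}\times\cdots\times\top_{\X_{m-1}}\times T \leq \[\phi\]\}.
$$
Applying Definition \ref{definition.C.2}(3) (and Proposition \ref{definition.C.3} if one wants to quantify the ``first'' variable after permuting the context), one obtains
$\[(\forall x_m\fin \X_m)\cdots(\forall x_1\fin\X_1)\phi\] = \sup\{R \suchthat \top_{\X_m}\times R \leq \psi\}$. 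It then suffices to show, for each $R \in \Rel(\X_{m+1},\ldots,\X_n)$, that
$$
\top_{\X_m}\times R \leq \psi \EV \top_{\X_1}\times\cdots\times\top_{\X_m}\times R \leq \[\phi\].
$$
The direction $(\Leftarrow)$ is immediate: $\top_{\X_m}\times R$ is itself a candidate $T$ in the sup defining $\psi$, hence bounded above by $\psi$.

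The direction $(\Rightarrow)$ is the one technical point. Assuming $\top_{\X_m}\times R \leq \psi$, I would tensor on the left with $\top_{\X_1}\times\cdots\times\top_{\X_{m-1}}$ and conclude
$$
\top_{\X_1}\times\cdots\times\top_{\X_m}\times R \leq \top_{\X_1}\times\cdots\times\top_{\X_{m-1}}\times \psi \leq \[\phi\],
$$
provided we know that the second inequality holds, that is, that $\psi$ itself lies in the defining candidate set. This is exactly the assertion that the operation $T \mapsto \top_{\X_1}\times\cdots\times\top_{\X_{m-1}}\times T$ preserves arbitrary suprema of predicates. I would verify this atomwise, using the fact that for each collection of atoms $X_1\in \At(\X_1),\ldots,X_n\in\At(\X_n)$, the assignment is precisely the linear map $V \mapsto L(X_1,\CC)\otimes\cdots\otimes L(X_{m-1},\CC)\otimes V$ on subspaces of $L(X_m\otimes\cdots\otimes X_n,\CC)$; and the join of a family of subspaces is the closure of their sum, which is preserved under the tensor product with any fixed subspace.

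The main (and only) obstacle is this sup-preservation fact for the Cartesian product of predicates, and it is genuinely a lemma about the modular orthomodular lattice $\Pred(\X_1 \times \cdots \times \X_n)$ rather than a formal consequence of the definitions. The $m=n$ remark then follows by specialization, since in that case the sup is taken over $\Rel() = \{\bot,\top\}$, and the inequality $\top_{\X_1}\times\cdots\times\top_{\X_n}\times\top \leq \[\phi\]$ forces $\[\phi\]$ to be maximal.
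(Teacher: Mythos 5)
Your proof is correct and follows essentially the same route as the paper's: induction on $m$, peeling off one universal quantifier at a time via Definition \ref{definition.C.2}(3) and comparing the candidate sets at consecutive stages. The only difference is one of emphasis: you make explicit the atomwise fact that $P \mapsto \top_{\X_1} \times \cdots \times \top_{\X_{m-1}} \times P$ preserves suprema (equivalently, that each supremum belongs to its own candidate set), a point the paper uses silently when it asserts $\top_{\X_m} \times R_m \leq R_{m-1}$ and $\top_{\X_1} \times \cdots \times \top_{\X_{m-1}} \times R_{m-1} \leq R_0$; your verification of that point is sound.
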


\begin{proof}
For each $m \in \{0, \ldots, n\}$, write $R_m$ for the left side of the equality. Hence, we are to show that for all $m \in \{0, \ldots, n\}$, we have that $R_m = \sup\{
R \in \Rel(\X_{m+1}, \ldots, \X_n)
\suchthat
\top_{\X_1} \times \cdots \times \top_{\X_m} \times R \leq R_0\}$. We proceed by induction on $m$. The base case is just the obvious equality $R_0 = \sup\{R \in \Rel(\X_1, \ldots, \X_n)\suchthat R \leq R_0\}$. For the induction step, we assume that the desired equality holds for some natural $m-1 \in \{0, \ldots, n-1\}$, and thus, we have the following:
$$R_{m-1} = \sup\{
R \in \Rel(\X_m, \ldots , \X_n)
\suchthat
\top_{\X_1} \times \cdots \times \top_{\X_{m-1}} \times R \leq R_0
\},
$$
$$
R_m = \sup\{R\in \Rel(\X_{m+1}, \ldots, \X_n)
\suchthat
\top_{\X_m} \times R \leq R_{m- 1}
\}.
$$
In particular, we have that $\top_{\X_1} \times \cdots \times \top_{\X_{m-1}} \times \top_{\X_m} \times R_m \leq \top_{\X_1} \times \cdots \times \top_{\X_{m-1}} \times R_{m-1} \leq R_0$. Now, suppose that $R$ is any other relation that satisfies $\top_{\X_1} \times \cdots \times \top_{\X_{m-1}} \times \top_{\X_m} \times R \leq R_0$. It follows that $\top_{\X_m} \times R \leq R_{m-1}$ by the first equation and then that $R \leq R_m$ by the second equation. Therefore, $R_m$ is indeed the supremum of the relations $R$ satisfying $\top_{\X_1} \times \cdots \times \top_{\X_m} \times R \leq R_0$.
\end{proof}

\begin{proposition}\label{computation.C.2}
Let $\phi(x_1, \ldots, x_n)$ and $\psi(x_1, \ldots, x_n)$ be nonduplicating formulas, with $x_1, \ldots, x_n$ of sorts $\X_1, \ldots, \X_n$, respectively. Then $$\[(\forall x_n \fin \X_n)\cdots (\forall x_1 \fin \X_1)\,(\phi(x_1, \ldots, x_n) \IMPLIES \psi (x_1, \ldots, x_n))\] = \top$$ if and only if $$\[(x_1, \ldots, x_n) \in \X_1 \times \cdots \times \X_n \suchthat \phi(x_1, \ldots, x_n)\] \leq \[(x_1, \ldots, x_n) \in \X_1 \times \cdots \times \X_n \suchthat \psi(x_1, \ldots, x_n)\].$$
\end{proposition}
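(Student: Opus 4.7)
The plan is to reduce the claim to a standard property of the Sasaki arrow in orthomodular lattices, using the semantic clauses from Definition~\ref{definition.C.2} together with Lemma~\ref{computation.C.1}.

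First, I would apply Lemma~\ref{computation.C.1} with $m = n$ to the formula $\phi(x_1,\ldots,x_n) \IMPLIES \psi(x_1,\ldots,x_n)$. This rewrites the left-hand condition as
\begin{align*}
\[(x_1,\ldots,x_n) \in \X_1 \times \cdots \times \X_n \suchthat \phi \IMPLIES \psi \] \;=\; \top_{\X_1 \times \cdots \times \X_n}.
\end{align*}
So the proposition reduces to showing that this equality holds precisely when $\[\phi\] \leq \[\psi\]$.

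Next, I would unfold $\IMPLIES$ using Definition~\ref{definition.D.3}, which defines $\phi \IMPLIES \psi$ as the Sasaki arrow $\NOT\phi \OR (\phi \AND \psi)$. Applying Definition~\ref{definition.C.2}(1),(2) for $\NOT$ and $\AND$ and Proposition~\ref{definition.D.2}(1) for $\OR$ (and noting that each of these semantic clauses lifts from primitive to nonduplicating formulas through the translation of subsection~\ref{definition.G}, since it replaces function terms by quantification over equality without touching the outer Boolean structure), I obtain
\begin{align*}
\[\phi \IMPLIES \psi\] \;=\; \NOT \[\phi\] \;\OR\; (\[\phi\] \AND \[\psi\]).
\end{align*}
Hence the task reduces to showing, in the orthomodular lattice $\Rel(\X_1, \ldots, \X_n)$, the identity $\NOT P \OR (P \AND Q) = \top$ iff $P \leq Q$, where $P = \[\phi\]$ and $Q = \[\psi\]$.

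This last equivalence is property (3) of Hardegree's characterization of the Sasaki arrow, already recalled explicitly in subsection~\ref{definition.D}. It holds in every orthomodular lattice, and in particular in $\Rel(\X_1, \ldots, \X_n) \iso \Pred(\X_1 \times \cdots \times \X_n)$, since the operations $\NOT$, $\AND$, $\OR$ are defined atomwise and each $\Pred(\X)$ is orthomodular. No real obstacle arises; the only point that requires mild care is the propagation of the semantic clauses for $\NOT$, $\AND$, $\OR$ from primitive formulas to nonduplicating ones, but this is immediate since translation in subsection~\ref{definition.G} commutes with these connectives by construction.
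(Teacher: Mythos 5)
Your proposal is correct and follows essentially the same route as the paper's own proof: reduce via Lemma \ref{computation.C.1} (the case $m=n$) to the statement that $\[\phi\] \IMPLIES \[\psi\]$ is the maximum relation, then invoke Hardegree's characterization of the Sasaki arrow. The only difference is that you spell out the unfolding of the connective clauses and the lifting from primitive to nonduplicating formulas, which the paper leaves implicit.
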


\begin{proof}
By Lemma \ref{computation.C.1}, the equation is true if and only if $ \[\phi(x_1, \ldots, x_n)\] \IMPLIES \[\psi(x_1, \ldots, x_n)\] $ is the maximum relation of arity $(\X_1, \ldots, \X_n)$. This condition is equivalent to the claimed inequality by a fundamental property of the Sasaki arrow \cite{Hardegree}.
\end{proof}

\begin{proposition}\label{computation.C.3}
Let $\phi(x_1, \ldots, x_n)$ be a nonduplicating formula, with $x_1, \ldots,x_n$ of sorts $\X_1, \ldots,\X_n$. Then,
\begin{align*}
\[(x_2, \ldots, x_n) & \in \X_2 \times \cdots  \times \X_n \suchthat (\exists x_1 \fin \X_1)\,\phi(x_1, \ldots, x_n)\] \\ & = \[(x_1, \ldots, x_n) \in \X_1 \times \cdots \times \X_n \suchthat \phi(x_1, \ldots, x_n)\] \circ (\top_{\X_1}^\dagger \times I_{\X_2} \times \cdots \times I_{\X_n})
\\ & = \quad
\begin{aligned}
\begin{tikzpicture}[scale=1]
	\begin{pgfonlayer}{nodelayer}
        \node [style=box] (0) at (0,0) {$\[\phi(x_1, \ldots, x_n)\]$};
        \node [style=none] (A) at (-1.2,-0.2) {};
        \node [style=none] (A1) at (-1.2,-0.8) {};
        \node [style=none] (1) at (-1.2,-0.8) {$\bullet$};
        \node [style=none] (B) at (-0.9,-0.2) {};
        \node [style=none] (C) at (-.8,-0.2) {};
        \node [style=none] (E) at (1.2,-0.2) {};
        \node [style=none] (C1) at (-.8,-1) {};
        \node [style=none] (E1) at (1.2,-1) {};
        \node [style=none] (D) at (0.2,-0.65){$\cdots\cdots$};
	\end{pgfonlayer}
	\begin{pgfonlayer}{edgelayer}
	    \draw [arrow] (C1) to (C);
	    \draw [arrow] (E1) to (E);
	    \draw [arrow] (A1) to (A);
	\end{pgfonlayer}
   \end{tikzpicture}
\end{aligned}\;.
\end{align*}
\end{proposition}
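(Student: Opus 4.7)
The strategy is to reduce the claim to Proposition \ref{definition.D.2}(2) via the translation from nonduplicating to primitive formulas, and then identify the resulting infimum with the claimed composition by an atomwise computation in $\cat{qRel}$.

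First I would invoke the translation described in subsection \ref{definition.G}: by definition, the interpretation of $(\exists x_1 \fin \X_1)\,\phi$ is that of $(\exists x_1 \fin \X_1)\,\tilde\phi$, where $\tilde\phi$ is the primitive translation of $\phi$, and $\[\tilde\phi\] = \[\phi\]$. Since $\tilde\phi$ is primitive, Proposition \ref{definition.D.2}(2) applies to give
\[
\[(x_2,\ldots,x_n) \in \X_2\times\cdots\times\X_n \suchthat (\exists x_1 \fin \X_1)\,\phi\] \;=\; \inf\{R \in \Rel(\X_2, \ldots, \X_n) \suchthat \top_{\X_1} \times R \geq \[\phi\]\}.
\]

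The heart of the proof is then to show that this infimum equals $R^\sharp := \[\phi\] \circ (\top_{\X_1}^\dagger \times I_{\X_2} \times \cdots \times I_{\X_n})$, which I would do atomwise. Evaluated at a tuple of atoms $(X_2, \ldots, X_n)$, the composition law for binary relations in $\cat{qRel}$ expresses $R^\sharp(X_2 \otimes \cdots \otimes X_n)$ as the span, over all $X_1 \in \At(\X_1)$ and all $v \in X_1$, of the slices $\xi \circ (\iota_v \otimes \mathrm{id})$ for $\xi \in \[\phi\](X_1 \otimes X_2 \otimes \cdots \otimes X_n)$, where $\iota_v\colon \CC \to X_1$ sends $z \mapsto zv$. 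Under the identification $L(X_1 \otimes \cdots \otimes X_n, \CC) \iso L(X_1, \CC) \otimes L(X_2 \otimes \cdots \otimes X_n, \CC)$, this is precisely the span of all partial evaluations of $\[\phi\]$ in the first factor. Two inequalities then remain. For $\top_{\X_1} \times R^\sharp \geq \[\phi\]$, expand any $\xi \in \[\phi\](X_1 \otimes X_2 \otimes \cdots \otimes X_n)$ in a basis $\{e_i\}$ of $X_1$ as $\xi = \sum_i e_i^* \otimes \xi(e_i, -, \ldots, -)$; each coefficient is a slice and so lies in $R^\sharp(X_2 \otimes \cdots \otimes X_n)$, whence $\xi$ lies in $L(X_1, \CC) \otimes R^\sharp(X_2 \otimes \cdots \otimes X_n)$. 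For the reverse, if $R$ satisfies $\top_{\X_1} \times R \geq \[\phi\]$, then by writing $\xi = \sum_i e_i^* \otimes r_i$ with $r_i \in R(X_2 \otimes \cdots \otimes X_n)$, any slice $\xi(v, -, \ldots, -) = \sum_i e_i^*(v)\,r_i$ again lies in $R$, forcing $R^\sharp \leq R$ by summing over $X_1$.

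Finally, the diagrammatic equality is immediate from the conventions of subsection \ref{computation.A}: composition with $\top_{\X_1}^\dagger$ on the first wire is depicted as a bullet terminating the downward-oriented $x_1$ wire beneath the box for $\[\phi\]$. The main obstacle is bookkeeping rather than depth: one must carefully track which spaces are duals, how the composition formula in $\cat{qRel}$ distributes over monoidal products, and the canonical identification between $L(X_1 \otimes X_2 \otimes \cdots, \CC)$ and $L(X_1, \CC) \otimes L(X_2 \otimes \cdots, \CC)$ used to move freely between slices and tensor-factor expansions.
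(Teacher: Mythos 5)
Your proof is correct, but it takes a different route from the paper's. You reduce to Proposition \ref{definition.D.2}(2) (the $\inf$ characterization of $\exists$ obtained from the primitive translation) and then verify atomwise, with bases and slices, that $\[\phi\] \circ (\top_{\X_1}^\dagger \times I_{\X_2} \times \cdots \times I_{\X_n})$ is the \emph{least} relation $R$ satisfying $\top_{\X_1} \times R \geq \[\phi\]$; both inclusions you sketch (every slice of an element of $L(X_1,\CC)\otimes R(\cdot)$ lies in $R(\cdot)$, and every $\xi$ is recovered from its slices along a basis of $X_1$) are sound, and the diagrammatic equality is indeed just notation. The paper instead never touches atoms: it unfolds $\exists$ as $\NOT\forall\NOT$, rewrites the condition $\top_{\X_1}\times R \leq \[\NOT\phi\]$ as orthogonality $\top_{\X_1}\times R \perp \[\phi\]$, uses $(\top_{\X_1}\times R)^\dagger = \top_{\X_1}^\dagger\times R^\dagger$ and functoriality to shift the orthogonality onto $R \perp \[\phi\]\circ(\top_{\X_1}^\dagger\times I_{\X_2}\times\cdots\times I_{\X_n})$, and concludes by $\NOT\sup\{R \suchthat R\perp S\} = \NOT\NOT S = S$. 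The paper's argument is basis-free and is reused verbatim (with $E_\X$ replacing $\top_{\X_1}$) to prove Theorem \ref{computation.D.2}, so it scales directly to the diagonal quantifier; your argument is more elementary and concrete but carries the bookkeeping burden you note, and its slice-based minimality check would need to be reworked rather than copied to handle the $E_\X^\dagger$ case.
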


\begin{proof} Write $\[\phi(x_1, \ldots, x_n)\] =\[(x_1, \ldots,x_n) \in \X_1 \times \cdots \times \X_n \suchthat\phi(x_1, \ldots, x_n)\]$. We refer to \cite{KornellLindenhoviusMislove2}*{App.~C} for the relationship between the adjoint $\vphantom{a}^\dagger$, and the orthogonality relation $\perp$.
\begin{align*}
\[&(x_2, \ldots, x_n) \in \X_2 \times \cdots \times \X_n \suchthat (\exists x_1 \fin \X_1)\,\phi(x_1, \ldots, x_n)\]
\\ & =
\[(x_2, \ldots, x_n) \in \X_2 \times \cdots \times \X_n \suchthat \NOT (\forall x_1 \fin \X_1)\,\NOT \phi(x_1, \ldots, x_n)\]
\\ & =
\NOT \sup\{
R \in \Rel(\X_2, \ldots, \X_n)
\suchthat
\top_{\X_1} \times R \leq \[\NOT\phi(x_1, \ldots, x_n)\] \}
\\ & =
\NOT \sup\{
R \in \Rel(\X_2, \ldots, \X_n)
\suchthat
\top_{\X_1} \times R \perp  \[\phi(x_1, \ldots, x_n)\] \}
\\ & =
\NOT \sup\{
R \in \Rel(\X_2, \ldots, \X_n)
\suchthat
\[\phi(x_1, \ldots, x_n)\] \circ (\top_{\X_1} \times R)^\dagger = \bot\}
\\ & =
\NOT \sup\{
R \in \Rel(\X_2, \ldots, \X_n)
\suchthat
\[\phi(x_1, \ldots, x_n)\] \circ (\top_{\X_1}^\dagger \times R^\dagger)=\bot\}
\\ & =
\NOT \sup\{
R \in \Rel(\X_2, \ldots, \X_n)
\suchthat
\[\phi(x_1, \ldots, x_n)\] \circ (\top_{\X_1}^\dagger \times I_{\X_2} \times \cdots \times I_{\X_n})\circ R^\dagger=\bot\}
\\ & =
\NOT \sup\{
R \in \Rel(\X_2, \ldots, \X_n)
\suchthat
R \perp \[\phi(x_1, \ldots, x_n)\] \circ (\top_{\X_1}^\dagger \times I_{\X_2} \times \cdots \times I_{\X_n})\}
\\ & =
\NOT \NOT  (\[\phi(x_1, \ldots, x_n)\] \circ (\top_{\X_1}^\dagger \times I_{\X_2} \times \cdots \times I_{\X_n}))
\\ & =
\[\phi(x_1, \ldots, x_n)\] \circ (\top_{\X_1}^\dagger \times I_{\X_2} \times \cdots \times I_{\X_n}). \qedhere
\end{align*} 
\end{proof}

Applying diagrammatic reasoning, we find that existential quantifiers commute, as a corollary of Proposition \ref{computation.C.3}. Therefore, so do universal quantifiers. If $\X_1 = `A$ for some ordinary set $A$, then existential quantification over $\X_1$ is equivalent to a disjunction over $A$, essentially because the maximum binary relation from a singleton $\{\ast\}$ to $A$ is the disjunction of the elements of $A$, each considered as a binary relation from $\{\ast\}$ to $A$. See Lemma \ref{appendix.C.1}.

\subsection{Diagonal quantifiers.}\label{computation.D} We now characterize our two defined quantifiers over the diagonal.

\begin{proposition}\label{computation.D.1}
Let $\phi(x, x_*, y_1, \ldots, y_n)$ be a nonduplicating formula, with $x$ of sort $\X$, with $x_*$ of sort $\X^*$, and with $y_1, \ldots, y_n$ of sorts $\Y_1, \ldots \Y_n$, respectively. Write $\[\phi(x, x_*, y_1, \ldots, y_n)\]$ as an abbreviation for
$\[ (x,x_*,y_1, \ldots, y_n) \in \X \times \X^* \times \Y_1 \times \cdots \times \Y_n
\suchthat
\phi(x,x_*, y_1, \ldots, y_n)
\]$. Then, 
\begin{align*}
\[(y_1, \ldots& , y_n) \in \Y_1 \times \cdots \times \Y_n \suchthat (\forall  (x \feq x_*) \fin \X \ftimes \X^*)\, \phi(x, x_*, y_1, \ldots, y_n)\] 
\\ & =
\sup\{
R \in \Rel(\Y_1, \ldots, \Y_n)
\suchthat
E_\X \times R \leq
\[
\phi(x,x_*, y_1, \ldots, y_n)
\] \}.
\end{align*}
\end{proposition}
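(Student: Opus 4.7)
The plan is to unfold the abbreviations defining the diagonal universal quantifier, reduce to two ordinary universal quantifiers via Lemma \ref{computation.C.1}, identify the Sasaki arrow at the level of predicates, and then translate the resulting constraint by a commutation argument.

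By the convention stated at the end of subsection \ref{definition.G}, the formula $(\forall (x \feq x_*) \fin \X \ftimes \X^*)\,\phi(x, x_*, y_1, \ldots, y_n)$ abbreviates $(\forall x_* \fin \X^*)\,(\forall x \fin \X)\,(E_\X(x, x_*) \to \phi(x, x_*, y_1, \ldots, y_n))$. Lemma \ref{computation.C.1}, applied with $m = 2$, expresses its interpretation as the supremum of those $R \in \Rel(\Y_1, \ldots, \Y_n)$ for which
\[
\top_\X \times \top_{\X^*} \times R \;\leq\; \[E_\X(x, x_*) \to \phi(x, x_*, y_1, \ldots, y_n)\].
\]
Unfolding Definitions \ref{definition.D.1} and \ref{definition.D.3} using Definition \ref{definition.C.2} and Proposition \ref{definition.C.3}, the right-hand predicate equals the Sasaki arrow $P \to Q := \neg P \OR (P \AND Q)$ computed in the orthomodular lattice $\Pred(\X \times \X^* \times \Y_1 \times \cdots \times \Y_n)$, where $P := E_\X \times \top_{\Y_1 \times \cdots \times \Y_n}$ and $Q := \[\phi(x, x_*, y_1, \ldots, y_n)\]$. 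It therefore suffices to prove, writing $A := \top_\X \times \top_{\X^*} \times R$, the equivalence
\[
A \leq P \to Q \quad\Longleftrightarrow\quad E_\X \times R \leq Q.
\]

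For the forward direction, meet both sides with $P$: by Hardegree's identity $(P \to Q) \AND P \leq Q$ one gets $A \AND P \leq Q$, and since the projections in $\ell^\infty(\X) \stensor \ell^\infty(\X)^{op} \stensor \ell^\infty(\Y_1 \times \cdots \times \Y_n)$ corresponding to $A$ and $P$ are supported on disjoint tensor factors, a direct computation gives $A \AND P = E_\X \times R$. For the converse, the same observation shows $A$ and $P$ commute in the orthomodular lattice, hence $A = (A \AND P) \OR (A \AND \neg P)$; since $A \AND P = E_\X \times R \leq P \AND Q$ by hypothesis and $A \AND \neg P \leq \neg P$, one concludes $A \leq \neg P \OR (P \AND Q) = P \to Q$.

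The main obstacle is that Sasaki adjunction $A \leq (P \to Q) \iff A \AND P \leq Q$ fails in general orthomodular lattices; it holds precisely when $A$ and $P$ commute. Here commutation is immediate because $A$ and $P$ are products of predicates supported on disjoint tensor factors of the underlying von Neumann algebra, so the corresponding projections commute as operators. Once commutation is in hand, the orthomodular law and the routine identity $A \AND P = E_\X \times R$ deliver the desired equivalence, and hence the proposition.
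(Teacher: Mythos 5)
Your proof is correct, and its skeleton matches the paper's: unfold the diagonal quantifier into $(\forall x_* \fin \X^*)\,(\forall x \fin \X)\,(E_\X(x,x_*) \IMPLIES \phi)$, apply Lemma \ref{computation.C.1} with $m=2$, and identify the interpretation of the implication as the Sasaki arrow $P \IMPLIES Q$ with $P = E_\X \times \top_{\Y_1} \times \cdots \times \top_{\Y_n}$ and $Q = \[\phi(x,x_*,y_1,\ldots,y_n)\]$. Where you genuinely diverge is the finishing step. The paper invokes the Sasaki adjunction valid in every orthomodular lattice --- $A \mapsto A \sascon P$ is left adjoint to $A \mapsto P \IMPLIES A$ (Finch) --- and then tacitly evaluates $(\top_\X \times \top_{\X^*} \times R) \sascon (E_\X \times \top_{\Y_1} \times \cdots \times \top_{\Y_n}) = E_\X \times R$. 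You bypass the Sasaki projection connective entirely and prove the equivalence $A \leq P \IMPLIES Q \Longleftrightarrow A \AND P \leq Q$ directly: Hardegree's property $P \AND (P \IMPLIES Q) \leq Q$ gives the forward direction, and for the converse you use that $A$ and $P$ correspond to projections of the form $1 \tensor r$ and $\delta_{\ell^\infty(\X)} \tensor 1$ supported on disjoint tensor factors, hence commute, so $A = (A \AND P) \OR (A \AND \NOT P) \leq (P \AND Q) \OR \NOT P$. This commutation (equivalently $A \sascon P = A \AND P = E_\X \times R$, i.e.\ Lemma \ref{appendix.B.1}(2) plus operator commutation) is exactly what the paper's unstated last equality rests on as well, so your route makes explicit a step the paper glosses over, at the cost of a two-directional check that the standing adjunction would subsume. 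One minor remark: your claim that the meet-form adjunction holds ``precisely when'' $A$ and $P$ commute is stronger than what you need and is not used; only the sufficiency of commutation matters, and that part you establish correctly.
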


\begin{proof}
\begin{align*}
\[(y_1&, \ldots , y_n) \in \Y_1 \times \cdots \times \Y_n \suchthat (\forall  (x \feq x_*) \fin \X \ftimes \X^*)\, \phi(x, x_*, y_1, \ldots, y_n)\] 
\\ & =
\[(y_1, \ldots , y_n) \in \Y_1 \times \cdots \times \Y_n \suchthat (\forall x_* \fin \X^*)\,(\forall x \fin \X)\,(E_\X(x,x_*) \IMPLIES  \phi(x, x_*, y_1, \ldots, y_n))\] 
\\ & =
\sup\{R \in \Rel(\Y_1, \ldots, \Y_n)
\suchthat
\top_{\X} \times \top_{\X^*} \times R \leq \[E_\X(x,x_*) \IMPLIES \phi(x, x_*, y_1, \ldots, y_n)\]
\}
\\ & =
\sup\{R \in \Rel(\Y_1, \ldots, \Y_n)
\suchthat
\top_{\X} \times \top_{\X^*} \times R \leq \[E_\X(x,x_*)\] \IMPLIES \[\phi(x, x_*, y_1, \ldots, y_n)\]
\}
\\ & =
\sup\{R \in \Rel(\Y_1, \ldots, \Y_n)
\suchthat
(\top_{\X} \times \top_{\X^*} \times R) \sascon \[E_\X(x,x_*)\]\leq \[\phi(x, x_*, y_1, \ldots, y_n)\]
\}
\\ & =
\sup\{R \in \Rel(\Y_1, \ldots, \Y_n)
\suchthat
(\top_{\X} \times \top_{\X^*} \times R) \sascon (E_\X \times \top_{\Y_1} \times \cdots \times \top_{\Y_n}) 
\\ & \hspace{60ex}
\leq \[\phi(x, x_*, y_1, \ldots, y_n)\]
\}
\\ & =
\sup\{R \in \Rel(\Y_1, \ldots, \Y_n)
\suchthat
E_\X \times R \leq \[\phi(x, x_*, y_1, \ldots, y_n)\]
\}.
\end{align*}
In this context, $\sascon$ denotes the Sasaki projection connective, defined by $P \sascon Q = (P \OR \NOT Q ) \AND Q$ \cite{Sasaki}*{Def.~5.1}. For every relation $Q$, the mapping $P \mapsto P \sascon Q$ is left adjoint to the mapping $P \mapsto Q \rightarrow P$ \cite{Finch}.
\end{proof}

The following theorem serves as a bridge between the semantics defined in section \ref{definition}, and the interpretation of wire diagrams in the dagger compact category of quantum sets and binary relations \cite{Kornell}*{sec.~3}.

\begin{theorem}\label{computation.D.2}
From the assumptions of Proposition \ref{computation.D.1}, we have the following equality:
\begin{align*}
\[(y_1, \ldots, y_n) \in \Y_1 \times \cdots \times \Y_n & \suchthat (\exists(x\feq x_*) \fin \X \ftimes \X^*)\,\phi(x,x_*, y_1, \ldots, y_n) \] 
\\ &= \;
\[\phi(x,x_*, y_1, \ldots, y_n)\] \circ (E_\X^\dagger \times I_{\Y_1} \times \cdots \times I_{\Y_n})
\\ & \quad
\\ &= \quad
\begin{aligned}
\begin{tikzpicture}[scale=1]
	\begin{pgfonlayer}{nodelayer}
        \node [style=box] (0) at (0,0) {$\[\phi(x,x_*, y_1, \ldots, y_n)\]$};
        \node [style=none] (A) at (-1.7,-0.2) {};
        \node [style=none] (B) at (-0.8,-0.2) {};
        \node [style=none] (C) at (-0.3,-0.2) {};
        \node [style=none] (E) at (1.7,-0.2) {};
        \node [style=none] (C1) at (-0.3,-1) {};
        \node [style=none] (E1) at (1.7,-1) {};
        \node [style=none] (D) at (0.75,-0.65){$\cdots\cdots$};
	\end{pgfonlayer}
	\begin{pgfonlayer}{edgelayer}
	    \draw [arrow, bend left=90, looseness=2.25] (B) to (A);
	    \draw [arrow] (C1) to (C);
	    \draw [arrow] (E1) to (E);
	\end{pgfonlayer}
   \end{tikzpicture}
\end{aligned} \; .
\end{align*}
\end{theorem}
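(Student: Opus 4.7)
The plan is to mimic the derivation of Proposition \ref{computation.C.3}, using Proposition \ref{computation.D.1} in place of Lemma \ref{computation.C.1} and using the equality relation $E_\X$ in place of $\top_{\X_1}$. First I would unfold the defined existential diagonal quantifier by Definition \ref{definition.E.1}:
\[
\[(\exists(x\feq x_*)\fin \X\ftimes\X^*)\,\phi(x,x_*,y_1,\ldots,y_n)\]
=
\neg \[(\forall(x\feq x_*)\fin \X\ftimes\X^*)\,\neg\phi(x,x_*,y_1,\ldots,y_n)\].
\]
By Proposition \ref{computation.D.1} applied to $\neg\phi$, the right-hand side becomes
\[
\neg \sup\{R\in\Rel(\Y_1,\ldots,\Y_n) \suchthat E_\X\times R \leq \[\neg\phi(x,x_*,y_1,\ldots,y_n)\]\}.
\]

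Next I would pass from the inequality to an orthogonality condition exactly as in the proof of Proposition \ref{computation.C.3}: $E_\X \times R \leq \neg\[\phi\]$ is equivalent to $E_\X\times R \perp \[\phi\]$, which by the adjoint-orthogonality correspondence from \cite{KornellLindenhoviusMislove2}*{App.~C} is equivalent to $\[\phi\] \circ (E_\X\times R)^\dagger = \bot$. Since the dagger distributes over the Cartesian product of binary relations in the dagger compact category $\cat{qRel}$, we have $(E_\X\times R)^\dagger = E_\X^\dagger \times R^\dagger$, and hence
\[
\[\phi\] \circ (E_\X^\dagger \times R^\dagger) = \[\phi\] \circ (E_\X^\dagger \times I_{\Y_1}\times\cdots\times I_{\Y_n}) \circ R^\dagger.
\]
This equals $\bot$ precisely when $R \perp \[\phi\] \circ (E_\X^\dagger \times I_{\Y_1}\times\cdots\times I_{\Y_n})$, again by the dagger-orthogonality correspondence.

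Finally I would substitute this back and use double negation: the supremum of all relations $R$ orthogonal to a fixed relation $S$ is $\neg S$, so
\[
\sup\{R \suchthat E_\X\times R \leq \neg\[\phi\]\} = \neg\bigl(\[\phi\]\circ(E_\X^\dagger\times I_{\Y_1}\times\cdots\times I_{\Y_n})\bigr),
\]
and applying the outer negation yields the desired equality. The graphical depiction is then immediate: composition with $E_\X^\dagger$ on the $\X\times\X^*$-factors is precisely the cup joining the $x$-wire to the $x_*$-wire.

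The only subtle step is the identity $(E_\X\times R)^\dagger = E_\X^\dagger \times R^\dagger$, which needs the functoriality of the dagger with respect to the monoidal product in $\cat{qRel}$; this is part of the definition of a dagger symmetric monoidal category, so it is routine but worth checking against the conventions of \cite{Kornell}*{sec.~3}. Everything else is a direct transcription of the argument for Proposition \ref{computation.C.3}, with $\top_{\X_1}$ replaced by $E_\X$ and with the quantified context $\X$ replaced by the paired context $\X\times\X^*$ constrained by $E_\X$.
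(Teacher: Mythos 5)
Your proposal is correct and follows essentially the same route as the paper's own proof: unfold the diagonal existential via its definition, apply Proposition \ref{computation.D.1} to $\neg\phi$, convert the inequality to orthogonality and then to vanishing of the composite with $(E_\X\times R)^\dagger = E_\X^\dagger\times R^\dagger$, and finish with the orthocomplement characterization of the supremum and double negation. The one step you flag as subtle (dagger distributing over the monoidal product) is exactly the step the paper also uses without further comment, so nothing is missing.
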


\begin{proof}
Appealing to Proposition \ref{computation.D.1} for the second equality, we reason that
\begin{align*}
\[&(y_1, \ldots, y_n) \in \Y_1 \times \cdots \times \Y_n  \suchthat (\exists(x\feq  x_*) \fin \X \ftimes \X^*)\,\phi(x,x_*, y_1, \ldots, y_n) \]
\\ & =
\[(y_1, \ldots, y_n) \in \Y_1 \times \cdots \times \Y_n  \suchthat \NOT (\forall(x\feq  x_*) \fin \X \ftimes \X^*)\, \NOT \phi(x,x_*, y_1, \ldots, y_n) \]
\\ & =
\NOT \sup\{
R \in \Rel(\Y_1, \ldots, \Y_n)
\suchthat
E_\X \times R \leq  \[\NOT \phi(x,x_*, y_1, \ldots, y_n)\] \}
\\ & =
\NOT \sup\{
R \in \Rel(\Y_1, \ldots, \Y_n)
\suchthat
E_\X \times R \perp  \[\phi(x,x_*, y_1, \ldots, y_n)\] \}
\\ & =
\NOT \sup\{
R \in \Rel(\Y_1, \ldots, \Y_n)
\suchthat
 \[\phi(x,x_*, y_1, \ldots, y_n)\] \circ (E_\X \times R)^\dagger =\bot\}
\\ & =
\NOT \sup\{
R \in \Rel(\Y_1, \ldots, \Y_n)
\suchthat
 \[\phi(x,x_*, y_1, \ldots, y_n)\] \circ (E_\X^\dagger \times R^\dagger)=\bot\}
\\ & =
\NOT \sup\{
R \in \Rel(\Y_1, \ldots, \Y_n)
\suchthat
 \[\phi(x,x_*, y_1, \ldots, y_n)\] \circ (E_\X^\dagger \times I_{\Y_1} \times \cdots \times I_{\Y_n})\circ R^\dagger=\bot\}
\\ & =
\NOT \sup\{
R \in \Rel(\Y_1, \ldots, \Y_n)
\suchthat
R \perp \[\phi(x,x_*, y_1, \ldots, y_n)\] \circ (E_\X^\dagger \times I_{\Y_1} \times \cdots \times I_{\Y_n})\}
\\ & =
\NOT \NOT ( \[\phi(x,x_*, y_1, \ldots, y_n)\] \circ (E_\X^\dagger \times I_{\Y_1} \times \cdots \times I_{\Y_n}))
\\ & =
\[\phi(x,x_*, y_1, \ldots, y_n)\] \circ (E_\X^\dagger \times I_{\Y_1} \times \cdots \times I_{\Y_n}). \qedhere
\end{align*}
\end{proof}

If $\X = `A$ for some ordinary set $A$, then existential quantification over the diagonal of $\X \times \X^*$ is equivalent to a disjunction over $A$. See Lemma \ref{appendix.C.2}.

\subsection{Functions.}\label{computation.E} Let $\X$ and $\Y$ be quantum sets. We now show that functions from $\X$ to $\Y$ in the sense of \cite{Kornell}*{Def.~4.1} are in canonical bijective correspondence with function graphs of arity $(\X, \Y^*)$ in the sense of Definition \ref{definition.F.1}. This correspondence is given by 
$$
F  \;\mapsto\; E_\Y \circ (F \times I_{\Y^*}),
$$
$$
\begin{aligned}
      \begin{tikzpicture}[scale=1]
	\begin{pgfonlayer}{nodelayer}
		\node [style=box] (F) at (0,0) {$F^{\phantom{*}}$};
		\node [style=none] (Xanch) at (0,-0.2) {};
		\node [style=none] (Yanch) at (0,0.2) {};
		\node  (X) at (0,-1) {$\scriptstyle \X$};
		\node  (Y) at (0,1) {$\scriptstyle \Y$};
	\end{pgfonlayer}
	\begin{pgfonlayer}{edgelayer}
	    \draw [arrow,markat=0.55] (X) to (Xanch.center);
	    \draw [arrow,markat=0.55] (Yanch.center) to (Y);
	\end{pgfonlayer}
      \end{tikzpicture}
\end{aligned}
\quad
 \mapsto
\quad
\begin{aligned}
      \begin{tikzpicture}[scale=1]
	\begin{pgfonlayer}{nodelayer}
		\node  (phantomY) at (0,1) {$\scriptstyle \phantom{\Y}$};
		\node [style=box] (F) at (0,0) {$F^{\phantom{*}}$};
		\node [style=none] (Xanch) at (0,-0.2) {};
		\node [style=none] (Yanch) at (0.7,0.2) {};
		\node [style=none] (Fanch) at (0,0.2) {};
		\node  (X) at (0,-1) {$\scriptstyle \X$};
		\node  (Y) at (0.7,-1) {$\scriptstyle \Y$};
	\end{pgfonlayer}
	\begin{pgfonlayer}{edgelayer}
	    \draw [arrow,markat=0.55] (X) to (Xanch.center);
	    \draw [arrow,markat=0.55] (Yanch.center) to (Y);
	    \draw [arrow, bend left=90, looseness=3, markat=0.2] (Fanch.center) to (Yanch.center);
	\end{pgfonlayer}
      \end{tikzpicture}
\end{aligned}\;.
$$
Because we regard the distinction between domain wires and codomain wires to be simply an aid to computation, we view $F$ and $\bend F:=E_\Y \circ (F \times I_{\Y^*})$ to be essentially identical. Thus, intuitively, we show that the functions defined in \cite{Kornell} are the same as the functions that we have defined here.

\begin{lemma}\label{computation.E.1}
Let $F$ be a partial function from $\X$ to $\Y$ in the sense of \cite{Kornell}, i.e., a binary relation from $\X$ to $\Y$ satisfying the inequality $F \circ F^\dagger \leq I_\Y$. Then, $\bend F$ is a relation of arity $(\X, \Y^*)$ that satisfies condition (2) of Definition \ref{definition.F.1}. Furthermore, this construction is bijective.
\end{lemma}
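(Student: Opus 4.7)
The plan is to prove that $\bend F$ satisfies condition (2) of Definition \ref{definition.F.1} if and only if $F$ satisfies $F \circ F^\dagger \leq I_\Y$, and then to upgrade this equivalence to a bijection via the compact closed structure of $\cat{qRel}$.

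First, I would translate condition (2) via Proposition \ref{computation.C.2} into the single relational inequality
$$\[(\exists (x \feq x_*) \fin \X \ftimes \X^*)\,(G_*(x_*, y_1) \AND G(x, y_{2*}))\] \;\leq\; \[E_\Y(y_1, y_{2*})\]$$
of relations of arity $(\Y, \Y^*)$. Applying Theorem \ref{computation.D.2} to the existential diagonal quantifier converts the left hand side into a wire diagram consisting of two boxes for $G$ and $G_*$ placed side by side, with their $\X$-wires joined by the cup $E_\X^\dagger$, and with remaining free wires of sorts $\Y$ (emanating from $G_*$) and $\Y^*$ (emanating from $G$). The right hand side is, by Definition \ref{definition.D.4} and the graphical calculus of subsection \ref{computation.A}, just the cap $E_\Y$.

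Next, I would substitute $G = \bend F = E_\Y \circ (F \times I_{\Y^*})$ and its conjugate into this diagram. Since $(\bend F)_*$ is obtained by reversing all wire orientations, both copies of $G$ contain a plain copy of $F$ (or $F_*$) joined through an $E_\Y$-cap to one of the free output wires. The $E_\X^\dagger$ cup now joins the two $\X$-wires emerging from the two copies of $F$. Applying the snake identity to the $\X$-wires collapses this configuration and produces the composite $F \circ F^\dagger$ on a $\Y$-wire, with the two trailing $E_\Y$-caps recombining so that the whole diagram reduces to $\bend{(F \circ F^\dagger)} = E_\Y \circ ((F \circ F^\dagger) \times I_{\Y^*})$. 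The inequality thus becomes $\bend{(F \circ F^\dagger)} \leq \bend{I_\Y}$; because the bending map $H \mapsto \bend H$ is a monotone order-isomorphism $\Rel(\Y;\Y) \to \Rel(\Y,\Y^*)$ by the dagger compact structure, this is equivalent to $F \circ F^\dagger \leq I_\Y$.

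For bijectivity, the map $F \mapsto \bend F$ is already a bijection $\Rel(\X;\Y) \to \Rel(\X, \Y^*)$ with explicit inverse $G \mapsto (G \times I_\Y) \circ (I_\X \times E_\Y^*)$, as noted in the remark following Definition \ref{definition.F.1} and justified by the snake identities. Since the above computation shows that $\bend F$ satisfies condition (2) precisely when $F \circ F^\dagger \leq I_\Y$, the restriction to partial functions is a bijection onto the relations of arity $(\X, \Y^*)$ satisfying condition (2). The main obstacle is bookkeeping in the snake-yanking step: because $G$ has arity $(\X, \Y^*)$ while $G_*$ has arity $(\X^*, \Y)$, the two boxes point in formally opposite directions in the diagram, and one must verify that composing two copies of $F$ (one via conjugation) through an $E_\X^\dagger$-cup yanks to $F \circ F^\dagger$ rather than $F^\dagger \circ F$. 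This is where the conventions relating the conjugation $(-)_*$ and the dualization of wires in $\cat{qRel}$ must be checked carefully.
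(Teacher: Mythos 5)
Your proposal is correct and takes essentially the same route as the paper: both translate condition (2) into the inequality of interpreted relations via Proposition \ref{computation.C.2}, render the diagonal existential quantifier graphically (Theorem \ref{computation.D.2}), use wire-yanking to identify that inequality with $F \circ F^\dagger \leq I_\Y$, and obtain bijectivity from the compact closed structure of the category of quantum sets and binary relations. The only difference is directional bookkeeping — you read the chain of graphical equivalences starting from condition (2) and reduce to $\bend{(F\circ F^\dagger)} \leq E_\Y$, while the paper starts from $F \circ F^\dagger \leq I_\Y$ and bends outward — and the conjugation/orientation subtlety you flag at the end is exactly the point the paper handles with its displayed diagrams.
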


\begin{proof}
We argue that the inequality $F \circ F^\dagger \leq I_\Y$ is equivalent to the inequality 
\begin{equation*}\label{computation.E.1.eq}\tag{$*$}\[(y_1,y_{2*}) \in \Y \times \Y^* \suchthat (\exists(x\feq x_*)\fin \X \ftimes \X^*)\, (\bend F_*(x_*,y_1) \AND \bend F(x,y_{2*}))\] \leq E_\Y.
\end{equation*} The inequality $F \circ F^\dagger \leq I_\Y$ may be depicted as on the left, and it is equivalent to the inequalities depicted to its right by the graphical calculus:
$$
\begin{aligned}
      \begin{tikzpicture}[scale=1]
	\begin{pgfonlayer}{nodelayer}
	    \node [style=none] (air) at (0,1.1) {\phantom{air}};
	    \node [style=box] (F) at (0, 0.4)  {$F^{\phantom{\dagger}}$};
	    \node [style=box] (Fdag) at (0, -0.4)  {$F^{\dagger}$};
        \node [style=none] (top) at (0,1) {};
        \node [style=none] (bot) at (0,-1) {};        
	\end{pgfonlayer}
	\begin{pgfonlayer}{edgelayer}
        \draw [arrow,markat=0.6] (bot.center) to (Fdag.south);
        \draw [arrow,markat=0.6] (Fdag.north) to (F.south);
        \draw [arrow,markat=0.6] (F.north) to (top.center);
	\end{pgfonlayer}
      \end{tikzpicture}
\end{aligned}
\; \leq \;
\begin{aligned}
      \begin{tikzpicture}[scale=1]
	\begin{pgfonlayer}{nodelayer}
	    \node [style=none] (air) at (0,1.1) {\phantom{air}};
        \node [style=none] (top) at (0,1) {};
        \node [style=none] (bot) at (0,-1) {};        
	\end{pgfonlayer}
	\begin{pgfonlayer}{edgelayer}
        \draw [arrow] (bot) to (top);
	\end{pgfonlayer}
      \end{tikzpicture}
\end{aligned}
\qquad \Longleftrightarrow \qquad
\begin{aligned}
      \begin{tikzpicture}[scale=1]
	\begin{pgfonlayer}{nodelayer}
	    \node [style=none] (air) at (0,1.1) {\phantom{air}};
		\node [style=box] (F) at (0.7,0) {$F^{\phantom{*}}$};
		\node [style=box] (F*) at (-0.7,0 ) {$F_*$};
		\node [style=none] (ce) at (0,0) {};
		\node [style=none] (A) at (0,-1) {};
		\node [style=none] (B) at (1.4,-1) {};
		\node [style=none] (C) at (0.7,1) {};
	\end{pgfonlayer}
	\begin{pgfonlayer}{edgelayer}
        \draw [arrow, bend right=90, looseness=3] (ce.center) to (F*.center);
        \draw [arrow, bend right=90, looseness=0.8, markat=0.6] (F*.south) to (F.south);
        \draw [arrow, markat=0.3] (A.north) to (ce.center);
        \draw [arrow, markat=0.7] (F.north) to (C.south);
	\end{pgfonlayer}
      \end{tikzpicture}
\end{aligned}
\; \leq \;
\begin{aligned}
      \begin{tikzpicture}[scale=1]
	\begin{pgfonlayer}{nodelayer}
	    \node [style=none] (air) at (0,1.1) {\phantom{air}};
        \node [style=none] (top) at (0,1) {};
        \node [style=none] (bot) at (0,-1) {};        
	\end{pgfonlayer}
	\begin{pgfonlayer}{edgelayer}
        \draw [arrow] (bot) to (top);
	\end{pgfonlayer}
      \end{tikzpicture}
\end{aligned}
\qquad \Longleftrightarrow \qquad 
\begin{aligned}
      \begin{tikzpicture}[scale=1]
	\begin{pgfonlayer}{nodelayer}
	    \node [style=none] (air) at (0,1.1) {\phantom{air}};
		\node [style=box] (F) at (0.7,0) {$F^{\phantom{*}}$};
		\node [style=box] (F*) at (-0.7,0 ) {$F_*$};
		\node [style=none] (ce) at (0,0) {};
		\node [style=none] (ri) at (1.4,0) {};
		\node [style=none] (A) at (0,-1) {};
		\node [style=none] (B) at (1.4,-1) {};		
	\end{pgfonlayer}
	\begin{pgfonlayer}{edgelayer}
        \draw [arrow, bend right=90, looseness=3] (ce.center) to (F*.center);
        \draw [arrow, bend right=90, looseness=0.8, markat=0.6] (F*.south) to (F.south);
        \draw [arrow, bend left=90, looseness=3] (F.center) to (ri.center);
        \draw [arrow, markat=0.3] (A.north) to (ce.center);
        \draw [arrow, markat=0.8] (ri.center) to (B.north);
	\end{pgfonlayer}
      \end{tikzpicture}
\end{aligned}
\; \leq \;
\begin{aligned}
      \begin{tikzpicture}[scale=1]
	\begin{pgfonlayer}{nodelayer}
		\node [style=none] (F) at (0.7,0) {};
		\node [style=none] (F*) at (-0.7,0 ) {};
		\node [style=none] (A) at (-0.7,-1) {};
		\node [style=none] (B) at (0.7,-1) {};		
	\end{pgfonlayer}
	\begin{pgfonlayer}{edgelayer}
        \draw [arrow, bend left=90, looseness=2.25] (F*.center) to (F.center);
        \draw [arrow, markat=0.3] (A.north) to (F*.center);
        \draw [arrow, markat=0.8] (F.center) to (B.north);
	\end{pgfonlayer}
      \end{tikzpicture}
\end{aligned}
$$
The third graphical inequality depicts inequality (\ref{computation.E.1.eq}) because \begin{align*}\[\bend F_*(x_*,y_1) \AND \bend F(x,y_{2*})\] & = (\[\bend F_*(x_*,y_1)\] \times \top_\X \times \top_{\Y^*}) \AND (\[\top_{\X^*} \times \top_\Y \times \[\bend F(x,y_{2*})\]) \\ &  =\[\bend F_*(x_*,y_1)\] \times \[\bend F(x,y_{2*})\],
\end{align*} and this is a relation of arity $(\X^*, \Y, \X, \Y^*)$ that may be depicted as follows:
$$
\begin{aligned}
      \begin{tikzpicture}[scale=1]
	\begin{pgfonlayer}{nodelayer}
		\node [style=box] (F) at (0.7,0) {$F^{\phantom{*}}$};
		\node [style=box] (F*) at (-0.7,0 ) {$F_*$};
		\node [style=none] (ce) at (0,0) {};
		\node [style=none] (ri) at (1.4,0) {};
		\node [style=none] (A) at (0,-1) {$\scriptstyle y_1$};
		\node [style=none] (A0) at (-0.7,-1) {$\scriptstyle x$};
		\node [style=none] (B) at (1.4,-1) {$\scriptstyle y_2$};
		\node [style=none] (B0) at (0.7,-1) {$\scriptstyle x$};	
	\end{pgfonlayer}
	\begin{pgfonlayer}{edgelayer}
        \draw [arrow, bend right=90, looseness=3] (ce.center) to (F*.center);
        \draw [arrow, bend left=90, looseness=3] (F.center) to (ri.center);
        \draw [arrow, markat=0.3] (A.north) to (ce.center);
        \draw [arrow, markat=0.8] (F*.center) to (A0.north);
        \draw [arrow, markat=0.8] (ri.center) to (B.north);
        \draw [arrow, markat=0.45] (B0.north) to (F);
	\end{pgfonlayer}
      \end{tikzpicture}
\end{aligned}\;.
$$
Therefore, $F \circ F^\dagger \leq I_\Y$ is equivalent to inequality (\ref{computation.E.1.eq}). Inequality (\ref{computation.E.1.eq}) is in turn equivalent to condition (2) of Definition \ref{definition.F.1} by Proposition \ref{computation.C.2}. By the graphical calculus, the construction $F \mapsto \bend F$ is a bijection from binary relations $\X \to \Y$ to relations of arity $(\X, \Y^*)$, so the lemma is proved.
\end{proof}

\begin{theorem}\label{computation.E.2}
Let $F$ be a function from $\X$ to $\Y$ in the sense of \cite{Kornell}, i.e., a binary relation from $\X$ to $\Y$ satisfying the inequalities $F^\dagger \circ F \geq I_\X$ and $F \circ F^\dagger \leq I_\Y$. Then, $\bend F$ is a function graph. Furthermore, this construction is bijective. Applying {Thm.~7.4} of \cite{Kornell}, we obtain a canonical bijection between function graphs of arity $(\X,\Y^*)$ and unital normal $*$-homomorphisms from $\ell^\infty(\Y)$ to $\ell^\infty(\X)$.
\end{theorem}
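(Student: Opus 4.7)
The approach is to combine Lemma \ref{computation.E.1} with a parallel analysis of condition (1) of Definition \ref{definition.F.1}. Lemma \ref{computation.E.1} already establishes that $F \mapsto \bend F$ is a bijection between binary relations from $\X$ to $\Y$ and relations of arity $(\X, \Y^*)$, and that the partial-function inequality $F \circ F^\dagger \leq I_\Y$ corresponds under this bijection to condition (2) of Definition \ref{definition.F.1}. It therefore suffices to show that, under the standing assumption $F \circ F^\dagger \leq I_\Y$, the totality inequality $F^\dagger \circ F \geq I_\X$ corresponds to condition (1).

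To this end, I would first unfold condition (1) via the semantic machinery of section \ref{computation}. By Lemma \ref{computation.C.1}, condition (1) is equivalent to $\[(\exists y_* \fin \Y^*)\,\bend F(x, y_*)\] = \top_\X$. Applying Propositions \ref{computation.A.1} and \ref{computation.C.3}, to permute the context and then eliminate the existential, the left-hand side equals $\bend F \circ (I_\X \times \top_{\Y^*}^\dagger)$. Substituting $\bend F = E_\Y \circ (F \times I_{\Y^*})$, applying the interchange law for the monoidal product, and using the graphical identity $E_\Y \circ (I_\Y \times \top_{\Y^*}^\dagger) = \top_\Y$ (a zig-zag straightening that transports a loose end across the cap $E_\Y$), the composite simplifies to $\top_\Y \circ F$. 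Hence condition (1) reduces to the equation $\top_\Y \circ F = \top_\X$.

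Next, I would show that, under the standing assumption $F \circ F^\dagger \leq I_\Y$, the equation $\top_\Y \circ F = \top_\X$ is equivalent to the inequality $F^\dagger \circ F \geq I_\X$. The forward direction is a short chain: $\top_\X = \top_\X \circ I_\X \leq \top_\X \circ F^\dagger \circ F \leq \top_\Y \circ F \leq \top_\X$, where the middle inequality uses $\top_\X \circ F^\dagger \leq \top_\Y$ (because $\top_\Y$ is the maximum predicate on $\Y$). For the converse, I would argue graphically that the univalence of $F$ forces $F^\dagger \circ F$ to behave as a ``domain predicate'' on $\X$ whose pushforward along $\top_\Y \circ F$ already records the full domain of $F$, so that the assumption $\top_\Y \circ F = \top_\X$ upgrades $F^\dagger \circ F$ to contain $I_\X$. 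Combining these steps with Lemma \ref{computation.E.1} yields the bijection between functions $\X \to \Y$ in Kornell's sense and function graphs of arity $(\X, \Y^*)$, and the final assertion about unital normal $*$-homomorphisms follows immediately from \cite{Kornell}*{Thm.~7.4}. The main obstacle is the converse direction of the third step: in the quantum setting, moving from the cap-off description $\top_\Y \circ F = \top_\X$ back to the two-sided description $F^\dagger \circ F \geq I_\X$ cannot be carried out purely formally as in compact closed categories of classical relations, and will require working with the subspace-level presentation of $\bend F$ together with the univalence of $F$.
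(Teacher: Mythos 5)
Your overall architecture is the same as the paper's: Lemma \ref{computation.E.1} handles condition (2) of Definition \ref{definition.F.1} and the bijectivity of $F \mapsto \bend F$, condition (1) is unwound (diagrammatically or via Lemma \ref{computation.C.1} and Proposition \ref{computation.C.3}, which amounts to the same computation) to the single equation $\top_\Y \circ F = \top_\X$, and the final sentence is delegated to \cite{Kornell}*{Thm.~7.4}. Your forward chain $\top_\X \leq \top_\X \circ F^\dagger \circ F \leq \top_\Y \circ F \leq \top_\X$ is correct. The gap is exactly where you flag it: the converse, that $F \circ F^\dagger \leq I_\Y$ together with $\top_\Y \circ F = \top_\X$ forces $F^\dagger \circ F \geq I_\X$, is the real content of the theorem, and "the univalence of $F$ forces $F^\dagger \circ F$ to behave as a domain predicate" is a heuristic, not an argument. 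You are right that it cannot be purely formal: the example revisited at the end of subsection \ref{examples.D} (one atom $\CC^2$, $R(\CC^2,\CC^2) = \CC a$ with $a$ invertible but not a multiple of a unitary) satisfies the cap-off equation but not $R^\dagger \circ R \geq I$, so univalence must enter in an essential, operator-theoretic way; but having noted this, you still owe the proof.

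The paper closes this step by citation rather than by a new argument: by \cite{Kornell}*{Lem.~B.4}, for a partial function $F$ the equation $\top_\Y \circ F = \top_\X$ holds if and only if the associated normal $*$-homomorphism $F^\star$ is unital, and by \cite{Kornell}*{Lem.~6.4}, under $F \circ F^\dagger \leq I_\Y$ unitality of $F^\star$ is equivalent to $F^\dagger \circ F \geq I_\X$. So to complete your proposal you must either invoke these two lemmas at the point where you currently gesture at a "subspace-level" argument, or reprove their content directly (for each atom $X$, deduce from univalence and from the totality of the spaces $\sum_Y L(Y,\CC)\cdot F(X,Y) = L(X,\CC)$ that $\sum_Y F^\dagger(Y,X)\cdot F(X,Y)$ contains $\CC 1_X$), which is a genuine piece of work and not a rearrangement of wires. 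As submitted, the bijection claim — in particular that every function graph arises from a function rather than merely from a partial function with full cap-off — rests on this unproved step.
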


\begin{proof}
With Lemma \ref{computation.E.1} in hand, it remains only to show that $F$ satisfies $F^\dagger \circ F \geq I_\X$ if and only if $ \[(\forall x \fin \X)\,(\exists y_* \fin \Y)\, \bend F(x,y_*)\] = \top$. As we observed in subsection \ref{computation.C}, this equality holds if and only if $\[x \in \X \suchthat (\exists y_* \fin \Y)\, \bend F(x,y_*)\]$ is the maximum predicate $\top_\X$. Reasoning diagrammatically, we have that
$$
\[x \in \X \suchthat (\exists y_* \fin \Y)\, \bend F(x,y_*)\]
\; = \;
\begin{aligned}
      \begin{tikzpicture}[scale=1]
	\begin{pgfonlayer}{nodelayer}
		\node [style=box] (F) at (0.7,0) {$F$};
		\node [style=none] (ri) at (1.4,0) {};
		\node [style=none] (B) at (1.4,-0.7) {$\bullet$};
		\node [style=none] (B0) at (0.7,-1) {};	
	\end{pgfonlayer}
	\begin{pgfonlayer}{edgelayer}
        \draw [arrow, bend left=90, looseness=3] (F.center) to (ri.center);
        \draw [arrow, markat=0.5] (ri.center) to (B.center);
        \draw [arrow, markat=0.3] (B0.north) to (F);
	\end{pgfonlayer}
      \end{tikzpicture}
\end{aligned}
\; = \;
\begin{aligned}
      \begin{tikzpicture}[scale=1]
	\begin{pgfonlayer}{nodelayer}
		\node [style=box] (F) at (0.7,0) {$F$};
		\node [style=none] (B) at (0.7,0.7) {$\bullet$};
		\node [style=none] (B0) at (0.7,-1) {};	
	\end{pgfonlayer}
	\begin{pgfonlayer}{edgelayer}
        \draw [arrow, markat=0.5] (F.north) to (B.center);
        \draw [arrow, markat=0.3] (B0.north) to (F);
	\end{pgfonlayer}
      \end{tikzpicture}
\end{aligned}\;.
$$
We conclude that $ \[(\forall x \fin \X)\,(\exists y_* \fin \Y)\, \bend F(x,y_*)\] = \top$ if and only if $\top_\Y \circ F = \top_\X$.

Thus, the construction $F \mapsto \bend F$ is a bijection between partial functions $F$ satisfying $\top_\Y \circ F = \top_\X$ and function graphs. By \cite{Kornell}*{Lem.~B.4}, $\top_\Y \circ F = \top_\X$ if and only if the normal $*$-homomorphism $F^\star$ is unital, and by \cite{Kornell}*{Lem.~6.4}, the latter condition holds if and only if $F^\dagger \circ F \geq I_\X$. Therefore, the construction $F \mapsto \bend F$ restricts to a bijection from functions to function graphs.
\end{proof}

Having established a one-to-one correspondence between functions and function graphs, it becomes natural to use functions for function symbols. Indeed, this is what we have been doing. For each function $F$ from a quantum set $\X$ to a quantum set $\Y$ and each function graph $G$ of arity $(\X, \Y^*)$, the equation $G = \bend F$ is easily seen to be equivalent to the equation $F = \breve G$ via the graphical calculus.
We did not directly define our function symbols to be functions in subsection \ref{definition.F} to delay drawing from \cite{Kornell}, in order to demonstrate that this notion may be motivated from elementary physical and logical considerations.

\subsection{Terms}\label{computation.F} One effect of Definition \ref{definition.G.2} is that nonduplicating terms may be interpreted as compositions of functions in the expected way.

\begin{definition}\label{computation.F.1}
Let $\X_1, \ldots, \X_n$ be quantum sets, and let $x_1, \ldots, x_n$ be distinct variables of sorts $\X_1, \ldots, \X_n$, respectively. Let $\Y$ be a quantum set, and let $t(x_1, \ldots, x_n)$ be a term of sort $\Y$. We define
$
\[(x_1, \ldots, x_n) \in \X_1 \times \cdots \times \X_n \suchthat t(x_1, \ldots,x_n)\]
$
to be
$$(\[(x_1, \ldots, x_n,y_*) \in \X_1 \times \cdots \times \X_n \times \Y^* \suchthat t(x_1, \ldots,x_n) \evaluatesto y_*\] \times I_\Y)\circ (I_{\X_1} \times \cdots \times I_{\X_n} \times E_{\Y}^*),
$$
where the formula $t(x_1, \ldots,x_n) \evaluatesto y_*$ is defined in Definition \ref{definition.G.2}.
Graphically,
\begin{align*}
\begin{aligned}
\begin{tikzpicture}
\begin{pgfonlayer}{nodelayer}
    \node[style=box] (term) at (0,0) {$\[t(x_1, \ldots,x_n)\]$};
    \node[style=none] (yanch) at (0, 0) {};
    \node[style=none] (xnanch) at (1.2, 0) {};
    \node[style=none] (x1anch) at (-1.2, 0) {};
    \node (xn) at (1.2, -1.4) {$\scriptstyle x_n$};
    \node (x1) at (-1.2, -1.4) {$\scriptstyle x_1$};
    \node[style=none] (y) at (0, 1.4) {$\scriptstyle y$};
    \node (dots) at (0,-0.8) {$\cdots$};
\end{pgfonlayer}
\begin{pgfonlayer}{edgelayer}
    \draw [arrow, markat=0.4] (x1) to (x1anch.center);
    \draw [arrow, markat=0.4] (xn) to (xnanch.center);
    \draw [arrow, markat=0.6] (yanch.center) to (y);
\end{pgfonlayer}
\end{tikzpicture}
\end{aligned}
\quad := \quad
\begin{aligned}
\begin{tikzpicture}
\begin{pgfonlayer}{nodelayer}
    \node[style=box] (equality) at (0,0) {$\[t(x_1, \ldots,x_n) \evaluatesto y_*\]$};
    \node[style=none] (ybend) at (2.6, 0) {};
    \node[style=none] (yanch) at (1.5, 0) {};
    \node[style=none] (xnanch) at (0.8, 0) {};
    \node[style=none] (x1anch) at (-1.6, 0) {};
    \node (xn) at (0.8, -1.4) {$\scriptstyle x_n$};
    \node (x1) at (-1.6, -1.4) {$\scriptstyle x_1$};
    \node[style=none] (y) at (2.6, 1.4) {$\scriptstyle y$};
    \node (dots) at (-0.4,-0.8) {$\cdots$};
\end{pgfonlayer}
\begin{pgfonlayer}{edgelayer}
    \draw [arrow, markat=0.4] (x1) to (x1anch.center);
    \draw [arrow, markat=0.4] (xn) to (xnanch.center);
    \draw [arrow] (ybend.center) to (y);
    \draw [arrow, bend right = 90, looseness=3] (yanch.center) to (ybend.center);
\end{pgfonlayer}
\end{tikzpicture}
\end{aligned}
\; .
\end{align*}
\end{definition}

Let $F$ be a function $\X_1 \times \cdots \times \X_n \to \Y$. By definition, the formula $F(x_1,\ldots,x_n) \evaluatesto y_*$ abbreviates the nonduplicating formula $\bend F(x_1, \ldots, x_n, y_*)$, and therefore,
\begin{align*}
\begin{aligned}
\begin{tikzpicture}
\begin{pgfonlayer}{nodelayer}
    \node[style=box] (term) at (0,0) {$\[F(x_1, \ldots,x_n)\]$};
    \node[style=none] (yanch) at (0, 0) {};
    \node[style=none] (xnanch) at (1.2, 0) {};
    \node[style=none] (x1anch) at (-1.2, 0) {};
    \node (xn) at (1.2, -1.4) {$\scriptstyle x_n$};
    \node (x1) at (-1.2, -1.4) {$\scriptstyle x_1$};
    \node[style=none] (y) at (0, 1.4) {$\scriptstyle y$};
    \node (dots) at (0,-0.8) {$\cdots$};
\end{pgfonlayer}
\begin{pgfonlayer}{edgelayer}
    \draw [arrow, markat=0.4] (x1) to (x1anch.center);
    \draw [arrow, markat=0.4] (xn) to (xnanch.center);
    \draw [arrow, markat=0.6] (yanch.center) to (y);
\end{pgfonlayer}
\end{tikzpicture}
\end{aligned}
 & \quad = \quad
\begin{aligned}
\begin{tikzpicture}
\begin{pgfonlayer}{nodelayer}
    \node[style=box] (equality) at (0,0) {$\[\bend F(x_1, \ldots, x_n, y_*)\]$};
    \node[style=none] (ybend) at (2.6, 0) {};
    \node[style=none] (yanch) at (1.5, 0) {};
    \node[style=none] (xnanch) at (0.8, 0) {};
    \node[style=none] (x1anch) at (-1.6, 0) {};
    \node (xn) at (0.8, -1.4) {$\scriptstyle x_n$};
    \node (x1) at (-1.6, -1.4) {$\scriptstyle x_1$};
    \node[style=none] (y) at (2.6, 1.4) {$\scriptstyle y$};
    \node (dots) at (-0.4,-0.8) {$\cdots$};
\end{pgfonlayer}
\begin{pgfonlayer}{edgelayer}
    \draw [arrow, markat=0.4] (x1) to (x1anch.center);
    \draw [arrow, markat=0.4] (xn) to (xnanch.center);
    \draw [arrow] (ybend.center) to (y);
    \draw [arrow, bend right = 90, looseness=3] (yanch.center) to (ybend.center);
\end{pgfonlayer}
\end{tikzpicture}
\end{aligned}
\quad = \quad
\begin{aligned}
\begin{tikzpicture}
\begin{pgfonlayer}{nodelayer}
    \node[style=box] (equality) at (0,0) {$\ghost \qquad \qquad \bend F \qquad \qquad \ghost$};
    \node[style=none] (ybend) at (2.6, 0) {};
    \node[style=none] (yanch) at (1.5, 0) {};
    \node[style=none] (xnanch) at (0.8, 0) {};
    \node[style=none] (x1anch) at (-1.6, 0) {};
    \node (xn) at (0.8, -1.4) {$\scriptstyle x_n$};
    \node (x1) at (-1.6, -1.4) {$\scriptstyle x_1$};
    \node[style=none] (y) at (2.6, 1.4) {$\scriptstyle y$};
    \node (dots) at (-0.4,-0.8) {$\cdots$};
\end{pgfonlayer}
\begin{pgfonlayer}{edgelayer}
    \draw [arrow, markat=0.4] (x1) to (x1anch.center);
    \draw [arrow, markat=0.4] (xn) to (xnanch.center);
    \draw [arrow] (ybend.center) to (y);
    \draw [arrow, bend right = 90, looseness=3] (yanch.center) to (ybend.center);
\end{pgfonlayer}
\end{tikzpicture}
\end{aligned}
\\ & \quad  = \quad
\begin{aligned}
\begin{tikzpicture}
\begin{pgfonlayer}{nodelayer}
    \node[style=box] (F) at (-0.4,0) {$\ghost \qquad\quad F\qquad\quad \ghost $};
    \node[style=none] (y*anch) at (2.6, 0) {};
    \node[style=none] (yanch) at (1.5, 0) {};
    \node[style=none] (xnanch) at (0.8, 0) {};
    \node[style=none] (x1anch) at (-1.6, 0) {};
    \node (xn) at (0.8, -1.4) {$\scriptstyle x_n$};
    \node (x1) at (-1.6, -1.4) {$\scriptstyle x_1$};
    \node[style=none] (y) at (2.6, 1.4) {$\scriptstyle y$};
    \node (dots) at (-0.4,-0.8) {$\cdots$};
\end{pgfonlayer}
\begin{pgfonlayer}{edgelayer}
    \draw [arrow, markat=0.4] (x1) to (x1anch.center);
    \draw [arrow, markat=0.4] (xn) to (xnanch.center);
    \draw [arrow] (y*anch.center) to (y);
    \draw [arrow, bend right = 90, looseness=3] (yanch.center) to (y*anch.center);
    \draw [arrow, bend left = 90, looseness =2] (F.center) to (yanch.center);
\end{pgfonlayer}
\end{tikzpicture}
\end{aligned}
\quad =\quad
\begin{aligned}
\begin{tikzpicture}
\begin{pgfonlayer}{nodelayer}
    \node[style=box] (term) at (0,0) {$\ghost \qquad \quad F \quad \qquad \ghost$};
    \node[style=none] (yanch) at (0, 0) {};
    \node[style=none] (xnanch) at (1.2, 0) {};
    \node[style=none] (x1anch) at (-1.2, 0) {};
    \node (xn) at (1.2, -1.4) {$\scriptstyle x_n$};
    \node (x1) at (-1.2, -1.4) {$\scriptstyle x_1$};
    \node[style=none] (y) at (0, 1.4) {$\scriptstyle y$};
    \node (dots) at (0,-0.8) {$\cdots$};
\end{pgfonlayer}
\begin{pgfonlayer}{edgelayer}
    \draw [arrow, markat=0.4] (x1) to (x1anch.center);
    \draw [arrow, markat=0.4] (xn) to (xnanch.center);
    \draw [arrow, markat=0.6] (yanch.center) to (y);
\end{pgfonlayer}
\end{tikzpicture}
\end{aligned} \;.
\end{align*}
Thus, $\[(x_1,\ldots, x_n) \in \X_1 \times \cdots \times \X_n \suchthat F(x_1, \ldots, x_n)\] = F$. 

Let $i \in \{1, \ldots, n\}$. The variable $x_i$ is a term of sort $\X_i$. The formula $x_i \evaluatesto y_*$ abbreviates the nonduplicating formula $E_{\X_i}(x_i,y_*)$, and therefore,
\begin{align*}
\begin{aligned}
\begin{tikzpicture}
\begin{pgfonlayer}{nodelayer}
    \node[style=box] (term) at (0,0) {$\ghost \qquad \;\[x_i\]\;\qquad \ghost$};
    \node[style=none] (yanch) at (0, 0) {};
    \node[style=none] (xnanch) at (1.2, 0) {};
    \node[style=none] (x1anch) at (-1.2, 0) {};
    \node[style=none] (xianch) at (0,0) {};
    \node (xi) at (0, -1.4) {$\scriptstyle x_i$};
    \node (xn) at (1.2, -1.4) {$\scriptstyle x_n$};
    \node (x1) at (-1.2, -1.4) {$\scriptstyle x_1$};
    \node[style=none] (y) at (0, 1.4) {$\scriptstyle y$};
    \node (dotsleft) at (-0.6,-0.8) {$\cdots$};
    \node (dotsright) at (0.6,-0.8) {$\cdots$};
\end{pgfonlayer}
\begin{pgfonlayer}{edgelayer}
    \draw [arrow, markat=0.4] (x1) to (x1anch.center);
    \draw [arrow, markat=0.4] (xi) to (xianch.center);
    \draw [arrow, markat=0.4] (xn) to (xnanch.center);
    \draw [arrow, markat=0.6] (yanch.center) to (y);
\end{pgfonlayer}
\end{tikzpicture}
\end{aligned}
 & \quad = \quad
\begin{aligned}
\begin{tikzpicture}
\begin{pgfonlayer}{nodelayer}
    \node[style=box] (equality) at (0,0) {$\ghost \quad\;\[E_{\X_i}(x_i, y_*)\]\;\quad \ghost$};
    \node[style=none] (ybend) at (2.6, 0) {};
    \node[style=none] (yanch) at (1.5, 0) {};
    \node[style=none] (xnanch) at (0.8, 0) {};
    \node[style=none] (x1anch) at (-1.6, 0) {};
    \node[style=none] (xianch) at (-0.4,0) {};
    \node (xi) at (-0.4, -1.4) {$\scriptstyle x_i$};
    \node (xn) at (0.8, -1.4) {$\scriptstyle x_n$};
    \node (x1) at (-1.6, -1.4) {$\scriptstyle x_1$};
    \node[style=none] (y) at (2.6, 1.4) {$\scriptstyle y$};
    \node (dotsleft) at (-1,-0.8) {$\cdots$};
    \node (dotsright) at (0.2,-0.8) {$\cdots$};
\end{pgfonlayer}
\begin{pgfonlayer}{edgelayer}
    \draw [arrow, markat=0.4] (x1) to (x1anch.center);
    \draw [arrow, markat=0.4] (xi) to (xianch.center);
    \draw [arrow, markat=0.4] (xn) to (xnanch.center);
    \draw [arrow] (ybend.center) to (y);
    \draw [arrow, bend right = 90, looseness=3] (yanch.center) to (ybend.center);
\end{pgfonlayer}
\end{tikzpicture}
\end{aligned}
\\ & \quad = \quad
\begin{aligned}
\begin{tikzpicture}
\begin{pgfonlayer}{nodelayer}
    \node[style=none] (ybend) at (2.6, 0) {};
    \node[style=none] (yanch) at (1.5, 0) {};
    \node[style=none] (xnanch) at (0.8, 0) {$\bullet$};
    \node[style=none] (x1anch) at (-1.6, 0) {$\bullet$};
    \node[style=none] (xianch) at (-0.4,0) {};
    \node (xi) at (-0.4, -1.4) {$\scriptstyle x_i$};
    \node (xn) at (0.8, -1.4) {$\scriptstyle x_n$};
    \node (x1) at (-1.6, -1.4) {$\scriptstyle x_1$};
    \node[style=none] (y) at (2.6, 1.4) {$\scriptstyle y$};
    \node (dotsleft) at (-1,-0.8) {$\cdots$};
    \node (dotsright) at (0.2,-0.8) {$\cdots$};
    \node (dotsleftmid) at (-1,0) {$\cdots$};
    \node (dotsrightmid) at (0.2,-0) {$\cdots$};
\end{pgfonlayer}
\begin{pgfonlayer}{edgelayer}
    \draw [arrow, markat=0.4] (x1) to (x1anch.center);
    \draw [arrow, markat=0.4] (xi) to (xianch.center);
    \draw [arrow, markat=0.4] (xn) to (xnanch.center);
    \draw [arrow] (ybend.center) to (y);
    \draw [arrow, bend right = 90, looseness=3] (yanch.center) to (ybend.center);
    \draw [arrow, bend left = 90, looseness =2] (xianch.center) to (yanch.center);
\end{pgfonlayer}
\end{tikzpicture}
\end{aligned}
 \quad  = \quad
\begin{aligned}
\begin{tikzpicture}
\begin{pgfonlayer}{nodelayer}
    \node[style=none] (xnanch) at (0.8, 0) {$\bullet$};
    \node[style=none] (x1anch) at (-1.6, 0) {$\bullet$};
    \node[style=none] (xianch) at (-0.4,0) {};
    \node (xi) at (-0.4, -1.4) {$\scriptstyle x_i$};
    \node (xn) at (0.8, -1.4) {$\scriptstyle x_n$};
    \node (x1) at (-1.6, -1.4) {$\scriptstyle x_1$};
    \node[style=none] (y) at (-0.4, 1.4) {$\scriptstyle y$};
    \node (dotsleft) at (-1,-0.8) {$\cdots$};
    \node (dotsright) at (0.2,-0.8) {$\cdots$};
    \node (dotsleftmid) at (-1,0) {$\cdots$};
    \node (dotsrightmid) at (0.2,-0) {$\cdots$};
\end{pgfonlayer}
\begin{pgfonlayer}{edgelayer}
    \draw [arrow, markat=0.4] (x1) to (x1anch.center);
    \draw [arrow, markat=0.4] (xi) to (xianch.center);
    \draw [arrow, markat=0.4] (xn) to (xnanch.center);
    \draw [arrow] (xianch.center) to (y);
\end{pgfonlayer}
\end{tikzpicture}
\end{aligned}\;.
\end{align*}
Thus, $\[(x_1, \ldots, x_n) \in \X_1 \times \cdots \times \X_n \suchthat x_i\]= 
\top_{\X_1} \times \cdots \times \top_{\X_{i-1}} \times I_{\X_i} \times \top_{\X_{i+1}} \times \cdots \times \top_{\X_n}$. This is the projection function $P_i\: \X_1 \times \cdots \times \X_n \to \X_i$, which is dual to the canonical inclusion unital normal $*$-homomorphism $P_i^\star\: \ell^\infty(\X_i) \hookrightarrow \ell^\infty(\X_1) \stensor \cdots \stensor \ell^\infty(\X_n)$ \cite{Kornell}*{sec.~10}\cite{KornellLindenhoviusMislove2}*{App.~B}.

\begin{lemma}\label{computation.F.2}
Let $\Y_1, \ldots, \Y_m$ be quantum sets, and let $R$ be a relation of arity $(\Y_1, \ldots, \Y_m)$. For each index $i \in \{1, \ldots, m\}$, let $t_i$ be a term of sort $\Y_i$, whose distinct variables $x_{i,1}, \ldots, x_{i,n_i}$ are of sorts $\X_{1,i}, \ldots \X_{i,n_i}$, respectively. For each index $i \in \{1, \ldots, m\}$, let $\[t_i\]$ be an abbreviation for $\[(x_{i,1}, \ldots, x_{i,n_i}) \in \X_{i,1}\times \cdots \times \X_{i,n_i} \suchthat t_i\]$. Similarly, let $\[R(t_1, \ldots, t_m)\]$ be an abbreviation for $$\[
(x_{1,1}, \ldots, x_{1,n_1}, \ldots, x_{m,1}, \ldots, x_{m,n_m}) \in \X_{1,1}\times \cdots \times \X_{1,n_1} \times \cdots \times \X_{m,1} \times \cdots \times \X_{m,n_m}
\suchthat R(t_1, \ldots, t_m)\].$$ If $R(t_1, \ldots, t_m)$ is nonduplicating, then
$$\[R(t_1, \ldots, t_m)\] = R \circ (\[t_1\] \times \cdots \times \[t_m\]).$$
\end{lemma}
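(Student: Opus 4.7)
The plan is to peel off one layer of the abbreviation from Definition \ref{definition.G.2} and rewrite the result using the graphical calculus of subsection \ref{computation.A}, invoking Theorem \ref{computation.D.2} to eliminate the existential diagonal quantifiers and Definition \ref{computation.F.1} to recognize each $\[t_i\]$ in the final diagram. First I dispense with the case in which $R(t_1, \ldots, t_m)$ is primitive: each $t_i$ is then a variable, $n_i = 1$ for every $i$, and Definition \ref{definition.C.2} yields $\[R(t_1, \ldots, t_m)\] = R$ directly; the computation following Definition \ref{computation.F.1} gives $\[t_i\] = I_{\Y_i}$, so the right-hand side also reduces to $R$.

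In the general case, Definition \ref{definition.G.2} expands $R(t_1, \ldots, t_m)$ to
$$(\exists(y_m \feq y_{m*}) \fin \Y_m \ftimes \Y_m^*) \cdots (\exists(y_1 \feq y_{1*}) \fin \Y_1 \ftimes \Y_1^*)\, \bigl(R(y_1, \ldots, y_m) \AND t_1 \evaluatesto y_{1*} \AND \cdots \AND t_m \evaluatesto y_{m*}\bigr)$$
for fresh $y_i$ and $y_{i*}$. The nonduplicating hypothesis forces the free variables of the $m+1$ conjuncts inside the parentheses to be pairwise disjoint. Combining Proposition \ref{definition.C.3}, which raises each conjunct's interpretation to the full context as a Cartesian product with $\top$-predicates, with the observation that $(P \times \top) \AND (\top \times Q) = P \times Q$ (immediate from Definition \ref{definition.B.2}), I identify the interpretation of the bracketed formula with the single Cartesian product
$$R \times \[t_1 \evaluatesto y_{1*}\] \times \cdots \times \[t_m \evaluatesto y_{m*}\]$$
up to the reordering of the context supplied by Proposition \ref{computation.A.1}. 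I then apply Theorem \ref{computation.D.2} a total of $m$ times, once for each existential diagonal quantifier. The $i$-th application postcomposes with $E_{\Y_i}^\dagger$ at the $(y_i, y_{i*})$ slot, which graphically bends the $y_i$-wire of $R$ into the $y_{i*}$-wire of $\[t_i \evaluatesto y_{i*}\]$. By Definition \ref{computation.F.1}, this bend is exactly how $\[t_i\]$ is built from $\[t_i \evaluatesto y_{i*}\]$, so the resulting diagram depicts $R \circ (\[t_1\] \times \cdots \times \[t_m\])$.

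The main obstacle I anticipate is bookkeeping: tracking the permutations produced by Propositions \ref{definition.C.3} and \ref{computation.A.1} so that the wires connected by each $E_{\Y_i}^\dagger$ are the intended pair, and so that the outer context is rearranged into the canonical grouping $(x_{1,1}, \ldots, x_{1,n_1}, \ldots, x_{m,1}, \ldots, x_{m,n_m})$ specified in the statement. The graphical calculus absorbs these permutations as wire crossings, so once the diagram is drawn the equality becomes apparent at a glance; the formal verification amounts to checking that each wire crossing on the left aligns with its counterpart in the composite $R \circ (\[t_1\] \times \cdots \times \[t_m\])$.
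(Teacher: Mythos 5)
Your proposal is correct and follows essentially the same route as the paper's proof: unpack the abbreviation of Definition \ref{definition.G.2}, identify the interpretation of the conjunction as a Cartesian product of $R$ with the $\[t_i \evaluatesto y_{i*}\]$ (up to permutation of the context), eliminate the diagonal existential quantifiers via Theorem \ref{computation.D.2} so that each bend $E_{\Y_i}^\dagger$ turns $\[t_i \evaluatesto y_{i*}\]$ into $\[t_i\]$ per Definition \ref{computation.F.1}, and read off $R \circ (\[t_1\] \times \cdots \times \[t_m\])$ from the resulting diagram. Your separate treatment of the primitive case (all $t_i$ variables, so $\[t_i\] = I_{\Y_i}$) is a small extra care the paper leaves implicit, but it does not change the substance of the argument.
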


\begin{proof}
Assume that $R(t_1, \ldots, t_m)$ is nonduplicating. We calculate that
\begin{align*}
& \[R(t_1, \ldots, t_m)\]
\\ & =
\[ (\exists (y_m \feq y_{m*}) \fin \Y \ftimes \Y^*)\cdots (\exists (y_1 \feq y_{1*}) \fin \Y \ftimes \Y^*)\,
\\ & \ghost \hspace{44ex}(R(y_1, \ldots, y_m) \AND t_1\evaluatesto y_{1*} \AND \cdots \AND t_m \evaluatesto y_{m*}) \]
\\ & = \quad
\begin{aligned}
\begin{tikzpicture}
\begin{pgfonlayer}{nodelayer}
    \node[style=box] (t1) at (0,0) {$\ghost \[t_1\evaluatesto y_{1*}\] \ghost$};
    \node[style=none] (x11) at (-0.8,-1.3) {};
    \node[style=none] (x11!) at (-0.8, 0) {};
    \node[style=none] (x1n) at (0.3,-1.3) {};
    \node[style=none] (x1n!) at (0.3, 0) {};
    \node (dots1) at (-0.2,-0.85) {$\cdots$};
    \node[style=none] (y1star!) at (0.8, -0.2) {};
    \node (dots) at (1.8,0) {$\cdots$};
    \node[style=box] (tm) at (3.6,0) {$\ghost \[t_m\evaluatesto y_{m*}\] \ghost$};
    \node[style=none] (xm1) at (2.8,-1.3) {};
    \node[style=none] (xm1!) at (2.8, 0) {};
    \node[style=none] (xmn) at (3.9,-1.3) {};
    \node[style=none] (xmn!) at (3.9, 0) {};
    \node (dotsm) at (3.4,-0.85) {$\cdots$};
    \node[style=none] (ymstar!) at (4.4, -0.2) {};
    \node[style=box] (R) at (7,0) {$\[R(y_1, \ldots,y_m)\]$};
    \node[style=none] (y1!) at (6,-0.2) {};
    \node[style=none] (ym!) at (8,-0.2) {};
    \node (dotsR) at (7,-0.5) {$\cdots$};
\end{pgfonlayer}
\begin{pgfonlayer}{edgelayer}
    \draw [arrow, markat=0.4] (x11) to (x11!);
    \draw [arrow, markat=0.4] (x1n) to (x1n!);
    \draw [arrow, markat=0.4] (xm1) to (xm1!);
    \draw [arrow, markat=0.4] (xmn) to (xmn!);
    \draw [arrow, bend right=90, looseness=0.35, markat = 0.3] (y1star!.center) to (y1!.center);
    \draw [arrow, bend right=90, looseness=0.55, markat = 0.4] (ymstar!.center) to (ym!.center);
\end{pgfonlayer}
\end{tikzpicture}
\end{aligned}
\\ & = \quad
\begin{aligned}
\begin{tikzpicture}
\begin{pgfonlayer}{nodelayer}
    \node[style=box] (R) at (0,0) {$\[R(y_1, \ldots,y_m)\]$};
    \node[style=none] (t1!) at (-1,-0.2) {};
    \node[style=none] (tm!) at (1,-0.2) {};
    \node (dotsR) at (0,-1) {$\cdots$};
    \node[style=box] (t1) at (-1, -1) {$\[t_1\]$};
    \node[style=none] (x11!) at (-1.3,-1) {};
    \node[style=none] (x1n!) at (-0.7,-1) {};
    \node[style=none] (x11) at (-1.3,-2) {};
    \node[style=none] (x1n) at (-0.7,-2) {};
    \node (dots1) at (-1, -1.6) {$\scriptstyle \cdots$};
    \node[style=box] (tm) at (1, -1) {$\[t_m\]$};
    \node[style=none] (xmn!) at (1.3,-1) {};
    \node[style=none] (xm1!) at (0.7,-1) {};
    \node[style=none] (xmn) at (1.3,-2) {};
    \node[style=none] (xm1) at (0.7,-2) {};
    \node (dots2) at (1, -1.6) {$\scriptstyle \cdots$};
\end{pgfonlayer}
\begin{pgfonlayer}{edgelayer}
    \draw [arrow] (t1) to (t1!);
    \draw [arrow] (tm) to (tm!);
    \draw [arrow,markat=0.4] (x11) to (x11!);
    \draw [arrow,markat=0.4] (x1n) to (x1n!);
    \draw [arrow,markat=0.4] (xm1) to (xm1!);
    \draw [arrow,markat=0.4] (xmn) to (xmn!);
\end{pgfonlayer}
\end{tikzpicture}
\end{aligned}
\quad = \;
R \circ (\[t_1\] \times \cdots \times \[t_m\])\;.\qedhere
\end{align*}
\end{proof}

\begin{lemma}\label{computation.F.3}
Let $\Y_1, \ldots, \Y_m$ and $\Z$ be quantum sets, and let $F$ be a function from $\Y_1 \times \cdots \times \Y_m$ to $\Z$. For each index $i \in \{1, \ldots, m\}$, let $t_i$ be a term of sort $\Y_i$, whose distinct variables $x_{i,1}, \ldots, x_{i,n_i}$ are of sorts $\X_{1,i}, \ldots \X_{i,n_i}$, respectively. For each index $i \in \{1, \ldots, m\}$, let $\[t_i\]$ be an abbreviation for $\[(x_{i,1}, \ldots, x_{i,n_i}) \in \X_{i,1}\times \cdots \times \X_{i,n_i} \suchthat t_i\]$. Similarly, let $\[F(t_1, \ldots, t_m)\]$ be an abbreviation for $$\[
(x_{1,1}, \ldots, x_{1,n_1}, \ldots, x_{m,1}, \ldots, x_{m,n_m}) \in \X_{1,1}\times \cdots \times \X_{1,n_1} \times \cdots \times \X_{m,1} \times \cdots \times \X_{m,n_m}
\suchthat F(t_1, \ldots, t_m)\].$$ If $F(t_1, \ldots, t_m)$ is nonduplicating, then
$$\[F(t_1, \ldots, t_m)\] = F \circ (\[t_1\] \times \cdots \times \[t_m\]).$$
\end{lemma}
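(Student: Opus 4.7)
The plan is to unfold the definitions until the right-hand side falls out of bifunctoriality and the snake-equation characterization of $F$ as $\breve{\bend F}$. Concretely, since $F(t_1,\ldots,t_m)$ is a term of sort $\Z$, Definition \ref{computation.F.1} rewrites the left-hand side as
$$\[F(t_1,\ldots,t_m)\] \;=\; (\[F(t_1,\ldots,t_m)\evaluatesto z_*\] \times I_\Z)\circ (I_\text{context} \times E_\Z^*),$$
where $z_*$ is a fresh variable of sort $\Z^*$ and the ``context'' is the tuple of variables of $t_1,\ldots,t_m$. Because $F=\breve{\bend F}$, Definition \ref{definition.G.2} unfolds $F(t_1,\ldots,t_m)\evaluatesto z_*$ as the nonduplicating atomic formula $\bend F(t_1,\ldots,t_m,z_*)$.

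Next I would apply Lemma \ref{computation.F.2} to this atomic formula, treating $z_*$ as a term in the one-variable context $(z_*)$, for which the computation at the end of subsection \ref{computation.F} gives $\[z_*\]=I_{\Z^*}$. Since $\bend F(t_1,\ldots,t_m,z_*)$ is nonduplicating (the $t_i$ share no variables by hypothesis, and $z_*$ is fresh), Lemma \ref{computation.F.2} yields
$$\[\bend F(t_1,\ldots,t_m,z_*)\] \;=\; \bend F \circ (\[t_1\]\times\cdots\times\[t_m\]\times I_{\Z^*}).$$
Substituting this into the previous display, writing $T := \[t_1\]\times\cdots\times\[t_m\]$, and using bifunctoriality of $\times$, we get
$$\[F(t_1,\ldots,t_m)\] \;=\; (\bend F \times I_\Z)\circ (T\times I_{\Z^*}\times I_\Z)\circ (I_\text{context}\times E_\Z^*) \;=\; (\bend F\times I_\Z)\circ (T\times E_\Z^*).$$

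The final step is to peel $T$ off using the interchange law: write $T\times E_\Z^* = (I_{\Y_1\times\cdots\times\Y_m}\times E_\Z^*)\circ (T\times I_{\mathbf 1})$, whence
$$(\bend F\times I_\Z)\circ(T\times E_\Z^*) \;=\; \bigl((\bend F\times I_\Z)\circ(I_{\Y_1\times\cdots\times\Y_m}\times E_\Z^*)\bigr)\circ T \;=\; \breve{\bend F}\circ T \;=\; F\circ T,$$
where the middle equality is the defining formula $\breve G=(G\times I_\Y)\circ(I_\X\times E_\Y^*)$ from subsection \ref{definition.F} applied to $G=\bend F$, which in turn equals $F$ by Theorem \ref{computation.E.2}. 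This yields exactly $F\circ(\[t_1\]\times\cdots\times\[t_m\])$.

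The only real subtlety is step 4: ensuring the bifunctoriality reshuffling between the $\Z^*$ wire produced by $E_\Z^*$ and the $\Z^*$ slot into which $\bend F$ is fed is carried out on the correct tensor factors, and that the resulting composite really is the snake identity that trivializes $\breve{\bend F}$ back to $F$. Diagrammatically, this is the standard straightening of a ``U'' into a straight wire, so once the setup is correctly written out, no new ideas are required beyond Lemma \ref{computation.F.2} and Theorem \ref{computation.E.2}.
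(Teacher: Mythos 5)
Your proposal is correct and follows essentially the same route as the paper's proof: unfold $\[F(t_1,\ldots,t_m)\]$ via Definition \ref{computation.F.1}, rewrite $F(t_1,\ldots,t_m)\evaluatesto z_*$ as $\bend F(t_1,\ldots,t_m,z_*)$, apply Lemma \ref{computation.F.2} with $\[z_*\]=I_{\Z^*}$, and then use the interchange law and the snake identity $F=\breve{\bend F}$ (Theorem \ref{computation.E.2}) to collapse the composite to $F\circ(\[t_1\]\times\cdots\times\[t_m\])$. No gaps.
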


\begin{proof}
Assume that $F(t_1, \ldots, t_m)$ is nonduplicating. Let $\X=\X_{1,1}\times \cdots \times \X_{1,n_1} \times \cdots \times \X_{m,1} \times \cdots \times \X_{m,n_m}$, and let $\Y = \Y_1 \times \cdots \times\Y_m$. We apply Lemma \ref{computation.F.2} to calculate that
\begin{align*}
\[F(t_1, \ldots, t_m)\]
& =
(\[F(t_1, \ldots, t_m)\evaluatesto z_*\] \times I_\Z) \circ (I_\X \times E_\Z^*)
\\ &=
(\[\bend F(t_1, \ldots, t_m, z_*)\] \times I_\Z) \circ (I_\X \times E_\Z^*)
\\ & =
((\bend F \circ  (\[t_1\] \times \cdots \times \[t_m\] \times \[z_*\])) \times I_\Z)\circ (I_\X \times E_\Z^*)
\\ & =
((\bend F \circ  (\[t_1\] \times \cdots \times \[t_m\] \times I_{\Z^*})) \times I_\Z)\circ (I_\X \times E_\Z^*)
\\ & =
(\bend F \times I_\Z) \circ (I_\Y \times E_\Z^*) \circ (\[t_1\] \times \cdots \times \[t_m\])
\\ & =
F \circ (\[t_1\] \times \cdots \times \[t_m\]). \qedhere
\end{align*}
\end{proof}

We may conjugate a term $t$ by conjugating each function symbol and variable that appears in that term. Formally, if $t$ is of the form $F(t_1, \ldots, t_m)$, then we define $t_*$ to be $F_*(t_{1*}, \ldots, t_{m*})$, and if $t$ is a variable $x$, then we define $t_*$ to be $x_*$, the conjugate variable. If the term $t$ has variables among $x_1, \ldots, x_n$, of sorts quantum sets $\X_1, \ldots \X_n$, respectively, then we may apply Lemma \ref{computation.F.3} to show that $$\[(x_{1*}, \ldots, x_{n*}) \in \X_1^* \times \cdots \times \X_n^* \suchthat t_*\] = \[(x_{1}, \ldots, x_{n}) \in \X_1 \times \cdots \times \X_n \suchthat t\]_*.$$

\begin{proposition}\label{computation.F.4}
Let $\X_1, \ldots, \X_n$ and $\Y$ be quantum sets. Let $x_1, \ldots, x_n$ be distinct variables of sorts $\X_1, \ldots, \X_n$, respectively, and let $x_{1*}, \ldots, x_{n*}$ be distinct variables of sorts $\X_{1*}, \ldots, \X_{n*}$, respectively. Let $s$ and $t$ be terms of sort $\Y$ whose free variables are among $x_1, \ldots, x_n$. Then, $\[(x_1, \ldots, x_n) \in \X_1 \times \cdots \times \X_n \suchthat s \] = \[(x_1, \ldots, x_n) \in \X_1 \times \cdots \times \X_n \suchthat t \]$ if and only if
\begin{equation}\label{computation.F.4.eq}\tag{$\dagger$}\[(\forall (x_n \feq x_{n*})\fin \X_n \ftimes \X_n^*)\cdots (\forall (x_1 \feq x_{1*}) \fin \X_1 \ftimes \X_1^*)\,E_\Y(s, t_*)\] = \top.\end{equation}
\end{proposition}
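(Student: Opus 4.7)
The plan is to unfold the nested diagonal quantifiers, apply Lemma~\ref{computation.F.2} to the inner atomic formula $E_\Y(s, t_*)$, and reduce the result to an elementary statement about functions by means of a standard identity in the dagger compact category of quantum sets and binary relations.

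First, by iterated application of Proposition~\ref{computation.D.1}, peeling off one $(\forall (x_i \feq x_{i*}))$-quantifier at a time and using Lemma~\ref{computation.C.1} for the final closure to $\top$, the hypothesis (\ref{computation.F.4.eq}) is equivalent to the single inequality
$$
E_{\X_1} \times \cdots \times E_{\X_n} \;\leq\; \[(x_1, \ldots, x_n, x_{1*}, \ldots, x_{n*}) \in \X_1 \times \cdots \times \X_n \times \X_1^* \times \cdots \times \X_n^* \suchthat E_\Y(s, t_*)\]
$$
after a routine reordering of tensor factors handled by Propositions~\ref{definition.C.3} and~\ref{computation.A.1}. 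Writing $\X := \X_1 \times \cdots \times \X_n$, the atomwise formulas of Definitions~\ref{definition.B.2} and~\ref{definition.D.4} identify $E_{\X_1} \times \cdots \times E_{\X_n}$ with $E_\X$ under this same reordering.

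Next, because the variables of $s$ are $x_1, \ldots, x_n$ while those of $t_*$ are $x_{1*}, \ldots, x_{n*}$, the atomic formula $E_\Y(s, t_*)$ is nonduplicating, so by Lemma~\ref{computation.F.2} and the identity $\[(x_{1*}, \ldots, x_{n*}) \suchthat t_*\] = \[(x_1, \ldots, x_n) \suchthat t\]_*$ recorded after Lemma~\ref{computation.F.3}, its interpretation factors as $E_\Y \circ (\[s\] \times \[t\]_*)$. Setting $F_1 := \[s\]$ and $F_2 := \[t\]$, both of which are functions $\X \to \Y$ (by iterated application of Lemma~\ref{computation.F.3} and the computation of $\[x_i\]$ given in subsection~\ref{computation.F}), the standard dagger-compact snake identity obtained by pulling $F_{2*}$ around the cap $E_\Y$ yields
$$
E_\Y \circ (F_1 \times F_{2*}) \;=\; E_\X \circ ((F_2^\dagger \circ F_1) \times I_{\X^*}) \;=\; \bend{F_2^\dagger F_1}.
$$
Since $E_\X = \bend{I_\X}$ and the bending map $R \mapsto \bend R$ is an order isomorphism from $\Rel(\X; \X)$ onto relations of arity $(\X, \X^*)$, the combined inequality is equivalent to $I_\X \leq F_2^\dagger \circ F_1$.

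Finally, for functions $F_1, F_2 \: \X \to \Y$, I would verify that $I_\X \leq F_2^\dagger F_1$ if and only if $F_1 = F_2$. The ``only if'' direction is the defining inequality $F_1^\dagger F_1 \geq I_\X$ of a function (\cite{Kornell}*{Def.~4.1}). Conversely, from $I_\X \leq F_2^\dagger F_1$ and $F_2 F_2^\dagger \leq I_\Y$ one obtains $F_2 \leq F_2 F_2^\dagger F_1 \leq F_1$; applying $\vphantom{x}^\dagger$ to the hypothesis yields $I_\X \leq F_1^\dagger F_2$, which symmetrically gives $F_1 \leq F_2$, so $F_1 = F_2$. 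The main obstacle is the tensor-factor bookkeeping required in the first paragraph; the remainder follows cleanly from the graphical calculus.
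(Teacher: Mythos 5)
Your proof is correct and follows essentially the same route as the paper's: reduce equation~(\ref{computation.F.4.eq}) to the inequality $E_\X \leq \[E_\Y(s,t_*)\]$ for $\X = \X_1 \times \cdots \times \X_n$, factor the right-hand side as $E_\Y \circ (\[s\] \times \[t\]_*)$ via Lemma~\ref{computation.F.2} and the conjugate-term identity, straighten wires to obtain $I_\X \leq \[t\]^\dagger \circ \[s\]$, and conclude $\[s\] = \[t\]$ by the same order-theoretic argument with the defining inequalities of a function. The only, immaterial, difference is in how the diagonal quantifier prefix is unwound: you iterate the supremum characterization of Proposition~\ref{computation.D.1}, whereas the paper dualizes to the existential form and applies Theorem~\ref{computation.D.2} together with an orthogonality argument.
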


\begin{proof}
By the duality of diagonal quantifiers (Definition \ref{definition.E.1}), equation (\ref{computation.F.4.eq}) holds if and only if $
\[(\exists (x_n \feq x_{n*})\fin \X_n \ftimes \X_n^*)\cdots (\exists (x_1 \feq x_{1*}) \fin \X_1 \ftimes \X_1^*)\, \neg E_\Y(s, t_*)\] = \bot$ or equivalently
$$
\begin{aligned}
      \begin{tikzpicture}[scale=1]
	\begin{pgfonlayer}{nodelayer}
		\node [style=box] (notR) at (0,0) {$\[\neg E_\Y(s, t_*)\]$};
		\node [style=none] (Fdots) at (-.6,-.45) {$\cdots$};
		\node [style=none] (F1anchor) at (-1,-.2) {};
    	\node [style=none] (F1start) at (-1,-.6){};
		\node [style=none] (Fnanchor) at (-.2,-.2) {};
		\node [style=none] (Fnstart) at (-.2,-.6){};
		\node [style=none] (Gdots) at (.6,-.45) {$\cdots$};
		\node [style=none] (G1anchor) at (1,-.2) {};
    	\node [style=none] (G1start) at (1,-.6){};
		\node [style=none] (Gnanchor) at (.2,-.2) {};
		\node [style=none] (Gnstart) at (.2,-.6){};
	\end{pgfonlayer}
	\begin{pgfonlayer}{edgelayer}
	    \draw [arrow] (F1start.center) to (F1anchor);
	    \draw [arrow] (Fnstart.center) to (Fnanchor);
	    \draw [arrow,markat=0.8] (G1anchor) to (G1start.center);
	    \draw [arrow,markat=0.8] (Gnanchor) to (Gnstart.center);
	    \draw [arrow, bend left=90, looseness=1.5] (Gnstart.center) to (F1start.center);
	    \draw [arrow, bend left=90, looseness=1.5] (G1start.center) to (Fnstart.center);
	\end{pgfonlayer}
      \end{tikzpicture}
\end{aligned}
\quad = \quad
\bot
\; .
$$
We recognize the diagram on the left as depicting $\[\neg E_\Y(s, t_*)\] \circ E_\X^\dagger$, for $\X = \X_1 \times \cdots \times \X_n$. Thus, equation (\ref{computation.F.4.eq}) holds if and only if $(\neg \[E_\Y(s, t_*)\] ) \circ E_\X^\dagger = \bot$ or equivalently $E_\X \perp \neg \[E_\Y(s, t_*)\]$ or equivalently $E_\X \leq \[E_\Y(s, t_*)\]$. By Lemma \ref{computation.F.2}, we have that $\[E_\Y(s, t_*)\] = E_\Y \circ (\[s\] \times \[t\]_*)$, and therefore, equation (\ref{computation.F.4.eq}) is equivalent to the inequality
$$
\begin{aligned}
      \begin{tikzpicture}[scale=1]
	\begin{pgfonlayer}{nodelayer}
		\node [style=none] (F1anchor) at (-1,-.2) {};
    	\node [style=none] (F1start) at (-1,-1.2){$\scriptstyle \X$};
		\node [style=none] (Gnanchor) at (.2,-.2) {};
		\node [style=none] (Gnstart) at (.2,-1.2){$\scriptstyle \X$};
	\end{pgfonlayer}
	\begin{pgfonlayer}{edgelayer}
	    \draw [arrow] (F1start) to (F1anchor.center);
	    \draw [arrow,markat=0.65] (Gnanchor.center) to (Gnstart);
	    \draw [arrow, bend left=90, looseness=1.85] (F1anchor.center) to (Gnanchor.center);
	\end{pgfonlayer}
      \end{tikzpicture}
\end{aligned}
\quad \leq \quad
\begin{aligned}
      \begin{tikzpicture}[scale=1]
	\begin{pgfonlayer}{nodelayer}
		\node [style=box] (F) at (-.6,0) {$\[s\]_{\phantom{*}}$};
		\node [style=box] (G) at (.6,0) {$\[t\]_*$};
		\node [style=none] (F1anchor) at (-.6,-.2) {};
    	\node [style=none] (F1start) at (-.6,-.8){$\scriptstyle \X$};
		\node [style=none] (Gnanchor) at (.6,-.2) {};
		\node [style=none] (Gnstart) at (.6,-.8){$\scriptstyle \X$};
		\node [style=none] (Ftop) at (-.6,.2){};
		\node [style=none] (Gtop) at (.6,.2){};
	\end{pgfonlayer}
	\begin{pgfonlayer}{edgelayer}
	    \draw [arrow] (F1start) to (F1anchor);
	    \draw [arrow,markat=0.75] (Gnanchor) to (Gnstart);
	    \draw [arrow, bend left=90, looseness=1.85] (Ftop) to (Gtop);
	\end{pgfonlayer}
      \end{tikzpicture}
\end{aligned}\;.
$$
Straightening the wires, we conclude that equation (\ref{computation.F.4.eq}) is equivalent to $I_\X\leq \[t\]^\dagger \circ \[s\]$.

It is a basic fact about functions between quantum sets that $I_\X \leq \[t\]^\dagger \circ \[s\]$ if and only if $\[s\] = \[t\]$. Indeed, $I_\X \leq \[t\]^\dagger \circ \[s\]$ implies that $\[t\] \leq \[t\] \circ \[t\]^\dagger \circ \[s\] \leq \[s\]$, and similarly, it implies that $\[s\]^\dagger \leq \[t\]^\dagger \circ \[s\] \circ \[s\]^\dagger \leq \[t\]^\dagger$, so $\[s\] \leq \[t\]$. Altogether, $I_\X \leq \[t\]^\dagger \circ \[s\]$ implies that $\[s\] =\[t\]$. Conversely, if $\[s\] = \[t\]$, then $I_\X \leq \[t\]^\dagger \circ \[s\]$, by the definition of a function between quantum sets. Thus, we conclude that equation (\ref{computation.F.4.eq}) is equivalent to $\[s\] = \[t\]$.
\end{proof}

\section{examples}\label{examples}
The definition of equality as a relation of mixed arity can be motivated on conceptual grounds, as in subsection \ref{introduction.A}, but the most compelling justification for this definition is that it provides a link in a mathematical connection between the quantum logic of Birkhoff and von Neumann and several established classes of discrete quantum structures. Definition \ref{core} does not in itself provide an unambiguous quantization method because formulas that are equivalent in the classical setting need not also be equivalent in the quantum setting. Indeed, the remainder of section \ref{definition} may be viewed as providing some heuristic guidelines for this approach to quantization. Nevertheless, organic examples of axiomatic quantization are further evidence of the mechanisms underlying the coherence the noncommutative dictionary.

Before considering the examples, we briefly recall a useful property of binary relations between quantum sets \cite{KornellLindenhoviusMislove2}*{App.~C}. For each quantum set $\X$, the set of binary relations on $\X$ has a trace that is defined by
$$
\Tr_\X(R)
\; := \quad
\begin{aligned}
      \begin{tikzpicture}[scale=1]
	\begin{pgfonlayer}{nodelayer}
		\node [style=box] (R) at (0,0) {$R$};
		\node [style=none] (top) at (0,0.2) {};
		\node [style=none] (bot) at (0,-0.2) {};
		\node [style=none] (topright) at (0.7,0.2) {};
		\node [style=none] (botright) at (0.7,-0.2) {};
	\end{pgfonlayer}
	\begin{pgfonlayer}{edgelayer}
        \draw [arrow, bend left=90, looseness=2.25] (top.center) to (topright.center);
        \draw [arrow, bend left=90, looseness=2.25] (botright.center) to (bot.center);
        \draw [arrow] (topright.center) to (botright.center);
	\end{pgfonlayer}
      \end{tikzpicture}
\end{aligned}\;.
$$
Two binary relations $R$ and $S$ from a quantum set $\X$ to a quantum set $\Y$ are orthogonal to each other \cite{Kornell}*{Def.~3.8(5)} if and only if $\Tr_\X(S^\dagger \circ R) = \bot$ \cite{KornellLindenhoviusMislove2}*{Prop.~C.2}. We may express this equivalence graphically:
$$ R \perp S
\EV
\begin{aligned}
      \begin{tikzpicture}[scale=1]
	\begin{pgfonlayer}{nodelayer}
		\node [style=box] (R) at (0,0) {$R$};
		\node [style=box] (Sdag) at (0,1) {$S^\dagger$};
		\node [style=none] (top) at (0,1.2) {};
		\node [style=none] (bot) at (0,-0.2) {};
		\node [style=none] (topright) at (0.7,1.2) {};
		\node [style=none] (botright) at (0.7,-0.2) {};
	\end{pgfonlayer}
	\begin{pgfonlayer}{edgelayer}
        \draw [arrow, bend left=90, looseness=2.25] (top.center) to (topright.center);
        \draw [arrow, bend left=90, looseness=2.25] (botright.center) to (bot.center);
        \draw [arrow] (topright.center) to (botright.center);
        \draw [arrow] (R.center) to (Sdag.center);
	\end{pgfonlayer}
      \end{tikzpicture}
\end{aligned}
\; = \bot
\EV
\begin{aligned}
      \begin{tikzpicture}[scale=1]
	\begin{pgfonlayer}{nodelayer}
		\node [style=box] (R) at (0,0) {$R$};
		\node [style=none] (top0) at (0,0.2) {};
		\node [style=none] (bot0) at (0,-0.2) {};
		\node [style=box] (conS) at (1.0,0) {$S_*$};
		\node [style=none] (top2) at (1.0,0.2) {};
		\node [style=none] (bot2) at (1.0,-0.2) {};
	\end{pgfonlayer}
	\begin{pgfonlayer}{edgelayer}
        \draw [arrow, bend left=90, looseness=2] (top0.center) to  (top2.center);
        \draw [arrow, bend left=90, looseness=2] (bot2.center) to  (bot0.center);
	\end{pgfonlayer}
      \end{tikzpicture}
\end{aligned}
\; = \; \bot\;.
$$

\subsection{Quantum graphs}\label{examples.A} Quantum graphs are a quantum generalization of simple graphs. Quantum graphs were first defined in the context of zero-error communication to be operator systems on a finite-dimensional Hilbert space \cite{DuanSeveriniWinter}*{sec.~II}. More generally, a quantum graph structure on an arbitrary von Neumann algebra $M \subsetof L(H)$ is an ultraweakly closed operator system $V$ such that $m'v \in V$ and $vm' \in V$ for all $m' \in M'$ and all $v \in V$ \cite{Weaver2}*{Def.~2.6(d)}\cite{Weaver3}. By definition, an operator system contains the scalar operators, and this feature reflects the convention that each vertex of a simple graph is adjacent to itself, which is natural to the context of zero-error communication. Hence, in subsection \ref{examples.A}, a simple graph is a set $A$ that is equipped with a binary relation $\sim_A$ that is both reflexive and symmetric.

\begin{proposition}\label{examples.A.1} Let $\X$ be a quantum set, and let $R$ be a binary relation on $\X$. Then, $I_\X \leq R$ if and only if
$ \[(\forall (x \feq x_*) \fin \X \ftimes \X^*)\, \bend R(x, x_*) \] = \top.$
\end{proposition}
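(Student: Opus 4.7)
The plan is to reduce the asserted identity to an inequality between relations via Proposition \ref{computation.D.1}, and then transfer that inequality across the bending bijection $R \mapsto \bend R$ to land on the desired inequality $I_\X \leq R$.

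First, I would apply Proposition \ref{computation.D.1} in the degenerate case where no free variables remain after the diagonal quantifier. Taking $\phi(x, x_*) := \bend R(x, x_*)$ and $n = 0$, the proposition identifies $\[(\forall (x \feq x_*) \fin \X \ftimes \X^*)\, \bend R(x, x_*)\]$ with the supremum of those predicates $S$ on $\mathbf 1$ satisfying $E_\X \times S \leq \[\bend R(x, x_*)\]$. Since $\Rel()$ contains only $\bot$ and $\top_\mathbf 1$, this supremum equals $\top$ precisely when $\top_\mathbf 1$ lies in the set, that is, when $E_\X \leq \bend R$ (after absorbing the unit factor via the canonical isomorphism $\X \times \X^* \times \mathbf 1 \iso \X \times \X^*$). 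Note here that $\[\bend R(x, x_*)\] = \bend R$ directly from Definition \ref{core}.

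Second, I would observe that $\bend{I_\X} = E_\X \circ (I_\X \times I_{\X^*}) = E_\X$, and that the map $R \mapsto \bend R$, from binary relations on $\X$ to relations of arity $(\X, \X^*)$, is an order isomorphism. Monotonicity in the forward direction is immediate since both taking the monoidal product with $I_{\X^*}$ and post-composing with $E_\X$ preserve the order on hom-sets; monotonicity of the inverse (which just bends the wire back using the snake identities) is equally clear from the graphical calculus. Therefore $I_\X \leq R$ if and only if $E_\X = \bend{I_\X} \leq \bend R$, which combined with the first step yields the equivalence claimed.

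The only real subtlety is the bookkeeping in the first step—namely, confirming that Proposition \ref{computation.D.1} specializes cleanly to $n = 0$ and that the tensor factor $\top_\mathbf 1$ may be suppressed—so this is the step I would write out most carefully. The second step is essentially a graphical tautology, since bending and unbending a wire are inverse order isomorphisms in any dagger compact category.
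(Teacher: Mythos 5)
Your proof is correct, but it takes a genuinely different route from the paper's. The paper dualizes the universal diagonal quantifier to an existential one, applies Theorem \ref{computation.D.2} to render $\[(\exists (x\feq x_*)\fin\X\ftimes\X^*)\,\NOT\bend R(x,x_*)\]$ as the trace $\Tr_\X(\NOT R)$, and then invokes the orthogonality criterion $\Tr_\X(S^\dagger\circ R)=\bot\Leftrightarrow R\perp S$ (recalled at the start of section \ref{examples}) to conclude $\NOT R\perp I_\X$, i.e.\ $I_\X\leq R$. You instead stay with the universal quantifier: Proposition \ref{computation.D.1}, specialized to the empty remaining context, together with the fact that the relations of arity $()$ form the two-element lattice $\{\bot,\top\}$, reduces the statement to $E_\X\leq\bend R$, which you then transfer across the order isomorphism $R\mapsto\bend R$ using $\bend I_\X=E_\X$. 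Both arguments are short and sound; yours buys independence from Theorem \ref{computation.D.2} and the trace machinery, remaining purely order-theoretic, at the modest cost of the $n=0$ bookkeeping in Proposition \ref{computation.D.1} (suppressing the unit factor $\top_{\mathbf 1}$, which the paper's conventions indeed permit) and the easy verification that bending and unbending are inverse monotone bijections. A slight streamlining of your first step: since the diagonal quantifier unfolds by definition to $(\forall x_*\fin\X^*)\,(\forall x\fin\X)\,(E_\X(x,x_*)\IMPLIES\bend R(x,x_*))$, Proposition \ref{computation.C.2} yields the equivalence with $E_\X\leq\bend R$ directly, with no case analysis over the two-element lattice.
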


\begin{proof}
\begin{align*}
 & \[(\forall (x \feq x_*) \fin \X \ftimes \X^*)\, \bend R(x, x_*) \] = \top
\quad \Longleftrightarrow \quad
\[(\exists (x \feq x_*) \fin \X \ftimes \X^*)\, \NOT \bend R(x, x_*) \] = \bot
\\ & \quad \Longleftrightarrow \quad
\begin{aligned}
      \begin{tikzpicture}[scale=1]
	\begin{pgfonlayer}{nodelayer}
		\node [style=box] (notR) at (0,0) {$\NOT R$};
		\node [style=none] (top) at (0,0.2) {};
		\node [style=none] (bot) at (0,-0.2) {};
		\node [style=none] (topright) at (0.7,0.2) {};
		\node [style=none] (botright) at (0.7,-0.2) {};
	\end{pgfonlayer}
	\begin{pgfonlayer}{edgelayer}
        \draw [arrow, bend left=90, looseness=2.25] (top.center) to (topright.center);
        \draw [arrow, bend left=90, looseness=2.25] (botright.center) to (bot.center);
        \draw [arrow] (topright.center) to (botright.center);
	\end{pgfonlayer}
      \end{tikzpicture}
\end{aligned}
= \bot
\quad \Longleftrightarrow \quad
\NOT R \perp I_\X
\quad \Longleftrightarrow \quad
I_\X \leq R \; .\qedhere
\end{align*}
\end{proof}

\begin{lemma}\label{examples.A.2}
Let $\X$ and $\Y$ be quantum sets. Let $R$ be a binary relation from $\X$ to $\Y$, and let $S = R^\dagger$. Then, $\[(y, x_*) \in \Y \times \X^* \suchthat \bend R_*(x_*, y)\] =  \bend S$.
\end{lemma}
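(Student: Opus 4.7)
The plan is to prove the equality in two steps: first, unpack the left-hand side via Definition \ref{definition.C.2} and Proposition \ref{computation.A.1} to identify it with a concrete morphism in $\cat{qRel}$; second, verify via the graphical calculus that this morphism equals $\bend S$.

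For the first step, observe that $\bend R_*(x_*, y)$ is a primitive atomic formula whose arguments $(x_*, y)$ are listed in the order matching the arity $(\X^*, \Y)$ of $\bend R_*$, whereas the context $(y, x_*) \in \Y \times \X^*$ permutes them by the transposition $\pi$ of $\{1,2\}$. Applying Definition \ref{definition.C.2} to this primitive atomic formula, together with Proposition \ref{computation.A.1}, yields
\begin{equation*}
\[(y, x_*) \in \Y \times \X^* \suchthat \bend R_*(x_*, y)\] \;=\; \pi_\#(\bend R_*) \;=\; \bend R_* \circ U_\pi,
\end{equation*}
where $U_\pi \colon \Y \times \X^* \to \X^* \times \Y$ is the canonical symmetric monoidal swap. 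The right-hand side $\bend S = E_\X \circ (R^\dagger \times I_{\X^*})$ is likewise a morphism $\Y \times \X^* \to \mathbf{1}$ in $\cat{qRel}$.

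For the second step, I would depict both morphisms as wire diagrams and verify their equality via the graphical calculus of subsection \ref{computation.A}. Unpacking $\bend R_* = (\bend R)_*$ and using the dagger compact identity $R_* = (R^\dagger)^*$---so that the box $R_*$ is, pictorially, a $180^\circ$ rotation of $R^\dagger$---the diagram for $\bend R_* \circ U_\pi$ features a rotated copy of $R^\dagger$ linked to a cup through the swap. Applying the snake identities of the dagger compact structure together with the self-duality of the counit $E_\Y$ (i.e., that $(E_\Y)_*$ coincides, up to the canonical identification $\Y^{**} \cong \Y$ and a swap, with the cup on $\Y$) straightens this into precisely the picture of $\bend{R^\dagger} = \bend S$.

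The main obstacle is the careful bookkeeping of how conjugation interacts with the cups, caps, and swaps of $\cat{qRel}$; once this is in place, the equality reduces to a short diagrammatic chase. A direct matrix-element verification provides a safety net: for each pair of atoms $X \in \At(\X)$ and $Y \in \At(\Y)$, both sides compute to the same subspace of $L(Y \otimes X^*, \CC)$, namely the image of $R(X, Y) \leq L(X, Y)$ under the canonical bending isomorphism $L(X, Y) \cong L(Y \otimes X^*, \CC)$.
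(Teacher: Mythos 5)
Your proposal is correct and follows essentially the same route as the paper: the paper's proof likewise reads the permuted context as precomposition with the symmetry (the weave of wires below the box, i.e.\ $\pi_\#(\bend R_*) = \bend R_* \circ U_\pi$) and then slides the box $R_*$ around the cup in the graphical calculus, so that the $180^\circ$ rotation turns $R_* = (R^\dagger)^*$ into $R^\dagger = S$, yielding $\bend S$. Your atomwise check of the components is a harmless supplement, not a different method.
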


\begin{proof}
\begin{align*}
&
\[(y, x_*) \in \Y \times \X^* \suchthat \bend R_*(x_*, y)\]
\; = \;
\begin{aligned}
      \begin{tikzpicture}[scale=1]
	\begin{pgfonlayer}{nodelayer}
		\node [style=box] (conR) at (0,0) {$R_*$};
		\node [style=none] (top) at (0,0.2) {};
		\node [style=none] (bot) at (0,-0.2) {};
		\node [style=none] (topright) at (0.7,0.2) {};
		\node [style=none] (botright) at (0.7,-0.2) {};
		\node (x1) at (0,-1.2) {$\scriptstyle y$};
		\node (x2) at (0.7,-1.2) {$\scriptstyle x$};
	\end{pgfonlayer}
	\begin{pgfonlayer}{edgelayer}
        \draw [arrow, bend right=90, looseness=2.25] (topright.center) to (top.center)  ;
        \draw [arrow] (botright.center) to (topright.center);
        \draw [arrow, out =270, in = 90, markat = 0.8] (bot.center) to (x2);
        \draw [arrow, out =90, in = 270, markat = 0.3]  (x1) to (botright.center);
	\end{pgfonlayer}
      \end{tikzpicture}
\end{aligned}
\; = \;
\begin{aligned}
      \begin{tikzpicture}[scale=1]
	\begin{pgfonlayer}{nodelayer}
		\node [style=box] (conR) at (0.7,0) {$R_*$};
		\node [style=none] (top) at (0,0.2) {};
		\node [style=none] (bot) at (0,-0.2) {};
		\node [style=none] (topright) at (0.7,0.2) {};
		\node [style=none] (botright) at (0.7,-0.2) {};
		\node (x1) at (0,-1.2) {$\scriptstyle y$};
		\node (x2) at (0.7,-1.2) {$\scriptstyle x$};
	\end{pgfonlayer}
	\begin{pgfonlayer}{edgelayer}
        \draw [arrow, bend left=90, looseness=2.25] (top.center) to (topright.center);
        \draw [arrow] (bot.center) to (top.center);
        \draw [arrow, out =270, in = 90, markat = 0.8] (botright.center) to (x2);
        \draw [arrow, out =90, in = 270, markat = 0.3]  (x1) to (bot.center);
	\end{pgfonlayer}
      \end{tikzpicture}
\end{aligned}
\; = \;
\begin{aligned}
      \begin{tikzpicture}[scale=1]
	\begin{pgfonlayer}{nodelayer}
		\node [style=box] (conR) at (0,0) {$R^\dagger$};
		\node [style=none] (top) at (0,0.2) {};
		\node [style=none] (bot) at (0,-0.2) {};
		\node [style=none] (topright) at (0.7,0.2) {};
		\node [style=none] (botright) at (0.7,-0.2) {};
		\node (x1) at (0,-1.2) {$\scriptstyle y$};
		\node (x2) at (0.7,-1.2) {$\scriptstyle x$};
	\end{pgfonlayer}
	\begin{pgfonlayer}{edgelayer}
        \draw [arrow, bend left=90, looseness=2.25] (top.center) to (topright.center);
        \draw [arrow] (topright.center) to (botright.center);
        \draw [arrow, out =270, in = 90, markat = 0.8] (botright.center) to (x2);
        \draw [arrow, out =90, in = 270, markat = 0.3]  (x1) to (bot.center);
	\end{pgfonlayer}
      \end{tikzpicture}
\end{aligned}
\; = \;
\bend{S} \; . \qedhere
\end{align*}
\end{proof}

\begin{lemma}\label{examples.A.3}
Let $\X$ and $\Y$ be quantum sets. Let $R$ and $S$ be binary relations from $\X$ to $\Y$. Then, $R \leq S$ if and only if 
\begin{equation*}\tag{$\ddag$}\label{examples.A.3.eq}\[(\forall (x \feq x_*) \fin \X \ftimes \X^*)\,(\forall (y \feq y_*) \fin \Y \ftimes \Y^*)\,( \bend R (x, y_*) \IMPLIES \bend S_*(x_*, y))\] = \top.
\end{equation*}
\end{lemma}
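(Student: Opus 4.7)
The plan is to reduce equation (\ref{examples.A.3.eq}) to a single orthomodular-lattice inequality and then identify that inequality with $R\le S$ by a graphical computation.

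First, I would apply Proposition \ref{computation.D.1} once per diagonal universal quantifier, in the spirit of Lemma \ref{computation.C.1}: each application extracts an equality projection on the left and collapses the supremum, so equation (\ref{examples.A.3.eq}) becomes
$$ E_\X \times E_\Y \;\leq\; \[\bend R(x,y_*) \IMPLIES \bend S_*(x_*,y)\] $$
in $\Rel(\X,\X^*,\Y,\Y^*)$. Using the Sasaki-arrow adjunction $A \le (P \IMPLIES Q)\iff A\sascon P \le Q$ cited in the proof of Proposition \ref{computation.D.1}, and invoking Proposition \ref{definition.C.3} to identify $\[\bend R(x,y_*)\]$ with $\bend R \times \top_{\X^*}\times \top_\Y$ and $\[\bend S_*(x_*,y)\]$ with $\bend S_*\times \top_\X \times \top_{\Y^*}$ (up to the canonical permutation of factors), the displayed inequality reduces to
$$ (E_\X \times E_\Y)\sascon(\bend R \times \top_{\X^*}\times \top_\Y) \;\leq\; \bend S_*\times \top_\X \times \top_{\Y^*}. $$

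The heart of the argument is evaluating this Sasaki projection atomwise. On each atom $X\otimes X^*\otimes Y\otimes Y^*$, the equality projection $E_\X \times E_\Y$ is the one-dimensional subspace $\CC(\counit_X\otimes \counit_Y)$. For a rank-one projection $e=\CC\cdot v$ and any projection $P$ in a subspace lattice, $e\sascon P$ is the rank-one projection onto the image of $v$ under orthogonal projection to $P$ (or zero if $v\perp P$). Thus the displayed inequality simplifies atomwise to the geometric condition that the orthogonal projection of $\counit_X\otimes \counit_Y$ onto $\bend R(X\otimes Y^*)\otimes L(X^*,\CC)\otimes L(Y,\CC)$ lies in the corresponding rectangular subspace built from $\bend S_*$. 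Applying Lemma \ref{examples.A.2} to identify the permuted $\bend S_*$ with $\bend{S^\dagger}$ and using the dagger compact structure (so that $E_\X$ and $E_\Y$ straighten to identity wires after absorbing the rank-one projections), this geometric condition unwinds to $\bend R(X\otimes Y^*)\leq \bend S(X\otimes Y^*)$ for every pair of atoms, which is $\bend R\leq \bend S$ in $\Rel(\X,\Y^*)$, i.e.\ $R\leq S$ by the bending bijection of Lemma \ref{computation.E.1}.

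The main obstacle is the atomwise geometric computation in the third step: carefully handling the $\top$ factors in the tensor and the orthogonal-projection formula for the Sasaki projection. A likely cleaner alternative, modelled closely on the proof of Proposition \ref{computation.F.4}, would be to first apply De Morgan (Definition \ref{definition.D.1}) to turn both diagonal universals into diagonal existentials, then invoke Theorem \ref{computation.D.2} twice to land on a diagrammatic orthogonality condition of the trace type reviewed at the start of section \ref{examples}. This route avoids the Sasaki projection altogether and reduces the problem to a direct graphical verification that $\bend R \perp \neg \bend S$ iff $R\leq S$; I would expect this presentation to be the cleanest.
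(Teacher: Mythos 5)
Your ``cleaner alternative'' is in fact exactly the paper's proof: dualize both diagonal universals, use that $\bend R(x,y_*)$ and $\bend S_*(x_*,y)$ share no variables (so their interpretations commute and the negated Sasaki implication is interpreted as $\[\bend R(x,y_*)\] \AND \NOT\[\bend S_*(x_*,y)\]$), apply Theorem \ref{computation.D.2} twice to cap the four wires, and recognize the resulting closed diagram as the trace-orthogonality criterion from the start of section \ref{examples}, giving $R \perp \NOT S$, i.e.\ $R \le S$. So on that route you and the paper coincide.

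Your primary route is genuinely different and does work, but as written its decisive step is asserted rather than proved, and you correctly flag it as the obstacle. Two gaps would need filling. First, the reduction of (\ref{examples.A.3.eq}) to $E_\X \times E_\Y \le \[\bend R(x,y_*) \IMPLIES \bend S_*(x_*,y)\]$ is a nested-diagonal analogue of Lemma \ref{computation.C.1}; it requires (as in that lemma's induction) that the set of relations $T$ with $E \times T$ below a fixed relation is closed under suprema, which is true but cited nowhere, so it must be argued. Second, the atomwise Sasaki computation: on the atom $X \otimes X^* \otimes Y \otimes Y^*$ the functional $\counit_X \otimes \counit_Y$ is maximally entangled across the leg pairs $(1,2)$ and $(3,4)$, whereas $\[\bend R(x,y_*)\]$ constrains legs $(1,4)$ and $\[\bend S_*(x_*,y)\]$ constrains legs $(2,3)$; after projecting, the condition becomes a statement about the $(2,3)$-marginal support of the projected vector, and one must identify that support with the image of $R(X,Y)$ under the canonical antilinear map $X^* \otimes Y \to X \otimes Y^*$, which is precisely how $S(X,Y)$ sits inside $\[\bend S_*(x_*,y)\]$ via the conjugate. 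Carrying this out does yield $R(X,Y) \le S(X,Y)$ atom by atom, so your argument closes, but this coordinate computation is exactly what the paper's route avoids: the trace/orthogonality argument needs no Sasaki projection and no atomwise analysis, at the mild cost of the preliminary observation that commuting interpretations let one replace $\NOT(\phi \IMPLIES \psi)$ by $\phi \AND \NOT\psi$. (One small imprecision in your wording: the final orthogonality comes out as $R \perp \NOT S$; this is equivalent to $\bend R \perp \NOT\bend S$ because bending is an orthoisomorphism, but it is the former that the trace criterion delivers directly.)
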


\begin{proof}
Taking advantage of the fact that the formulas $\bend R (x, y_*)$ and $\bend S_*(x_*, y)$ have no variables in common, we find that equation (\ref{examples.A.3.eq}) is equivalent to each of the following conditions:
\begin{align*}
& \[(\exists (x \feq x_{*}) \fin \X \ftimes \X^*)\,(\exists (y \feq y_*) \fin \Y \ftimes \Y^*)\, \NOT (\bend R (x, y_*) \IMPLIES \bend S_*(x_*, y))\] = \bot
\\ & \quad \Longleftrightarrow \quad
\begin{aligned}
      \begin{tikzpicture}[scale=1]
	\begin{pgfonlayer}{nodelayer}
		\node [style=box] (R) at (0,0) {$R$};
		\node [style=none] (top0) at (0,0.2) {};
		\node [style=none] (bot0) at (0,-0.2) {};
		\node [style=none] (top1) at (0.7,0.2) {};
		\node [style=none] (bot1) at (0.7,-0.2) {};
		\node [style=box] (notconS) at (1.4,0) {$\NOT S_*$};
		\node [style=none] (top2) at (1.4,0.2) {};
		\node [style=none] (bot2) at (1.4,-0.2) {};
		\node [style=none] (top3) at (2.1,0.2) {};
		\node [style=none] (bot3) at (2.1,-0.2) {};
	\end{pgfonlayer}
	\begin{pgfonlayer}{edgelayer}
        \draw [arrow, bend left=90, looseness=2.25] (top0.center) to  (top1.center);
        \draw [arrow] (top1.center) to (bot1.center);
        \draw [arrow, bend right=90, looseness=2.25] (top3.center) to (top2.center);
        \draw [arrow] (bot3.center) to (top3.center);
        \draw [arrow, bend right= 90, looseness=1.5]  (bot1.center) to (bot3.center);
        \draw [arrow, bend left =90, looseness=1.5] (bot2.center) to (bot0.center);
	\end{pgfonlayer}
      \end{tikzpicture}
\end{aligned}
\; = \; \bot
\quad \Longleftrightarrow \quad
\begin{aligned}
      \begin{tikzpicture}[scale=1]
	\begin{pgfonlayer}{nodelayer}
		\node [style=box] (R) at (0,0) {$R$};
		\node [style=none] (top0) at (0,0.2) {};
		\node [style=none] (bot0) at (0,-0.2) {};
		\node [style=none] (top1) at (0.7,0.2) {};
		\node [style=none] (bot1) at (0.7,-0.2) {};
		\node [style=box] (notconS) at (1.4,0) {$\NOT S_*$};
		\node [style=none] (top2) at (1.4,0.2) {};
		\node [style=none] (bot2) at (1.4,-0.2) {};
		\node [style=none] (top3) at (2.1,0.2) {};
		\node [style=none] (bot3) at (2.1,-0.2) {};
	\end{pgfonlayer}
	\begin{pgfonlayer}{edgelayer}
        \draw [arrow, bend left=90, looseness=1.5] (top0.center) to  (top2.center);
        \draw [arrow, bend left=90, looseness=1.5] (bot2.center) to  (bot0.center);
	\end{pgfonlayer}
      \end{tikzpicture}
\end{aligned}
\; = \; \bot
\quad \Longleftrightarrow \quad
R \perp \NOT S
\\ & \quad \Longleftrightarrow \quad
R \leq S\qedhere
\end{align*}
\end{proof}

\begin{proposition}\label{examples.A.4}
Let $\X$ be a quantum set, and let $R$ be a binary relation on $\X$. Then, the following are equivalent:
\begin{enumerate}
\item $R \leq R^\dagger$,
\item $\[(\forall x_1 \in \X)\, (\forall x_{2*} \in \X^*)\, (\bend R(x_1, x_{2*}) \IMPLIES \bend R_*(x_{2*}, x_1))\] = \top$,
\item $\[(\forall (x_1 \feq x_{1*}) \fin \X \ftimes \X^*)\,(\forall (x_2 \feq x_{2*}) \fin \X \ftimes \X^*)\, (\bend R (x_1, x_{2*}) \IMPLIES \bend R(x_2, x_{1*}))\] = \top. $
\end{enumerate}
\end{proposition}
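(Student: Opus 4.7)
The strategy is to reduce both (2) and (3) to the plain inequality $R \leq R^\dagger$, leaning on Lemma \ref{examples.A.2}, which identifies $\[(y, x_*) \in \Y \times \X^* \suchthat \bend R_*(x_*, y)\]$ with $\bend{R^\dagger}$, and on the order-preserving bijection $R \mapsto \bend R$ between binary relations on $\X$ and relations of arity $(\X, \X^*)$.

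For (1) $\Longleftrightarrow$ (2), I first apply Proposition \ref{computation.C.2} to rewrite (2) as the inequality $\[(x_1, x_{2*}) \in \X \times \X^* \suchthat \bend R(x_1, x_{2*})\] \leq \[(x_1, x_{2*}) \in \X \times \X^* \suchthat \bend R_*(x_{2*}, x_1)\]$. The left side is just $\bend R$, while Lemma \ref{examples.A.2} (with $\Y = \X$ and the renaming $y = x_1$, $x_* = x_{2*}$) identifies the right side with $\bend{R^\dagger}$. Hence (2) is equivalent to $\bend R \leq \bend{R^\dagger}$, which in turn is equivalent to (1) by the bijectivity and monotonicity of bending.

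For (1) $\Longleftrightarrow$ (3), I apply Lemma \ref{examples.A.3} with $S = R^\dagger$ and $\Y = \X$, obtaining that (1) is equivalent to $\[(\forall (x \feq x_*) \fin \X \ftimes \X^*)\,(\forall(y \feq y_*) \fin \X \ftimes \X^*)\,(\bend R(x, y_*) \IMPLIES \bend{R^\dagger}_*(x_*, y))\] = \top$. After renaming $x \mapsto x_1$, $x_* \mapsto x_{1*}$, $y \mapsto x_2$, $y_* \mapsto x_{2*}$, this differs from (3) only in that its consequent is $\bend{R^\dagger}_*(x_{1*}, x_2)$ rather than $\bend R(x_2, x_{1*})$. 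To close the gap, I apply Lemma \ref{examples.A.2} once more, with $R$ replaced by $R^\dagger$ and hence $S = (R^\dagger)^\dagger = R$, yielding $\[(x_2, x_{1*}) \in \X \times \X^* \suchthat \bend{R^\dagger}_*(x_{1*}, x_2)\] = \bend R = \[(x_2, x_{1*}) \in \X \times \X^* \suchthat \bend R(x_2, x_{1*})\]$. Since the two consequents therefore have the same interpretation in the ambient context of the outer quantifiers, the two sentences coincide, and (3) $\Longleftrightarrow$ (1).

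The substantive graphical work has already been performed in Lemmas \ref{examples.A.2} and \ref{examples.A.3}, so the only real hurdle here is the careful bookkeeping of variable orders, together with implicit use of Proposition \ref{definition.C.3} to pad the two-variable interpretations of the consequents with $\top$-factors corresponding to the unused free variables $x_1$ and $x_{2*}$.
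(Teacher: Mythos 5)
Your proof is correct and follows essentially the same route as the paper: the paper likewise sets $S = R^\dagger$, applies Lemma \ref{examples.A.2} twice to identify $\[\bend R_*(x_{2*},x_1)\]$ with $\bend{S}$ and $\[\bend S_*(x_{1*},x_2)\]$ with $\bend R$, and then concludes (1)$\Leftrightarrow$(2) via Proposition \ref{computation.C.2} and (1)$\Leftrightarrow$(3) via Lemma \ref{examples.A.3} together with the equality of the interpretations of the two consequents. Your extra remarks on variable renaming and padding by $\top$-factors are just the bookkeeping the paper leaves implicit.
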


\begin{proof}
Let $S = R^\dagger$. Applying Lemma \ref{examples.A.2} twice, we find that $\[(x_1, x_{2*}) \in \X \times \X^* \suchthat \bend R_*(x_{2*}, x_1)\] =\bend S$ and $\[(x_2, x_{1^*}) \in \X \times \X^* \suchthat \bend S_*(x_{1*}, x_2)\] = \bend R$. Combining the former equality with Proposition \ref{computation.C.2}, we conclude that condition (2) is equivalent to condition (1). Combining the latter equality with Lemma \ref{examples.A.3}, we conclude that condition (3) is equivalent to condition (1). Indeed, we have that $\[(x_2, x_{1^*}) \in \X \times \X^* \suchthat \bend S_*(x_{1*}, x_2)\] = \[(x_2, x_{1^*}) \in \X \times \X^* \suchthat \bend R(x_2, x_{1*})\]$.
\end{proof}

For each atom $X$ of a quantum set $\X$, let $\inc^{\phantom \dagger}_X \in L(X, \bigoplus \At(\X))$ be the inclusion isometry of $X$ into the $\ell^2$-direct sum of the atoms of $\X$.

\begin{theorem}\label{examples.A.5}
Let $\X$ be a quantum set. Then, there is a one-to-one correspondence between quantum graph structures $V$ on $\ell^\infty(\X)$ in the sense of \cite{Weaver2}*{Def.~2.6(d)} and binary relations $R$ on $\X$ such that
\begin{enumerate}
\item $\[(\forall (x \feq x_*) \fin \X \ftimes \X^*)\, \bend R(x, x_*) \] = \top$;
\item $\[(\forall (x_1 \feq x_{1*}) \fin \X \ftimes \X^*)\,(\forall (x_2 \feq x_{2*}) \fin \X \ftimes \X^*)\, (\bend R (x_1, x_{2*}) \IMPLIES \bend R(x_2, x_{1*}))\] = \top$.
\end{enumerate}
The correspondence is given by $R(X_1, X_2) =  \inc_{X_2}^\dagger \cdot V \cdot \inc_{X_1}^{\phantom \dagger}$, for $X_1, X_2 \in \At(\X)$.
\end{theorem}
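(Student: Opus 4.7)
The plan is to combine Weaver's correspondence \cite{Weaver2}*{Prop.~2.23} between binary relations on $\X$ and ultraweakly closed $\ell^\infty(\X)'$-bimodules in $L(H)$ with the two characterization propositions established earlier in this subsection. Write $M = \ell^\infty(\X)$ and $H = \bigoplus \At(\X)$. Because $M = \bigoplus_{X \in \At(\X)} L(X)$ acts block-diagonally on $H$, its commutant is $M' = \bigoplus_{X \in \At(\X)} \CC \cdot p_X$, where $p_X := \inc_X \inc_X^\dagger$ is the projection onto $X$. Hence an ultraweakly closed $M'$-bimodule $V \subseteq L(H)$ decomposes as $V = \bigoplus_{X_1, X_2 \in \At(\X)} p_{X_2} V p_{X_1}$, and the blocks $p_{X_2} V p_{X_1}$ correspond bijectively to subspaces $R(X_1, X_2) \leq L(X_1, X_2)$ via the formula in the statement.

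A quantum graph structure on $M$ in the sense of \cite{Weaver2}*{Def.~2.6(d)} is, by definition, an ultraweakly closed $M'$-bimodule $V$ that is additionally an operator system, i.e., a unital self-adjoint subspace of $L(H)$. It therefore suffices to establish two equivalences: (a) $1_H \in V$ if and only if condition (1) holds; and (b) $V = V^\dagger$ if and only if condition (2) holds.

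For (a), I would argue that since $V$ is ultraweakly closed, $1_H \in V$ is equivalent to $p_X \in V$ for every $X \in \At(\X)$. If $1_H \in V$, then $p_X = p_X \cdot 1_H \cdot p_X \in V$ by the bimodule property; conversely, $1_H = \sum_X p_X$ in the ultraweak topology, so $p_X \in V$ for all $X$ forces $1_H \in V$ by ultraweak closure. Unwinding the correspondence, $p_X \in V$ is equivalent to $\mathrm{id}_X \in R(X,X)$, and this holding for every $X$ is precisely the inequality $I_\X \leq R$. Proposition \ref{examples.A.1} then identifies $I_\X \leq R$ with condition (1).

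For (b), taking adjoints block-by-block shows that $V = V^\dagger$ iff $R(X_1,X_2)^\dagger = R(X_2, X_1)$ for all $X_1, X_2 \in \At(\X)$, which is exactly $R = R^\dagger$. By Proposition \ref{examples.A.4}, condition (2) is equivalent to the weaker-looking $R \leq R^\dagger$; but this inequality is in fact equivalent to equality, since applying $\dagger$ gives $R^\dagger \leq R^{\dagger\dagger} = R$. The main structural content lies entirely in Weaver's bijection, and the mild wrinkle is verifying that reflexivity and self-adjointness of the bimodule really do translate atomwise; with that in hand and with Propositions \ref{examples.A.1} and \ref{examples.A.4} already available, the remainder of the argument is a routine translation.
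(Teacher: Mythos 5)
Your proposal is correct and follows essentially the same route as the paper, which simply cites Propositions \ref{examples.A.1} and \ref{examples.A.4} together with the equivalence between binary relations and quantum relations (Appendix \ref{appendix.E}); your atomwise verifications that unitality of $V$ corresponds to $I_\X \leq R$ and self-adjointness to $R = R^\dagger$ (with $R \leq R^\dagger$ upgraded to equality via the order-preserving involution $\dagger$) are exactly the steps the paper leaves as immediate.
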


\begin{proof}
This is an immediate consequence of Propositions \ref{examples.A.1} and \ref{examples.A.4}, and the equivalence between binary relations and quantum relations described in Appendix \ref{appendix.E}.
\end{proof}

\begin{corollary}\label{examples.A.6}
Furthermore, assume that $\At(\X) = \{H\}$ for some nonzero finite-dimensional Hilbert space $H$. Then, there is a one-to-one correspondence between operator systems $V \subsetof L(H)$ and binary relations $R$ on $\X$ satisfying conditions (1) and (2) in the statement of Theorem \ref{examples.A.5}. The correspondence is given by $R(H,H) = V$.
\end{corollary}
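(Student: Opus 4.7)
The plan is to derive the corollary directly from Theorem \ref{examples.A.5} by verifying that, in the single-atom case, the defining conditions of a quantum graph structure on $\ell^\infty(\X)$ in the sense of \cite{Weaver2}*{Def.~2.6(d)} collapse to the conditions of being an operator system in $L(H)$.

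First I would unwind the data. Since $\At(\X) = \{H\}$, we have $\ell^\infty(\X) = L(H)$, represented on $H$ itself. A quantum graph structure on $\ell^\infty(\X) \subsetof L(H)$ is by definition an ultraweakly closed operator system $V \subsetof L(H)$ such that $m' v \in V$ and $v m' \in V$ for every $v \in V$ and $m' \in L(H)'$. But $L(H)$ acts irreducibly on $H$, so its commutant is $L(H)' = \CC \cdot 1_H$; the bimodule condition thus reduces to closure under scalar multiplication, which holds automatically for any linear subspace. Moreover $L(H)$ is finite dimensional (as $H$ is finite dimensional), so every subspace is ultraweakly closed. Hence, in this case, the quantum graph structures on $\ell^\infty(\X)$ are precisely the operator systems $V \subsetof L(H)$, with no further conditions.

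Next I would unwind the bijection. Theorem \ref{examples.A.5} gives a bijection $V \leftrightarrow R$ between quantum graph structures on $\ell^\infty(\X)$ and binary relations $R$ on $\X$ satisfying (1) and (2), via $R(X_1, X_2) = \inc_{X_2}^\dagger \cdot V \cdot \inc_{X_1}^{\phantom\dagger}$ for $X_1, X_2 \in \At(\X)$. When $\At(\X) = \{H\}$, the $\ell^2$-direct sum $\bigoplus \At(\X)$ is just $H$ itself, and the inclusion isometry $\inc_H$ is the identity operator on $H$. Therefore the formula specializes to $R(H, H) = V$.

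Combining the two observations gives the asserted bijection. The main potential pitfall is the justification that the bimodule condition is vacuous; I would take care to note that this uses both the irreducibility of $L(H)$ on $H$ (giving $L(H)' = \CC \cdot 1_H$) and the observation that any operator system, being a $*$-closed subspace containing the identity, is already closed under multiplication by scalars. Everything else follows formally from Theorem \ref{examples.A.5}, so the proof is essentially a one-line appeal once these observations are in place.
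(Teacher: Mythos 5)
Your proposal is correct and follows essentially the same route as the paper's own proof: identify $\ell^\infty(\X) = L(H)$ with commutant $\CC 1_H$ so that the bimodule condition is vacuous and every operator system is a quantum graph structure, then observe that $\incnag_H$ is the identity so the bijection of Theorem \ref{examples.A.5} specializes to $R(H,H) = V$. Your added remark that finite-dimensionality makes every subspace ultraweakly closed is a worthwhile explicit touch that the paper leaves implicit.
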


\begin{proof}
In the special case $\At(\X) = \{H\}$, we have that $\ell^\infty(\X) = L(H)$ and that $\ell^\infty(\X)' = \CC 1_H$, so every operator system on $H$ is also a quantum graph structure on $L(H)$. The isometry $\incnag_H$ is simply the identity on $H$.
\end{proof}

\subsection{Quantum preordered sets.} A quantum preorder on a von Neumann algebra $M \subseteq L(H)$ is an ultraweakly closed algebra $N \subseteq L(H)$ that contains $M'$ \cite{Weaver2}*{Def.~2.6(b)}. Quantum preorders have not attracted much research interest except as a stepping stone to quantum partial orders \cite{Weaver2}*{Def.~2.6(c)}; this is the role that they play here too.

\begin{lemma}\label{examples.B.1}
Let $\X$, $\Y$ and $\Z$ be quantum sets. Let $R$ be a binary relation from $\X$ to $\Y$, let $S$ be a binary relation from $\Y$ to $\Z$, and let $T$ be the binary relation from $\X$ to $\Z$ defined by $T = S \circ R$. Then,
$ 
\[(x,z_*) \in \X \times \Z^* \suchthat (\exists (y\feq y_*) \in \Y \ftimes \Y^*)\, (\bend R(x,y_*) \AND \bend S(y,z_*))\] = \bend T.
$
\end{lemma}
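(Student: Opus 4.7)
The plan is to unfold the existential diagonal quantifier using Theorem \ref{computation.D.2} and then collapse the resulting wire diagram via the yanking (snake) identity of the dagger compact category of quantum sets and binary relations.

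First I would observe that the body $\bend R(x, y_*) \AND \bend S(y, z_*)$ is a nonduplicating formula in the context $(x, y, y_*, z_*) \in \X \times \Y \times \Y^* \times \Z^*$, because the four variables are distinct and each occurs exactly once. By the primitive atomic clause of Definition \ref{definition.C.2} (applied via the permutation that places the used variables in the correct slots and pads the unused ones with $\top$), together with Proposition \ref{definition.C.3}, the interpretations of $\bend R(x,y_*)$ and $\bend S(y, z_*)$ in this context are obtained from $\bend R$ and $\bend S$ by Cartesian multiplication with $\top$ on the missing factors; their conjunction is then the relation whose wire diagram places $\bend R$ (with $\X$-leg $x$ and $\Y^*$-leg $y_*$) beside $\bend S$ (with $\Y$-leg $y$ and $\Z^*$-leg $z_*$).

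Next I would apply Theorem \ref{computation.D.2} to the existential diagonal quantifier $(\exists (y \feq y_*) \fin \Y \ftimes \Y^*)$, which states that this quantifier corresponds to precomposition with $E_\Y^\dagger$ on the $y$ and $y_*$ wires. Graphically this means that the two free legs $y$ of $\bend S$ and $y_*$ of $\bend R$ get joined by a cup. The resulting diagram has exactly two remaining free legs, $x$ of sort $\X$ and $z_*$ of sort $\Z^*$, so it represents a relation of arity $(\X, \Z^*)$, which matches the asserted context.

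Finally, the snake identity in the dagger compact structure of $\cat{qRel}$ lets us straighten the $y$--$y_*$ zigzag: the composite $E_\Y \circ (R \times I_{\Y^*})$ followed through $E_\Y^\dagger$ and $\bend S$ yanks down to $E_\Z \circ (S \circ R \times I_{\Z^*}) = \bend T$. Equivalently, using $\bend R = E_\Y \circ (R \times I_{\Y^*})$ and $\bend S = E_\Z \circ (S \times I_{\Z^*})$, the diagrammatic expression equals
$$E_\Z \circ (S \times I_{\Z^*}) \circ (R \times I_{\Z^*}) = E_\Z \circ ((S \circ R) \times I_{\Z^*}) = \bend T,$$
which is the desired equality. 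I expect the only real obstacle to be the bookkeeping of variable orderings and the canonical braiding isomorphisms relating the context $(x, y, y_*, z_*)$ to the factor order used by the graphical calculus; once these are aligned, the proof is a one-move application of yanking.
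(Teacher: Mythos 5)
Your proposal is correct and follows essentially the same route as the paper: depict the conjunction as $\bend R$ and $\bend S$ side by side, apply the existential diagonal quantifier as a cup joining the $y$ and $y_*$ wires (Theorem \ref{computation.D.2}), and straighten the resulting zigzag to obtain $E_\Z \circ ((S \circ R) \times I_{\Z^*}) = \bend T$. The paper's proof is exactly this graphical computation, so no gap remains beyond the variable-ordering bookkeeping you already flagged.
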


\begin{proof}
$$
\[(x, y_*, y, z_*) \in \X \times \Y^* \times \Y \times \Z^* \suchthat \bend R(x, y_*) \AND \bend S(y,z_*)\]
\;=\;
\begin{aligned}
      \begin{tikzpicture}[scale=1]
	\begin{pgfonlayer}{nodelayer}
	\node [style=box] (box1) at (-0.7,0) {$R$};
		\node [style=none] (flex1) at (0,0) {};
		\node [style=none] (sink1) at (0,-1) {$\scriptstyle y$};
		\node [style=none] (source1) at (-0.7,-1) {$\scriptstyle x$};
		\node [style=box] (box2) at (0.7,0) {$S$};
		\node [style=none] (flex2) at (1.4,0) {};
		\node [style=none] (sink2) at (1.4,-1) {$\scriptstyle z$};
		\node [style=none] (source2) at (0.7,-1) {$\scriptstyle y$};
	\end{pgfonlayer}
	\begin{pgfonlayer}{edgelayer}
        \draw [arrow, bend left=90, looseness=3] (box1.center) to (flex1.center);
        \draw [arrow, markat=0.8] (flex1.center) to (sink1.north);
        \draw [arrow, markat=0.45] (source1.north) to (box1);
        \draw [arrow, bend left=90, looseness=3] (box2.center) to (flex2.center);
        \draw [arrow, markat=0.8] (flex2.center) to (sink2.north);
        \draw [arrow, markat=0.45] (source2.north) to (box2);
	\end{pgfonlayer}
      \end{tikzpicture}
\end{aligned}\;.
$$

\begin{align*}
\[(x,z_*) \in \X \times \Z^* \suchthat (\exists (y\feq y_*) \fin \Y \ftimes \Y^*)\,&( \bend R(x,y_*) \AND \bend S(y,z_*))\]
\\ &
\;=\;
\begin{aligned}
      \begin{tikzpicture}[scale=1]
	\begin{pgfonlayer}{nodelayer}
	\node [style=box] (box1) at (-0.7,0) {$R$};
		\node [style=none] (flex1) at (0,0) {};
		\node [style=none] (source1) at (-0.7,-1) {$\scriptstyle x$};
		\node [style=box] (box2) at (0.7,0) {$S$};
		\node [style=none] (flex2) at (1.4,0) {};
		\node [style=none] (sink2) at (1.4,-1) {$\scriptstyle z$};
	\end{pgfonlayer}
	\begin{pgfonlayer}{edgelayer}
        \draw [arrow, bend left=90, looseness=3] (box1.center) to (flex1.center);
        \draw [arrow, markat=0.45] (source1.north) to (box1);
        \draw [arrow, bend left=90, looseness=3] (box2.center) to (flex2.center);
        \draw [arrow, markat=0.8] (flex2.center) to (sink2.north);
        \draw [arrow, bend right = 90, looseness=3] (flex1.center) to (box2.center);
	\end{pgfonlayer}
      \end{tikzpicture}
\end{aligned}
\; = \;
\begin{aligned}
      \begin{tikzpicture}[scale=1]
	\begin{pgfonlayer}{nodelayer}
	    \node [style=none] (air) at (0,1.1) {\phantom{air}};
	    \node [style=box] (S) at (0, 0.4)  {$S$};
	    \node [style=box] (R) at (0, -0.4)  {$R$};
        \node [style=none] (source) at (0,-1.1) {$\scriptstyle x$};    
        \node [style=none] (sink) at (1, -1.1) {$\scriptstyle z$};
        \node [style=none] (flex) at (1,0.4) {};
	\end{pgfonlayer}
	\begin{pgfonlayer}{edgelayer}
        \draw [arrow,markat=0.6] (source.north) to (R.south);
        \draw [arrow,markat=0.6] (R.north) to (S.south);
        \draw [arrow, bend left = 90, looseness=2.5] (S.center) to (flex.center);
        \draw [arrow] (flex.center) to (sink.north);
	\end{pgfonlayer}
      \end{tikzpicture}
\end{aligned}
\; = \;
\bend{T} \; . \qedhere
\end{align*}
\end{proof}

\begin{lemma}\label{examples.B.2}
Let $\X$, $\Y$ and $\Z$ be quantum sets. Let $R$ be a binary relation from $\X$ to $\Y$, let $S$ be a binary relation from $\Y$ to $\Z$, and let $T$ be a binary relation from $\X$ to $\Z$. Then, the following are equivalent:
\begin{enumerate}
\item $S \circ R \leq T$;
\item $\[(\forall x \in \X)\, (\forall z_* \in \Z^*)\,((\exists (y\feq y_*) \fin \Y \ftimes \Y^*)\, (\bend R(x,y_*) \AND \bend S(y,z_*)) \IMPLIES \bend T(x,z_*))\] = \top$;
\item $\[(\forall (x\feq x_*) \fin \X \ftimes \X^*)\, (\forall (y \feq y_*) \fin \Y \ftimes \Y^*)\, \forall (z \feq z_*) \fin \Z \ftimes \Z^*.$ \\ \phantom{A} \hfill $((\bend R(x, y_*) \AND \bend S(y, z_*)) \IMPLIES \bend T_*(x_*, z))\] = \top$.
\end{enumerate}
\end{lemma}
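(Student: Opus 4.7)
I establish $(1) \iff (2)$ and $(1) \iff (3)$ separately.

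For $(1) \iff (2)$: Apply Proposition \ref{computation.C.2} to the outer universal quantifiers in $(2)$ to obtain the equivalent relation inequality
$$\[(x, z_*) \in \X \times \Z^* \suchthat (\exists (y \feq y_*) \fin \Y \ftimes \Y^*)\,(\bend R(x, y_*) \AND \bend S(y, z_*))\] \; \leq \; \[(x, z_*) \in \X \times \Z^* \suchthat \bend T(x, z_*)\].$$
Lemma \ref{examples.B.1} identifies the left side as $\bend{S \circ R}$, while the right side is $\bend T$. By the graphical-calculus bijection discussed in subsection \ref{computation.A}, the map $U \mapsto \bend U$ is an order-preserving bijection between binary relations from $\X$ to $\Z$ and relations of arity $(\X, \Z^*)$, so this inequality is equivalent to $S \circ R \leq T$, which is $(1)$.

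For $(1) \iff (3)$: Apply Lemma \ref{examples.A.3} directly to the binary relations $S \circ R$ and $T$ (both from $\X$ to $\Z$), yielding that $(1)$ is equivalent to the sentence
$$\[(\forall (x \feq x_*) \fin \X \ftimes \X^*)\,(\forall (z \feq z_*) \fin \Z \ftimes \Z^*)\,(\bend{S \circ R}(x, z_*) \IMPLIES \bend T_*(x_*, z))\] = \top.$$
Using Lemma \ref{examples.B.1} to rewrite the atomic subformula $\bend{S \circ R}(x, z_*)$ as the existentially quantified formula $(\exists(y \feq y_*) \fin \Y \ftimes \Y^*)(\bend R(x, y_*) \AND \bend S(y, z_*))$ yields an equivalent condition, which differs from $(3)$ only by a prenex move that pulls the existential quantifier out of the antecedent of the implication and converts it into a universal quantifier surrounding the whole implication.

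The main obstacle is justifying this prenex move, because in general quantum predicate logic the Sasaki arrow does not commute with quantification. However, at the sentence level required here---and crucially because the antecedent $\bend R(x, y_*) \AND \bend S(y, z_*)$ is product-separable in the bound pair $(y, y_*)$, touching the $\Y$ and $\Y^*$ wires only through $\bend S$ and $\bend R$ respectively---both forms reduce, via iterated applications of Proposition \ref{computation.D.1}, Theorem \ref{computation.D.2}, Proposition \ref{computation.C.2}, and the Sasaki-arrow adjunction noted just after Proposition \ref{computation.D.1}, to the same diagrammatic orthogonality condition. That orthogonality is in turn the same condition that characterizes $S \circ R \leq T$ in the proof pattern of Lemma \ref{examples.A.3}, so working out this graphical reduction (essentially a composition of bent boxes traced against a bent copy of $\NOT T$) is the technical heart of the proof.
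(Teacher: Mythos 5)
Your first half, $(1)\Leftrightarrow(2)$, is complete and is essentially the paper's own argument: identify the antecedent with $\bend T_0$ for $T_0 = S \circ R$ via Lemma \ref{examples.B.1}, then apply Proposition \ref{computation.C.2}. Your route to $(3)$, through Lemma \ref{examples.A.3} applied to $S \circ R$ and $T$ and then rewriting $\bend T_0(x,z_*)$ by Lemma \ref{examples.B.1}, also uses the same ingredients as the paper. The problem is that your proof stops exactly where the nonroutine work begins: the ``prenex move'' is asserted, not proved. Declaring that both forms ``reduce to the same diagrammatic orthogonality condition'' and that working out this reduction ``is the technical heart of the proof'' is an IOU for the only step of the lemma that does not follow from previously established results, and, as you yourself note, it is exactly the kind of step that fails in general, since the Sasaki arrow does not commute with quantification.

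The missing argument, which is how the paper closes the gap, runs as follows. The formulas $\Phi := \bend R(x,y_*) \AND \bend S(y,z_*)$ and $\Psi := \bend T_*(x_*,z)$ have no variables in common, so in the common context their interpretations are Cartesian products supported on disjoint tensor factors and hence commute; for commuting predicates the Sasaki arrow is the Boolean implication, so $\NOT(\Phi \IMPLIES \Psi)$ has the same interpretation as $\Phi \AND \NOT\Psi$. Writing $(\forall (y \feq y_*) \fin \Y \ftimes \Y^*)$ as $\NOT\,(\exists (y \feq y_*) \fin \Y \ftimes \Y^*)\,\NOT$ and applying Theorem \ref{computation.D.2}, under which the existential diagonal quantifier is composition with $E_\Y^\dagger \times I$ and therefore distributes over the conjunction with the $y$-free conjunct $\NOT\Psi$ (cf.\ Proposition \ref{appendix.G.1}), one obtains $\[(\forall (y \feq y_*) \fin \Y \ftimes \Y^*)\,(\Phi \IMPLIES \Psi)\] = \[(\exists (y \feq y_*) \fin \Y \ftimes \Y^*)\,\Phi \IMPLIES \Psi\]$, again using disjointness of variables to pass back from $\NOT(\,\cdot\, \AND \NOT\Psi)$ to the Sasaki arrow. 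Together with the (diagrammatically evident) fact that diagonal quantifiers over distinct variable pairs commute, which you need in order to move $(\forall (y \feq y_*) \fin \Y \ftimes \Y^*)$ innermost past $(\forall (z \feq z_*) \fin \Z \ftimes \Z^*)$ in condition $(3)$, this licenses your prenex step and completes the proof; without some such argument the second equivalence remains unestablished.
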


\begin{proof}
Let $T_0 = S \circ R$. By Lemma \ref{examples.B.1}, $\[(x,z_*) \in \X \times \Z^* \suchthat (\exists (y\feq y_*) \fin \Y \ftimes \Y^*)\, (\bend R(x,y_*) \AND \bend S(y,z_*))\] = \bend T_0 = \[(x,z_*) \in \X \times \Z^* \suchthat \bend T_0(x,z_*)\]$, so by Proposition \ref{computation.C.2}, condition (2) is equivalent to $T_0 \leq T$, i.e., to condition (1).

No two of the formulas $\bend R(x, y_*)$, $\bend S(y, z_*)$, and $\bend T_*(x_*, z)$ have variables in common, which implies that the formula $\NOT ((\bend R(x, y_*) \AND \bend S(y, z_*)) \IMPLIES \bend T_*(x_*, z))$ has the same interpretation as the formula $\bend R(x, y_*) \AND  \bend S(y, z_*) \AND \NOT \bend T_*(x_*, z)$.
Similarly, the formula $\NOT ((\exists (y \feq y_*) \fin \Y \ftimes \Y^*)\,(\bend R(x, y_*) \AND \bend S(y, z_*)) \IMPLIES \bend T_*(x_*, z))$ has the same interpretation as the formula $(\exists (y \feq y_*) \fin \Y \ftimes \Y^*)\,(\bend R(x, y_*) \AND \bend S(y, z_*)) \AND \NOT \bend T_*(x_*, z)$. It follows by Theorem \ref{computation.D.2} and the duality between the quantifier expressions $(\forall (y \feq y_*) \fin \Y \ftimes \Y^*)$ and $(\exists (y \feq y_*) \fin \Y \ftimes \Y^*)$ that the formula $(\forall (y \feq y_*) \fin \Y \ftimes \Y^*)\, ((\bend R(x, y_*) \AND \bend S(y, z_*)) \IMPLIES \bend T_*(x_*, z))$ has the same interpretation as the formula $(\exists (y \feq y_*) \fin \Y \ftimes \Y^*)\,(\bend R(x, y_*) \AND \bend S(y, z_*)) \IMPLIES \bend T_*(x_*, z)$. We conclude that condition (3) is equivalent to condition (2) by Proposition \ref{examples.A.4}. 
\end{proof}

\begin{theorem}\label{examples.B.3}
Let $\X$ be a quantum set. Then, there is a one-to-one correspondence between quantum preorders $V$ on $\ell^\infty(\X)$ in the sense of \cite{Weaver2}*{Def.~2.6(b)} and binary relations $R$ on $\X$ such that
\begin{enumerate}
\item $\[(\forall (x \feq x_*) \fin \X \ftimes \X^*)\, \bend R(x, x_*) \] = \top$;
\item $\[(\forall (x_1\feq x_{1*}) \fin \X \ftimes \X^*)\, (\forall (x_2 \feq x_{2*}) \fin \X \ftimes \X^*)\, (\forall (x_3 \feq x_{3*}) \fin \X \ftimes \X^*)$ \\ \phantom{A} \hfill $((\bend R(x_1, x_{2*}) \AND \bend R(x_2, x_{3*})) \IMPLIES \bend R_*(x_{1*}, x_3))\] = \top$.
\end{enumerate}
The correspondence is given by $R(X_1, X_2) =  \inc_{X_2}^\dagger \cdot V \cdot \inc_{X_1}^{\phantom \dagger}$, for $X_1, X_2 \in \At(\X)$.
\end{theorem}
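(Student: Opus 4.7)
The plan is to reduce the theorem to the combination of Proposition~\ref{examples.A.1}, Lemma~\ref{examples.B.2}, and the equivalence between binary relations on $\X$ and quantum relations on $\ell^\infty(\X)$ verified in Appendix~\ref{appendix.E}. Write $H = \bigoplus\At(\X)$ and $M = \ell^\infty(\X) \subseteq L(H)$. Under the Appendix~\ref{appendix.E} correspondence, binary relations $R$ on $\X$ are in bijection with ultraweakly closed $M'$-bimodules $V \subseteq L(H)$, and the bijection is explicitly $R(X_1, X_2) = \inc_{X_2}^\dagger \cdot V \cdot \inc_{X_1}^{\phantom\dagger}$ for all $X_1, X_2 \in \At(\X)$, exactly as in the statement of Theorem~\ref{examples.A.5}.

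First I would translate condition (1). Proposition~\ref{examples.A.1} identifies (1) with the inequality $I_\X \leq R$, and under the Appendix~\ref{appendix.E} correspondence this is equivalent to $1_H \in V$. Since $V$ is already an $M'$-bimodule, $1_H \in V$ yields $M' = M' \cdot 1_H \cdot M' \subseteq V$, and conversely $M' \subseteq V$ trivially contains $1_H$. Thus condition (1) is equivalent to $M' \subseteq V$.

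Next I would translate condition (2). Applying Lemma~\ref{examples.B.2} in the special case $\Y = \Z = \X$ and $S = T = R$, I read off that condition (2) of the theorem is precisely clause (3) of the lemma in this specialization, hence equivalent to $R \circ R \leq R$. Composition of binary relations corresponds under the Appendix~\ref{appendix.E} equivalence to ordinary operator multiplication of the associated subspaces of $L(H)$, so this translates further into $V \cdot V \subseteq V$, i.e., $V$ is a subalgebra of $L(H)$.

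Putting the two translations together, conditions (1) and (2) are jointly equivalent to $V$ being an ultraweakly closed subalgebra of $L(H)$ containing $M'$, which is exactly the definition of a quantum preorder on $M$ from \cite{Weaver2}*{Def.~2.6(b)}. The one step I would take care to justify rather than assert is the compatibility of binary-relation composition with operator multiplication under the direction convention used in Appendix~\ref{appendix.E}; this should follow directly from the explicit formula $R(X_1, X_2) = \inc_{X_2}^\dagger \cdot V \cdot \inc_{X_1}^{\phantom\dagger}$ together with the definition of composition in \cite{Kornell}*{sec.~3}, but it is the one bookkeeping point on which the whole reduction hinges.
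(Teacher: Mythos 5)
Your proposal is correct and follows essentially the same route as the paper, whose proof simply cites Proposition \ref{examples.A.1}, Lemma \ref{examples.B.2} (specialized exactly as you do), and the Appendix \ref{appendix.E} equivalence; your extra bookkeeping point about composition matching operator multiplication is already covered by the functoriality statement, Proposition \ref{appendix.E.2}. The only difference is that you spell out the translation ($I_\X \leq R \Leftrightarrow M' \subseteq V$, $R \circ R \leq R \Leftrightarrow V\cdot V \subseteq V$) which the paper leaves implicit.
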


\begin{proof}
This is an immediate consequence of Propositions \ref{examples.A.1} and \ref{examples.B.2} and the equivalence between binary relations and quantum relations described in Appendix \ref{appendix.E}.
\end{proof}

\begin{corollary}\label{examples.B.4}
Let $\X$ be a quantum set. Then, there is a one-to-one correspondence between von Neumann algebras $M \subsetof \ell^\infty(\X)$ and binary relations $R$ on $\X$ such that
\begin{enumerate}
\item $\[(\forall (x \feq x_*) \fin \X \ftimes \X^*)\, \bend R(x, x_*) \] = \top$;
\item $\[(\forall (x_1 \feq x_{1*}) \fin \X \ftimes \X^*)\,(\forall (x_2 \feq x_{2*}) \fin \X \ftimes \X^*)\, (\bend R (x_1, x_{2*}) \IMPLIES \bend R(x_2, x_{1*}))\] = \top$;
\item $\[(\forall (x_1\feq x_{1*}) \fin \X \ftimes \X^*)\, (\forall (x_2 \feq x_{2*}) \fin \X \ftimes \X^*)\, (\forall (x_3 \feq x_{3*}) \fin \X \ftimes \X^*)$ \\ \phantom{A} \hfill $((\bend R(x_1, x_{2*}) \AND \bend R(x_2, x_{3*})) \IMPLIES \bend R_*(x_{1*}, x_3))\] = \top$.
\end{enumerate}
The correspondence is given by $R(X_1, X_2) =  \inc_{X_2}^\dagger \cdot M' \cdot \inc_{X_1}^{\phantom \dagger}$, for $X_1, X_2 \in \At(\X)$.
\end{corollary}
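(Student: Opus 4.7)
The plan is to deduce Corollary \ref{examples.B.4} from Theorem \ref{examples.B.3} by identifying condition (2) as precisely the extra condition that promotes a quantum preorder to (the commutant of) a von Neumann subalgebra of $\ell^\infty(\X)$. First, Theorem \ref{examples.B.3} already supplies a bijection between binary relations $R$ on $\X$ satisfying conditions (1) and (3) and quantum preorders $V$ on $\ell^\infty(\X)$, given by $R(X_1,X_2)= \inc_{X_2}^\dagger \cdot V \cdot \inc_{X_1}^{\phantom\dagger}$. It therefore suffices to verify that, under this bijection, condition (2) corresponds to $V$ being self-adjoint and that self-adjoint quantum preorders on $\ell^\infty(\X)$ are in bijection with von Neumann subalgebras $M\subseteq \ell^\infty(\X)$ via $M \mapsto M'$.

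The first step is to rewrite condition (2) algebraically. By Proposition \ref{examples.A.4}, condition (2) is equivalent to $R \leq R^\dagger$; taking daggers and using $R^{\dagger\dagger}=R$, this is in fact equivalent to $R = R^\dagger$. The second step is to identify what this means for $V$. Using the formula $R(X_1,X_2) = \inc_{X_2}^\dagger \cdot V \cdot \inc_{X_1}^{\phantom\dagger}$ from Theorem \ref{examples.B.3}, we compute
\[
R^\dagger(X_2,X_1)= R(X_1,X_2)^\dagger = \inc_{X_1}^\dagger \cdot V^* \cdot \inc_{X_2}^{\phantom\dagger},
\]
so that $R^\dagger$ corresponds to $V^*$ under the bijection of Appendix \ref{appendix.E}. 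Hence $R = R^\dagger$ if and only if $V = V^*$, i.e., $V$ is closed under the Hermitian adjoint.

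The third step is the von Neumann algebra identification. A quantum preorder $V$ is by definition an ultraweakly closed subalgebra of $L(H)$ (with $H = \bigoplus \At(\X)$) containing $\ell^\infty(\X)' \supseteq \CC 1_H$, so a self-adjoint quantum preorder is an ultraweakly closed unital $*$-subalgebra of $L(H)$ containing $\ell^\infty(\X)'$, i.e., a von Neumann subalgebra of $L(H)$ with $V \supseteq \ell^\infty(\X)'$. By the double commutant theorem, $V \mapsto V'$ is a bijection between such $V$ and von Neumann subalgebras $M = V' \subseteq \ell^\infty(\X)'' = \ell^\infty(\X)$, with inverse $M \mapsto M'$. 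Under this identification the formula in Theorem \ref{examples.B.3} becomes $R(X_1,X_2)=\inc_{X_2}^\dagger \cdot M' \cdot \inc_{X_1}^{\phantom\dagger}$, exactly as stated. Combining the three steps yields the corollary.

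The only potentially subtle point is the identification $R \leftrightarrow V$ versus $R^\dagger \leftrightarrow V^*$ under the correspondence of Appendix \ref{appendix.E}; this is the main thing to check carefully, but it follows directly from the explicit formula $R(X_1,X_2) = \inc_{X_2}^\dagger \cdot V \cdot \inc_{X_1}^{\phantom\dagger}$ and the compatibility of $\vphantom{a}^\dagger$ with the inclusion isometries. Everything else is a direct application of the double commutant theorem and of results already established in this paper.
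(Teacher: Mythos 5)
Your proposal is correct and follows essentially the same route as the paper: the paper obtains the result by combining Theorem \ref{examples.A.5} (whose symmetry part rests on Proposition \ref{examples.A.4}) with Theorem \ref{examples.B.3} to identify relations satisfying (1)--(3) with quantum equivalence relations, i.e., von Neumann algebras containing $\ell^\infty(\X)'$, and then passes to von Neumann subalgebras of $\ell^\infty(\X)$ via the commutant, exactly as you do. Your explicit check that $R\mapsto R^\dagger$ matches $V\mapsto V^\dagger$ under the Appendix \ref{appendix.E} correspondence is the same observation the paper records after Proposition \ref{appendix.E.2}, so no new ingredient is needed.
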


\begin{proof}
As an immediate consequence of Theorems \ref{examples.A.5} and \ref{examples.B.3}, we have a one-to-one correspondence between quantum equivalence relations $V$ on $\ell^\infty(\X)$ in the sense of \cite{Weaver2}*{Def.~2.6(a)} and binary relations $R$ on $\X$ satisfying conditions (1), (2) and (3). A quantum equivalence relation on $\ell^\infty(\X)$ is just a von Neumann algebra that contains $\ell^\infty(\X)'$, and such von Neumann algebras are in one-to-one correspondence with the von Neumann algebras contained in $\ell^\infty(\X)$ via the commutant operation.
\end{proof}

The significance of Corollary \ref{examples.B.4} is that according to the noncommutative dictionary, the unital ultraweakly closed $*$-subalgebras of $\ell^\infty(\X)$ correspond to the quotients of $\X$.

\subsection{Quantum posets.}\label{examples.C} The example of quantum posets is quite similar to the example of quantum preordered sets, but it is nevertheless significant, both because quantum posets are of inherent interest \cite{Weaver4}\cite{KornellLindenhoviusMislove} and because this example touches on a basic feature of quantum logic. This basic feature is that the notion of inconsistency between two predicates has multiple natural generalizations to the quantum setting. More generally, the notion of the conjunction of two predicates has multiple natural generalizations to the quantum setting.

The phenomenon is familiar: a pair of predicates $P$ and $Q$ may be inconsistent in the sense that $P \AND Q = \bot$, or they may be inconsistent in the stronger sense that $P \perp Q$. Physically, a pair of predicates $P$ and $Q$ that are inconsistent in the former sense but not in the latter sense correspond to Boolean observables with the property that no state provides the certainty of both $P$ and $Q$, but there are states that provide the certainty of $P$ and the possibility of $Q$, and vice versa. This distinction in turn involves the distinction between the meet connective $\AND$ and the Sasaki projection connective $\sascon$. Both connectives have a natural role in quantum predicate logic; see Lemma \ref{appendix.C.1} and \cite{Finch}.

\begin{proposition}\label{examples.C.1}
Let $\X$ be a quantum set, and let $P$ and $Q$ be predicates on $\X$. Then,
\begin{enumerate}
\item $P \AND Q = \bot_\X$ if and only $\[(\forall x \in \X)\, \NOT( P(x) \AND Q(x))\] = \top$;
\item $P \perp Q$ if and only if $\[(\forall x \in \X)\, \NOT (P(x) \sascon Q(x))\] = \top$;
\item $P \perp Q$ if and only if $\[(\forall (x \feq x_*) \fin \X \ftimes \X^*)\,\NOT (P(x) \AND Q_*(x_*))\] = \top$.
\end{enumerate}
\end{proposition}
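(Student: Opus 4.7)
The plan is to reduce (1) and (2) to lattice-theoretic identities in $\Pred(\X)$ via Lemma \ref{computation.C.1}, and to reduce (3) through Proposition \ref{computation.D.1} to a single atomwise Hilbert space computation. For (1), the formula $P(x) \AND Q(x)$ is primitive atomic in the context $x \in \X$ and its interpretation is $P \AND Q$, so Lemma \ref{computation.C.1} converts the universal formula to the equation $\NOT(P \AND Q) = \top_\X$, equivalent to $P \AND Q = \bot_\X$. Part (2) proceeds analogously: interpreting the Sasaki connective pointwise gives $\[x \in \X \suchthat \NOT(P(x) \sascon Q(x))\] = \NOT(P \sascon Q)$, and Lemma \ref{computation.C.1} reduces the claim to the orthomodular identity $P \sascon Q = \bot_\X \iff P \perp Q$. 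The forward direction follows because $P \leq \NOT Q$ collapses $P \OR \NOT Q$ to $\NOT Q$ and hence $P \sascon Q = \NOT Q \AND Q = \bot$; conversely, $(P \OR \NOT Q) \AND Q = \bot$ together with $\NOT Q \leq P \OR \NOT Q$ forces $P \OR \NOT Q = \NOT Q$ by orthomodularity, so $P \leq \NOT Q$.

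For part (3), I first compute $\[(x, x_*) \in \X \times \X^* \suchthat P(x) \AND Q_*(x_*)\] = P \times Q_*$ by invoking Proposition \ref{definition.C.3} to promote the interpretations of the two atomic subformulas to the full context and then taking their meet. Applying Proposition \ref{computation.D.1} with no auxiliary free variables collapses the resulting supremum to $\Rel() = \{\top, \bot\}$, so the universal formula evaluates to $\top$ iff $E_\X \leq \NOT(P \times Q_*)$, i.e.\ iff $E_\X \perp P \times Q_*$. The proof therefore hinges on the equivalence $E_\X \perp P \times Q_* \iff P \perp Q$.

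Since orthogonality of predicates is defined atomwise and $E_\X(X_1 \tensor X_2^*) = 0$ whenever $X_1 \neq X_2$, this equivalence reduces to a single-atom assertion: for each $X \in \At(\X)$, $\counit_X \perp P(X) \tensor Q_*(X^*)$ in $X^* \tensor X^{**}$ iff $P(X) \perp Q(X)$ in $X^*$. This is the main obstacle. I plan to resolve it by fixing an orthonormal basis $\{e_i\}$ of $X$, writing $\counit_X = \sum_i e_i^* \tensor e_i^{**}$, and expanding $\xi \tensor \eta_*$ using the defining identity $\eta_*(\mu) = \langle \eta, \mu \rangle$ from subsection \ref{definition.B}; a short bookkeeping should then produce $\langle \counit_X, \xi \tensor \eta_* \rangle = \overline{\langle \xi, \eta \rangle}$ (up to the conjugation convention for the Hilbert space inner product). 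Because $\counit_X$ spans the one-dimensional subspace $E_\X(X \tensor X^*)$, this pointwise identity transfers orthogonality of $\counit_X$ against the full tensor subspace $P(X) \tensor Q_*(X^*)$ to Hilbert space orthogonality between $P(X)$ and $Q(X)$, and assembling across atoms then finishes the proof.
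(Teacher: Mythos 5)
Your proposal is correct, and for parts (1) and (2) it is essentially the paper's argument in a different dress: the paper passes to the dual existential statements and reads the conditions off wire diagrams (composition with $\top_\X^\dagger$), while you invoke Lemma \ref{computation.C.1} directly and then settle the lattice identity $P \sascon Q = \bot_\X \Leftrightarrow P \perp Q$ by orthomodularity; both routes rest on the same facts, and your explicit orthomodularity argument is a welcome expansion of what the paper leaves implicit. Where you genuinely diverge is part (3). The paper's proof is a one-line appeal to the graphical orthogonality criterion recalled at the start of Section \ref{examples} (namely $R \perp S$ iff $\Tr_\X(S^\dagger \circ R) = \bot$, depicted as $P$ and $Q_*$ joined by a cup), combined with Theorem \ref{computation.D.2}, so that the diagram for the existential formula is literally the orthogonality test for $P$ and $Q$. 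You instead reduce, as the paper effectively does, to $E_\X \perp P \times Q_*$ via Proposition \ref{computation.D.1}, but then verify the equivalence with $P \perp Q$ by hand: working atomwise, expanding $\counit_X = \sum_i e_i^* \tensor \hat e_i$ in an orthonormal basis, and checking that pairing against a simple tensor gives $\langle \counit_X, \xi \tensor \eta_* \rangle = \overline{\langle \xi, \eta \rangle}$, which I confirm is correct under the paper's conventions ($\eta_*(\mu) = \langle \eta | \mu \rangle$, and $(P \times Q_*)(X \tensor X^*) = P(X) \tensor Q_*(X^*)$ inside $L(X \tensor X^*, \CC)$). In effect you re-prove, for predicates, the special case of the trace criterion that the paper imports from \cite{KornellLindenhoviusMislove2}; what this buys is a self-contained, coordinate-level explanation of why the counit $E_\X$ implements the Hilbert-space inner product, at the cost of losing the reusable one-diagram argument that the paper deploys repeatedly throughout Section \ref{examples}. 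The only soft spot is that the basis computation is announced ("a short bookkeeping should then produce") rather than carried out, but the computation is routine and comes out as you predict, so there is no gap of substance.
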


\begin{proof}
Each of the three equivalences may be established via the corresponding condition depicted graphically below:
\begin{align*}&
\begin{aligned}
      \begin{tikzpicture}[scale=1]
	\begin{pgfonlayer}{nodelayer}
		\node [style=box] (PandQ) at (0,0) {$P \AND Q$};
		\node [style=none] (end) at (0,-1) {$\bullet$};
	\end{pgfonlayer}
	\begin{pgfonlayer}{edgelayer}
        \draw [arrow] (end.center) to  (PandQ.center);
	\end{pgfonlayer}
      \end{tikzpicture}
\end{aligned}
\;=\; \bot \;,
\hspace{10ex}
\begin{aligned}
      \begin{tikzpicture}[scale=1]
	\begin{pgfonlayer}{nodelayer}
		\node [style=box] (PandQ) at (0,0) {$P \sascon Q$};
		\node [style=none] (end) at (0,-1) {$\bullet$};
	\end{pgfonlayer}
	\begin{pgfonlayer}{edgelayer}
        \draw [arrow] (end.center) to  (PandQ.center);
	\end{pgfonlayer}
      \end{tikzpicture}
\end{aligned}
\;=\; \bot \;,
\hspace{10ex}
\begin{aligned}
      \begin{tikzpicture}[scale=1]
	\begin{pgfonlayer}{nodelayer}
		\node [style=box] (P) at (0,0) {$P_{\phantom *}$};
		\node [style=box] (Q) at (1,0) {$Q_*$};
	\end{pgfonlayer}
	\begin{pgfonlayer}{edgelayer}
        \draw [arrow, bend left=90, looseness=3] (Q.center) to  (P.center);
	\end{pgfonlayer}
      \end{tikzpicture}
\end{aligned}
\;=\; \bot\;.\qedhere
\end{align*}
\end{proof}

\begin{corollary}\label{examples.C.2}
The one-to-one correspondence of Theorem \ref{examples.B.3} restricts to a one-to-one correspondence between quantum partial orders $V$ on $\ell^\infty(\X)$ in the sense of \cite{Weaver2}*{Def.~2.6(c)} and binary relations $R$ on $\X$ satisfying conditions (1) and (2) in the statement of Theorem \ref{examples.B.3} as well as
\begin{enumerate}\setcounter{enumi}{2}
\item $\[(\forall x_1 \in \X)\, (\forall x_{2*} \in \X)\, ((\bend R(x_1, x_{2*}) \AND \bend R_*(x_{2*},x_1)) \IMPLIES E_\X(x_1, x_{2*}))\] = \top$.
\end{enumerate}
\end{corollary}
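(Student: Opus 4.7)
The plan is to reduce condition (3) to an inequality of binary relations, then to translate that inequality through the correspondence of Theorem \ref{examples.B.3} and Appendix \ref{appendix.E} into the operator-algebraic antisymmetry condition defining quantum partial orders in \cite{Weaver2}*{Def.~2.6(c)}, which on top of the quantum preorder axioms demands $V \cap V^* \subseteq M'$ where $M = \ell^\infty(\X)$.

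First, I would eliminate the universal quantifiers in condition (3). By Proposition \ref{computation.C.2}, condition (3) is equivalent to the inequality
$$\[(x_1,x_{2*}) \in \X \times \X^* \suchthat \bend R(x_1, x_{2*}) \AND \bend R_*(x_{2*}, x_1)\] \leq \[(x_1,x_{2*})\in \X \times \X^* \suchthat E_\X(x_1,x_{2*})\] = E_\X.$$
Meets of relations are computed atomwise, so the left side factors as $\bend R \AND \[(x_1,x_{2*}) \in \X \times \X^* \suchthat \bend R_*(x_{2*}, x_1)\]$, and applying Lemma \ref{examples.A.2} (with the roles of $\X$ and $\Y$ both being $\X$, and with $S = R^\dagger$) turns the second factor into $\bend{R^\dagger}$. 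Hence condition (3) is equivalent to $\bend R \AND \bend{R^\dagger} \leq E_\X$. Since the map $B \mapsto \bend B$ is an isomorphism of orthomodular lattices from $\mathrm{Rel}(\X;\X)$ to $\mathrm{Pred}(\X \times \X^*)$ that sends $I_\X$ to $E_\X$, this inequality is equivalent to $R \AND R^\dagger \leq I_\X$ in the lattice of binary relations on $\X$.

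Finally, the equivalence in Appendix \ref{appendix.E} takes the binary relations $I_\X$, $R$, and $R^\dagger$ to the quantum relations $M'$, $V$, and $V^*$, respectively, and it is an isomorphism of orthomodular lattices, so the inequality $R \AND R^\dagger \leq I_\X$ translates precisely to $V \cap V^* \subseteq M'$. Given that the binary relations $R$ satisfying conditions (1) and (2) of Theorem \ref{examples.B.3} are already in bijection with the quantum preorders $V$ on $\ell^\infty(\X)$ (so that $M' \subseteq V$ automatically, and therefore $V \cap V^* \subseteq M'$ is equivalent to $V \cap V^* = M'$), this is exactly the antisymmetry condition cutting out quantum partial orders among quantum preorders. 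The only potential obstacle is verifying that the isomorphism of Appendix \ref{appendix.E} sends the adjoint $R^\dagger$ on the relational side to the operator-algebraic adjoint $V^*$; this should follow directly from the fact that the dagger on $\cat{qRel}$ is defined atomwise by Hermitian adjunction, matching the $*$-operation on subspaces of $L(H)$.
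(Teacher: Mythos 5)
Your proposal is correct and follows essentially the same route as the paper: reduce condition (3) via Lemma \ref{examples.A.2} and Proposition \ref{computation.C.2} to $\bend R \AND \bend{R^\dagger} \leq E_\X$, hence to $R \AND R^\dagger \leq I_\X$, and then translate through the Appendix \ref{appendix.E} correspondence (which, as noted there, preserves order and adjoints) to Weaver's antisymmetry condition $V \cap V^\dagger \subseteq \ell^\infty(\X)'$. The extra verifications you flag (atomwise meets, the bending bijection sending $I_\X$ to $E_\X$, adjoint-compatibility of the correspondence) are exactly the points the paper treats as immediate, so there is no gap.
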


\begin{proof}
Let $S = R^\dagger$. By Lemma \ref{examples.A.2}, we have that $\[(x_1,x_{2*}) \in \X \times \X^* \suchthat \bend R_*(x_{2*},x_1)\] = \bend S$. By Proposition \ref{computation.C.2}, we conclude that condition (3) is equivalent to $\bend R \AND \bend S \leq E_\X$, i.e., to $R \AND R^\dagger \leq I_\X$, i.e., to $V \cap V^\dagger \subseteq \ell^\infty(\X)'$.
\end{proof}

\begin{lemma}\label{examples.C.3}
Let $\X$ be a quantum set. There is a one-to-one correspondence between binary relations $R$ on $\X$ such that $I_\X \leq R$, $R \circ R \leq R$ and $R \sascon R^\dagger \leq I_\X$ and binary relations $S$ on $\X$ such that $S \perp I_\X$ and $S \circ S \leq S$. The correspondence is given by $R \mapsto R \AND \NOT I_\X$ and by $S \mapsto S \OR I_\X$.
\end{lemma}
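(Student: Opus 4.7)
The plan is to verify both directions of the correspondence and that the two constructions are mutually inverse, relying on three ingredients: the Sasaki adjunction recalled after Proposition~\ref{computation.D.1} (the map $P \mapsto P \sascon Q$ is left adjoint to $P \mapsto Q \rightarrow P$), the Foulis--Holland theorem for orthomodular lattices, and the trace characterization of orthogonality recalled at the opening of Section~\ref{examples}. A key preliminary reduction I will use repeatedly is that $R \sascon R^\dagger \leq I_\X$ is equivalent to $R \leq \NOT R^\dagger \OR I_\X$: applying the Sasaki adjunction, $R \sascon R^\dagger \leq I_\X$ iff $R \leq R^\dagger \rightarrow I_\X = \NOT R^\dagger \OR (R^\dagger \AND I_\X) = \NOT R^\dagger \OR I_\X$, using $I_\X \leq R^\dagger$ (which follows from $I_\X \leq R$ together with $I_\X^\dagger = I_\X$).

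For the map $S \mapsto R := S \OR I_\X$, reflexivity $I_\X \leq R$ is immediate; transitivity follows by distributing composition over join, $R \circ R = (S \circ S) \OR S \OR S \OR I_\X \leq S \OR I_\X = R$, using the hypothesis $S \circ S \leq S$. For the Sasaki-antisymmetry, I compute $R^\dagger = S^\dagger \OR I_\X$ and $\NOT R^\dagger = \NOT S^\dagger \AND \NOT I_\X$, so by the reduction above it suffices to exhibit $S \leq \NOT S^\dagger \AND \NOT I_\X$. The factor $S \leq \NOT I_\X$ is the hypothesis $S \perp I_\X$, and the factor $S \leq \NOT S^\dagger$, i.e.\ $S \perp S^\dagger$, is equivalent by the trace characterization to $\Tr_\X(S \circ S) = \bot$; but $S \circ S \leq S \leq \NOT I_\X$, so $S \circ S \perp I_\X$, which is exactly $\Tr_\X(S \circ S) = \bot$.

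For the map $R \mapsto S := R \AND \NOT I_\X$, orthogonality $S \perp I_\X$ is immediate, and by orthomodularity $R = I_\X \OR S$ and $R^\dagger = I_\X \OR S^\dagger$, so $\NOT R^\dagger = \NOT S^\dagger \AND \NOT I_\X \leq \NOT I_\X$. Applying the reduction to $R \sascon R^\dagger \leq I_\X$ gives $R \leq \NOT R^\dagger \OR I_\X$. Since $S \perp I_\X$ and $\NOT R^\dagger \leq \NOT I_\X$, every pair among $\{S, I_\X, \NOT I_\X, \NOT R^\dagger\}$ commutes in the orthomodular sense, so the Foulis--Holland theorem permits distribution; meeting $R \leq \NOT R^\dagger \OR I_\X$ with $\NOT I_\X$ yields $S = R \AND \NOT I_\X \leq (\NOT R^\dagger \OR I_\X) \AND \NOT I_\X = \NOT R^\dagger = \NOT S^\dagger \AND \NOT I_\X$, whence $S \perp S^\dagger$ and therefore $\Tr_\X(S \circ S) = \bot$, i.e.\ $S \circ S \leq \NOT I_\X$. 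Combined with $S \circ S \leq R \circ R \leq R$ from the transitivity of $R$, this gives $S \circ S \leq R \AND \NOT I_\X = S$.

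Finally, the two constructions are mutually inverse: starting from $R$, $(R \AND \NOT I_\X) \OR I_\X = R$ by orthomodularity since $I_\X \leq R$; starting from $S$, $(S \OR I_\X) \AND \NOT I_\X = S$ again by Foulis--Holland, since $S \perp I_\X$ makes $S$ commute with $I_\X$ and $\NOT I_\X$. The main technical obstacle is packaging the Foulis--Holland distributivity arguments cleanly around the Sasaki adjunction; once the equivalence $R \sascon R^\dagger \leq I_\X \iff R \leq \NOT R^\dagger \OR I_\X$ is isolated, the orthogonality $S \perp I_\X$ ensures that every remaining meet and join involved can be distributed, reducing the rest to bookkeeping.
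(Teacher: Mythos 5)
Your proof is correct and takes essentially the same route as the paper's: the Sasaki-adjunction reduction $R \sascon R^\dagger \leq I_\X \Leftrightarrow R \leq \NOT R^\dagger \OR I_\X$, the trace characterization $S \perp S^\dagger \Leftrightarrow \Tr_\X(S\circ S) = \bot \Leftrightarrow S \circ S \perp I_\X$, distribution of composition over joins for transitivity, and orthomodularity for the mutual inverses. The only cosmetic differences are that the paper gets $S \perp S^\dagger$ from $S \leq R \leq \NOT R^\dagger \OR I_\X = \NOT S^\dagger$ by De Morgan rather than via Foulis--Holland, and your blanket claim that every pair among $\{S, I_\X, \NOT I_\X, \NOT R^\dagger\}$ commutes is not justified for the pair $(S, \NOT R^\dagger)$ a priori --- but it is also not needed, since the distributivity instance you actually use involves only $\NOT R^\dagger$, $I_\X$ and $\NOT I_\X$.
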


\begin{proof}
We gather a couple of basic facts. First, for each binary relation $R$ on $\X$ such that $I_\X \leq R$, we have that 
$R \sascon R^\dagger \leq I_\X \; \Leftrightarrow \; R \leq R^\dagger \IMPLIES I_\X \; \Leftrightarrow \; R \leq \NOT R^\dagger \OR I_\X \; \Leftrightarrow \; R \AND \NOT I_\X \leq \NOT R^\dagger \OR I_\X \; \Leftrightarrow \; (R \AND \NOT I_\X) \perp (R \AND \NOT I_\X)^\dagger$, with the first equivalence following via the Sasaki adjunction and the third equivalence following by orthomodularity. Second, for each binary relation $S$ on $\X$ we have that $S \perp S^\dagger \; \Leftrightarrow\; \Tr_\X(S \circ S) = \bot\; \Leftrightarrow \; S \circ S \perp  I_\X$ \cite{KornellLindenhoviusMislove2}*{App.~C}.

Let $R$ be a binary relation on $\X$ such that $I_\X \leq R$, $R \circ R \leq R$ and $R \sascon R^\dagger \leq I_\X$, and let $S = R \AND \NOT I_\X$. Clearly, $S \perp I_\X$. Furthermore, $S \perp S^\dagger$, which implies that $S \circ S \leq  \NOT I_\X$. We now calculate that $S \circ S \leq (R \circ R) \AND \NOT I_\X \leq R \AND \NOT I_\X = S$. Therefore, $S$ satisfies both $S \perp I_\X$ and $S \circ S \leq S$.

Let $S$ be a binary relation on $\X$ such that $S \perp I_\X$ and $S \circ S \leq S$, and let $R = S \OR I_\X$. Clearly, $I_\X \leq R$. Furthermore, $R \circ R = (S \circ S) \OR (S \circ I_\X) \OR (I_\X \circ S) \OR (I_\X \circ I_\X) \leq S \OR I_\X = R$. Finally, we observe that $S = R \AND \NOT I_\X$ by orthomodularity, and we reason that $S \perp I_\X$ implies $S \circ S \perp I_\X$, which implies $S \perp S^\dagger$, leading to $R \sascon R^\dagger \leq I_\X$. Therefore, $R$ satisfies all three inequalities $I_\X \leq R$, $R \circ R \leq R$ and $R \sascon R^\dagger \leq I_\X$.

We have shown that the construction $R \mapsto R \AND \NOT I_\X$ takes binary relations of the first kind to binary relations of the second kind, and that the construction $S \mapsto S \OR I_\X$ takes binary relations of the second kind to binary relations of the first kind. The two constructions invert each other by orthomodularity.
\end{proof}

\begin{theorem}\label{examples.C.4}
Let $H$ be a nonzero finite-dimensional Hilbert space, and let $\X$ be the quantum set defined by $\At(\X) = \{H\}$. Then, there is a one-to-one correspondence between nilpotent algebras $A \subsetof L(H)$ in the sense of \cite{Weaver4}*{sec.~6} and binary relations $R$ on $\X$ satisfying conditions (1) and (2) in the statement of Theorem \ref{examples.B.3} as well as
\begin{enumerate}\setcounter{enumi}{2}
\item $\[(\forall x_1 \in \X)\, (\forall x_{2*} \in \X)\, ((\bend R(x_1, x_{2*}) \sascon \bend R_*(x_{2*},x_1)) \IMPLIES E_\X(x_1, x_{2*}))\] = \top$.
\end{enumerate}
This correspondence is given by $R(H, H) = A + \CC 1_H$.
\end{theorem}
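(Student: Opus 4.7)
The plan is to reduce the three conditions on $R$ to a pair of simpler conditions via Lemma \ref{examples.C.3}, translate to subspaces of $L(H)$, and then identify the resulting algebraic class with the nilpotent subalgebras. In the single-atom case $\At(\X) = \{H\}$, binary relations $R$ on $\X$ are in canonical bijection with subspaces $V \subseteq L(H)$ via $V := R(H,H)$, with $I_\X$ corresponding to $\CC 1_H$, the orthomodular operations $\AND$, $\OR$, $\NOT$ to intersection, subspace sum, and Hilbert--Schmidt orthogonal complement, the dagger $R^\dagger$ to $V^\dagger$, and the composition $\circ$ to the operator-product span $\cdot$\,. Arguing as in the proofs of Theorems \ref{examples.A.5} and \ref{examples.B.3} and of Corollary \ref{examples.C.2}, conditions (1), (2), and (3) translate respectively to $I_\X \leq R$, $R \circ R \leq R$, and $R \sascon R^\dagger \leq I_\X$; the last is the exact analogue of the computation in Corollary \ref{examples.C.2} with the connective $\AND$ replaced by $\sascon$, obtained by applying Proposition \ref{computation.C.2} together with Lemma \ref{examples.A.2}.

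By Lemma \ref{examples.C.3}, this class of relations $R$ is in bijection, via $R \leftrightarrow S := R \AND \NOT I_\X$, with the class of binary relations $S$ on $\X$ satisfying $S \perp I_\X$ and $S \circ S \leq S$. Writing $W := S(H,H)$, these become $W \perp \CC 1_H$ in the Hilbert--Schmidt inner product (equivalently, $\Tr(w) = 0$ for every $w \in W$) and $W \cdot W \subseteq W$ (that is, $W$ is an associative, not necessarily unital, subalgebra of $L(H)$).

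The main remaining task is to identify such trace-zero subalgebras $W \subseteq L(H)$ with the nilpotent subalgebras of $L(H)$ in the sense of \cite{Weaver4}*{sec.~6}. The forward direction is standard: a nilpotent subalgebra consists of nilpotent operators and hence of traceless operators. For the converse, given $w \in W$, every power $w^k$ lies in $W$ and thus satisfies $\Tr(w^k) = 0$, which by Newton's identities forces all eigenvalues of $w$ to vanish; so $w$ is nilpotent as an operator. A finite-dimensional associative subalgebra of $L(H)$ all of whose elements are nilpotent operators is itself a nilpotent algebra, either by simultaneous strict upper triangularization or by observing that such an algebra coincides with its Jacobson radical. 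I expect this structural lemma, although classical, to be the principal technical content of the proof.

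Finally, combining everything, if $V = R(H,H)$ corresponds to a relation satisfying (1)--(3), then $A := V \cap (\CC 1_H)^\perp = W$ is a nilpotent subalgebra of $L(H)$ with $V = A + \CC 1_H$; conversely, any nilpotent $A \subseteq L(H)$ satisfies $A \perp \CC 1_H$ and $A \cap \CC 1_H = 0$ (the latter since $1_H$ is not nilpotent for $H \neq 0$), so setting $V := A + \CC 1_H$ recovers $A = V \cap (\CC 1_H)^\perp$ and yields a unique $R$ with $R(H,H) = V$. This establishes the asserted bijection given by $R(H,H) = A + \CC 1_H$.
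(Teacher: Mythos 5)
Your proposal is correct and follows essentially the same route as the paper: translate conditions (1)--(3) into $I_\X \leq R$, $R \circ R \leq R$, $R \sascon R^\dagger \leq I_\X$ (Propositions \ref{examples.A.1} and \ref{examples.B.2}, and the argument of Corollary \ref{examples.C.2} with $\sascon$ in place of $\AND$), pass through Lemma \ref{examples.C.3} to relations $S$ with $S \perp I_\X$ and $S \circ S \leq S$, and identify the corresponding trace-zero subalgebras of $L(H)$ with the nilpotent subalgebras via the power-trace criterion. The only difference is expository: you spell out the Newton's-identities/nil-implies-nilpotent step that the paper compresses into the statement that $a$ is nilpotent iff $\Tr_H(a^n)=0$ for all $n$.
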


\begin{proof}
Condition (1) is equivalent to the inequality $I_\X \leq R$ by Proposition \ref{examples.A.1}. Condition (2) is equivalent to the inequality $R \circ R \leq R$ by Proposition \ref{examples.B.2}. Condition (3) is equivalent to the inequality $R \sascon R^\dagger \leq I_\X$, as in the proof of Corollary \ref{examples.C.2}. Thus, by Lemma \ref{examples.C.3}, the binary relations $R$ on $\X$ satisfying conditions (1), (2) and (3) are in one-to-one correspondence with the binary relations $S$ on $\X$ satisfying $S \perp I_\X$ and $S \circ S \leq S$, which is given by $R = S \OR I_\X$, i.e., by $R(H,H) = S(H,H) + \CC 1_H$.

We also have a one-to-one correspondence between the binary relations $S$ on $\X$ and subspaces $A$ of $L(H)$, which is given by $A = S(H,H)$. The binary relations $S$ that satisfy $S \perp I_\X$ correspond to subspaces of trace-zero operators, and the binary relations $S$ that satisfy $S \circ S \leq S$ correspond to subalgebras. Thus, the binary relations $S$ on $\X$ that satisfy both inequalities correspond to subalgebras of trace-zero operators. These are exactly the nilpotent subalgebras because an operator $a \in L(H)$ is nilpotent if and only if $\Tr_H(a^n) = 0$ for each positive integer $n$.

Combining the one-to-one correspondences, we find that binary relations $R$ on $\X$ satisfying conditions (1), (2) and (3) correspond to nilpotent subalgebras $A$ of $L(H)$ via the equation $R(H,H) = A + \CC 1_H$.
\end{proof}

\subsection{Functions between quantum sets.}\label{examples.D}

We have characterized functions between quantum sets as binary relations satisfying a pair of intelligible formulas, which certainly characterize ordinary functions within the dagger compact category of ordinary sets and ordinary binary relations; see Definition \ref{definition.F.1} and Theorem \ref{computation.E.2}. Now, we give streamlined characterizations of functions, injective functions and surjective functions between quantum sets, emphasizing our recovery of the corresponding concepts from noncommutative geometry.

\begin{proposition}\label{examples.D.1}
Let $\X$ and $\Y$ be quantum sets. Then, there is a one-to-one correspondence between unital normal $*$-homomorphisms $\phi$ from $\ell^\infty(\Y)$ to $\ell^\infty(\X)$ and binary relations $F$ on $\X$ such that
\begin{enumerate}
\item $\[(\forall x \fin \X)\, (\exists y_* \fin \Y^*)\, \bend F(x,y_*)\] = \top$;
\item $\[(\forall (x \feq x_*) \fin \X \ftimes \X^*)\, (\forall (y_1 \feq y_{1*}) \fin \Y \ftimes \Y^*)\, (\forall (y_2 \feq y_{2*}) \fin \Y \ftimes \Y^*)$\\ \phantom A \hfill $((\bend F(x, y_{1*}) \AND \bend F_*(x_*, y_2)) \to E_\Y(y_1, y_{2*}))\] = \top.$
\end{enumerate}
These are exactly the functions $F$ from $\X$ to $\Y$ \cite{Kornell}*{Def.~4.1}. This correspondence is given by $\phi = F^\star$ \cite{Kornell}*{Thm.~7.4}.
\end{proposition}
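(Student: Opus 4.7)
The plan is to reduce this proposition to Theorem \ref{computation.E.2}, which already establishes a canonical bijection between function graphs of arity $(\X,\Y^*)$ in the sense of Definition \ref{definition.F.1} and unital normal $*$-homomorphisms from $\ell^\infty(\Y)$ to $\ell^\infty(\X)$ (by composing with the bijection between function graphs and functions $F\: \X \to \Y$ in the sense of \cite{Kornell}*{Def.~4.1} and then invoking \cite{Kornell}*{Thm.~7.4}). Accordingly, the only substantive task is to verify that, for a binary relation $F$ from $\X$ to $\Y$, the relation $\bend F$ of arity $(\X,\Y^*)$ is a function graph if and only if $F$ satisfies conditions (1) and (2) of the proposition.

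Condition (1) of the proposition is verbatim condition (1) of Definition \ref{definition.F.1}, so no work is needed there. For condition (2), the strategy is to show that both the proposition's condition (2) and condition (2) of Definition \ref{definition.F.1} are equivalent to the graphical inequality $F \circ F^\dagger \leq I_\Y$. The latter equivalence is precisely the content of Lemma \ref{computation.E.1}. For the former, I plan to peel off the three outer diagonal universal quantifiers by iterated application of Proposition \ref{computation.D.1}. After suitably permuting the context via Proposition \ref{computation.A.1}, this yields an equivalent inequality of the form
$$ E_\X \times E_\Y \times E_\Y \;\leq\; \[(\bend F(x, y_{1*}) \AND \bend F_*(x_*, y_2)) \IMPLIES E_\Y(y_1, y_{2*})\]. $$
Converting the Sasaki arrow into the Sasaki projection connective $\sascon$ via the Sasaki adjunction, exactly as in the proof of Proposition \ref{computation.D.1}, and then straightening the resulting diagram using the snake identities of the dagger compact structure on $\cat{qRel}$, one obtains the same diagram that characterizes $F \circ F^\dagger \leq I_\Y$ in the proof of Lemma \ref{computation.E.1}.

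The main obstacle will be the diagrammatic bookkeeping in this last step. Unfolding all three diagonal quantifiers together with the Sasaki arrow yields a diagram with several parallel equality caps and cups (from $E_\X$, $E_\Y$, and $E_\Y$) interleaved with copies of $\bend F$ and $\bend F_*$, and one has to verify by repeated snake identities that it collapses to the canonical diagram for $F \circ F^\dagger \leq I_\Y$. Intuitively, the equality caps on $\Y$ merely identify the variables $y_1$ with $y_{1*}$ and $y_2$ with $y_{2*}$ (so the redundant copies can be absorbed), while the $E_\X$ cap contracts the two legs carrying $x$ and $x_*$ into a single $\X$-loop that matches the one produced by the existential diagonal quantifier in Definition \ref{definition.F.1}(2).

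Once conditions (1) and (2) of the proposition have been identified with the two defining clauses of a function graph, the canonical bijection $F \mapsto F^\star$ with unital normal $*$-homomorphisms is inherited directly from Theorem \ref{computation.E.2} together with \cite{Kornell}*{Thm.~7.4}, and the final clause that these binary relations are precisely the functions from $\X$ to $\Y$ is simply the restatement of Theorem \ref{computation.E.2} in the present language.
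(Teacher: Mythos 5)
Your high-level reduction coincides with the paper's: condition (1) is verbatim clause (1) of Definition \ref{definition.F.1}, the whole content is that condition (2) is equivalent to $F \circ F^\dagger \leq I_\Y$ and hence, by Lemma \ref{computation.E.1}, to clause (2) of Definition \ref{definition.F.1}, after which Theorem \ref{computation.E.2} hands you the bijection with unital normal $*$-homomorphisms. The paper obtains that key equivalence by citing Lemma \ref{examples.B.2} with $R = F^\dagger$, $S = F$ and $T = I_\Y$ (so that $\bend T = E_\Y$), together with Lemma \ref{examples.A.2} and a permutation of variables, rather than recomputing it from scratch.

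The gap is in your mechanism for the key step. Peeling the three diagonal universal quantifiers is fine (iterate Proposition \ref{computation.D.1} in the manner of Lemma \ref{computation.C.1}), and it yields $E_\X \times E_\Y \times E_\Y \leq \[\phi\] \IMPLIES \[\psi\]$ with $\phi$ the conjunction $\bend F(x,y_{1*}) \AND \bend F_*(x_*,y_2)$ and $\psi$ the formula $E_\Y(y_1,y_{2*})$. The Sasaki adjunction then gives $(E_\X \times E_\Y \times E_\Y) \sascon \[\phi\] \leq \[\psi\]$, and here the analogy with Proposition \ref{computation.D.1} breaks: in that proof the two projections are supported on disjoint tensor factors, so the Sasaki projection collapses to a meet, i.e., a product of relations, which is a wire diagram; in your situation $E_\X \times E_\Y \times E_\Y$ and $\[\phi\]$ share wires and do not commute, so $\sascon$ does not reduce to a meet and is not a composite or monoidal product of boxes at all --- there is no diagram for the snake identities to straighten. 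The commutativity you actually have is between $\[\phi\]$ and $\[\psi\]$ (antecedent and consequent share no variables), and it must be exploited differently: rewrite the Sasaki arrow as $\NOT\[\phi\] \OR \[\psi\]$ (valid for commuting projections), so the inequality becomes the orthogonality $E_\X \times E_\Y \times E_\Y \perp \[\phi\] \AND \NOT\[\psi\]$; equivalently, dualize the quantifiers and show $\[(\exists\text{-diagonal quantifiers})\,(\phi \AND \NOT \psi)\] = \bot$ via Theorem \ref{computation.D.2}. That orthogonality is graphical and does collapse by the snake identities to $F \circ F^\dagger \perp \NOT I_\Y$, i.e., $F \circ F^\dagger \leq I_\Y$; this is exactly the computation packaged in Lemmas \ref{examples.A.3} and \ref{examples.B.2}, so the quickest repair is to cite Lemma \ref{examples.B.2} (equivalence of its conditions (1) and (3)), as the paper does. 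As written, your Sasaki-projection-then-snake-identities step does not go through.
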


\begin{proof}
This proposition combines Theorem \ref{computation.E.2} with Lemma \ref{examples.A.2} and Proposition \ref{examples.B.2}. In Proposition \ref{examples.B.2}, we set $R = F^\dagger$, $S = F$ and $T = E_\Y$, to infer that
$$\[(\forall y_1 \fin \Y)\, (\forall y_{2*} \fin \Y^*)\,(  (\exists(x \feq x_*) \fin \X \ftimes \X^*)\, (\bend R(y_1,x_*) \AND \bend F(x,y_{2*})) \IMPLIES E_\Y (y_1, y_{2*}))\] = \top$$
is equivalent to
\begin{align*}\tag{\S}\label{examples.D.1.eq}
\[(\forall (y_1 \feq y_{1*}) \fin \Y \ftimes \Y^*)\, (\forall(x \feq x_*) & \fin \X \ftimes \X^*)\, (\forall (y_2 \feq y_{2*}) \fin \Y \ftimes \Y^*)\, \\ &
((\bend R(y_1, x_*) \AND \bend F(x, y_{2*})) \IMPLIES E_{\Y*}(y_{1*} , y_2))\] = \top.
\end{align*}
By Lemma \ref{examples.A.2}, $\[(y_1,x_*) \in \Y \times \X^* \suchthat \bend F_*(x_*,y_1)\] = \bend R =
\[(y_1,x_*) \in \Y \times \X^* \suchthat \bend R(y_1, x_*)\]$, so we can replace $\bend R(y_1,x_*)$ by $\bend F_*(x_*, y_1)$ in both formulas. We thus recover the definition of a function graph (Definiton \ref{definition.F.1}). Similarly, appealing to the fact $I_\X$ is self-adjoint, we can replace $E_{\Y*}(y_{1*},y_2)$ by $E_\Y(y_2, y_{1*})$ in formula (\ref{examples.D.1.eq}). Exchanging the variables $y_1$ and $y_2$, and permuting the quantifiers, we recover condition (2).
\end{proof}

\begin{proposition}\label{examples.D.2}
The one-to-one correspondence in Proposition \ref{examples.D.1} restricts to a one-to-one correspondence between surjective unital normal $*$-homomorphisms $\phi$ from $\ell^\infty(\Y)$ to $\ell^\infty(\X)$ and binary relations $F$ from $\X$ to $\Y$ satisfying conditions (1) and (2) in the statement of that proposition, together with
\begin{enumerate}\setcounter{enumi}{2}
\item $\[(\forall x \fin \X)\, (\forall x_* \fin  \X^*)\, E_\Y(F(x),F_*(x_*)) \IMPLIES E_\X(x,x_*)\] = \top.$
\end{enumerate}
These are exactly the injective functions $F$ from $\X$ to $\Y$ \cite{Kornell}*{Def.~4.3(1)}.
\end{proposition}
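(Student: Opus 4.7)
The plan is to show that condition (3) cuts out exactly the injective functions among the functions already identified by conditions (1) and (2) via Proposition~\ref{examples.D.1}, and then to invoke the duality from \cite{Kornell} between injective functions of quantum sets and surjective unital normal $*$-homomorphisms in the opposite direction.

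First, I would use Proposition~\ref{computation.C.2} to rephrase condition (3) as the inequality
\begin{equation*}
\[(x, x_*) \in \X \times \X^* \suchthat E_\Y(F(x), F_*(x_*))\] \leq E_\X.
\end{equation*}
The terms $F(x)$ and $F_*(x_*)$ share no variable, so this atomic formula is nonduplicating; Lemma~\ref{computation.F.2}, together with the identifications $\[F(x)\] = F$ and $\[F_*(x_*)\] = F_*$ computed in subsection~\ref{computation.F}, then gives $\[E_\Y(F(x), F_*(x_*))\] = E_\Y \circ (F \times F_*)$.

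Next, using the dagger compact structure of $\cat{qRel}$, I would identify this composite with $E_\X \circ ((F^\dagger \circ F) \times I_{\X^*})$. The snake equation $E_\Y \circ (F \times I_{\Y^*}) = E_\X \circ (I_\X \times F^*)$, where $F^* = F_*^\dagger$ is the categorical dual of $F$, yields
\begin{equation*}
E_\Y \circ (F \times F_*) \;=\; E_\X \circ (I_\X \times (F^* \circ F_*)) \;=\; E_\X \circ (I_\X \times (F^\dagger \circ F)_*),
\end{equation*}
the second equality following from $(F^\dagger \circ F)_* = (F^\dagger)_* \circ F_* = F^* \circ F_*$. Applying the snake equation a second time, with $F^\dagger \circ F$ in place of $F$ and using that $F^\dagger \circ F$ is self-adjoint so $(F^\dagger \circ F)_* = (F^\dagger \circ F)^*$, converts this into $E_\X \circ ((F^\dagger \circ F) \times I_{\X^*})$. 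Since the bending operation $R \mapsto E_\X \circ (R \times I_{\X^*})$ is an order-preserving bijection from binary relations $\X \to \X$ onto predicates on $\X \times \X^*$ that sends $I_\X$ to $E_\X$, condition (3) is equivalent to $F^\dagger \circ F \leq I_\X$.

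Combined with the automatic inequality $F^\dagger \circ F \geq I_\X$ satisfied by every function $F$, this gives $F^\dagger \circ F = I_\X$, which is the definition of an injective function in \cite{Kornell}*{Def.~4.3(1)}. Finally, by \cite{Kornell}*{Thm.~7.4} together with the standard fact that a unital normal $*$-homomorphism between hereditarily atomic von Neumann algebras is surjective exactly when the dual function of quantum sets is injective, the correspondence of Proposition~\ref{examples.D.1} restricts as claimed. The main obstacle will be the careful tracking of daggers and conjugations through the two snake-equation manipulations, so that the self-adjoint morphism $F^\dagger \circ F$ (and not some other combination of $F$ and its variants) emerges on the other side.
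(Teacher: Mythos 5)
Your proposal is correct and follows essentially the same route as the paper: Proposition \ref{computation.C.2} and Lemma \ref{computation.F.2} reduce condition (3) to $E_\Y \circ (F \times F_*) \leq E_\X$, which the paper then ``straightens'' graphically into $F^\dagger \circ F \leq I_\X$ — exactly the content of your two snake-equation manipulations — before matching injectivity of $F$ with surjectivity of $F^\star$ via \cite{Kornell}*{Prop.~8.4}. The only difference is cosmetic: you carry out the wire-straightening algebraically (tracking $F^*$, $F_*$ and the self-adjointness of $F^\dagger \circ F$) rather than diagrammatically.
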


\begin{proof}
Appealing to Proposition \ref{computation.C.2} and Lemma \ref{computation.F.2}, we find that condition (3) is equivalent to the following inequality:
$$
\begin{aligned}
      \begin{tikzpicture}[scale=1]
	\begin{pgfonlayer}{nodelayer}
	\node [style=box] (F) at (0,0) {$F_{\phantom *}$};
	\node [style=box] (Fcon) at (1,0) {$F_*$};
	\node [style=none] (x) at (0,-1) {$\scriptstyle x$};
	\node [style=none] (xcon) at (1, -1) {$\scriptstyle x$};
	\end{pgfonlayer}
	\begin{pgfonlayer}{edgelayer}
	\draw[arrow, markat=0.45] (x.north) to (F.center);
	\draw [arrow, bend left = 90, looseness = 2.5] (F.center) to (Fcon.center);
	\draw [arrow, markat =0.7] (Fcon.center) to (xcon.north);
	\end{pgfonlayer}
      \end{tikzpicture}
\end{aligned}
\; \leq \;
\begin{aligned}
      \begin{tikzpicture}[scale=1]
	\begin{pgfonlayer}{nodelayer}
	\node [style=none] (F) at (0,0) {};
	\node [style=none] (Fcon) at (1,0) {};
	\node [style=none] (x) at (0,-1) {$\scriptstyle x$};
	\node [style=none] (xcon) at (1, -1) {$\scriptstyle x$};
	\end{pgfonlayer}
	\begin{pgfonlayer}{edgelayer}
	\draw[arrow, markat=0.45] (x.north) to (F.center);
	\draw [arrow, bend left = 90, looseness = 2.5] (F.center) to (Fcon.center);
	\draw [arrow, markat =0.7] (Fcon.center) to (xcon.north);
	\end{pgfonlayer}
      \end{tikzpicture}
\end{aligned}\;.
$$
Straightening the wire, we conclude that condition (3) is equivalent to the inequality $F^\dagger \circ F \leq I_\X$, i.e., to $F$ being injective. A function $F$ is injective if and only if $F^\star$ is surjective \cite{Kornell}*{Prop.~8.4}.
\end{proof}

\begin{proposition}\label{examples.D.3}
The one-to-one correspondence in Proposition \ref{examples.D.1} restricts to a one-to-one correspondence between injective unital normal $*$-homomorphisms $\phi$ from $\ell^\infty(\Y)$ to $\ell^\infty(\X)$ and binary relations from $\X$ to $\Y$ satisfying conditions (1) and (2) in the statement of that proposition, together with
\begin{enumerate}\setcounter{enumi}{3}
\item $\[(\forall y_* \in \Y^*)\, (\exists x \in \X)\, E_\Y( F(x), y_*)\] = \top$.
\end{enumerate}
These are exactly the surjective functions $F$ from $\X$ to $\Y$ \cite{Kornell}*{Def.~4.3(2)}.
\end{proposition}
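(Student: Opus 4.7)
The plan is to mirror the proof of Proposition \ref{examples.D.2} closely: first reduce condition (4) to a purely diagrammatic equation, then identify that equation with surjectivity of $F$ as a function between quantum sets in the sense of \cite{Kornell}*{Def.~4.3(2)}, and finally invoke \cite{Kornell}*{Prop.~8.4} to convert surjectivity of $F$ into injectivity of $F^\star$. Since Proposition \ref{examples.D.1} already supplies the restriction of the bijection $F \mapsto F^\star$ to functions, it suffices to characterize surjective functions among all functions, and to translate surjectivity across $\star$.

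First I would apply Lemma \ref{computation.C.1} (in the case $m = n$) to see that condition (4) is equivalent to
$$
\[y_* \in \Y^* \suchthat (\exists x \in \X)\, E_\Y(F(x), y_*)\] = \top_{\Y^*}.
$$
Then Proposition \ref{computation.C.3}, together with Lemma \ref{computation.F.2} applied to the atomic formula $E_\Y(F(x), y_*)$, lets me rewrite the left-hand side as
$$
E_\Y \circ \bigl((F \circ \top_\X^\dagger) \times I_{\Y^*}\bigr),
$$
which is the bend-down of the binary relation $F \circ \top_\X^\dagger \colon \mathbf{1} \to \Y$. Because bending is an order isomorphism between $\Rel(\mathbf{1}; \Y)$ and $\Rel(\Y^*; \mathbf{1})$ that carries $\top_\Y^\dagger$ to $\top_{\Y^*}$, straightening the bent wire in the graphical calculus shows that condition (4) is equivalent to the diagrammatic equation $F \circ \top_\X^\dagger = \top_\Y^\dagger$.

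Next I would identify $F \circ \top_\X^\dagger = \top_\Y^\dagger$ with surjectivity of $F$. Intuitively this says that the image predicate of $F$ is all of $\Y$, which for ordinary functions is exactly surjectivity; the quantum version follows by direct analog of \cite{Kornell}*{Lem.~B.4} (which was used in Theorem \ref{computation.E.2} to equate $\top_\Y \circ F = \top_\X$ with unitality of $F^\star$), dualized through the dagger. The concluding step is an appeal to \cite{Kornell}*{Prop.~8.4}, in its dual form, to convert surjectivity of $F$ into injectivity of $F^\star$.

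The main obstacle is the middle step: confirming that the precise diagrammatic characterization $F \circ \top_\X^\dagger = \top_\Y^\dagger$ matches the notion of surjectivity used in \cite{Kornell}*{Def.~4.3(2)}. If it does not appear there verbatim, a short additional calculation will be needed, using that $F$ is a function (so $F \circ F^\dagger \leq I_\Y$) to bridge between $F \circ \top_\X^\dagger = \top_\Y^\dagger$ and the condition $F \circ F^\dagger = I_\Y$; everything else is immediate from the lemmas already established in Sections \ref{computation} and \ref{examples}.
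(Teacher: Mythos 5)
Your reduction of condition (4) to the diagrammatic equation $F \circ \top_\X^\dagger = \top_\Y^\dagger$, equivalently $\top_\X \circ F^\dagger = \top_\Y$, is exactly the first step of the paper's proof, and your closing appeal to the duality between surjectivity of $F$ and injectivity of $F^\star$ is also how the paper concludes (the correct citation is \cite{Kornell}*{Prop.~8.1}; ``Prop.~8.4 in dual form'' is a slip, though harmless). The genuine gap is the middle step, which you defer as ``a short additional calculation.'' In \cite{Kornell}*{Def.~4.3(2)}, surjectivity of a function means $F \circ F^\dagger \geq I_\Y$, hence $F \circ F^\dagger = I_\Y$, and its equivalence with $\top_\X \circ F^\dagger = \top_\Y$ is not definitional and not automatic: the paper's own remark immediately after this proposition exhibits a binary relation $R$ on the quantum set with single atom $\CC^2$, namely $R(\CC^2,\CC^2) = \CC a$ with $a$ invertible but not proportional to a unitary, which satisfies $R \circ \top_\X^\dagger = \top_\X^\dagger$ yet fails $R \circ R^\dagger \geq I_\X$. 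So the equivalence is a fact about functions specifically, and the tool you gesture at --- an ``analog of \cite{Kornell}*{Lem.~B.4} dualized through the dagger'' --- is not available, since $F^\dagger$ is not a function and Lemma~B.4 concerns totality versus unitality of $F^\star$, not surjectivity.

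The missing argument, which is the bulk of the paper's proof, runs as follows. If $F \circ F^\dagger = I_\Y$, then $\top_\X \circ F^\dagger \geq \top_\Y \circ F \circ F^\dagger = \top_\Y$, so equality holds. Conversely, if $F \circ F^\dagger \neq I_\Y$, then the inequality $F \circ F^\dagger \leq I_\Y$ forces the existence of an atom $W \in \At(\Y)$ with $(F \circ F^\dagger)(W,Y) = 0$ for every $Y \in \At(\Y)$; writing $J$ for the inclusion of the nonempty subset $\W$ of $\Y$ with $\At(\W) = \{W\}$, one gets $F \circ F^\dagger \circ J = \bot$, and the supposition $\top_\X \circ F^\dagger = \top_\Y$ then yields $\bot_\W = \top_\Y \circ F \circ F^\dagger \circ J = \top_\X \circ F^\dagger \circ J = \top_\Y \circ J = \top_\W$, a contradiction. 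Note that this chain uses totality, $\top_\Y \circ F = \top_\X$, in addition to the inequality $F \circ F^\dagger \leq I_\Y$ that your sketch mentions. With this bridge written out, your argument coincides with the paper's; without it, the central claim of the proposition is unproved.
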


\begin{proof}
We glean from the proof of Theorem \ref{computation.E.2} that $\top_\Y \circ F  = \top_\X$, simply because $F$ is a function. Similarly, condition (4) is equivalent to the inequality $\top_\X \circ F^\dagger = \top_\Y$, as we infer from its depiction below:
\begin{align*}
\begin{aligned}
      \begin{tikzpicture}[scale=1]
	\begin{pgfonlayer}{nodelayer}
		\node [style=box] (F) at (0.7,0) {$F$};
		\node [style=none] (ri) at (1.4,0) {};
		\node [style=none] (B) at (1.4,-1) {$\scriptstyle y$};
		\node [style=none] (B0) at (0.7,-0.7) {$\bullet$};	
	\end{pgfonlayer}
	\begin{pgfonlayer}{edgelayer}
        \draw [arrow, bend left=90, looseness=3] (F.center) to (ri.center);
        \draw [arrow, markat=0.5] (ri.center) to (B.north);
        \draw [arrow, markat=0.7] (B0.center) to (F);
	\end{pgfonlayer}
      \end{tikzpicture}
\end{aligned}
\; = \;
\begin{aligned}
      \begin{tikzpicture}[scale=1]
	\begin{pgfonlayer}{nodelayer}
		\node [style=none] (B) at (0.7,0.7) {$\bullet$};
		\node [style=none] (B0) at (0.7,-1) {$\scriptstyle y$};	
	\end{pgfonlayer}
	\begin{pgfonlayer}{edgelayer}
        \draw [arrow, markat=0.5] (B.center) to (B0.north);
	\end{pgfonlayer}
      \end{tikzpicture}
\end{aligned}\;.
\end{align*}

By the definition of a function from $\X$ to $\Y$, the function $F$ satisfies the inequality $F \circ F^\dagger \leq I_\Y$. It is surjective if and only if $F \circ F^\dagger = I_\Y$. If $F \circ F^\dagger = I_\Y$, then $ \top_\X \circ F^\dagger \geq\ \top_\Y \circ F \circ F^\dagger  =\top_\Y \circ I_\X$, and therefore $\top_\X \circ F^\dagger = \top_\Y$. Inversely, if $F \circ F^\dagger \neq I_\Y$, then the inequality $F \circ F^\dagger \leq I_\Y$ implies that $(F \circ F^\dagger)(W,Y) = 0$ for some atom $W \in \At(\Y)$ and all atoms $Y \in \At(\Y)$. Thus, $\Y$ has a nonempty subset $\W$ whose inclusion function $J$ satisfies $F \circ F^\dagger \circ J = \bot_\W^\Y$, where $\bot_\W^\Y$ denotes the minimum binary relation from $\W$ to $\Y$. If we suppose that $\top_\X \circ F^\dagger = \top_\Y$, then we may calculate that $\bot_\W = \top_\Y \circ F \circ F^\dagger \circ J = \top_\X \circ F^\dagger \circ J = \top_\Y \circ J = \top_\W$, contradicting that $\W$ is nonempty. Therefore, if $F \circ F^\dagger \neq I_\Y$, then $\top_\X \circ F^\dagger \neq \top_\Y$.

We conclude that $F$ satisfies condition (4) if and only if it is surjective. A function $F$ is surjective if and only if $F^\star$ is injective \cite{Kornell}*{Prop.~8.1}.
\end{proof}

Noting the similarity between the proof of Proposition \ref{examples.D.3} and the proof of Theorem \ref{computation.E.2}, the reader may well ask whether a binary relation $R$ from $\X$ to $\Y$ is surjective in the sense of \cite{Kornell}*{Def.~4.3(2)} if and only if $F \circ \top_\X^\dagger = \top_Y^\dagger$ or equivalently $\[(\forall y_* \in \Y^*)\, (\exists x \in \X)\, R(x,y_*)\] = \top$. The answer is no, by the same simple example that appeared at the end of section 4 of \cite{Kornell}, which we now revisit. Thus, the notion of surjectivity for binary relations has multiple natural generalizations to the quantum setting.

Let $\X$ be the quantum set whose only atom is $\CC^2$, let $a \in L(\CC^2, \CC^2)$ be an invertible matrix that is not a scalar multiple of a unitary matrix, and let $R$ be the binary relation on $\X$ defined by $R(\CC^2, \CC^2) = \CC a$. By our choice of $a$, the binary relation $R$ does not satisfy the inequality $R \circ R^\dagger \geq I_\X$. However, by our choice of $a$, the binary relation $R$ is clearly invertible, so it does satisfy the inequality $R \circ \top_\X^\dagger \geq \top_\Y^\dagger$, as a simple consequence of $\top_\X^\dagger \geq R\inv \circ \top_\Y^\dagger$. We conclude that $R$ does not satisfy $R \circ R^\dagger \geq I_\X$ and that it does satisfy $R \circ \top_\X^\dagger = \top_\Y^\dagger$.

\subsection{Quantum metric spaces}\label{examples.F}
A quantum metric on a von Neumann algebra $M \subsetof L(H)$ is a family of ultraweakly closed subspaces $(V_\alpha \subsetof L(H) \suchthat \alpha \in [0,\infty))$ such that $V_\alpha V_\beta \subsetof V_{\alpha+\beta}$ for all $\alpha, \beta \in [0,\infty)$, $V_\alpha = \bigcap_{\beta > \alpha} V_\beta$ for all $\alpha \in [0,\infty)$ and $V_0 = M'$ \cite{KuperbergWeaver}*{Def.~2.1(a) and 2.3}. Intuitively, each subspace $V_\alpha$ consists of those operators which transform the configuration of the system to a configuration that is at most distance $\alpha \in [0,\infty)$ away. In the motivating example of quantum Hamming distance, $M = M_2(\CC) \tensor \cdots \tensor M_2(\CC)$ and $V_\alpha$ is the span of operators $a_1 \tensor \cdots \tensor a_n$, with $a_i \neq 1$ for at most $\alpha$ indices $i \in \{1, \ldots, n\}$. Thus, $V_\alpha$ is spanned by operators that corrupt at most $\alpha$ qubits. Note that these quantum metrics generalize metrics for which the distance between two points may be infinite \cite{KuperbergWeaver}*{sec.~2.1, Prop.~2.5}.

\begin{lemma}\label{examples.F.1}
Let $\X_1, \ldots, \X_n$ be quantum sets, and let $A_1, \ldots, A_m$ be ordinary sets. Let $t_1, \ldots, t_m$ be terms of sorts $`A_1, \ldots, `A_m$, whose free variables are among $x_1, \ldots, x_n$ of sorts $\X_1, \ldots, \X_n$ and $x_{1*}, \ldots, x_{n*}$ of sorts $\X_1^*, \ldots, \X_n^*$. Let $r$ be an ordinary relation of arity $(A_1, \ldots, A_m)$. If $`r(t_1, \ldots, t_n)$ is nonduplicating, then the following are equivalent:
\begin{enumerate}
\item $\[(\forall (x_1 \feq x_{1*}) \fin \X_1 \ftimes \X_1^*)\cdots (\forall (x_n \feq x_{n*}) \fin \X_n \ftimes \X_n^*)\, `r(t_1, \ldots, t_m)\] = \top$;
\item for all $(a_1, \ldots, a_m) \in A_1 \times \cdots \times A_m \setminus r$,
$$\[(\exists (x_1 \feq x_{1*}) \fin \X_1 \ftimes \X_1^*)\cdots (\exists (x_n \feq x_{n*}) \fin \X_n \ftimes \X_n^*)\,( E_{`A}(t_1, `a_1) \AND \cdots \AND E_{`A}(t_m,`a_m))\] = \bot.$$
\end{enumerate}
\end{lemma}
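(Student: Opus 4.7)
The plan is to reduce both conditions to predicate inequalities on the single pullback $`r \circ T$, exploiting the Boolean structure of the predicate lattice on $`A_1 \times \cdots \times `A_m$ together with the fact that pulling back along a function preserves the orthomodular lattice operations. The crux will be expressing condition (2) as the orthogonality of the tensored equality predicates to $(\neg `r) \circ T$ for a suitable function $T$, and then using function-preservation of orthocomplement to convert this into condition (1).

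First I will set $\A := `A_1 \times \cdots \times `A_m$, let $\W$ denote the product of sorts of the free variables actually appearing in $`r(t_1, \ldots, t_m)$, and put $T := \[t_1\] \times \cdots \times \[t_m\]\colon \W \to \A$. Each $\[t_i\]$ is a function (by induction on terms, as shown in subsection \ref{computation.F}), and because the nonduplicating hypothesis makes the $t_i$ act on disjoint groups of variables, the tensor $T$ is a well-defined function. Iterating Proposition \ref{computation.D.1} through the $n$ diagonal universal quantifiers in (1), with the quantifiers over unused variables collapsing vacuously, reduces (1) to the single inequality $\EEE \leq \[`r(t_1, \ldots, t_m)\]$, where $\EEE$ denotes the tensor of the $E_{\X_i}$ over the indices whose variables occur; Lemma \ref{computation.F.2} identifies this right-hand side with $`r \circ T$. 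Iterating Theorem \ref{computation.D.2} dually through the existential diagonal quantifiers in (2), and using that $P \circ Q^\dagger = \bot$ iff $P \perp Q$ for predicates, reduces (2) to the family of orthogonality conditions $\[\bigwedge_i E_{`A_i}(t_i, `a_i)\] \perp \EEE$ indexed by tuples $(a_1, \ldots, a_m) \notin r$.

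Next I will use the atomic decomposition of $`r$ on the classical quantum set $\A$. Since its predicate lattice is Boolean with atoms $P_{(a_1, \ldots, a_m)}$ indexed by $A_1 \times \cdots \times A_m$, I have $`r = \bigvee_{(a_1, \ldots, a_m) \in r} P_{(a_1, \ldots, a_m)}$ and dually $\neg `r = \bigvee_{(a_1, \ldots, a_m) \notin r} P_{(a_1, \ldots, a_m)}$. On the syntactic side I will identify $\[\bigwedge_i E_{`A_i}(t_i, `a_i)\]$ with $P_{(a_1, \ldots, a_m)} \circ T$: by Lemma \ref{computation.F.2} each conjunct equals $\chi_{a_i} \circ \[t_i\]$, with $\chi_{a_i}$ the atom of $`A_i$ at $a_i$, and the nonduplicating hypothesis lets the meet of these predicates on $\W$ be reassembled as a tensor product, giving $(\chi_{a_1} \times \cdots \times \chi_{a_m}) \circ T = P_{(a_1, \ldots, a_m)} \circ T$.

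Finally, since pre-composition with the binary relation $T$ commutes with arbitrary joins, condition (2) becomes $\EEE \perp (\neg `r) \circ T$. Because $T$ is a function, $T^\star$ is a unital normal $*$-homomorphism and therefore commutes with orthocomplement, so $(\neg `r) \circ T = \neg(`r \circ T)$; hence (2) is equivalent to $\EEE \perp \neg(`r \circ T)$, which by orthomodularity is equivalent to $\EEE \leq `r \circ T$, i.e.\ to condition (1). The main obstacle I anticipate is the careful identification $\[\bigwedge_i E_{`A_i}(t_i, `a_i)\] = P_{(a_1, \ldots, a_m)} \circ T$, since unfolding the abbreviations in Definition \ref{definition.G.2} requires tracking how the meet of predicates supported on disjoint groups of variables coincides with their tensor product before one can compose with $T$.
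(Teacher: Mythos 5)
Your proposal is correct and follows essentially the same route as the paper's proof: both rest on decomposing the complement $\neg r$ into the join of point relations over tuples outside $r$, identifying $\[E_{`A_i}(t_i,\cdot)\]$-type conjuncts with compositions of atoms against the term interpretations (Lemma \ref{computation.F.2}), using that composition preserves arbitrary joins and that precomposition with a function commutes with $\neg$, and invoking the diagonal-quantifier results \ref{computation.D.1}/\ref{computation.D.2}. The only difference is organizational — you recast both conditions as lattice statements about $\EEE$ versus $`r\circ T$, whereas the paper pushes the negation through the quantifiers and the join graphically at the level of interpreted formulas — which does not change the substance of the argument.
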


The following proof includes formulas that may be infinite conjunctions or infinite disjunctions of other formulas. Formally, such formulas are not part of our language, but they pose no special difficulty. Indeed, Definition \ref{definition.C.1} excludes infinitary conjunction exclusively for the sake of the exposition. In light of Lemma \ref{appendix.C.1}, we may regard each infinitary conjunction $\bigwedge_{(a_1, \ldots, a_n) \in A^n} \phi(`a_{1}, \ldots, `a_{m}, y_1, \ldots, y_n)$ as abbreviating
$$ (\forall x_1 \in `A) \cdots (\forall x_n \in `A)\, \phi(x_1, \ldots,x_n, y_1, \ldots,y_n),$$
and we may do similarly for each infinitary conjunction $\bigwedge_{(a_1, \ldots, a_n) \in A^n}\phi(`a_{1*}, \ldots, `a_{m*}, y_1, \ldots, y_n)$.

\begin{proof}[Proof of Lemma \ref{examples.F.1}]
The relation $ \NOT r = A_1 \times \cdots \times A_m \setminus r$ may be regarded as a morphism in the dagger compact category of ordinary sets and binary relations. As a binary relation from $A_1 \times \cdots \times A_m$ to the singleton set $\{\ast\}$, it is defined by $\NOT r = \sup\{a_1^\dagger \times \cdots \times a_m^\dagger \suchthat (a_1, \ldots a_m) \in A_1 \times \cdots \times A_m \setminus r \}$. For brevity, we will write $\overline a = (a_1, \ldots, a_m)$.
\begin{align*}
 &\[`(\NOT r)(t_1, \ldots t_n)\]
\; = \quad
\begin{aligned}
\begin{tikzpicture}
\begin{pgfonlayer}{nodelayer}
    \node [style = box] (t1) at (-1, 0) {$\[t_1\]$};
    \node [style = box] (tm) at (1,0) {$\[t_m\]$};
    \node [style = none] (dots) at (0,0) {$\cdots$};
    \node [style = box] (notr) at (0,1) {$\ghost\qquad`(\NOT r )\qquad\ghost$};
    \node [style = none] (anchor1) at (-1,1) {};
    \node [style = none] (anchorm) at (1,1) {};
    \node [style = none] (t1dots) at (-1,-0.4) {$\scriptstyle \cdots$};
    \node [style = none] (tmdots) at (1,-0.4) {$\scriptstyle \cdots$};
    \node [style = none] (t1var1) at (-1.3,-0.5) {};
    \node [style = none] (t1var1anchor) at (-1.3,0) {};
    \node [style = none] (t1var2) at (-0.7,-0.5) {};
    \node [style = none] (t1var2anchor) at (-0.7,0) {};
    \node [style = none] (t2var1) at (1.3,-0.5) {};
    \node [style = none] (t2var1anchor) at (1.3,0) {};
    \node [style = none] (t2var2) at (0.7,-0.5) {};
    \node [style = none] (t2var2anchor) at (0.7,0) {};
\end{pgfonlayer}
\begin{pgfonlayer}{edgelayer}
    \draw [arrow] (t1.center) to (anchor1.center);
    \draw [arrow] (tm.center) to (anchorm.center);
    \draw (t1var1) to (t1var1anchor);
    \draw (t1var2) to (t1var2anchor);
    \draw (t2var1) to (t2var1anchor);
    \draw (t2var2) to (t2var2anchor);
\end{pgfonlayer}
\end{tikzpicture}
\end{aligned}
\quad = \;
\bigvee_{(a_1, \ldots, a_m) \in \NOT r}
\; \left(
\begin{aligned}
\begin{tikzpicture}
\begin{pgfonlayer}{nodelayer}
    \node [style = box] (t1) at (-1, 0) {$\[t_1\]$};
    \node [style = box] (tm) at (1,0) {$\[t_m\]$};
    \node [style = none] (dots) at (0,0) {$\cdots$};
    \node [style = box] (s1dagger) at (-1,1) {$`a_1^\dagger$};
    \node [style = box] (smdagger) at (1,1) {$`a_m^\dagger$};
    \node [style = none] (highdots) at (0,1) {$\cdots$};
    \node [style = none] (anchor1) at (-1,1) {};
    \node [style = none] (anchorm) at (1,1) {};
    \node [style = none] (t1dots) at (-1,-0.4) {$\scriptstyle \cdots$};
    \node [style = none] (tmdots) at (1,-0.4) {$\scriptstyle \cdots$};
    \node [style = none] (t1var1) at (-1.3,-0.5) {};
    \node [style = none] (t1var1anchor) at (-1.3,0) {};
    \node [style = none] (t1var2) at (-0.7,-0.5) {};
    \node [style = none] (t1var2anchor) at (-0.7,0) {};
    \node [style = none] (t2var1) at (1.3,-0.5) {};
    \node [style = none] (t2var1anchor) at (1.3,0) {};
    \node [style = none] (t2var2) at (0.7,-0.5) {};
    \node [style = none] (t2var2anchor) at (0.7,0) {};
\end{pgfonlayer}
\begin{pgfonlayer}{edgelayer}
    \draw [arrow] (t1.center) to (anchor1.center);
    \draw [arrow] (tm.center) to (anchorm.center);
    \draw (t1var1) to (t1var1anchor);
    \draw (t1var2) to (t1var2anchor);
    \draw (t2var1) to (t2var1anchor);
    \draw (t2var2) to (t2var2anchor);
\end{pgfonlayer}
\end{tikzpicture}
\end{aligned}
\right)
\\ & \; = \;
\bigvee_{(a_1, \ldots, a_m) \in \NOT r}
\; \left(
\begin{aligned}
\begin{tikzpicture}
\begin{pgfonlayer}{nodelayer}
    \node [style = box] (t1) at (-2, 0) {$\[t_1\]$};
    \node [style = box] (tm) at (1,0) {$\[t_m\]$};
    \node [style = none] (dots) at (0,0) {$\cdots$};
    \node [style = none] (t1dots) at (-2,-0.4) {$\scriptstyle \cdots$};
    \node [style = none] (tmdots) at (1,-0.4) {$\scriptstyle \cdots$};
    \node [style = none] (t1var1) at (-2.3,-0.5) {};
    \node [style = none] (t1var1anchor) at (-2.3,0) {};
    \node [style = none] (t1var2) at (-1.7,-0.5) {};
    \node [style = none] (t1var2anchor) at (-1.7,0) {};
    \node [style = none] (t2var1) at (1.3,-0.5) {};
    \node [style = none] (t2var1anchor) at (1.3,0) {};
    \node [style = none] (t2var2) at (0.7,-0.5) {};
    \node [style = none] (t2var2anchor) at (0.7,0) {};
    \node [style = box] (s1dagger) at (-0.9,0) {$`a_{1*}$};
    \node [style = box] (smdagger) at (2.1,0) {$`a_{m*}$};
\end{pgfonlayer}
\begin{pgfonlayer}{edgelayer}
    \draw (t1var1) to (t1var1anchor);
    \draw (t1var2) to (t1var2anchor);
    \draw (t2var1) to (t2var1anchor);
    \draw (t2var2) to (t2var2anchor);
    \draw [arrow, bend left = 90, looseness = 2] (t1) to (s1dagger);
    \draw [arrow, bend left = 90, looseness = 2] (tm) to (smdagger);
\end{pgfonlayer}
\end{tikzpicture}
\end{aligned}
\right)
\\ & \; = \;
\[ \textstyle \bigvee_{\overline a \in \NOT r} ( E_{`A}(t_1, `a_{1*}) \AND \cdots \AND E_{`A}(t_m, `a_{m*}))\].
\end{align*}

\begin{align*}
& \[(\forall (x_1 \feq x_{1*}) \fin \X_1 \ftimes \X_1^*)\cdots (\forall (x_n \feq x_{n*}) \fin \X_n \ftimes \X_n^*)\, `r(t_1, \ldots, t_m)\]
\\ & =
\[\NOT (\exists (x_1 \feq x_{1*}) \fin \X_1 \ftimes \X_1^*)\cdots (\exists (x_n \feq x_{n*}) \fin \X_n \ftimes \X_n^*)\, \NOT`r(t_1, \ldots, t_m)\]
\\ & =
\[\NOT (\exists (x_1 \feq x_{1*}) \fin \X_1 \ftimes \X_1^*)\cdots (\exists (x_n \feq x_{n*}) \fin \X_n \ftimes \X_n^*)\, (\NOT`r)(t_1, \ldots, t_m)\]
\\ & = 
\[\NOT (\exists (x_1 \feq x_{1*}) \fin \X_1 \ftimes \X_1^*)\cdots (\exists (x_n \feq x_{n*}) \fin \X_n \ftimes \X_n^*)\, `(\NOT r)(t_1, \ldots, t_m)\]
\\ & = 
\[\NOT (\exists (x_1 \feq x_{1*}) \fin \X_1 \ftimes \X_1^*)\cdots (\exists (x_n \feq x_{n*}) \fin \X_n \ftimes \X_n^*)\, \textstyle \bigvee_{\overline a \in \NOT r} ( E_{`A}(t_1, `a_{1*}) \AND \cdots \AND E_{`A}(t_m), `a_{m*}))\]
\\ & =
\[\NOT \textstyle \bigvee_{\overline a \in \NOT r} (\exists (x_1 \feq x_{1*}) \fin \X_1 \ftimes \X_1^*)\cdots (\exists (x_n \feq x_{n*}) \fin \X_n \ftimes \X_n^*)\,  ( E_{`A}(t_1, `a_{1*}) \AND \cdots \AND E_{`A}(t_m), `a_{m*}))\]
\\ & =
\bigwedge_{\overline a \in \NOT r}\[\NOT (\exists (x_1 \feq x_{1*}) \fin \X_1 \ftimes \X_1^*)\cdots (\exists (x_n \feq x_{n*}) \fin \X_n \ftimes \X_n^*)\,  ( E_{`A}(t_1, `a_{1*}) \AND \cdots \AND E_{`A}(t_m), `a_{m*}))\].
\end{align*}
For the second equality, we appeal to Lemma \ref{computation.F.2}; \cite{Kornell}*{Thm.~B.8} implies that for each function from a quantum set $\X$ to a quantum set $\Y$ and each binary relation $R$ from $\Y$ to $\mathbf 1$, we have that $\neg(R \circ F) = (\neg R) \circ F$. For the penultimate equality, we appeal to Theorem \ref{computation.D.2} and either to Proposition \ref{computation.C.3} or to the fact that the composition of binary relations between quantum sets respects arbitrary joins in both arguments, according to our gloss of the infinitary disjunction symbol. The conjunction of binary relations is equal to $\top$ if and only if each conjunct is equal to $\top$, so the statement of the theorem follows.
\end{proof}

\begin{proposition}\label{examples.F.2}
Let $\X$ be a quantum set, let $A$ be an ordinary set, and let $F\: \X \times \X^* \to `A$ be a function. For each $a \in A$, let $R_a$ be the binary relation on $\X$ defined by $\bend R_a = `a^\dagger \circ F$. Let $a_0 \in A$. Then, $R_{a_0} \geq I_{`A}$ if and only if
\begin{equation*}\label{examples.F.2*}\[(\forall (x_1 \feq x_{1*}) \fin \X \ftimes \X^*)\, `(=_A)(F(x_1,x_{1*}),`a_0)\] = \top. \end{equation*}
\end{proposition}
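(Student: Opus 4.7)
The plan is to use Proposition~\ref{examples.A.1} to convert the inequality $R_{a_0} \geq I_\X$ into the condition that $\bend R_{a_0}$ universally holds on the diagonal, and then to show that the predicate $\bend R_{a_0}(x, x_*)$ has the same interpretation as $`(=_A)(F(x, x_*), `a_0)$ by direct computation using the term-semantics of subsection~\ref{computation.F}. (Note that $I_{`A}$ in the displayed inequality should be read as $I_\X$, since $R_{a_0}$ is a binary relation on $\X$.)

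First I would apply Proposition~\ref{examples.A.1} to the binary relation $R_{a_0}$ on $\X$: the inequality $I_\X \leq R_{a_0}$ is equivalent to
\begin{equation*}
\[(\forall (x \feq x_*) \fin \X \ftimes \X^*)\, \bend R_{a_0}(x, x_*)\] = \top.
\end{equation*}
Since the outer universal diagonal quantifier matches the one in the displayed formula of the proposition, it suffices to show that $\bend R_{a_0}(x, x_*)$ and $`(=_A)(F(x, x_*), `a_0)$ agree as relations of arity $(\X, \X^*)$.

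Next I would unfold the interpretation of the right-hand atomic formula. Since $`(=_A)(F(x, x_*), `a_0)$ is a nonduplicating atomic formula with disjoint term-variables (the first term uses $x, x_*$; the second term is a constant), Lemma~\ref{computation.F.2} gives
\begin{equation*}
\[`(=_A)(F(x, x_*), `a_0)\] = `(=_A) \circ (\[F(x, x_*)\] \times \[`a_0\]).
\end{equation*}
By Lemma~\ref{computation.F.3} and the analysis following Definition~\ref{computation.F.1}, $\[F(x, x_*)\]$ is just $F$ as a morphism $\X \times \X^* \to `A$, and the constant term $`a_0$ interprets as the morphism $`a_0 \colon \mathbf 1 \to `A$ picking out the element $a_0$. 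Using bifunctoriality to rewrite $F \times `a_0 = (I_{`A} \times `a_0) \circ F$, after the canonical identification $\X \times \X^* \cong \X \times \X^* \times \mathbf 1$, we obtain $\[`(=_A)(F(x, x_*), `a_0)\] = `(=_A) \circ (I_{`A} \times `a_0) \circ F$.

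Finally I would verify the identity $`(=_A) \circ (I_{`A} \times `a_0) = `a_0^\dagger$ as binary relations from $`A$ to $\mathbf 1$. This is the quantum lift of the corresponding identity in the classical category of sets and relations, and it may be checked atomwise: the quantization $`(=_A)$ is supported on diagonal atoms $\CC_a \tensor \CC_a$, the function $`a_0$ selects the atom $\CC_{a_0}$ in $`A$, and the composite is the binary relation from $`A$ to $\mathbf 1$ supported exactly at $\CC_{a_0}$, which is $`a_0^\dagger$ by definition. Combined with the hypothesis $\bend R_{a_0} = `a_0^\dagger \circ F$, this yields $\[`(=_A)(F(x, x_*), `a_0)\] = `a_0^\dagger \circ F = \bend R_{a_0}$, completing the reduction. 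The only nontrivial input is this atomwise identity; everything else is a routine application of Proposition~\ref{examples.A.1} together with the term-interpretation lemmas of subsection~\ref{computation.F}.
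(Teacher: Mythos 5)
Your proof is correct, but it takes a genuinely different route from the paper. The paper runs this proposition through its general Lemma \ref{examples.F.1}: the universal diagonal statement about $`(=_A)$ applied to terms is converted into the vanishing of the corresponding existential statements for every pair $(a_1,a_2)$ in the complement of $=_A$, which (after noting $`a_2^\dagger \circ `a_0$ is $\top$ or $\bot$) becomes the condition $R_{a_1} \perp I_\X$ for all $a_1 \neq a_0$; the paper then invokes the fact that $(R_a)_{a\in A}$ is a pairwise-orthogonal family of binary relations whose join is the maximum, so that orthogonality to all the other $R_{a_1}$ is equivalent to $I_\X \leq R_{a_0}$. You instead exploit the special feature of this particular statement --- that the second argument of $`(=_A)$ is the constant $`a_0$ --- to compute directly, via Lemma \ref{computation.F.2}, the identity $\[F(x,x_*)\]=F$, and the atomwise identity $`(=_A)\circ(I_{`A}\times `a_0)=`a_0^\dagger$ (which is just functoriality of the quantization $r \mapsto `r$), that the body of the quantifier has interpretation $`a_0^\dagger \circ F = \bend R_{a_0}$, and then Proposition \ref{examples.A.1} finishes; since the interpretation of the diagonal quantifier depends only on the interpretation of its body (Proposition \ref{computation.D.1}), substituting the body is legitimate. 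Your argument is shorter and does not need either Lemma \ref{examples.F.1} or the partition property of $(R_a)_{a \in A}$; what the paper's method buys is uniformity, since the same Lemma \ref{examples.F.1} machinery is what carries the companion results \ref{examples.F.3} and \ref{examples.F.4}, where the arguments of the classical relation are not constants and no collapse to a single $\bend R_{a_0}$ is available. Your reading of $I_{`A}$ as $I_\X$ in the statement is also the intended one.
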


\begin{proof}
Applying Lemma \ref{examples.F.1}, we find that the following are equivalent:
\begin{align*}&
\[(\forall (x_1 \feq x_{1*}) \fin \X \ftimes \X^*)\, `(=_A)(F(x_1,x_{1*}),`a_0)\] = \top
\\ & \Longleftrightarrow \quad
\text{for all distinct $a_1, a_2 \in A$, } \quad
\begin{aligned}
\begin{tikzpicture}
\begin{pgfonlayer}{nodelayer}
    \node [style=box] (F) at (0,0) {$\phantom| \, F \,\phantom|$};
    \node [style=box] (s0) at (1.2,0) {$`a_{0}$};
    \node [style=box] (s1) at (0,1) {$`a_1^\dagger$};
    \node [style=box] (s2) at (1.2,1) {$`a_2^\dagger$};
    \node [style=none] (anchor2) at (0.25,-0.2) {};
    \node [style=none] (anchor1) at (-0.25, -0.2) {};
\end{pgfonlayer}
\begin{pgfonlayer}{edgelayer}
    \draw [arrow, bend left=90, looseness=3] (anchor2) to (anchor1);
    \draw [arrow] (F.north) to (s1);
    \draw [arrow] (s0) to (s2);
\end{pgfonlayer}
\end{tikzpicture}
\end{aligned}
\quad = \; \bot
\\ & \Longleftrightarrow \quad
\text{for all $a_1\in A$ distinct from $a_0$, } \quad
\begin{aligned}
\begin{tikzpicture}
\begin{pgfonlayer}{nodelayer}
    \node [style=box] (Rs1) at (0,0) {$\phantom|\bend R_{a_1}\phantom|$};
    \node [style=none] (anchor2) at (0.25,-0.2) {};
    \node [style=none] (anchor1) at (-0.25, -0.2) {};
\end{pgfonlayer}
\begin{pgfonlayer}{edgelayer}
    \draw [arrow, bend left=90, looseness=3] (anchor2) to (anchor1);
\end{pgfonlayer}
\end{tikzpicture}
\end{aligned} \quad = \; \bot
\\ & \Longleftrightarrow \quad
\text{for all $a_1 \in A$ distinct from $a_0$,}  \quad
\begin{aligned}
\begin{tikzpicture}
\begin{pgfonlayer}{nodelayer}
    \node [style = box] (R) at (0,0) {$R_{a_1}$};
    \node [style = none] (ri) at (0.7,0) {};
\end{pgfonlayer}
\begin{pgfonlayer}{edgelayer}
    \draw [arrow, bend left = 90, looseness = 3] (R.center) to (ri.center);
    \draw [arrow, bend left = 90, looseness = 3] (ri.center) to (R.center);
\end{pgfonlayer}
\end{tikzpicture}
\end{aligned}
\quad =\; \bot
\\ & \Longleftrightarrow \quad
\text{for all $a_1 \in A$ distinct from $a_0$,\;$R_{a_1} \perp I_{`A}$}
\quad \Longleftrightarrow \quad
I_{`A} \leq R_{a_0}.
\end{align*}

The second equivalence follows form the fact that $`a_2^\dagger \circ `a_0 = \top$ or $`a_2^\dagger \circ `a_0 = \bot$ according to whether $a_2 = a_0$ or $a_2 \neq a_0$. The last equivalence follows from the fact that $\{`a^\dagger \circ F \suchthat a \in A\}$ consists of pairwise-orthogonal relations of arity $(\X, \X^*)$ whose join is the maximum relation of arity $(\X, \X^*)$, and thus, $\{R_a \suchthat a \in A\}$ consists of binary relations on $\X$ whose join is the maximum binary relation on $\X$.
\end{proof}

\begin{proposition}\label{examples.F.3}
Let $\X$ be a quantum set, let $A$ be an ordinary set, and let $F\: \X \times \X^* \to `A$ be a function. For each $a \in A$, let $R_a$ be the binary relation on $\X$ defined by $\bend R_a = `a^\dagger \circ F$. Then, $R_a^\dagger = R_a$ for all $a \in A$ if and only if 
$$\[(\forall (x_1 \feq x_{1*}) \fin \X \ftimes \X^*)\, (\forall (x_2 \feq x_{2*}) \fin \X \ftimes \X^*)\, `(=_A)( F(x_1, x_{2*}),  F(x_2, x_{1*}))\] = \top.$$
\end{proposition}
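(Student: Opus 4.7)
The strategy is to apply Lemma~\ref{examples.F.1} to decompose the universal-formula condition into a family of orthogonality-type conditions indexed by distinct pairs $(a_1, a_2) \in A \times A$, recognize these conditions via the graphical calculus as the vanishing of a trace, and then invoke a partition-of-unity argument. Since the formula $`(=_A)(F(x_1, x_{2*}), F(x_2, x_{1*}))$ is nonduplicating (the four variables $x_1, x_{2*}, x_2, x_{1*}$ are distinct and each appears once in the atomic subformulas), Lemma~\ref{examples.F.1} reduces the displayed condition to the requirement that for every $(a_1, a_2) \in A \times A$ with $a_1 \neq a_2$,
\begin{equation*}
E(a_1, a_2) := \[(\exists(x_1 \feq x_{1*}) \fin \X \ftimes \X^*)\,(\exists(x_2 \feq x_{2*}) \fin \X \ftimes \X^*)\,(E_{`A}(F(x_1, x_{2*}), `a_1) \AND E_{`A}(F(x_2, x_{1*}), `a_2))\] = \bot.
\end{equation*}

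Next I would compute $E(a_1, a_2)$ graphically. Exactly as in the proof of Proposition~\ref{examples.F.2}, Lemma~\ref{computation.F.2} identifies $\[E_{`A}(F(x_i, x_{j*}), `a)\]$ in its natural context with $\bend R_a = `a^\dagger \circ F$; since the two conjuncts share no variables, their conjunction in the combined four-variable context is the external product $\bend R_{a_1} \times \bend R_{a_2}$, up to a permutation of factors handled by Proposition~\ref{computation.A.1}. Applying Theorem~\ref{computation.D.2} twice composes this product with an $E_\X^\dagger$ cap pairing the $x_1$ wire of $\bend R_{a_1}$ with the $x_{1*}$ wire of $\bend R_{a_2}$, and with a second $E_\X^\dagger$ cap pairing the $x_{2*}$ wire of $\bend R_{a_1}$ with the $x_2$ wire of $\bend R_{a_2}$. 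Unfolding $\bend R_{a_i} = E_\X \circ (R_{a_i} \times I_{\X^*})$ and invoking the snake equations of the dagger compact structure straightens the resulting closed diagram into $\Tr_\X(R_{a_1} \circ R_{a_2})$. By the orthogonality--trace identity recalled at the opening of section~\ref{examples}, applied with $R = R_{a_2}$ and $S = R_{a_1}^\dagger$, this equals $\bot$ if and only if $R_{a_1}^\dagger \perp R_{a_2}$.

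Hence the displayed condition is equivalent to $R_{a_1}^\dagger \perp R_{a_2}$ for all distinct $a_1, a_2 \in A$, and I would finish by showing this is equivalent to $R_a^\dagger = R_a$ for every $a \in A$. Because $F$ is a function, the family $\{\bend R_a\}_{a \in A} \subset \Pred(\X \times \X^*)$ is a partition of unity: its members are pairwise orthogonal with join $\top_{\X \times \X^*}$. By Lemma~\ref{examples.A.2} the map $\bend R \mapsto \bend{R^\dagger}$ is an orthomodular-lattice automorphism of $\Pred(\X \times \X^*)$, so the daggered family $\{\bend{R_a^\dagger}\}_{a \in A}$ also partitions unity. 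The forward implication is then immediate: $R_a = R_a^\dagger$ gives $R_{a_1}^\dagger = R_{a_1} \perp R_{a_2}$ for distinct $a_1, a_2$. Conversely, fixing $a \in A$, the hypothesis yields $\bend{R_a^\dagger} \leq \neg \bend R_{a'}$ for each $a' \neq a$, whence $\bend{R_a^\dagger} \leq \bigwedge_{a' \neq a} \neg \bend R_{a'} = \bend R_a$ by De Morgan and the partition property; applying the dagger gives the reverse inequality, so $R_a^\dagger = R_a$. The main obstacle is the graphical identification in the middle step: careful bookkeeping of arrow orientations and the snake equations is required to verify that the closed ``pretzel'' produced by the two diagonal quantifiers is exactly $\Tr_\X(R_{a_1} \circ R_{a_2})$, with no spurious dagger introduced along the way.
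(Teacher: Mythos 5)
Your proposal is correct and follows essentially the same route as the paper: reduce via Lemma \ref{examples.F.1} to the vanishing of the existential interpretations for distinct pairs $(a_1,a_2)$, identify the resulting closed diagram with the orthogonality of $R_{a_1}$ and $R_{a_2}^\dagger$ (you phrase this as $\Tr_\X(R_{a_1}\circ R_{a_2})=\bot$, the paper uses the equivalent two-arc form of the same criterion), and finish with the partition-of-unity argument using that the $R_a$ are pairwise orthogonal with join the maximum relation and that $\dagger$ is an order isomorphism. No gaps.
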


\begin{proof}
Applying Lemma \ref{examples.F.1}, we find that the following are equivalent:
\begin{align*}&
\[(\forall (x_1 \feq x_{1*}) \fin \X \ftimes \X^*)\, (\forall (x_2 \feq x_{2*}) \fin \X \ftimes \X^*)\, `(=_A)( F(x_1, x_{2*}),  F(x_2, x_{1*}))\] = \top
\\ & \Longleftrightarrow \quad
\text{for all distinct $a_1, a_2 \in A$, } \quad
\begin{aligned}
\begin{tikzpicture}
\begin{pgfonlayer}{nodelayer}
    \node [style=box] (s1) at (0,1) {$`a_1^\dagger$};
    \node [style=box] (s2) at (1,1) {$`a_{2}^\dagger$};
    \node [style=box] (F1) at (0,0) {$\phantom|\, F\, \phantom|$};
    \node [style=box] (F2) at (1,0) {$\phantom|\, F\, \phantom |$};
    \node [style=none] (F1anch1) at (-0.25,-0.2) {};
    \node [style=none] (F1anch2) at (0.25, -0.2) {};
    \node [style=none] (F2anch1) at (0.75,-0.2) {};
    \node [style=none] (F2anch2) at (1.25, -0.2) {};
\end{pgfonlayer}
\begin{pgfonlayer}{edgelayer}
    \draw [arrow] (F1.north) to (s1.south);
    \draw [arrow] (F2.north) to (s2.south);
    \draw [arrow, bend right = 90, looseness=2.5] (F1anch2) to (F2anch1);
    \draw [arrow, bend left = 90, looseness=2] (F2anch2) to (F1anch1);
\end{pgfonlayer}
\end{tikzpicture}
\end{aligned}
\quad = \; \bot
\\ & \Longleftrightarrow \quad
\text{for all distinct $a_1, a_2 \in A$, } \quad
\begin{aligned}
\begin{tikzpicture}
\begin{pgfonlayer}{nodelayer}
    \node [style=box] (R1) at (0,0) {$\phantom|\, \bend R_{a_1} \phantom|$};
    \node [style=box] (R2) at (1.4,0) {$\phantom|\, \bend R_{a_2} \phantom |$};
    \node [style=none] (F1anch1) at (-0.25,-0.2) {};
    \node [style=none] (F1anch2) at (0.25, -0.2) {};
    \node [style=none] (F2anch1) at (1.15,-0.2) {};
    \node [style=none] (F2anch2) at (1.65, -0.2) {};
\end{pgfonlayer}
\begin{pgfonlayer}{edgelayer}
    \draw [arrow, bend right = 90, looseness=2] (F1anch2) to (F2anch1);
    \draw [arrow, bend left = 90, looseness=1.8] (F2anch2) to (F1anch1);
\end{pgfonlayer}
\end{tikzpicture}
\end{aligned} \quad = \; \bot
\\ & \Longleftrightarrow \quad
\text{for all distinct $a_1, a_2 \in A$, } \quad
\begin{aligned}
\begin{tikzpicture}
\begin{pgfonlayer}{nodelayer}
    \node [style=box] (R1) at (-0.3,0) {$  R_{a_1}$};
    \node [style=box] (R2) at (1.1,0) {$ R_{a_2}$};
    \node [style=none] (anch1) at (0.3,0) {};
    \node [style=none] (anch2) at (1.7,0) {};
\end{pgfonlayer}
\begin{pgfonlayer}{edgelayer}
    \draw [arrow, bend left = 90, looseness = 3 ] (R1.center) to (anch1.center);
    \draw [arrow, bend right =90, looseness = 2.5 ] (anch1.center) to (R2.center);
    \draw [arrow, bend left = 90, looseness = 3 ] (R2.center) to (anch2.center);
    \draw [arrow, bend left = 90, looseness =2 ] (anch2.center) to (R1.center);
\end{pgfonlayer}
\end{tikzpicture}
\end{aligned} \quad = \; \bot
\\ & \Longleftrightarrow \quad
\text{for all distinct $a_1, a_2 \in A$, } \quad
\begin{aligned}
\begin{tikzpicture}
\begin{pgfonlayer}{nodelayer}
    \node [style=box] (R1) at (0,0) {$R_{a_1}\, $};
    \node [style=box] (R2) at (1.2,0) {$R_{a_2}^* $};
\end{pgfonlayer}
\begin{pgfonlayer}{edgelayer}
    \draw [arrow, bend left = 90, looseness = 2.5] (R1.center) to (R2.center);
    \draw [arrow, bend left = 90, looseness = 2.5] (R2.center) to (R1.center);
\end{pgfonlayer}
\end{tikzpicture}
\end{aligned} \quad = \; \bot
\\ & \Longleftrightarrow \quad
\text{for all distinct $a_1, a_2 \in A$,\; $R_{a_1} \perp R_{a_2}^\dagger$}
\quad \Longleftrightarrow \quad
\text{for all $a \in A$, \; $R_a \leq R_a^\dagger$}.
\end{align*}
The second-to-last equivalence holds because $R_{a_2}^* = (R_{a_2}^\dagger)_*$. The last equivalence holds because $\{R_a\suchthat a \in A\}$ consists of pairwise-orthogonal relations on $\X$ whose join is the maximum relation on $\X$. The inequality $R_a \leq R_a^\dagger$ for all $a \in A$ is equivalent to the equality $R_a = R_a^\dagger$ for all $a \in A$ because the adjoint operation is an order isomorphism.
\end{proof}

\begin{proposition}\label{examples.F.4}
Let $\X$ be a quantum set, and let $F\: \X \times \X^* \to `\RRm$ be a function. For each $\alpha \in \RRm$, let $R_\alpha$ be the binary relation on $\X$ defined by $\bend R_\alpha = `\alpha^\dagger \circ F$. Then, $R_{\alpha_2} \circ R_{\alpha_1} \leq \bigvee_{\alpha \leq \alpha_1 + \alpha_2} R_\alpha^\dagger$ for all $\alpha_1, \alpha_2 \in \RRm$ if and only if
\begin{align*}
& \[ (\forall (x_1 \feq x_{1*}) \fin \X \ftimes \X^*)\, (\forall (x_2 \feq x_{2*}) \fin \X \ftimes \X^*)\,(\forall (x_3 \feq x_{3*}) \fin \X \ftimes \X^*) \\ & \hspace{35ex}  `(\bend\leq) (F(x_1,x_{2*}),`(+)(F(x_2,x_{3^*}), F(x_3, x_{1*})))\] = \top.
\end{align*}
\end{proposition}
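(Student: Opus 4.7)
The approach follows exactly the template of Propositions \ref{examples.F.2} and \ref{examples.F.3}: reduce the universal-diagonal quantification via Lemma \ref{examples.F.1} to a family of trace/orthogonality conditions indexed by the complement of the relevant classical relation, evaluate each such condition diagrammatically, and then exploit the fact that $\{R_\beta\}_{\beta \in \RRm}$ is a partition of the maximum relation.

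First, rewrite the nonduplicating atomic formula $`(\bend\leq)(F(x_1,x_{2*}),\,`(+)(F(x_2,x_{3*}),F(x_3,x_{1*})))$ as $`S(F(x_1,x_{2*}),F(x_2,x_{3*}),F(x_3,x_{1*}))$, where $S \subseteq \RRm^3$ is the ordinary ternary relation $\{(\beta_1,\beta_2,\beta_3) : \beta_1 \leq \beta_2+\beta_3\}$; the two formulas have identical interpretations by Lemma \ref{computation.F.2} applied to the inner $`(+)$ function symbol (composition of function graphs in the term is absorbed into a single classical ternary relation). Now apply Lemma \ref{examples.F.1} with this $S$ and with $t_1=F(x_1,x_{2*})$, $t_2=F(x_2,x_{3*})$, $t_3=F(x_3,x_{1*})$: the displayed universal formula equals $\top$ if and only if for every $(\beta_1,\beta_2,\beta_3) \in \RRm^3$ with $\beta_1 > \beta_2+\beta_3$ the existential
\[
\[(\exists (x_1\feq x_{1*}))\,(\exists(x_2\feq x_{2*}))\,(\exists(x_3\feq x_{3*}))\,(E_{`\RRm}(t_1,`\beta_1) \AND E_{`\RRm}(t_2,`\beta_2) \AND E_{`\RRm}(t_3,`\beta_3))\] = \bot.
\]

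Next, evaluate this interpretation diagrammatically, exactly as in Proposition \ref{examples.F.3} but with a third factor. By Lemma \ref{computation.F.2} and the definition $\bend R_{\beta_i} = `\beta_i^\dagger \circ F$, each conjunct $\[E_{`\RRm}(F(x_i,x_{(i+1)*}),`\beta_i)\]$ is the predicate $\bend R_{\beta_i}$ on the wires $(x_i,x_{(i+1)*})$. The three conjuncts share no variables, so their conjunction is the parallel product $\bend R_{\beta_1} \times \bend R_{\beta_2} \times \bend R_{\beta_3}$. By Theorem \ref{computation.D.2}, each diagonal existential $(\exists(x_i \feq x_{i*}))$ closes the wire $x_i$ together with the wire $x_{i*}$. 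Tracking conventions -- the first leg of $\bend R_{\beta_i}$ is the domain of $R_{\beta_i}$ and the second leg is its dualized codomain -- the three gluings connect codomain of $R_{\beta_1}$ to domain of $R_{\beta_2}$, codomain of $R_{\beta_2}$ to domain of $R_{\beta_3}$, and codomain of $R_{\beta_3}$ to domain of $R_{\beta_1}^\dagger$. The resulting closed diagram is $\Tr_\X(R_{\beta_1}^\dagger \circ R_{\beta_3} \circ R_{\beta_2})$, which equals $\bot$ iff $R_{\beta_3} \circ R_{\beta_2} \perp R_{\beta_1}^\dagger$ by \cite{KornellLindenhoviusMislove2}*{App.~C}.

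Finally, set $\alpha := \beta_1$, $\alpha_1 := \beta_2$, $\alpha_2 := \beta_3$; the condition becomes: for all $\alpha,\alpha_1,\alpha_2 \in \RRm$ with $\alpha > \alpha_1 + \alpha_2$, $R_{\alpha_2} \circ R_{\alpha_1} \perp R_\alpha^\dagger$. Because $F$ is a function into $`\RRm$ and $\{`\beta^\dagger : \beta \in \RRm\}$ is a pairwise orthogonal family of predicates on $`\RRm$ with join $\top_{`\RRm}$, the family $\{\bend R_\beta : \beta \in \RRm\}$ is pairwise orthogonal in $\Rel(\X,\X^*)$ with join $\top_{\X\times\X^*}$, and hence $\{R_\beta^\dagger : \beta \in \RRm\}$ is a pairwise orthogonal family of binary relations on $\X$ with join the maximum binary relation. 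For such a family, orthogonality of a fixed relation $T$ to every $R_\alpha^\dagger$ with $\alpha > \alpha_1+\alpha_2$ is equivalent to $T \leq \bigvee_{\alpha \leq \alpha_1+\alpha_2} R_\alpha^\dagger$. Applying this to $T = R_{\alpha_2} \circ R_{\alpha_1}$ yields the proposition.

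The main obstacle will be the bookkeeping in the second paragraph: keeping straight which of the six wires corresponds to a domain versus a dualized codomain, so that the cyclic composition in the trace comes out as $R_{\alpha_2} \circ R_{\alpha_1}$ (not its reverse) with the dagger on $R_\alpha$, matching the inequality in the statement rather than an adjointed variant.
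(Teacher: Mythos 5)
Your proposal is correct in substance and follows essentially the same route as the paper: reduce via Lemma \ref{examples.F.1} with the ternary relation $r=\{(\beta_1,\beta_2,\beta_3)\in\RRm^3 : \beta_1\le\beta_2+\beta_3\}$ (absorbing the nested $`(+)$ term by Lemma \ref{computation.F.2}), evaluate the resulting closed diagrams as traces, and pass to the stated join using the fact that $(R_\beta \suchthat \beta\in\RRm)$ is a pairwise orthogonal family whose join is the maximum binary relation on $\X$.

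One bookkeeping correction to your second paragraph, which is exactly the spot you flagged: the last cap glues the codomain of $R_{\beta_3}$ to the domain of $R_{\beta_1}$ itself, not of $R_{\beta_1}^\dagger$, so the closed diagram is $\Tr_\X(R_{\beta_3}\circ R_{\beta_2}\circ R_{\beta_1})$, with no dagger (classically: a $3$-cycle $x_1\,R_{\beta_1}\,x_2\,R_{\beta_2}\,x_3\,R_{\beta_3}\,x_1$). The orthogonality criterion $\Tr_\X(S^\dagger\circ R)=\bot \Leftrightarrow R\perp S$, applied with $R=R_{\beta_3}\circ R_{\beta_2}$ and $S=R_{\beta_1}^\dagger$ together with cyclicity of the trace, then yields precisely your intended conclusion $R_{\beta_3}\circ R_{\beta_2}\perp R_{\beta_1}^\dagger$. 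As written, however, your trace formula $\Tr_\X(R_{\beta_1}^\dagger\circ R_{\beta_3}\circ R_{\beta_2})$ and the inference you draw from it each misplace one dagger (your displayed trace vanishing would instead be equivalent to $R_{\beta_3}\circ R_{\beta_2}\perp R_{\beta_1}$); the two slips cancel, so the endpoint agrees with the paper, but the intermediate claims should be repaired. Everything else, including the final passage from orthogonality to every $R_\alpha^\dagger$ with $\alpha>\alpha_1+\alpha_2$ to the inequality against $\bigvee_{\alpha\le\alpha_1+\alpha_2}R_\alpha^\dagger$, is the paper's argument, with the partition property you cite supplying the justification the paper leaves implicit.
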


\begin{proof}
Refer to \cite{KornellLindenhoviusMislove2}*{App.~C} for the basic properties of the trace on binary relations. Let $\alpha, \alpha_1, \alpha_2 \in \RRm$. We compute that
\begin{align*}
\begin{aligned}
\begin{tikzpicture}[scale = 1]
\begin{pgfonlayer}{nodelayer}
    \node [style=box] (F1) at (0,0) {$\phantom|\, F\, \phantom|$};
    \node [style=box] (F2) at (1,0) {$\phantom|\, F\, \phantom|$};
    \node [style=box] (F3) at (2,0) {$\phantom|\, F\, \phantom|$};
    \node [style=box] (s1) at (0, 1) {$`\alpha_{\phantom{1}}^\dagger$};
    \node [style=box] (s2) at (1, 1) {$`\alpha_1^\dagger$};
    \node [style=box] (s3) at (2, 1) {$`\alpha_2^\dagger$};
    \node [style=none] (anch1) at (-0.2,-0.2) {};
    \node [style=none] (anch2) at (0.2,-0.2) {};
    \node [style=none] (anch3) at (0.8,-0.2) {};
    \node [style=none] (anch4) at (1.2,-0.2) {};
    \node [style=none] (anch5) at (1.8,-0.2) {};
    \node [style=none] (anch6) at (2.2,-0.2) {};
\end{pgfonlayer}
\begin{pgfonlayer}{edgelayer}
    \draw [arrow] (F1.north) to (s1.south);
    \draw [arrow] (F2.north) to (s2.south);
    \draw [arrow] (F3.north) to (s3.south);
    \draw [arrow, bend right = 90, looseness=2.5] (anch2) to (anch3);
    \draw [arrow, bend right = 90, looseness=2.5] (anch4) to (anch5);
    \draw [arrow, bend left = 90, looseness=1.5] (anch6) to (anch1);
\end{pgfonlayer}
\end{tikzpicture}
\end{aligned}
\quad & = \quad
\begin{aligned}
\begin{tikzpicture}[scale = 1]
\begin{pgfonlayer}{nodelayer}
    \node [style=none] (cloud) at (0,1.2) {$\phantom \circ$};
    \node [style=box] (R1) at (0,0) {$\phantom|\, \bend R_{\alpha_{\phantom{1}}} \phantom|$};
    \node [style=box] (R2) at (1.5,0) {$\phantom|\, \bend R_{\alpha_1} \phantom|$};
    \node [style=box] (R3) at (3,0) {$\phantom|\, \bend R_{\alpha_2} \phantom|$};
    \node [style=none] (anch1) at (-0.3,-0.2) {};
    \node [style=none] (anch2) at (0.3,-0.2) {};
    \node [style=none] (anch3) at (1.2,-0.2) {};
    \node [style=none] (anch4) at (1.8,-0.2) {};
    \node [style=none] (anch5) at (2.7,-0.2) {};
    \node [style=none] (anch6) at (3.3,-0.2) {};
\end{pgfonlayer}
\begin{pgfonlayer}{edgelayer}
    \draw [arrow, bend right = 90, looseness=2] (anch2) to (anch3);
    \draw [arrow, bend right = 90, looseness=2] (anch4) to (anch5);
    \draw [arrow, bend left = 90, looseness=1] (anch6) to (anch1);
\end{pgfonlayer}
\end{tikzpicture}
\end{aligned}
\quad = \quad
\begin{aligned}
\begin{tikzpicture}[scale = 1]
\begin{pgfonlayer}{nodelayer}
    \node [style=none] (cloud) at (0,1.2) {$\phantom \circ$};
    \node [style=box] (R1) at (-0.3,0) {$ R_{\alpha_{\phantom{1}}}$};
    \node [style=box] (R2) at (1.2,0) {$ R_{\alpha_1}$};
    \node [style=box] (R3) at (2.7,0) {$R_{\alpha_2}$};
    \node [style=none] (anch1) at (0.3,0) {};
    \node [style=none] (anch2) at (1.8,0) {};
    \node [style=none] (anch3) at (3.3,0) {};
\end{pgfonlayer}
\begin{pgfonlayer}{edgelayer}
    \draw [arrow, bend left = 90, looseness=4] (R1.center) to (anch1.center);
    \draw [arrow, bend right = 90, looseness=2.5] (anch1.center) to (R2.center);
    \draw [arrow, bend left = 90, looseness=4] (R2.center) to (anch2.center);
    \draw [arrow, bend right = 90, looseness=2.5] (anch2.center) to (R3.center);
    \draw [arrow, bend left = 90, looseness=4] (R3.center) to (anch3.center);
    \draw [arrow, bend left = 90, looseness=1] (anch3.center) to (R1.center);
\end{pgfonlayer}
\end{tikzpicture}
\end{aligned}
\\ & = \quad 
\begin{aligned}
\begin{tikzpicture}
\begin{pgfonlayer}{nodelayer}
    \node [style=box] (R1) at (0,-1) {$R_{\alpha_{\phantom{1}}}$};
    \node [style=box] (R2) at (0,0) {$R_{\alpha_1}$};
    \node [style=box] (R3) at (0,1) {$R_{\alpha_2}$};
    \node [style=none] (anch1) at (1.5,-1) {};
    \node [style=none] (anch3) at (1.5,1) {};
\end{pgfonlayer}
\begin{pgfonlayer}{edgelayer}
    \draw [arrow] (R1.north) to (R2.south);
    \draw [arrow] (R2.north) to (R3.south);
    \draw [arrow, bend left = 90, looseness = 2] (R3.center) to (anch3.center);
    \draw [arrow] (anch3.center) to (anch1.center);
    \draw [arrow, bend left = 90, looseness =2] (anch1.center) to (R1.center);
\end{pgfonlayer}
\end{tikzpicture}
\end{aligned}
\quad = \;
\Tr_\X (R_{\alpha_2} \circ R_{\alpha_1} \circ R_{\alpha}).
\end{align*}

Let $r$ be the ordinary relation of arity $(\RRm, \RRm, \RRm)$ defined by $$r = (\bend \leq) \circ (\mathrm{id}_\RRm \times (+)) = \{(\alpha,\alpha_1,\alpha_2) \in \RRm^3 \suchthat \alpha \leq \alpha_1 + \alpha_2\}.$$ Applying Lemma \ref{examples.F.1}, we find that the following are equivalent:
\begin{align*}
& \[ (\forall (x_1 \feq x_{1*}) \fin \X \ftimes \X^*)\, (\forall (x_2 \feq x_{2*}) \fin \X \ftimes \X^*)\,(\forall (x_3 \feq x_{3*}) \fin \X \ftimes \X^*) \\ & \hspace{35ex}  `(\bend\leq) (F(x_1,x_{2*}), `(+)( F(x_2,x_{3^*}), F(x_3, x_{1*})))\] = \top
\\ & \Longleftrightarrow \quad 
\[ (\forall (x_1 \feq x_{1*}) \fin \X \ftimes \X^*)\, (\forall (x_2 \feq x_{2*}) \fin \X \ftimes \X^*)\,(\forall (x_3 \feq x_{3*}) \fin \X \ftimes \X^*) \\ & \hspace{43ex}  `r(F(x_1,x_{2*}),  F(x_2,x_{3^*}), F(x_3, x_{1*}))\] = \top
\\ & \Longleftrightarrow \quad
\text{for all $\alpha, \alpha_1, \alpha_2 \in \RRm$ such that $\alpha > \alpha_1 + \alpha_2$, }\;\Tr_\X (R_{\alpha_2} \circ R_{\alpha_1} \circ R_{\alpha}) = \bot
\\ & \Longleftrightarrow \quad
\text{for all $\alpha, \alpha_1, \alpha_2 \in \RRm$ such that $\alpha > \alpha_1 + \alpha_2$, }\;\Tr_\X ((R_{\alpha}^\dagger)^\dagger \circ (R_{\alpha_2} \circ R_{\alpha_1})) = \bot
\\ & \Longleftrightarrow \quad
\text{for all $\alpha, \alpha_1, \alpha_2 \in \RRm$ such that $\alpha > \alpha_1 + \alpha_2$, }\; R_{\alpha_2} \circ R_{\alpha_1}   \perp R_{\alpha}^\dagger
\\ & \Longleftrightarrow \quad
\text{for all $\alpha_1, \alpha_2 \in \RRm$, }\; R_{\alpha_2} \circ R_{\alpha_1} \leq \bigvee_{\alpha \leq \alpha_1 + \alpha_2} R_\alpha^\dagger.
\end{align*}
The first equivalence follows by the graphical calculus via Lemma \ref{computation.F.2}.
\end{proof}

Let $A$ be an ordinary set. The relations $`(=_A)$ and $E_{`A}$ are both, in some sense, equality relations on $`A$, but they are distinct because they have different arities. The relation $`(=_A)$ has arity $(`A,`A)$, and the relation $E_{`A}$ has arity $(`A, (`A)^*)$. Nevertheless, they are very closely related because $`A$ and $(`A)^*$ are naturally isomorphic via the ``conjugation'' function $C_A\: `A \to (`A)^*$, defined by $C_A(\CC_{a}, \CC_{a}^*) = L(\CC_{a}, \CC_{a}^*)$ for all $a \in A$, with the other components vanishing. The function $C_A$ is intuitively the identity on $`A$, and this can be arranged to hold formally \cite{Kornell}*{App.~D}.

\begin{theorem}\label{examples.F.5}
Let $\X$ be a quantum set. Let $T$ be the binary relation on $`\RRm$ defined by $T = `(\leq)$. Then, there is a one-to-one correspondence between quantum pseudometrics $(V_\beta\suchthat \beta \in [0,\infty))$ on $\ell^\infty(\X)$ in the sense of \cite{KuperbergWeaver}*{Def.~2.3} and functions $F\: \X \times \X^* \to `[0, \infty]$ such that
\begin{enumerate}
\item $\[(\forall (x_1 \feq x_{1*}) \fin \X \ftimes \X^*)\, E_{`\RRm}(F(x_1,x_{1*}),`0_*)\] = \top$;
\item $\[(\forall (x_1 \feq x_{1*}) \fin \X \ftimes \X^*)\, (\forall (x_2 \feq x_{2*}) \fin \X \ftimes \X^*) $\\ \phantom | \hfill $ E_{`\RRm}( F(x_1, x_{2*}), C_{\RRm}(F(x_{2}, x_{1*})))\] = \top$;
\item $\[(\forall (x_1 \feq x_{1*}) \fin \X \ftimes \X^*)\, (\forall (x_2 \feq x_{2*}) \fin \X \ftimes \X^*)\,(\forall (x_3 \feq x_{3*}) \fin \X \ftimes \X^*)$\\ \phantom | \hfill $\bend T( F(x_1,x_{2*}), `(+)_*( F_*(x_{1*},x_{3}), F_*(x_{3*}, x_2)))\] = \top$.
\end{enumerate}
This correspondence is given by $\sum_{\alpha \in [0,\beta]} R_\alpha(X_1, X_2) = \incdag_{X_2} \cdot V_\beta \cdot \incnag_{X_1}$, for $\beta \in [0, \infty)$ and $X_1, X_2 \in \At(\X)$, where $\bend R_\alpha = `\alpha^\dagger \circ F$ for each $\alpha \in [0,\infty]$.
\end{theorem}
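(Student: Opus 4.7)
The plan is to reduce the theorem to the three preceding propositions by parameterizing $F$ as a distance-graded family of binary relations on $\X$. By Theorem \ref{computation.E.2}, a function $F \: \X \times \X^* \to `[0,\infty]$ corresponds to a unital normal $*$-homomorphism $F^\star \: \ell^\infty(`[0,\infty]) \to \ell^\infty(\X \times \X^*)$. Since $\ell^\infty(`[0,\infty])$ is the $\ell^\infty$-direct sum of copies of $\CC$ indexed by $[0,\infty]$, this homomorphism is determined by the images of the minimal projections $`\alpha$, which via bending correspond to a family $\{R_\alpha\}_{\alpha \in [0,\infty]}$ of pairwise orthogonal binary relations on $\X$ whose join is the maximum binary relation on $\X$, related to $F$ by $\bend R_\alpha = `\alpha^\dagger \circ F$; conversely any such family arises from some $F$.

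The next step is to translate the three conditions of the theorem into conditions on the family $\{R_\alpha\}$. Condition (1) reduces to $I_\X \leq R_0$ by Proposition \ref{examples.F.2} with $a_0 = 0$; condition (2) reduces to $R_\alpha^\dagger = R_\alpha$ for all $\alpha \in [0,\infty]$ by Proposition \ref{examples.F.3}; and condition (3) reduces to $R_{\alpha_2} \circ R_{\alpha_1} \leq \bigvee_{\alpha \leq \alpha_1 + \alpha_2} R_\alpha^\dagger$ for all $\alpha_1, \alpha_2 \in [0,\infty]$ by Proposition \ref{examples.F.4}. A minor subtlety is that the theorem phrases its conditions with the mixed-arity equality $E_{`\RRm}$ and the conjugation function $C_\RRm \: `\RRm \to (`\RRm)^*$, rather than with the ordinary equality $`(=_\RRm)$ used in those propositions; since $C_\RRm$ is an isomorphism of classical quantum sets, unfolding $E_{`A}(t, C_A(s))$ into $`(=_A)(t,s)$ makes the two formulations interchangeable.

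Finally, set $W_\beta := \bigvee_{\alpha \leq \beta} R_\alpha$ for $\beta \in [0,\infty)$, and let $V_\beta$ be the quantum relation on $\ell^\infty(\X)$ determined by $\incdag_{X_2} \cdot V_\beta \cdot \incnag_{X_1} = W_\beta(X_1, X_2)$ as in Appendix \ref{appendix.E}. The Kuperberg-Weaver pseudometric axioms then follow directly: $V_0 \supseteq \ell^\infty(\X)'$ is equivalent to $I_\X \leq R_0 = W_0$; the composition axiom $V_\alpha V_\beta \subseteq V_{\alpha+\beta}$ follows from the translated symmetry and triangle inequality once one distributes composition of binary relations over joins and invokes the noncommutative dictionary identifying binary relation composition with operator multiplication; and right-continuity $V_\alpha = \bigcap_{\beta > \alpha} V_\beta$ holds atomwise, since the finite-dimensionality of each $L(X_1, X_2)$ forces $R_\alpha(X_1, X_2)$ to vanish for all but finitely many $\alpha$. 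For the reverse direction, given a quantum pseudometric $(V_\beta)$ one recovers the $R_\alpha$ atomwise as the ``jumps'' of the increasing right-continuous chain of finite-dimensional subspaces $\incdag_{X_2} \cdot V_\beta \cdot \incnag_{X_1} \subseteq L(X_1, X_2)$, with any residual subspace collected at $\alpha = \infty$. The main obstacle is precisely this reverse decomposition, but finite-dimensionality reduces it to the elementary observation that a right-continuous nested chain of finite-dimensional subspaces has only finitely many proper jumps, so the extraction is unambiguous and assembles into a family satisfying the required orthogonality and totality by construction.
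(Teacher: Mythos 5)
Your overall route is the same as the paper's: decompose $F$ into the fibre family $(R_\alpha)_{\alpha\in[0,\infty]}$ via $\bend R_\alpha = `\alpha^\dagger\circ F$, translate conditions (1)--(3) through Propositions \ref{examples.F.2}, \ref{examples.F.3} and \ref{examples.F.4}, and then pass to the cumulative family $S_\beta=\bigvee_{\alpha\le\beta}R_\alpha$ and on to quantum pseudometrics through Appendix \ref{appendix.E}. Your atomwise ``finitely many jumps'' argument for the correspondence between $(R_\alpha)_{\alpha}$ and $(S_\beta)_{\beta}$ is exactly the paper's identification of both families with projections in the hereditarily atomic von Neumann algebra $\ell^\infty(\X\times\X^*)$, so that part is fine, as is your handling of conditions (1) and (2), where the mismatch really is only $`(=_A)$ versus $E_{`A}(\,\cdot\,,C_A(\,\cdot\,))$.

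The genuine gap is your treatment of condition (3). There the discrepancy with Proposition \ref{examples.F.4} is not just the equality symbol: condition (3) is phrased with $\bend T$, with $`(+)_*$, and with the conjugate terms $F_*(x_{1*},x_{3})$ and $F_*(x_{3*},x_{2})$, whose arguments are both conjugated and swapped, whereas Proposition \ref{examples.F.4} uses $F(x_2,x_{3*})$ and $F(x_3,x_{1*})$. To pass from one to the other you need the identity $F_*\circ B_{\X,\X^*}=C_{\RRm}\circ F$, and this is not a formal consequence of $C_{\RRm}$ being an isomorphism: it is derived from the symmetry already secured by conditions (1)--(2), via Lemma \ref{examples.A.2} together with $R_\alpha^\dagger=R_\alpha$ for every $\alpha$, and it is then fed into a naturality computation with the braidings that rewrites condition (3) into the hypothesis of Proposition \ref{examples.F.4}. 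In other words, condition (3) is equivalent to the triangle inequality for the $R_\alpha$ only \emph{modulo} conditions (1) and (2), and your one-line ``interchangeable'' remark does not deliver this step; it is precisely the computation occupying the middle of the paper's proof, and it needs to be supplied for your reduction to Proposition \ref{examples.F.4} to be legitimate.
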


\begin{proof}
Let $A = [0,\infty]$, and let $a_0 = 0$. The equation $\bend R_\alpha = `\alpha^\dagger \circ F$, for $\alpha \in A$, defines a one-to-one correspondence between functions $F\: \X \times \X^* \to `A$ and indexed families $(R_\alpha \suchthat \alpha \in A)$ of pairwise disjoint binary relations on $\X$ whose join is $\top_\X^\X$, the maximum binary relation on $\X$. In the context of this correspondence, condition (1) is equivalent to the equation in Proposition \ref{examples.F.2}, because $$E_{`A} \circ (F \times `0_*) =  E_{`A} \circ (F \times (C_A \circ `0)) = E_{`A} \circ (I_{`A} \times C_A) \circ (F \times `0) = `(=_A) \circ (F \times `0).$$ Similarly, condition (2) is equivalent to the equation in Proposition \ref{examples.F.3}, because $$E_{`A} \circ (F \times (C_A \circ F)) = E_{`A} \circ (I_{`A} \times C_A)  \circ (F \times F) = `(=_A) \circ (F \times F).$$ Thus, $F$ satisfies conditions (1) and (2) if and only if $R_0 \geq I_{`A}$ and $R_\alpha^\dagger = R_\alpha$ for all $\alpha \in [0,\infty]$. 

Assuming conditions (1) and (2), Lemma \ref{examples.A.2} implies that for all $\alpha \in [0,\infty]$ we have $\bend R_{\alpha*} \circ B_{\X,\X^*} = \bend R_\alpha$, i.e., $`\alpha^\dagger_* \circ F_* \circ B_{\X,\X^*} = `\alpha^\dagger \circ F$, where $B_{\X,\X^*}$ is the braiding of $\X$ and $\X^*$. Thus, $`\alpha^\dagger_* \circ F_* \circ B_{\X,\X^*} = `\alpha^\dagger_* \circ C_A \circ F$ for all $\alpha \in A$; we conclude that $F_* \circ B_{\X,\X^*} = C_A \circ F$. We now calculate that 
\begin{align*}
`(\bend \leq) &\circ (F \times (`(+) \circ (F \times F)))
=
`(\bend \leq) \circ (F \times (`(+) \circ B_{`\RRm,`\RRm}\circ (F \times F)))
\\ & =
`(\bend \leq) \circ (F \times (`(+) \circ (F \times F) \circ B_{\X \times \X^*, \X \times \X^*}))
\\ & =
\bend T \circ (I_{`A} \times C_A) \circ (F \times (`(+) \circ (F \times F) \circ B_{\X \times \X^*, \X \times \X^*}))
\\ & = 
\bend T \circ (F \times (C_A \circ `(+) \circ (F \times F) \circ B_{\X \times \X^*, \X \times \X^*}))
\\ & = 
\bend T \circ (F \times (`(+)_* \circ ( C_A \times C_A )  \circ (F \times F) \circ B_{\X \times \X^*, \X \times \X^*}))
\\ & = 
\bend T \circ (F \times (`(+)_* \circ (F_* \times F_*) \circ (B_{\X,\X^*} \times B_{\X,\X^*}) \circ B_{\X \times \X^*, \X \times \X^*}))
\\ & = \bend T \circ (F \times (`(+)_* \circ (F_* \times F_*))) \circ (I_{\X \times \X^*} \times ((B_{\X,\X^*} \times B_{\X,\X^*}) \circ B_{\X \times \X^*, \X \times \X^*})).
\end{align*}
We conclude that, modulo conditions (1) and (2), condition (3) is equivalent to the equation in Proposition \ref{examples.F.4}.

Therefore, we have a one-to-one correspondence between functions $F\: \X \times \X^* \to `[0,\infty]$ satisfying conditions (1), (2) and (3), and indexed families $(R_\alpha \suchthat \alpha \in [0,\infty])$ of pairwise orthogonal binary relations on $\X$, whose join is $\top_{\X}^{\X}$ and which satisfy $R_0 \geq I_{\X}$, $R_\alpha^\dagger= R_\alpha$ for all $\alpha \in [0,\infty]$ and $R_{\alpha_2} \circ R_{\alpha_1} \leq \bigvee_{\alpha \leq \alpha_1 + \alpha_2} R_\alpha$ for all $\alpha_1, \alpha_2 \in [0,\infty]$.

The equation $ \bigvee_{\alpha \leq \beta} R_\alpha = S_\beta$ defines a one-to-one correspondence between indexed families $(R_\alpha \suchthat \alpha \in [0,\infty])$ such that $\bigvee_{\alpha} R_\alpha = \top_\X^\X$ and $R_{\alpha_1} \perp R_{\alpha_2}$ for all distinct $\alpha_1, \alpha_2 \in [0,\infty]$ and indexed families $(S_\beta\suchthat \beta \in [0,\infty))$ such that $S_{\beta_0} = \bigwedge_{\beta > \beta_0} S_\beta$ for all $\beta_0 \in [0,\infty) $. The existence of this correspondence becomes readily apparent by identifying the binary relations $R_\alpha$ and the binary relations $S_\beta$ with projections in the hereditarily atomic von Neumann algebra $\ell^\infty(\X \times \X^*)$. In the context of this correspondence, $R_0 \geq I_\X$ is equivalent to $S_0 \geq I_\X$, $R^\dagger_\alpha = R_\alpha$ for all $\alpha \in [0,\infty]$ is equivalent to $S_\beta^\dagger = S_\beta$ for all $\beta \in [0,\infty)$, and $R_{\alpha_2} \circ R_{\alpha_1} \leq \bigvee_{\alpha \leq \alpha_1 + \alpha_2} R_\alpha$ for all $\alpha_1, \alpha_2 \in [0,\infty]$ is equivalent to $S_{\beta_1} \circ S_{\beta_2} \leq S_{\beta_1+\beta_2}$ for all $\beta_1, \beta_2 \in [0,\infty)$. Families $(S_\beta \suchthat \beta \in [0,\infty))$ with these four properties are in obvious one-to-once correspondence with quantum pseudometrics $(V_\beta \suchthat \beta \in [0,\infty))$ on $\ell^\infty(\X)$ via the expression $S_\beta(X_1,X_2) = \incdag_{X_2} \cdot V_\beta \cdot \incnag_{X_1}$, as in Appendix \ref{appendix.E.1}.

In summary, the equation $\bend R_\alpha = `\alpha^\dagger \circ F$, for $\alpha \in [0,\infty]$, defines a one-to-one correspondence between functions $F\: \X \times \X^* \to `[0,\infty]$ satisfying conditions (1), (2) and (3), and families  $(R_\alpha \suchthat \alpha \in [0,\infty])$ of binary relations on $\X$ satisfying
\begin{enumerate}[\indent (a)]
\item $R_{\alpha_1} \perp R_{\alpha_2}$ for distinct $\alpha_1, \alpha_2 \in [0,\infty]$,
\item $\bigvee_{\alpha} R_\alpha = \top_\X^\X$,
\item $R_0 \geq I_\X$,
\item $R_\alpha^\dagger = R_\alpha$ for $\alpha \in [0,\infty]$,
\item $R_{\alpha_2} \circ R_{\alpha_1} \leq \bigvee_{\alpha \leq \alpha_1 + \alpha_2} R_\alpha$ for $\alpha_1, \alpha_2 \in [0,\infty]$.
\end{enumerate}
Furthermore, the equation $\sum_{\alpha \leq \beta} R_\alpha (X_1,X_2) = \incdag_{X_2} \cdot V_\beta \cdot \incnag_{X_1}$, for $\beta \in[0,\infty)$, defines a one-to-one correspondence between such families $(R_\alpha \suchthat \alpha \in [0,\infty])$ and quantum pseudometrics $(V_\beta \suchthat \beta \in [0,\infty))$ on $\ell^\infty(\X)$. Thus, the theorem is proved.
\end{proof}

\begin{corollary}\label{examples.F.6}
The one-to-one correspondence of Theorem \ref{examples.F.5} restricts to a one-to-one correspondence between quantum metrics $(V_\beta\suchthat \beta \in [0,\infty))$ on $\ell^\infty(\X)$ in the sense of \cite{KuperbergWeaver}*{Def.~2.3}, and functions $F\: \X \times \X^* \to `[0, \infty]$ satisfying conditions (1), (2) and (3) in the statement of that theorem, together with
\begin{enumerate}\setcounter{enumi}{3}
\item $\[( \forall x_1 \fin \X)\, (\forall x_{2*} \fin \X^*)\, (E_{`[0,\infty]}(F(x_1, x_{2*}),0_*) \IMPLIES E_\X(x_1, x_{2*}))\] = \top$.
\end{enumerate}
\end{corollary}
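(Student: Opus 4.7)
The proof proceeds by establishing that, in the presence of conditions (1)--(3) of Theorem \ref{examples.F.5}, the new condition (4) is equivalent to the inequality $R_0 \leq I_\X$; combined with the inequality $R_0 \geq I_\X$ already supplied by condition (1) via Proposition \ref{examples.F.2}, this forces $R_0 = I_\X$. Transferring through the chain of correspondences $F \leftrightarrow (R_\alpha)_{\alpha \in [0,\infty]} \leftrightarrow (S_\beta)_{\beta \in [0,\infty)} \leftrightarrow (V_\beta)_{\beta \in [0,\infty)}$ constructed in the proof of Theorem \ref{examples.F.5}, this equality translates to $S_0 = I_\X$, which via the dictionary of Appendix \ref{appendix.E} corresponds to $V_0 = \ell^\infty(\X)'$. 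This is exactly the axiom that upgrades the quantum pseudometric $(V_\beta\suchthat \beta \in [0,\infty))$ to a quantum metric in the sense of \cite{KuperbergWeaver}*{Def.~2.3}.

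The main computational step is to identify the interpretation of the antecedent $E_{`\RRm}(F(x_1,x_{2*}), 0_*)$ with $\bend R_0$. By Lemma \ref{computation.F.2}, this interpretation equals $E_{`\RRm} \circ (F \times 0_*)$, where $0_*\colon \mathbf 1 \to `\RRm^*$ is the conjugate of the constant function $0\colon \mathbf 1 \to `\RRm$. The standard dagger-compact identity $`0^\dagger = E_{`\RRm} \circ (I_{`\RRm} \times 0_*)$, which expresses the adjoint of an element as the counit applied after its conjugate, then yields
$$E_{`\RRm} \circ (F \times 0_*) \;=\; (E_{`\RRm} \circ (I_{`\RRm} \times 0_*)) \circ F \;=\; `0^\dagger \circ F \;=\; \bend R_0.$$
Condition (4) thus becomes, by Proposition \ref{computation.C.2}, the inequality $\bend R_0 \leq E_\X$, which under the canonical bijection between binary relations on $\X$ and relations of arity $(\X,\X^*)$ (i.e., $R \leftrightarrow \bend R$) is precisely $R_0 \leq I_\X$.

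No serious obstacle is anticipated: the corollary is essentially a single additional implication on top of Theorem \ref{examples.F.5}, recognizing that condition (4) encodes the reverse of the inequality extracted from condition (1) in Proposition \ref{examples.F.2}. The only subtlety lies in the graphical identification of $\[E_{`\RRm}(F(x_1,x_{2*}),0_*)\]$ with $\bend R_0$, but this is a direct unpacking of Lemma \ref{computation.F.2} together with the standard bending identity for adjoints of elements in a dagger compact category, so the argument reduces to assembling existing pieces rather than developing new machinery.
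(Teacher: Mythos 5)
Your proposal is correct and follows essentially the same route as the paper: identify $\[E_{`[0,\infty]}(F(x_1,x_{2*}),0_*)\]$ with $`0^\dagger\circ F=\bend R_0$ by the graphical calculus (Lemma \ref{computation.F.2}), apply Proposition \ref{computation.C.2} to turn condition (4) into $\bend R_0\leq E_\X$, i.e.\ $R_0\leq I_\X$, and transfer this through the correspondences of Theorem \ref{examples.F.5} and Appendix \ref{appendix.E} to $V_0\leq\ell^\infty(\X)'$, which together with the pseudometric inclusion yields $V_0=\ell^\infty(\X)'$.
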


\begin{proof}
We have that $\[x_1 \in \X, x_{2*} \in \X^* \suchthat E_{`[0,\infty]}(F(x_1, x_{2*}),0_*)\] = `0^\dagger \circ F = \bend R_0$ by the graphical calculus, so condition (4) is equivalent to $\bend R_0 \leq E_\X$ by Proposition \ref{computation.C.2}. We now reason that $\bend R_0 \leq E_\X$ is equivalent to $R_0 \leq I_\X$, which is equivalent to $V_0 \leq \ell^\infty(\X)'$ via the correspondence between binary relations and quantum relations given in Appendix \ref{appendix.E.1} because $\ell^\infty(\X)'$ is the identity quantum relation on $\ell^\infty(\X)$.
\end{proof}

\subsection{Quantum families of graph isomorphisms}\label{examples.G}
The graph coloring game is played by two cooperating players, traditionally named Alice and Bob, against a referee. The parameters of the game are a finite simple graph $G$ and a finite set of colors $C$, and the rules of the game are such that, classically, Alice and Bob have a winning strategy if and only if $G$ can be properly colored by $C$. Alice and Bob are forbidden from communicating with each other during the course of the game, and the existence of a proper graph coloring ensures that they are able to successfully coordinate their responses to the referee. However, if Alice and Bob possess entangled quantum systems, then they may have a winning strategy even if no proper graph coloring exists. In this case, one says that a quantum graph coloring exists. Subsection I.A of \cite{Kornell} contains a longer discussion of the graph coloring game and its connection to quantum sets.

The graph coloring game can be generalized to the graph homomorphism game, whose parameters are two finite simple graphs $G$ and $H$. Classically, Alice and Bob have a winning strategy if and only if there exists a graph homomorphism from $G$ to $H$. We show that Alice and Bob have a winning strategy utilizing finite entangled quantum systems if and only if there exists a nonempty quantum family of graph homomorphisms from $G$ to $H$, suitably expressed in the quantum predicate logic of this paper. The family is quantum in the sense that it is indexed by a quantum set. This equivalence between the existence a winning strategy utilizing finite entangled quantum systems and the existence of a nonempty quantum family of graph homomorphisms is essentially already present in \cite{Kornell}*{Prop.~1.2}, and even in \cite{CameronMontanaroNewmanSeveriniWinter}*{sec.~II}. Thus, the novelty of Proposition \ref{examples.G.2} consists primarily in the axiomatization of these quantum families of graph homomorphisms in quantum predicate logic.

Similarly, we axiomatize quantum families of graph \emph{isomorphisms}. Naturally, Alice and Bob have a winning strategy for the graph isomorphism game \cite{AtseriasMancinskaRobersonSamalSeveriniVarvitsiotis}*{sec.~1.1}, possibly utilizing finite entangled quantum systems, if and only if there exists a nonempty quantum family of graph isomorphisms. The example of the quantum families of graph isomorphisms is particularly significant due to extraordinary progress in understanding the quantum isomorphism relation \cite{MustoReutterVerdon}\cite{LupiniMancinskaRoberson}\cite{MustoReutterVerdon2}\cite{MancinskaRoberson2}.

\begin{lemma}\label{examples.G.0}
Let $\X$ be a quantum set, and let $A$ be an ordinary set. Then, there is a one-to-one correspondence between functions $F\: \X \to `A$ and families $(P_a\in \Rel(\X) \suchthat a \in A)$ such that $P_{a_1} \perp P_{a_2}$ for all $a_1 \neq a_2$ and $\bigvee_{a \in A} P_a = \top_{`A}$. This correspondence is given by $P_a = `a^\dagger \circ F$.
\end{lemma}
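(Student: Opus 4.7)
The plan is to exhibit an explicit inverse to the map $F \mapsto (`a^\dagger \circ F)_{a \in A}$, namely the map $(P_a)_{a \in A} \mapsto \bigvee_{a \in A} `a \circ P_a$, and then verify mutual invertibility using two facts about the classical quantum set $`A$: (i) the indicator relations $`a\: \mathbf{1} \to `A$ satisfy $`a^\dagger \circ `a = I_{\mathbf{1}}$ and $`a^\dagger \circ `b = \bot$ for $b \neq a$, by construction of $`A$; and (ii) the atoms of $`A$ resolve the identity, i.e., $\bigvee_{a \in A} `a \circ `a^\dagger = I_{`A}$ and $\bigvee_{a \in A} `a^\dagger = \top_{`A}$. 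Both (i) and (ii) are direct computations atom by atom on $`A$. (Here the condition $\bigvee_a P_a = \top_{`A}$ in the statement is read as $\bigvee_a P_a = \top_\X$, since $P_a \in \Rel(\X)$.)

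For the forward direction, given a function $F\: \X \to `A$ and $P_a := `a^\dagger \circ F$, pairwise orthogonality of the $P_a$ reduces via the trace characterization $R \perp S \iff \Tr_\X(S^\dagger \circ R) = \bot$ to the computation
\[ P_a \circ P_b^\dagger \;=\; `a^\dagger \circ F \circ F^\dagger \circ `b \;\leq\; `a^\dagger \circ `b \;=\; \bot \qquad (a \neq b), \]
using that $F$ is a partial function. The join condition follows from fact (ii) together with totality of $F$:
\[ \bigvee_{a\in A} P_a \;=\; \Big(\bigvee_{a\in A} `a^\dagger\Big) \circ F \;=\; \top_{`A} \circ F \;=\; \top_\X. \]

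For the inverse direction, given a family $(P_a)$ satisfying the two conditions, I set $F := \bigvee_{a \in A} `a \circ P_a$. The identity $`a^\dagger \circ F = P_a$ follows from (i):
\[ `a^\dagger \circ F \;=\; \bigvee_{b \in A} (`a^\dagger \circ `b) \circ P_b \;=\; I_{\mathbf{1}} \circ P_a \;=\; P_a. \]
Applied in the opposite order, the equation $F = \bigvee_a `a \circ P_a$ evaluated on $P_a = `a^\dagger \circ F$ gives $\bigvee_a `a \circ `a^\dagger \circ F = I_{`A} \circ F = F$ by (ii), which together with the previous identity witnesses that the two constructions are mutually inverse.

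The main obstacle is verifying that the $F$ constructed from $(P_a)$ is indeed a function in the sense of \cite{Kornell}*{Def.~4.1}, i.e., $F \circ F^\dagger \leq I_{`A}$ and $F^\dagger \circ F \geq I_\X$. Totality is the easy half: $\top_{`A} \circ F = \bigvee_a (\top_{`A} \circ `a) \circ P_a = \bigvee_a P_a = \top_\X$, and this is equivalent to $F^\dagger \circ F \geq I_\X$ by \cite{Kornell}*{Lem.~6.4}. For the partial-function inequality, expanding gives
\[ F \circ F^\dagger \;=\; \bigvee_{a,b \in A} `a \circ (P_a \circ P_b^\dagger) \circ `b^\dagger, \]
and the off-diagonal terms vanish because pairwise orthogonality of the $P_a$ yields $P_a \circ P_b^\dagger = \Tr_\X(P_b^\dagger \circ P_a) = \bot$ (as a scalar in $\Rel(\mathbf{1}; \mathbf{1})$, using cyclicity of the trace and the characterization of orthogonality), while the diagonal terms satisfy $P_a \circ P_a^\dagger \leq I_{\mathbf{1}}$ trivially, so
\[ F \circ F^\dagger \;\leq\; \bigvee_{a \in A} `a \circ `a^\dagger \;=\; I_{`A}. \]
Once this is in hand, the correspondence is established.
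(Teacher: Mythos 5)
Your proof is correct, and its computational core coincides with the paper's: both arguments hinge on the trace characterization of orthogonality (so that $P_{a_1}\perp P_{a_2}$ becomes the vanishing of $`a_1^\dagger \circ F\circ F^\dagger\circ `a_2$, equivalently orthogonality of $F\circ F^\dagger$ to $\NOT I_{`A}=\bigvee_{a_1\neq a_2}`a_1\circ `a_2^\dagger$) and on Lemmas 6.4 and B.4 of \cite{Kornell} to convert $\top_{`A}\circ F=\top_\X$, i.e.\ $\bigvee_a P_a=\top_\X$, into $F^\dagger\circ F\geq I_\X$. The one genuine difference is how bijectivity is obtained: the paper observes that $F\mapsto(`a^\dagger\circ F)_{a\in A}$ is already a bijection between \emph{all} binary relations $\X\to `A$ and \emph{all} indexed families in $\Rel(\X)$, immediately from the atomwise definition of a binary relation (the atoms of $`A$ are one-dimensional), so no inverse formula is needed and the function axioms are then matched to the two conditions by a two-way chain of equivalences; you instead exhibit the inverse $(P_a)\mapsto\bigvee_a `a\circ P_a$ and verify it, which costs you the extra inputs $\bigvee_a `a\circ `a^\dagger=I_{`A}$ and distributivity of composition over joins, but yields a more hands-on, graphical argument and makes the reconstruction of $F$ from the family explicit. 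Two small points: your appeal to \cite{Kornell}*{Lem.~6.4} to get $F^\dagger\circ F\geq I_\X$ from totality is only licit once $F\circ F^\dagger\leq I_{`A}$ is in hand (the paper invokes it under exactly that hypothesis), so the partial-function inequality should be established before that step — which your argument does supply, just in the opposite order of presentation; and your reading of the statement's $\top_{`A}$ as $\top_\X$ agrees with the paper's own proof.
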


\begin{proof} The equation $P_a = `a^\dagger \circ F$, for $a \in A$, defines a one-to-one correspondence between binary relations $F$ from $\X$ to $`A$ and indexed families $(P_a\in \Rel(\X)\suchthat a \in A)$, essentially by the definition of a binary relation between quantum sets. It only remains to show that the inequalities $F \circ F^\dagger \leq I_{`A}$ and $F^\dagger \circ F \geq I_\X$ are together equivalent to the stated conditions on $(P_a\suchthat a \in A)$ under this correspondence. Reasoning in terms of the trace on binary relations \cite{KornellLindenhoviusMislove2}*{App.~C},
\begin{align*}
F& \circ F^\dagger \leq I_{`A} 
\EV
F \circ F^\dagger \perp \NOT I_{`A} 
\EV
\Tr(\NOT I_{`A}^\dagger \circ F \circ F^\dagger) = \bot
\\ & \EV
\Tr\left( \left(\bigvee_{a_1 \neq a_2} `a_1 \circ `a_2^\dagger\right) \circ F \circ F^\dagger \right) = \bot
\EV
\bigvee_{a_1 \neq a_2} \Tr(`a_1 \circ `a_2^\dagger \circ F \circ F^\dagger) = \bot
\\ & \EV
\text{for all distinct $a_1 ,a_2 \in A$,} \quad \Tr((`a_2^\dagger \circ F) \circ (a_1^\dagger \circ F)^\dagger) = \bot
\\ & \EV
\text{for all distinct $a_1 ,a_2 \in A$,} \quad P_{a_1} \perp P_{a_2}.
\end{align*}
Under the assumption $F \circ F^\dagger \leq I_{`A}$, the inequality $F^\dagger \circ F \geq I_\X$ is equivalent to the equality $\top_{`A} \circ F = \top_{\X}$ by Lemmas 6.4 and B.4 of \cite{Kornell}:
\begin{align*}&
F \circ F^\dagger \leq I_{`A}
\EV
\top_{`A} \circ F = \top_{\X}
\EV
\left(\bigvee_{a \in A} `a^\dagger\right) \circ F  = \top_\X
\EV
\bigvee_{a \in A} P_a = \top_\X.
\end{align*}
\end{proof}

\begin{lemma}\label{examples.G.1}
Let $A$ and $B$ be ordinary sets, and let $\X$ be a quantum set. We have a one-to-one correspondence between families $(P_{ab} \in \Rel(\X) \suchthat a \in A,\, b \in B)$ such that
\begin{enumerate}
\item $P_{ab_1} \perp P_{a b_2}$ for all $a \in A$ and distinct $b_1, b_2 \in B$ and
\item $\displaystyle \bigvee_{b \in B} P_{ab} = \top_\X$ for all $a \in A$,
\end{enumerate}
and functions $F\: \X \times `A \to `B$. It is given by the equation $P_{ab} = `b^\dagger \circ F \circ (I_\X \times `a)$, for $a \in A$ and $b \in B$.
\end{lemma}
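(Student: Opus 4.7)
The plan is to reduce the statement to Lemma~\ref{examples.G.0} by regarding a function $F\colon \X \times `A \to `B$ as an $A$-indexed family of ``slice'' functions $F_a := F \circ (I_\X \times `a)\colon \X \to `B$, where $`a\colon \mathbf 1 \to `A$ is the function pointing to $a$. Because $`A$ is classical, these slices should carry all of the information in $F$: the inclusions $I_\X \times `a$ for $a \in A$ exhibit $\X \times `A$ as a coproduct of $A$-many copies of $\X$ in the category of quantum sets and functions, so that $F$ is both determined by, and reconstructible from, the family $(F_a)_{a \in A}$.

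For the forward direction, given such an $F$, each $F_a$ is itself a function as a composition of the functions $F$, $I_\X$, and $`a$ (using bifunctoriality of $\times$). Setting $P_{ab} := `b^\dagger \circ F_a$ and applying Lemma~\ref{examples.G.0} to $F_a$ for each fixed $a \in A$ then yields the orthogonality condition (1) and the join condition (2) of the present lemma, with $P_{ab} = `b^\dagger \circ F \circ (I_\X \times `a)$ by definition.

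For the inverse direction, Lemma~\ref{examples.G.0} gives, for each family $(P_{ab})_{b \in B}$ satisfying (1) and (2) at a fixed $a$, a unique function $F_a\colon \X \to `B$ with $P_{ab} = `b^\dagger \circ F_a$. I would then assemble these into a single binary relation $F := \bigvee_{a \in A} F_a \circ (I_\X \times `a^\dagger)$ from $\X \times `A$ to $`B$ and check that $F \circ (I_\X \times `a) = F_a$ using the orthogonality relations $`a'^\dagger \circ `a = \bot$ for $a' \neq a$ and $`a^\dagger \circ `a = I_{\mathbf 1}$. The main obstacle will be verifying that this $F$ is a function, i.e., that $F \circ F^\dagger \leq I_{`B}$ and $F^\dagger \circ F \geq I_{\X \times `A}$. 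The first inequality reduces, via the same orthogonality relations, to $\bigvee_a F_a \circ F_a^\dagger \leq I_{`B}$, which holds since each $F_a$ is a function. The second requires the decomposition $I_{\X \times `A} = \bigvee_a (I_\X \times `a) \circ (I_\X \times `a^\dagger)$, which follows from $I_{`A} = \bigvee_a `a \circ `a^\dagger$ (a decomposition of the identity into atomic projections, valid because $`A$ is classical) together with the compatibility of the Cartesian product of quantum sets with joins of pairwise orthogonal projections; the latter is most transparent via the correspondence between binary relations and projections in $\ell^\infty(\X \times `A)$.
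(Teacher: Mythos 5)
Your proposal is correct and takes essentially the same route as the paper: both reduce the statement to Lemma \ref{examples.G.0} by slicing $F$ along the classical index set, using that $\X \times `A$ is a coproduct of copies of $\X$ with inclusions $I_\X \times `a$, so that functions $F\colon \X \times `A \to `B$ correspond to families $(F_a = F \circ (I_\X \times `a))_{a\in A}$. The only difference is that the paper simply invokes this coproduct correspondence, whereas you verify it by hand via the assembly $\bigvee_{a} F_a \circ (I_\X \times `a^\dagger)$ and the decomposition $I_{`A} = \bigvee_a `a \circ `a^\dagger$, which is a sound (if more laborious) way to justify the same step.
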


\begin{proof}
The quantum set $\X \times `A$ is a coproduct of copies of $\X$, with one copy for each element of $A$. The inclusions of this coproduct are exactly the functions $I_\X \times `a$ for $a \in A$. Thus, we have a one-to-one correspondence between families of functions $(F_a \: \X \to `B\suchthat a \in A)$ and functions $F\: \X \times `A \to `B$, which is given by $F_a = F \circ (I_\X \times `a)$ for $a \in A$. The statement of the lemma then follows by Lemma \ref{examples.G.0}.
\end{proof}

\begin{proposition}\label{examples.G.1.5}
Let $A$ and $B$ be ordinary sets, let $\X$ be a quantum set, and let $F$ be a function $\X \times `A \to `B$. For each $a \in A$ and each $b \in B$, let $P_{ab} = `b^\dagger \circ F \circ (I_\X \times `a)$. Then, $\bigvee_{a \in A} P_{ab} = \top_\X$ for all $b \in B$ if and only if
$$
\[(\forall x \in \X)\,(\forall b_* \in `B_*)\,(\exists a \in `A)\, E_{`B}(F(x,a),b_*)\] = \top.
$$
\end{proposition}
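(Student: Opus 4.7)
The plan is to compute the interpretation of the left-hand formula as a single composite, then use that $`A$ and $`B$ are classical to reduce to an atomwise condition that matches the stated pointwise join.

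First, I would simplify the interpretation. The atomic formula $E_{`B}(F(x,a), b_*)$ is nonduplicating, so by Lemma \ref{computation.F.2} (using $\[F(x,a)\] = F$ and $\[b_*\] = I_{`B^*}$ from the discussion in subsection \ref{computation.F}),
$$\[(x, a, b_*) \in \X \times `A \times `B^* \suchthat E_{`B}(F(x,a), b_*)\] \;=\; E_{`B} \circ (F \times I_{`B^*}) \;=\; \bend F,$$
which is a relation of arity $(\X, `A, `B^*)$. Applying Proposition \ref{computation.C.3} to the existential over $a$ (after permuting the context via Proposition \ref{computation.A.1} to place $a$ first), and then by the $m=n$ case of Lemma \ref{computation.C.1} applied to the outer universal quantifiers, the equation on the left of the proposition is equivalent to
$$\bend F \circ (I_\X \times \top_{`A}^\dagger \times I_{`B^*}) \;=\; \top_\X \times \top_{`B^*}.$$

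Second, I would decompose the right-hand side of the composite using that $`A$ is classical. By Lemma \ref{appendix.C.1}, $\top_{`A}^\dagger = \bigvee_{a \in A} `a$ in $\Rel(\mathbf 1; `A)$, so writing $F_a := F \circ (I_\X \times `a)$ and $R := F \circ (I_\X \times \top_{`A}^\dagger) = \bigvee_{a \in A} F_a$, functoriality of the monoidal product yields
$$\bend F \circ (I_\X \times \top_{`A}^\dagger \times I_{`B^*}) \;=\; E_{`B} \circ (R \times I_{`B^*}) \;=\; \bend R.$$
Because bending is an orthomodular isomorphism $\Rel(\X;\, `B) \to \Rel(\X \times `B^*;\, \mathbf 1)$, the above equality is equivalent to $R = \top_\X^{`B}$, the maximum binary relation from $\X$ to $`B$.

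Third, since $`B$ is classical we read off the equation $R = \top_\X^{`B}$ atomwise on $`B$. For each atom $X \in \At(\X)$ and each $b \in B$, $R(X, \CC_b) = \bigvee_{a \in A} F_a(X, \CC_b)$, and the canonical isomorphism $L(\CC_b, \CC) \tensor L(X, \CC_b) \to L(X, \CC)$ identifies $F_a(X, \CC_b)$ with $P_{ab}(X) = (`b^\dagger \circ F_a)(X)$. Hence $R(X, \CC_b) = L(X, \CC_b)$ for every $X$ if and only if $\bigvee_{a} P_{ab} = \top_\X$. Quantifying over $b \in B$ gives the stated equivalence.

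The main obstacle is bookkeeping: tracking the bend/unbend correspondence between $\Rel(\X;`B)$ and $\Rel(\X \times `B^*; \mathbf 1)$, and matching $\top_\X \times \top_{`B^*}$ to the pointwise-in-$b$ condition $\bigvee_a P_{ab} = \top_\X$ via the classicality of the $`B$-factor. Once one commits to computing the interpretation as a single composite and exploits the classical decompositions of $\top_{`A}^\dagger$ and $`B$, no further technical obstacle arises.
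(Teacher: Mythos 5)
Your argument is correct, but it takes a different route from the paper's. The paper works formula-first: it computes graphically that $P_{ab}$ equals the interpretation $\[x \in \X \suchthat E_{`B}(F(x,`a),`b_*)\]$ obtained by substituting the constants $`a$ and $`b_*$, and then converts the join over $a$ into $(\exists a)$ and the meet over $b$ into $(\forall b_*)$ by two applications of Lemma \ref{appendix.C.1} (the latter via the gloss of infinitary conjunctions as quantification over classical sets), finishing with Lemma \ref{computation.C.1} for $(\forall x)$. You instead compute the interpretation of the whole sentence top-down as the composite $\bend F \circ (I_\X \times \top_{`A}^\dagger \times I_{`B^*})$ via Lemma \ref{computation.F.2}, Proposition \ref{computation.C.3} and Lemma \ref{computation.C.1}, then unbend to the binary relation $R = \bigvee_{a} F \circ (I_\X \times `a)$ from $\X$ to $`B$ and check its maximality atomwise in $b$, so the classicality of $`B$ enters only at the very end through a componentwise identification rather than through the quantifier-as-conjunction lemma. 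Both proofs rest on the same basic facts (the decomposition $\top_{`A}^\dagger = \bigvee_{a}`a$ and distributivity of composition and the monoidal product over joins); the paper's version is shorter because Lemma \ref{appendix.C.1} absorbs all the semantic work and it matches the substitution-of-constants style used in the other examples, while yours avoids the infinitary-conjunction device at the cost of the permutation bookkeeping you flag (the reordering needed to apply Proposition \ref{computation.C.3} and the $m=n$ case of Lemma \ref{computation.C.1}, handled by Proposition \ref{definition.C.3}) and of the explicit atomwise identification of $F_a(X,\CC_b)$ with $P_{ab}(X)$, both of which are routine. One cosmetic point: the bend construction need only be an order isomorphism (bijective with monotone inverse) for your step equating $\bend R$ with the maximum relation to the statement $R = \top_\X^{`B}$; calling it an orthomodular isomorphism is true but more than you need.
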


\begin{proof}
First, we compute that for all $a \in A$ and all $b \in B$,
\begin{align*}
P_{ab}
\; &= \quad
\begin{aligned}
\begin{tikzpicture}
\begin{pgfonlayer}{nodelayer}
    \node [style=box] (F) at (0, 0) {$\phantom{F}\;F\;\phantom{F}$};
    \node [style=none] (xanch) at (-0.4,0) {};
    \node [style=none] (aanch) at (0.4,0) {};
    \node (x) at (-0.4,-1.2) {$\scriptstyle x$};
    \node [style=box] (a) at (0.4, -0.8) {$`a_{\phantom{*}}$};
    \node [style=box] (b) at (0, 0.8) {$`b^\dagger$};
\end{pgfonlayer}
\begin{pgfonlayer}{edgelayer}
    \draw [arrow, markat =0.4] (x) to (xanch);
    \draw [arrow, markat=0.4] (a) to (aanch);
    \draw [arrow, markat =0.8] (F.center) to (b);
\end{pgfonlayer}
\end{tikzpicture}
\end{aligned}
\quad = \quad
\begin{aligned}
\begin{tikzpicture}
\begin{pgfonlayer}{nodelayer}
    \node [style=box] (F) at (0, 0) {$\phantom{F}\;F\;\phantom{F}$};
    \node [style=none] (xanch) at (-0.4,0) {};
    \node [style=none] (aanch) at (0.4,0) {};
    \node [style=none] (banch) at (1.2,0) {};
    \node (x) at (-0.4,-1.2) {$\scriptstyle x$};
    \node [style=box] (a) at (0.4, -0.8) {$`a_{\phantom{*}}$};
    \node [style=box] (b) at (1.2, -0.8) {$`b_*$};
\end{pgfonlayer}
\begin{pgfonlayer}{edgelayer}
    \draw [arrow, markat =0.4] (x) to (xanch);
    \draw [arrow, markat=0.4] (a) to (aanch);
    \draw [arrow, markat =0.8] (banch.center) to (b);
    \draw [arrow, bend left = 90, looseness=2.3] (F.center) to (banch.center);
\end{pgfonlayer}
\end{tikzpicture}
\end{aligned}
\quad = \quad
\[x \in \X\suchthat  E_{`B}(F(x,`a),`b_*)\].
\end{align*}
Now we apply Lemma \ref{appendix.C.1} twice to reason that
\begin{align*}
(\forall b \in B)\; \bigvee_{a \in A} P_{ab} = \top_\X
& \EV
\bigwedge_{b \in B} \bigvee_{a \in A} P_{ab} = \top_\X
\\ & \EV
\bigwedge_{b \in B} \bigvee_{a \in A} \[x \in \X\suchthat  E_{`B}(F(x,`a),`b_*)\] = \top_\X
\\ & \EV
\bigwedge_{b \in B} \[x \in \X\suchthat (\exists a \fin A)\, E_{`B}(F(x,a),`b_*)\] = \top_\X
\\ & \EV
\[x \in \X\suchthat (\forall b_* \fin B^*)\, (\exists a \fin A)\, E_{`B}(F(x,a),b_*)\] = \top_\X
\\ & \EV
\[(\forall x \fin \X)\, (\forall b_* \fin B^*)\, (\exists a \fin A)\, E_{`B}(F(x,a),b_*)\] = \top,
\end{align*}
where the last equivalence follows from Lemma \ref{computation.C.1}.
\end{proof}

\begin{proposition}\label{examples.G.2}
Let $A$ and $B$ be sets equipped with binary relations $r$ and $s$, respectively, and let $R = `r$ and $S = `s$.
Let $\X$ be a quantum set, and let $F\: \X \times `A \to `B$ be a function. For each $a \in A$ and each $b \in B$, let $P_{ab}$ be the relation of arity $(\X)$ defined by $P_{ab} = `b^\dagger \circ F \circ (I_\X \times `a)$. Then, $P_{a_1b_1} \perp P_{a_2b_2}$ for all $(a_1,a_2) \in r$ and $(b_1, b_2) \in \NOT s$ if and only if
\begin{align*}
&\[
(\forall (x \feq x_*) \fin \X \ftimes \X^*)\, (\forall (a_1 \feq a_{1*}) \in `A \ftimes `A^*)\,(\forall (a_2 \feq a_{2*}) \fin `A \ftimes `A^*)\, \\ & \ghost \hspace{40ex}
(\bend R (a_{1},a_{2*}) \IMPLIES \bend S_*(F_*(x_*,a_{1*}), F(x,a_{2})))
\] = \top.
\end{align*}
\end{proposition}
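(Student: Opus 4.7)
The plan is to convert both sides of the claimed equivalence into graphical/orthogonality statements and match them.

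First, I would compute the interpretation of the inner formula
$\bend R(a_1,a_{2*}) \IMPLIES \bend S_*(F_*(x_*,a_{1*}), F(x,a_2))$
as a relation in the context $(x, x_*, a_1, a_{1*}, a_2, a_{2*})$. The two atomic subformulas share no variables, so, exactly as in the proof of Lemma~\ref{examples.A.3}, the interpretation of the \emph{negation} of this implication is the tensor product
\[
\[\bend R(a_1,a_{2*})\] \times \[\NOT \bend S_*(F_*(x_*,a_{1*}), F(x,a_2))\].
\]
Using Lemmas~\ref{computation.F.2}--\ref{computation.F.3}, the second factor is $\NOT(\bend S_* \circ (F_* \times F))$ with the variables routed appropriately. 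Then I apply Theorem~\ref{computation.D.2} three times (once for each diagonal existential) to turn $\[\exists(x\feq x_*)\,\exists(a_1\feq a_{1*})\,\exists(a_2\feq a_{2*})\,\NOT(\phi_1 \IMPLIES \phi_2)\]$ into a single composition with $E_\X^\dagger \times E_{`A}^\dagger \times E_{`A}^\dagger$. By Lemma~\ref{computation.C.1}, the original universal-quantifier equation holds if and only if this existential interpretation is $\bot$.

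Next, I would use the graphical identity from the start of section~\ref{examples} (the trace characterization of orthogonality) to rewrite this $\bot$-condition as
\[
\bend R \;\perp\; T,
\]
where $T$ is the relation of arity $(`A, `A^*)$ obtained by ``absorbing'' $\bend S_*$, $F$ and $F_*$ into a single diagram whose two remaining external wires are the $a_1$ and $a_{2*}$ wires. Explicitly, after straightening wires, $T$ assembles from $\bend S_*(F_*(x_*,a_{1*}), F(x,a_2))$ by closing off the $(x, x_*)$ pair with $E_\X$, so the resulting diagram computes a binary relation from $`A$ to $`A$ that depends on $F$ and $S$.

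Now I exploit that $`A$ and $`B$ are classical. By Lemma~\ref{examples.G.0} and its proof, $\bend R = \bigvee_{(a_1,a_2)\in r} (`a_1 \otimes `a_{2*})\circ \top^\dagger$ and similarly $\bend S = \bigvee_{(b_1,b_2)\in s}(`b_1 \otimes `b_{2*})\circ \top^\dagger$. Substituting these expansions into $\bend R \perp T$ and using that classical atoms are pairwise orthogonal, the single orthogonality $\bend R \perp T$ decomposes as: for every $(a_1,a_2)\in r$, a certain relation built from $F$ at ``inputs'' $a_1$ and $a_2$ must be orthogonal to $\bend S$, which in turn decomposes (since $S$ is classical and $\NOT S = \bigvee_{(b_1,b_2)\not\in s}(\cdots)$) into: for every $(b_1,b_2)\not\in s$, the scalar $\Tr_\X(P_{a_1 b_1}^\dagger \circ P_{a_2 b_2}) = \bot$. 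By the trace criterion for orthogonality, this is exactly $P_{a_1 b_1} \perp P_{a_2 b_2}$ for all such $(a_1,a_2)\in r$ and $(b_1,b_2)\not\in s$.

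The main obstacle is bookkeeping: keeping track of which wires correspond to which variables, especially the starred/unstarred pairing through $F_*$ versus $F$, and making sure the conjugations $(\cdot)_*$ and the braiding permutations line up so that after closing the three diagonal loops via $E_\X^\dagger$, $E_{`A}^\dagger$, $E_{`A}^\dagger$ the resulting diagram really is the orthogonality pairing of $P_{a_1b_1}$ against $P_{a_2b_2}$ rather than some transpose or conjugate. I would handle this by drawing the diagram once carefully and reading off the final trace expression, imitating the style of Proposition~\ref{examples.F.3} where an analogous symmetry/conjugation issue was resolved.
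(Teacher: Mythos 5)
Your proposal is correct and follows essentially the same route as the paper's proof: dualize the universal diagonal quantifiers to an existential statement equal to $\bot$, render it graphically via Theorem \ref{computation.D.2}, expand the classical relations $\bend R$ and $\NOT \bend S_*$ as joins of point relations indexed by $(a_1,a_2)\in r$ and $(b_1,b_2)\in\NOT s$, and read off each term as the orthogonality pairing of $P_{a_1b_1}$ with $P_{a_2b_2}$. The intermediate repackaging as $\bend R \perp T$ (and the slightly off citation of Lemma \ref{examples.G.0} for the point-mass expansion) is only cosmetic; the substance matches the paper.
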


\begin{proof}
The equation in the statement of the proposition is equivalent to
\begin{align*}
&\[
(\exists (x \feq x_*) \fin \X \ftimes \X^*)\, (\exists (a_1 \feq a_{1*}) \fin `A \ftimes `A^*)\,(\exists (a_2 \feq a_{2*}) \fin `A \ftimes `A^*)\, \\ & \ghost \hspace{40ex}
(\bend R (a_{1},a_{2*}) \AND \NOT \bend S_* (F_*(x_*,a_{1*}), F(x,a_{2}))
\] = \bot,
\end{align*}
which may be rendered graphically as
\begin{align*}
\begin{aligned}
\begin{tikzpicture}
\begin{pgfonlayer}{nodelayer}
    \node [style=box] (F) at (-0.4, -1) {$F_*$};
    \node [style=box] (Fstar) at (0.6,-1) {$F_{\phantom{*}}$};
    \node [style=box] (notS) at (0.1,0) {\phantom | \,$\NOT \bend S_{*}$\,\phantom |};
    \node [style=none] (anchF) at (-0.4,-0.2) {};
    \node [style=none] (anchFstar) at (0.6,-0.2) {};
    \node [style=box] (R) at (-1.9,0) {\phantom |\,$\phantom{\NOT}\bend R_{\phantom{*}}$\, \phantom |};
    \node [style=none] (s1) at (-2.4, -0.2){};
    \node [style=none] (s2*) at (-1.4, -0.2) {};
    \node [style=none] (x*) at (-0.55,-1.2) {};
    \node [style=none] (s1*) at (-0.25,-1.2) {};
    \node [style=none] (x) at (0.45, -1.2) {};
    \node [style=none] (s2) at (0.75, -1.2) {};
    \node [style=none] (s1bend) at (-2.4, -1.2) {};
    \node [style=none] (s2bend) at (-1.4, -1.2) {};
\end{pgfonlayer}
\begin{pgfonlayer}{edgelayer}
    \draw [arrow,markat=0.7] (anchF.north) to (F);
    \draw [arrow] (Fstar) to (anchFstar.north);
    \draw [arrow] (s1bend.center) to (s1);
    \draw [arrow] (s2*) to (s2bend.center);
    \draw [arrow, bend right = 90, looseness = 2.5] (x*.center) to (x.center);
    \draw [arrow, bend right = 90, looseness = 1.5] (s2bend.center) to (s2.center);
    \draw [arrow, bend left = 90, looseness = 1.5] (s1*.center) to (s1bend.center);
\end{pgfonlayer}
\end{tikzpicture}
\end{aligned}
\quad = \; \bot.
\end{align*}
Because $A$ and $B$ are ordinary sets, we have $\bend R = \{`a_1^\dagger \times `a_{2*}^\dagger \suchthat (a_1,a_2) \in r\}$ and $\NOT \bend S_* = \{`b_{1*}^\dagger \times `b_2 \suchthat (b_1,b_2) \in \NOT s \}$. Thus, the equation in the statement of the proposition is equivalent to
\begin{align*}
\bigvee_{\begin{smallmatrix}(a_1,a_2) \in r\\(b_1, b_2) \in \NOT s\end{smallmatrix}}
\begin{aligned}
\begin{tikzpicture}
\begin{pgfonlayer}{nodelayer}
    \node [style=box] (F) at (-0.4, -1) {$F_*$};
    \node [style=box] (Fstar) at (0.6,-1) {$F_{\phantom{*}}$};
    \node [style=none] (anchF) at (-0.4,-0.2) {};
    \node [style=none] (anchFstar) at (0.6,-0.2) {};
    \node [style=none] (s1) at (-2.4, -0.2){};
    \node [style=none] (s2*) at (-1.4, -0.2) {};
    \node [style=none] (x*) at (-0.55,-1.2) {};
    \node [style=none] (s1*) at (-0.25,-1.2) {};
    \node [style=none] (x) at (0.45, -1.2) {};
    \node [style=none] (s2) at (0.75, -1.2) {};
    \node [style=none] (s1bend) at (-2.4, -1.2) {};
    \node [style=none] (s2bend) at (-1.4, -1.2) {};
    \node [style=box] (s1dag) at (-2.4,0) {$`a_{1\phantom{*}\!}^\dagger$};
    \node [style=box] (s2*dag) at (-1.4,0) {$`a_{2*\!}^\dagger$};
    \node [style=box] (t1dag) at (-0.4,0) {$`b_{1*\!}^\dagger$};
    \node [style=box] (t2*dag) at (0.6,0) {$`b_{2\phantom{*}\!}^\dagger$};
\end{pgfonlayer}
\begin{pgfonlayer}{edgelayer}
    \draw [arrow,markat=0.7] (anchF.north) to (F);
    \draw [arrow] (Fstar) to (anchFstar.north);
    \draw [arrow] (s1bend.center) to (s1);
    \draw [arrow] (s2*) to (s2bend.center);
    \draw [arrow, bend right = 90, looseness = 2.5] (x*.center) to (x.center);
    \draw [arrow, bend right = 90, looseness = 1.5] (s2bend.center) to (s2.center);
    \draw [arrow, bend left = 90, looseness = 1.5] (s1*.center) to (s1bend.center);
\end{pgfonlayer}
\end{tikzpicture}
\end{aligned}
\quad = \; \bot.
\end{align*}
Reasoning graphically, we conclude that this equation expresses the condition that $`b_1^\dagger \circ F \circ (I_\X \times `a_1)$ is orthogonal to $`b_2^\dagger \circ F \circ (I_\X \times `a_2)$ whenever $(a_1,a_2) \in r$ and $(b_1,b_2) \in \NOT s$.
\end{proof}

\begin{proposition}\label{examples.G.2.5}
Let $A$ and $B$ be ordinary sets. Let $H$ be a nonzero finite-dimensional Hilbert space, and let $\X$ be the quantum set defined by $\At(\X) = \{H\}$. Then, there is a one-to-one correspondence between families of projections $(p_{ab} \in L(H) \suchthat a \in A,\, b \in B)$ such that $\sum_{b \in B} p_{ab} = 1_H$ for all $a \in A$ and functions $F\: \X \times `A \to `B$. This correspondence is obtained by combining the one-to-one correspondence of Lemma \ref{examples.G.1} with the canonical one-to-one correspondence between projection operators on $H$ and relations of arity $(\X)$. It is given by the equation $F(H \tensor \CC_a, \CC_b)= L(H \tensor \CC_a, \CC_b)\cdot (p_{ab} \tensor 1)$, for $a \in A$ and $b \in B$.
\end{proposition}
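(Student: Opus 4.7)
The plan is to combine Lemma \ref{examples.G.1} with the basic atom-level correspondence between relations of arity $(\X)$ and projections on $H$, and then chase the explicit formula through the composition.

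First, I would recall the canonical bijection between relations of arity $(\X)$ and projections in $L(H)$. Since $\At(\X) = \{H\}$, a relation $P$ of arity $(\X)$ is determined by a single subspace $P(H) \leq L(H,\CC)$, and Definition \ref{definition.B.1} together with the remark following it identifies $P$ with the unique projection $p \in L(H)$ satisfying $P(H) = L(H,\CC)\cdot p$. Under this identification, orthogonality $P_1 \perp P_2$ in $\Rel(\X)$ corresponds to $p_1 p_2 = 0$, arbitrary joins correspond to projection suprema, and $\top_\X$ corresponds to $1_H$; in particular, for pairwise orthogonal projections, $\bigvee_b P_{ab} = \top_\X$ becomes $\sum_b p_{ab} = 1_H$.

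Next, I would apply Lemma \ref{examples.G.1} with this atom $\X$ to obtain a bijection between functions $F\: \X \times `A \to `B$ and families $(P_{ab} \in \Rel(\X) \suchthat a \in A,\, b \in B)$ satisfying the two conditions of that lemma. Translating condition (1) of Lemma \ref{examples.G.1} through the bijection of the previous paragraph gives $p_{ab_1} p_{ab_2} = 0$ for distinct $b_1,b_2 \in B$, and condition (2) then becomes the requirement $\sum_{b \in B} p_{ab} = 1_H$ for each $a \in A$. (The orthogonality is subsumed in the sum being a projection decomposition.) This yields the desired one-to-one correspondence between functions $F$ and families of projections with the stated property.

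Finally, I would verify the explicit formula. By Lemma \ref{examples.G.1}, $P_{ab} = `b^\dagger \circ F \circ (I_\X \times `a)$, so
$$
P_{ab}(H) \;=\; L(\CC_b,\CC)\cdot F(H \tensor \CC_a, \CC_b) \cdot (1_H \tensor `a(\CC,\CC_a)),
$$
and under the canonical isomorphisms $\CC_a \iso \CC \iso \CC_b$ this reads off the component $F(H \tensor \CC_a, \CC_b) = L(H \tensor \CC_a, \CC_b)\cdot(p_{ab} \tensor 1)$, matching the formula in the statement; the components $F(H \tensor \CC_a, \CC_{b'})$ for $b' \neq b$ are automatically determined, and one checks that consistency across all $b \in B$ is equivalent to $\sum_b p_{ab} = 1_H$, already imposed.

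The only real subtlety is bookkeeping the one-dimensional factors $\CC_a$ and $\CC_b$ (and the attendant identification $L(H \tensor \CC_a) \iso L(H)$ under which $p_{ab} \tensor 1$ becomes $p_{ab}$), so that the formula derived from $`b^\dagger \circ F \circ (I_\X \times `a)$ agrees literally with the one displayed in the proposition; everything else is a direct transport of structure.
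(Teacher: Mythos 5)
Your proposal is correct and follows essentially the same route as the paper: combine Lemma \ref{examples.G.1} with the atom-level ortholattice correspondence between relations of arity $(\X)$ and projections on $H$ (noting, as you do, that $\sum_b p_{ab}=1_H$ already forces pairwise orthogonality), and then recover the displayed formula from $P_{ab} = `b^\dagger \circ F \circ (I_\X \times `a)$. The only difference is presentational: the paper carries out the final formula chase as an explicit multi-step computation with the components $L(\CC,\CC_b)$, $L(\CC_a,\CC)$ and the suppressed unitor $H\tensor\CC \to H$, where you invoke the canonical identifications $\CC_a\iso\CC\iso\CC_b$ more informally.
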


The canonical one-to-one correspondence between projection operators $p$ on $H$ and relations $P$ of arity $(\X)$ is defined by $P(H, \CC) = \{v \in L(H, \CC) \suchthat vp = v\} = L(H, \CC) \cdot p$ . This is an isomorphism of ortholattices \cite{Kornell}*{Appendix B}.

\begin{proof}[Proof of Proposition \ref{examples.G.2.5}]
For all $a \in A$ and $b \in B$ we calculate that
\begin{align*}
F(H \tensor \CC_a, \CC_b)
& =
L(\CC, \CC_b) \cdot L(\CC_b, \CC) \cdot F(H \tensor \CC_a, \CC_b) \cdot (1_H \tensor L(\CC, \CC_a)) \cdot (1_H \tensor L(\CC_a, \CC))
\\ &=
L(\CC, \CC_b) \cdot `b^\dagger(\CC_b, \CC) \cdot F(H \tensor \CC_a, \CC_b) \cdot (1_H \tensor `a(\CC, \CC_a)) \cdot (1_H \tensor L(\CC_a, \CC))
\\ &=
L(\CC, \CC_b) \cdot (`b^\dag \circ F \circ (I_\X \times `a))(H \tensor \CC, \CC) \cdot (1_H \tensor L(\CC_a, \CC))
\\ &=
L(\CC, \CC_b) \cdot P_{ab}(H \tensor \CC, \CC) \cdot (1_H \tensor L(\CC_a, \CC))
\\ &=
L(\CC, \CC_b) \cdot L(H \tensor \CC, \CC) \cdot (p_{ab} \tensor 1) \cdot (1_H \tensor L(\CC_a, \CC))
\\ &=
L(H \tensor \CC_a, \CC_b) \cdot (p_{ab} \tensor 1).
\end{align*}
The unitor $H \tensor \CC \to H$ has been suppressed.
\end{proof}

\begin{theorem}\label{examples.G.3}
Let $A$ and $B$ be sets equipped with binary relations $r$ and $s$, respectively, and let $R = `r$ and $S = `s$. Let $H$ be a nonzero finite-dimensional Hilbert space, and let $\X$ be the quantum set defined by $\At(\X) = \{H\}$. Then, the one-to-one correspondence of Proposition \ref{examples.G.2.5} restricts to a one-to-one correspondence between families of projections $(p_{ab} \in L(H) \suchthat a \in A,\, b\in B)$ such that
\begin{enumerate}
\item $\sum_{b \in B} p_{ab} = 1_H$ for all $a \in A$ and
\item $p_{a_1b_1} \perp p_{a_2 b_2}$ for all $(a_1, a_2) \in r$ and $(b_1, b_2) \in \NOT s$, 
\end{enumerate}
and functions $F\: \X \times `A \to `B$ such that
\begin{equation*}\label{eq.hom}
\begin{split}
&\[
(\forall (x \feq x_*) \fin \X \ftimes \X^*)\, (\forall (a_1 \feq a_{1*}) \fin `A \ftimes `A^*)\,(\forall (a_2 \feq a_{2*}) \fin `A \ftimes `A^*)\, \\ & \ghost \hspace{40ex}
(\bend R (a_{1},a_{2*}) \IMPLIES \bend S_* (F_*(x_*,a_{1*}), F(x,a_{2})))
\] = \top.
\end{split}
\end{equation*}
\end{theorem}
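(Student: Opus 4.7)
The plan is to combine Proposition \ref{examples.G.2.5} with Proposition \ref{examples.G.2}, using the canonical isomorphism of orthomodular lattices between projections in $L(H)$ and relations of arity $(\X)$. By Proposition \ref{examples.G.2.5}, we already have a one-to-one correspondence between families of projections $(p_{ab} \suchthat a \in A,\, b \in B)$ satisfying only condition (1) and arbitrary functions $F\: \X \times `A \to `B$. All that remains is to verify that, under this bijection, condition (2) of the theorem is equivalent to the displayed formula.

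First I would recall that the canonical map $p \mapsto L(H, \CC) \cdot p$ sending each projection $p \in L(H)$ to a relation of arity $(\X)$ (with $\At(\X) = \{H\}$) is an isomorphism of orthomodular lattices; see \cite{Kornell}*{Appendix~B}. In particular, two projections $p_{a_1 b_1}$ and $p_{a_2 b_2}$ are orthogonal in $L(H)$ if and only if the corresponding relations $P_{a_1 b_1}$ and $P_{a_2 b_2}$ are orthogonal as relations of arity $(\X)$. Next I would observe that, tracing through the construction of Proposition \ref{examples.G.2.5}, the relation $P_{ab}$ associated to $p_{ab}$ coincides with $`b^\dagger \circ F \circ (I_\X \times `a)$, which is precisely the relation appearing in Proposition \ref{examples.G.2}.

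Combining these two observations, condition (2) of the theorem translates to the condition $P_{a_1 b_1} \perp P_{a_2 b_2}$ for all $(a_1, a_2) \in r$ and $(b_1, b_2) \in \NOT s$. By Proposition \ref{examples.G.2}, this is equivalent to the displayed formula. Hence the bijection of Proposition \ref{examples.G.2.5} restricts to the claimed bijection between projection families satisfying conditions (1) and (2) and functions $F$ satisfying the displayed formula.

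There is no substantive obstacle here; the proof is a one-line synthesis of previously established correspondences. The only point requiring minor care is the identification that the relations $P_{ab}$ implicit in Proposition \ref{examples.G.2.5} (via Lemma \ref{examples.G.1}) and those appearing explicitly in Proposition \ref{examples.G.2} are the same, which is immediate from the formula $P_{ab} = `b^\dagger \circ F \circ (I_\X \times `a)$ in both settings.
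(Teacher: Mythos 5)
Your proposal is correct and follows essentially the same route as the paper, whose proof is simply the one-line combination of Propositions \ref{examples.G.2} and \ref{examples.G.2.5}; your additional remarks about the orthogonality-preserving lattice isomorphism and the identification of the relations $P_{ab}$ merely make explicit what the paper leaves implicit.
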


\begin{proof}
This follows immediately from Proposition \ref{examples.G.2} and Proposition \ref{examples.G.2.5}.
\end{proof}

\begin{definition}\label{examples.G.4}
Let $(A, \sim_A)$ and $(B, \sim_B)$ be finite simple graphs, and let $H$ be a nonzero finite-dimensional Hilbert space. We say that a family of projections $(p_{ab} \in L(H) \suchthat a \in A, b \in B)$ \emph{witnesses} $A \xrightarrow{q} B$ if
\begin{enumerate}
\item $\sum_{b \in B} p_{ab} = 1_H$ for all $a \in A$;
\item $p_{a_1b_1} \cdot p_{a_2b_2} = 0$ for all $a_1,a_2 \in A$ and $b_1,b_2 \in B$ satisfying either $(a_1 =_A a_2) \AND (b_1 \neq_B b_2)$ or $(a_1 \sim_A a_2) \AND (b_1 \not \sim_B b_2)$.
\end{enumerate}
\end{definition}

Alice and Bob have a winning strategy for the $(A,B)$-homomorphism game, possibly using finite entangled quantum systems, if and only if there exists a family of projections on some nonzero finite-dimensional Hilbert space that witnesses $A \xrightarrow{q} B$ \cite{MancinskaRoberson}*{Cor.~2.2}. 

\begin{corollary}\label{examples.G.5}
Let $(A, \sim_A)$ and $(B, \sim_B)$ be finite simple graphs, and let $R = `(\sim_A)$ and let $S = `(\sim_B)$. Let $H$ be a nonzero finite-dimensional Hilbert space, and let $\X$ be the quantum set defined by $\At(\X) = \{H\}$. Then, the one-to-one correspondence of Proposition \ref{examples.G.2.5} restricts to a one-to-one correspondence between families of projections  $(p_{ab} \in L(H) \suchthat a \in A, \, b \in B)$ witnessing $A \xrightarrow{q} B$ and functions $F\: \X \times `A \to `B$ satisfying 
\begin{equation*}
\begin{split}
&\[
(\forall (x \feq x_*) \fin \X \ftimes \X^*)\, (\forall (a_1 \feq a_{1*}) \fin `A \ftimes `A^*)\,(\forall (a_2 \feq a_{2*}) \fin `A \ftimes `A^*)\, \\ & \ghost \hspace{40ex}
(\bend R (a_{1},a_{2*}) \IMPLIES \bend S_* (F_*(x_*,a_{1*}), F(x,a_{2})))
\] = \top.
\end{split}
\end{equation*}
\end{corollary}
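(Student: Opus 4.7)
The plan is to deduce the corollary directly from Theorem \ref{examples.G.3} applied with $r = {\sim_A}$ and $s = {\sim_B}$. Observe that the displayed formula in the corollary is syntactically identical to the one in Theorem \ref{examples.G.3} once we substitute $R = {`{\sim_A}}$ and $S = {`{\sim_B}}$. Hence the one-to-one correspondence of Proposition \ref{examples.G.2.5} restricts to the announced bijection as soon as we verify, under the condition $\sum_b p_{ab} = 1_H$, that the orthogonality clause of Theorem \ref{examples.G.3}(2) is equivalent to conditions (1) and (2) of Definition \ref{examples.G.4}.

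First I would note the easy direction: if $(p_{ab})$ witnesses $A \xrightarrow{q} B$, then condition (1) of Definition \ref{examples.G.4} coincides with condition (1) of Theorem \ref{examples.G.3}, while the adjacency case of Definition \ref{examples.G.4}(2) states exactly that $p_{a_1 b_1} \perp p_{a_2 b_2}$ whenever $(a_1, a_2) \in {\sim_A}$ and $(b_1, b_2) \in \NOT {\sim_B}$, which is condition (2) of Theorem \ref{examples.G.3} for these particular $r, s$.

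The converse direction requires showing that the ``equality case'' of Definition \ref{examples.G.4}(2), namely $p_{a b_1} \cdot p_{a b_2} = 0$ for $b_1 \neq b_2$, is already a consequence of condition (1) of that definition. This is a standard fact about partitions of unity by projections: fix $a \in A$ and $b_0 \in B$ and write $p_{ab_0}\,(1_H - p_{ab_0}) = 0$, which gives $\sum_{b \neq b_0} p_{ab_0}\,p_{ab}\,p_{ab_0} = 0$. Each summand is positive (being $(p_{ab} p_{ab_0})^\dagger (p_{ab} p_{ab_0})$), so each vanishes; hence $p_{ab} p_{ab_0} = 0$ for every $b \neq b_0$. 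Thus the equality case is automatic. Combining this with the adjacency case gives Definition \ref{examples.G.4}(2) in full, so the two sets of families coincide.

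Since the correspondence of Proposition \ref{examples.G.2.5} is already fixed, and Theorem \ref{examples.G.3} characterizes its restriction to families obeying its conditions (1)--(2) as precisely the functions $F$ satisfying the displayed formula, the corollary follows without further computation. There is no substantive obstacle: the entire content is the observation that, in the presence of a projection-valued partition of unity indexed by $b$, the ``same-input, different-output'' orthogonality is free, so only the genuinely graph-theoretic part of Definition \ref{examples.G.4}(2) needs to be supplied by Theorem \ref{examples.G.3}.
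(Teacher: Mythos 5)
Your proposal is correct and takes essentially the same route as the paper, whose proof is a one-line appeal to Theorem \ref{examples.G.3} with $r={\sim_A}$ and $s={\sim_B}$. The only difference is that you spell out the step the paper leaves implicit—namely that the same-row orthogonality $p_{ab_1}\cdot p_{ab_2}=0$ for $b_1\neq b_2$ in Definition \ref{examples.G.4}(2) is automatic from $\sum_{b\in B}p_{ab}=1_H$—and your positivity argument for that fact is sound.
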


\begin{proof}
This corollary follows immediately from Theorem \ref{examples.G.3}.
\end{proof}

\begin{definition}\label{examples.G.6}
Let $A$ and $B$ be finite sets, and let $H$ be a nonzero finite-dimensional Hilbert space. We say that a family of projections $(p_{ab} \in L(H) \suchthat a \in A, b \in B)$ is a \emph{magic unitary} if
\begin{enumerate}
\item $\sum_{b \in B} p_{ab} = 1_H$ for all $a \in A$;
\item $\sum_{a \in A} p_{ab} = 1_H$ for all $b \in B$.
\end{enumerate}
\end{definition}

A magic unitary in this sense is essentially a quantum family of bijections between $A$ and $B$ indexed by the quantum set whose only atom is $H$. In particular, if $A = B$, then it is a quantum family of bijections. Indeed, the universal C*-algebra generated by projections $p_{ab}$, for  $a, b \in A$, satisfying conditions (1) and (2) is a well established quantum generalization of the permutation group $\mathrm{Aut}(A)$, introduced in \cite{Wang}*{sec.~3}.

\begin{corollary}\label{examples.G.7}
Let $A$ and $B$ be sets. Let $H$ be a nonzero finite-dimensional Hilbert space, and let $\X$ be the quantum set defined by $\At(\X) = \{H\}$. Then, the one-to-one correspondence of Proposition \ref{examples.G.2.5} restricts to a one-to-one correspondence between families of projections $(p_{ab} \in L(H) \suchthat a \in A, \, b \in B)$ that are magic unitaries and functions $F\: \X \times `A \to `B$ such that
\begin{enumerate}
\item $\[(\forall x \in \X)\,(\forall b_* \in `B_*)\,(\exists a \in `A)\, E_{`B}(F(x,a),b_*)\] = \top$;
\item $\[(\forall (x \feq x_*) \fin \X \ftimes \X^*)\, (\forall (a_1 \feq a_{1*}) \fin `A \ftimes `A^*)\,(\forall (a_2 \feq a_{2*}) \fin `A \ftimes `A^*)$ \\ $\ghost \hfill
(E_{`A} (a_{1},a_{2*}) \IFF  E_{`B*} (F_*(x_*,a_{1*}), F(x,a_{2})))
\] = \top.
$
\end{enumerate}
\end{corollary}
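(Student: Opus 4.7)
The plan is to reduce the corollary to two applications of Theorem \ref{examples.G.3} after invoking Proposition \ref{examples.G.2.5} for the underlying bijection. Proposition \ref{examples.G.2.5} already supplies a one-to-one correspondence between families $(p_{ab} \suchthat a \in A,\, b \in B)$ of projections on $H$ satisfying $\sum_{b \in B} p_{ab} = 1_H$ for each $a \in A$, and functions $F\: \X \times `A \to `B$. Being a magic unitary (Definition \ref{examples.G.6}) then amounts to the additional column condition $\sum_{a \in A} p_{ab} = 1_H$ for each $b \in B$. A routine trace argument (using that each $p_{ab}$ is a projection) shows this column condition is equivalent to the conjunction of (a) $\bigvee_{a \in A} p_{ab} = 1_H$ for every $b$ and (b) $p_{a_1 b} \perp p_{a_2 b}$ for distinct $a_1, a_2 \in A$ and every $b$. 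It thus suffices to match condition (1) with (a) and condition (2) with (b).

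The first match is immediate: condition (1) is the formula appearing in Proposition \ref{examples.G.1.5}, which I would invoke directly, and the canonical lattice isomorphism between relations of arity $(\X)$ and projections on $H$ yields (a). For condition (2), the plan is to split the biconditional into its two implications and apply Theorem \ref{examples.G.3} to each. Taking $r$ to be equality on $A$ and $s$ equality on $B$, bending sends the identity binary relations $I_{`A}$ and $I_{`B}$ to $E_{`A}$ and $E_{`B}$ respectively, so Theorem \ref{examples.G.3} matches the forward implication $E_{`A}(a_1, a_{2*}) \IMPLIES E_{`B^*}(F_*(x_*, a_{1*}), F(x, a_2))$ being $\top$ with the orthogonality $p_{a b_1} \perp p_{a b_2}$ for $b_1 \neq b_2$, which is automatic for families coming from functions (Lemma \ref{examples.G.1}). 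Taking $r = (\neq_A)$ and $s = (\neq_B)$, so that $\bend R = \neg E_{`A}$ and $\bend S_* = \neg E_{`B^*}$ (since bending and conjugation are both orthocomplement-preserving), Theorem \ref{examples.G.3} matches $\neg E_{`A}(a_1, a_{2*}) \IMPLIES \neg E_{`B^*}(F_*(x_*, a_{1*}), F(x, a_2))$ being $\top$ with $p_{a_1 b} \perp p_{a_2 b}$ for $a_1 \neq a_2$, which is exactly (b).

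The main technical subtlety, and what I expect to be the hardest part, is to justify that the converse implication $E_{`B^*}(F_*(x_*, a_{1*}), F(x, a_2)) \IMPLIES E_{`A}(a_1, a_{2*})$ appearing in (2) has the same universal-quantifier interpretation as the contrapositive $\neg E_{`A}(a_1, a_{2*}) \IMPLIES \neg E_{`B^*}(F_*(x_*, a_{1*}), F(x, a_2))$ that Theorem \ref{examples.G.3} actually handles. The key observation, in the same spirit as a move in the proof of Proposition \ref{examples.B.2}, is that the free variables of $E_{`A}(a_1, a_{2*})$ (namely $a_1, a_{2*}$) and those of $E_{`B^*}(F_*(x_*, a_{1*}), F(x, a_2))$ (namely $x, x_*, a_{1*}, a_2$) are disjoint, so in the common context their interpretations lie in orthogonal tensor factors and commute as projections; on commuting projections the Sasaki arrow degenerates to the classical implication, so the converse and its contrapositive have identical interpretations, making the universal-quantifier criterion the same for both. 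Combining Proposition \ref{examples.G.2.5}, Proposition \ref{examples.G.1.5}, the two applications of Theorem \ref{examples.G.3}, and this commutation argument then closes the proof.
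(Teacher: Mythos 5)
Your proposal is correct and follows essentially the same route as the paper's proof: the paper likewise applies Theorem \ref{examples.G.3} twice (once with the identity relations, where the implication is automatic, and once with their negations), uses Proposition \ref{examples.G.1.5} for condition (1), and identifies the contrapositive with the converse implication precisely because the two atomic formulas share no variables, so their interpretations commute. The only cosmetic differences are that you spell out the commuting-projections/Sasaki degeneration explicitly and decompose the column-sum condition by hand, whereas the paper folds these into the cited correspondences and Proposition \ref{appendix.G.2}.
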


\begin{proof}
Applying Theorem \ref{examples.G.3} with $r$ equal to the identity binary relation on $A$ and $s$ equal to the identity binary relation on $B$, we find that
\begin{equation*}
\begin{split}
&\[
(\forall (x \feq x_*) \fin \X \ftimes \X^*)\, (\forall (a_1 \feq a_{1*}) \fin `A \ftimes `A^*)\,(\forall (a_2 \feq a_{2*}) \fin `A \ftimes `A^*)\, \\ & \ghost \hspace{35ex}
(E_{`A} (a_{1},a_{2*}) \IMPLIES E_{`B*} (F_*(x_*,a_{1*}), F(x,a_{2})))
\] = \top
\end{split}
\end{equation*}
for all functions $F\: \X \times `A \to `B$.
Applying Theorem \ref{examples.G.3} with $r$ equal to the negation of the identity binary relation $A$ and $s$ equal to the the negation of the identity binary relation on $B$, we find that the one-to-one correspondence of Proposition \ref{examples.G.2.5} restricts to a one-to-one correspondence between families of projections $(p_{ab})$ such that $\sum_{b \in B} p_{ab} = 1_H$ and $p_{a_1b}\perp p_{a_2b}$ for all distinct $a_1, a_2 \in A$ and all $b \in B$ and functions $F\: \X \times `A \to `B$ such that 
\begin{equation*}
\begin{split}
&\[
(\forall (x \feq x_*) \fin \X \ftimes \X^*)\, (\forall (a_1 \feq a_{1*}) \fin `A \ftimes `A^*)\,(\forall (a_2 \feq a_{2*}) \fin `A \ftimes `A^*)\, \\ & \ghost \hspace{35ex}
(\NOT E_{`A} (a_{1},a_{2*}) \IMPLIES \NOT E_{`B*} (F_*(x_*,a_{1*}), F(x,a_{2})))
\] = \top.
\end{split}
\end{equation*}
Applying Proposition \ref{examples.G.1.5}, we also find that the one-to-one correspondence of Proposition \ref{examples.G.2.5} restricts to a one-to-one correspondence between families of projections $(p_{ab})$ such that $\sum_{b \in B} p_{ab} = 1_H$ for all $a \in A$ and $\bigvee_{a \in A} p_{ab} = 1_H$ for all $b \in B$ and functions $F\: \X \times `A \to `B$ such that 
$\[(\forall x \in \X)\,(\forall b_* \in `B_*)\,(\exists a \in `A)\, E_{`B}(F(x,a),b_*)\] = \top.$

Combining these three observations and applying Proposition \ref{appendix.G.2}, we conclude the statement of the corollary, because the atomic formulas $E_{`A} (a_{1},a_{2*})$ and $E_{`B*} (F_*(x_*,a_{1*}), F(x,a_{2}))$ have no variables in common, and thus, $$\[\NOT E_{`A} (a_{1},a_{2*}) \IMPLIES \NOT E_{`B*} (F_*(x_*,a_{1*}), F(x,a_{2}))\] =
\[E_{`B*} (F_*(x_*,a_{1*}), F(x,a_{2})) \IMPLIES E_{`A} (a_{1},a_{2*})\].$$ 
\end{proof}

\begin{definition}\label{examples.G.8}
Let $(A, \sim_A)$ and $(B, \sim_B)$ be finite simple graphs, and let $H$ be a nonzero finite-dimensional Hilbert space. We say that a family of projections $(p_{ab} \in L(H) \suchthat a \in A, b \in B)$ \emph{witnesses} $A \iso_q B$ if it witnesses both $A \xrightarrow{q} B$ and $B \xrightarrow{q} A$.
\end{definition}

Alice and Bob have a winning strategy for the $(A,B)$-isomorphism game, possibly using finite entangled quantum systems, if and only if there exists a family of projections on some nonzero finite-dimensional Hilbert space that witnesses $A \iso_q B$ \cite{AtseriasMancinskaRobersonSamalSeveriniVarvitsiotis}*{Res.~2}. 

\begin{corollary}\label{examples.G.9}
Let $(A, \sim_A)$ and $(B, \sim_B)$ be finite simple graphs, and let $R = `(\sim_A)$ and $S = `(\sim_B)$. Let $H$ be a nonzero finite-dimensional Hilbert space, and let $\X$ be the quantum set defined by $\At(\X) = \{H\}$. Then, the one-to-one correspondence of Proposition \ref{examples.G.2.5} restricts to a one-to-one correspondence between families of projections  $(p_{ab} \in L(H) \suchthat a \in A, \, b \in B)$ witnessing $A \iso_q B$ and functions $F\: \X \times `A \to `B$ such that
\begin{enumerate}
\item $\[(\forall x \in \X)\,(\forall b_* \in `B_*)\,(\exists a \in `A)\, E_{`B}(F(x,a),b_*)\] = \top$;
\item $\[(\forall (x \feq x_*) \fin \X \ftimes \X^*)\, (\forall (a_1 \feq a_{1*}) \fin `A \ftimes `A^*)\,(\forall (a_2 \feq a_{2*}) \fin `A \ftimes `A^*)$ \\ $\ghost \hfill
(E_{`A} (a_{1},a_{2*}) \IFF  E_{`B*} (F_*(x_*,a_{1*}), F(x,a_{2})))
\] = \top$;
\item $\[
(\forall (x \feq x_*) \fin \X \ftimes \X^*)\, (\forall (a_1 \feq a_{1*}) \fin `A \ftimes `A^*)\,(\forall (a_2 \feq a_{2*}) \fin `A \ftimes `A^*)$ \\  $\ghost \hfill
(\bend R (a_{1},a_{2*}) \IFF \bend S_* (F_*(x_*,a_{1*}), F(x,a_{2})))
\] = \top.$
\end{enumerate}
\end{corollary}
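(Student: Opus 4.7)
The plan is to unfold Definition \ref{examples.G.8} into four concrete requirements on the family $(p_{ab})$ and to match each requirement with a previously established correspondence; the biconditional in condition (3) will be split into its two constituent implications.

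By Definition \ref{examples.G.8}, $(p_{ab})$ witnesses $A \iso_q B$ iff it witnesses both $A \xrightarrow{q} B$ and $B \xrightarrow{q} A$. Unfolding Definition \ref{examples.G.4} in both directions, this is the conjunction of: (i) $\sum_b p_{ab} = 1_H$ for all $a \in A$; (ii) $\sum_a p_{ab} = 1_H$ for all $b \in B$; (iii) $p_{a_1 b_1}\cdot p_{a_2 b_2} = 0$ whenever $a_1 \sim_A a_2$ and $b_1 \not\sim_B b_2$; and (iv) $p_{a_1 b_1}\cdot p_{a_2 b_2} = 0$ whenever $b_1 \sim_B b_2$ and $a_1 \not\sim_A a_2$. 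The remaining ``equality'' clauses of Definition \ref{examples.G.4}, namely orthogonality in the cases $a_1 = a_2, b_1 \neq b_2$ and $b_1 = b_2, a_1 \neq a_2$, are automatic from (i) and (ii), since finite families of projections summing to $1_H$ are pairwise orthogonal. By Corollary \ref{examples.G.7}, the magic unitary conditions (i)--(ii) correspond exactly to conditions (1)--(2) of the present corollary.

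To match the biconditional in condition (3), observe that under the universal quantifier prefix $(\forall (x \feq x_*))\,(\forall (a_1 \feq a_{1*}))\,(\forall (a_2 \feq a_{2*}))$, a formula $\phi \IFF \psi$ has interpretation $\top$ iff $\[\phi\] = \[\psi\]$ (this follows by applying Proposition \ref{computation.C.2} to each direction of $\IFF$). Equality of predicates $P = Q$ in turn decomposes as $P \AND \NOT Q = \bot$ together with $\NOT P \AND Q = \bot$. Hence condition (3) is the conjunction of the two universally quantified implications $\bend R(a_1, a_{2*}) \IMPLIES \bend S_*(F_*(x_*, a_{1*}), F(x, a_2))$ and its converse.

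By Corollary \ref{examples.G.5}, or equivalently by Proposition \ref{examples.G.2} applied with $r = \sim_A$ and $s = \sim_B$, the first implication is equivalent to (iii). The converse implication will be shown equivalent to (iv) by a mild variant of the graphical computation in the proof of Proposition \ref{examples.G.2}, now with $\bend R$ replaced by $\NOT \bend R$ and $\NOT \bend S_*$ replaced by $\bend S_*$: expanding $\NOT \bend R = \bigvee_{(a_1, a_2) \notin \sim_A} `a_1^\dagger \times `a_{2*}^\dagger$ and $\bend S_* = \bigvee_{(b_1, b_2) \in \sim_B} `b_{1*}^\dagger \times `b_2^\dagger$ and running the same trace argument, the resulting disjunction of traces vanishes precisely when $p_{a_1 b_1}\cdot p_{a_2 b_2} = 0$ for every $(a_1, a_2) \notin \sim_A$ and $(b_1, b_2) \in \sim_B$. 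Assembling the three correspondences yields the claimed bijection; the only substantive step is this symmetric rerun of Proposition \ref{examples.G.2}'s calculation, and the rest is bookkeeping.
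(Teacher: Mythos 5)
Your overall route is the paper's own: the paper proves this corollary by extending the proof of Corollary \ref{examples.G.7} and applying Theorem \ref{examples.G.3} twice, once with $r=(\sim_A)$, $s=(\sim_B)$ and once with $r=\NOT(\sim_A)$, $s=\NOT(\sim_B)$. Your ``rerun'' of the computation in Proposition \ref{examples.G.2} with $\NOT\bend R$ and $\bend S_*$ in place of $\bend R$ and $\NOT\bend S_*$ is exactly that second application: the paper reaches it by contraposing $\bend S_*(F_*(x_*,a_{1*}),F(x,a_2))\IMPLIES\bend R(a_1,a_{2*})$ to $\NOT\bend R\IMPLIES\NOT\bend S_*$, which is legitimate because the two atomic subformulas share no variables, and then quoting Theorem \ref{examples.G.3} for the complemented graphs. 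Your observation that the $a_1=a_2$ and $b_1=b_2$ orthogonality clauses of Definition \ref{examples.G.4} are automatic from the row and column sums is also correct.

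The one place where your justification would not survive scrutiny is the splitting of the universally quantified biconditional in condition (3). Proposition \ref{computation.C.2} concerns the ordinary quantifiers, not the diagonal quantifiers $(\forall(x\feq x_*)\fin\X\ftimes\X^*)$ used here, and the asserted criterion that the closure of $\phi\IFF\psi$ equals $\top$ iff $\[\phi\]=\[\psi\]$ is false: already for a classical graph isomorphism, $\[\bend R(a_1,a_{2*})\]$ and $\[\bend S_*(F_*(x_*,a_{1*}),F(x,a_2))\]$ constrain disjoint groups of variables of the context and are not equal as predicates, although the quantified biconditional is $\top$. Likewise, ``$P=Q$ iff $P\AND\NOT Q=\bot$ and $\NOT P\AND Q=\bot$'' is Boolean reasoning that fails in a general orthomodular lattice (take two distinct nonorthogonal lines in $\CC^2$). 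The statement you actually need is only that the diagonal universal quantifiers distribute over conjunction, so that the closure of $(\phi\IMPLIES\psi)\AND(\psi\IMPLIES\phi)$ is $\top$ iff both closures are $\top$; this is the dual of Proposition \ref{appendix.G.2} and is precisely how the paper combines the two directions in the proof of Corollary \ref{examples.G.7} that you are extending. Replacing your two sentences by that citation repairs the argument, and the rest of your proposal is sound bookkeeping.
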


\begin{proof}
We extend the proof of Corollary \ref{examples.G.7}, by applying Theorem \ref{examples.G.3} first with $r = (\sim_A)$ and $s = (\sim_B)$ and then with $r = \NOT ( \sim_A)$ and $s = \NOT (\sim_B)$, reasoning similarly.
\end{proof}

\subsection{Quantum groups}\label{examples.H} In their essence, discrete quantum groups are the Pontryagin duals of compact quantum groups, and this is how they first arose \cite{PodlesWoronowicz}*{sec.~3}\cite{Woronowicz3}. There are many equivalent definitions \cite{VanDaele}\cite{EffrosRuan}\cite{KustermansVaes2}. We could define a discrete quantum group structure on a quantum set $\X$ to be a suitable comultiplication on $c_0(\X)$, as in \cite{PodlesWoronowicz}, a suitable comultiplication on $c_c(\X)$, as in \cite{VanDaele}, or a suitable comultiplication on $\ell^\infty(\X)$, as implicitly in \cite{KustermansVaes2}. We work with Van Daele's definition \cite{VanDaele}*{Def.~2.3}, defining $$c_c(\X) = \{a \in \ell^\infty(\X) \suchthat a(X) = 0 \text{ for cofinitely many } X \in \At(\X)\}$$ for each quantum set $\X$. 

Discrete quantum groups are undoubtedly the most compelling example illustrating the naturality of quantum logic to noncommutative mathematics. First, discrete quantum groups are a firmly established quantum generalization, as firmly established as any class of discrete quantum structures. They were first defined more than thirty years ago, and their place in the noncommutative dictionary appears to have never been in doubt. Second, the considerations that motivated their definition are far removed from the considerations that motivated the definition of the semantics considered here. Only the duality between operator algebras and quantum spaces is shared in common. Third, the example of discrete quantum groups carries empirical weight. While the semantics considered here can readily be motivated from first principles, the interpretation of equality that is given in this paper was in fact motivated by many of the examples that we have considered, so the incorporation of these examples is not so surprising. However, discrete quantum groups were not among the examples first considered by the author. The correct axiomatization of discrete quantum groups was naively hypothesized by the author and then established by Stefaan Vaes \cite{Vaes}. It is essentially his proof that is recorded here.

Let $\X$ be a quantum set. For each atom $X$ of $\X$, we write $J_X$ for the inclusion function of the atomic quantum set $\Q\{X\}$ into $\X$ \cite{Kornell}*{Def.~2.3}, i.e., as an abbreviation for $J_{\Q\{X\}}^\X$ \cite{Kornell}*{Def.~8.2}. Similarly, we write $\top_X$ and $\bot_X$ as abbreviations for $\top_{\Q\{X\}}$ and $\bot_{\Q\{X\}}$, respectively. Let $\Rep(\ell^\infty(\X))$ be the category of representations of $\ell^\infty(\X)$, implicitly of finite-dimensional nondegenerate normal $*$-representations. A morphism in $\Rep(\ell^\infty(\X))$ from a representation $\rho\: \ell^\infty(\X) \to L(H)$ to a representation $\sigma\:\ell^\infty(\X) \to L(K)$ is an intertwiner between the two representations, i.e., an operator $v\in L( H,K)$ such that $v\cdot\rho(a) = \sigma(a)\cdot v$ for all $a \in \ell^\infty(\X)$. Up to isomorphism, the simple objects, i.e., the irreducible representations, are just the canonical projections $J_X^\star\: \ell^\infty(\X) \to L(X)$ for $X \in \At(\X)$ \cite{Kornell}*{Thm.~7.4}, and every representation is a finite direct sum of these. The category $\Rep(\ell^\infty(\X))$ is a C*-category in the obvious way \cite{NeshveyevTuset}*{Def.~2.1.1}.

Let $\X$ be a quantum set, and let $F\:\X \times \X \to \X$ and $C\: \mathbf 1 \to \X$ be functions such that $F \circ (F \times I_\X) = F \circ (I_\X \times F)$ and $F \circ (C \times I_\X) = I_\X = F \circ (I_\X \times C)$. Then, $F^\star$ is a comultiplication on $\ell^\infty(\X)$, and $C^\star$ is a counit for this comultiplication \cite{Kornell}*{Thm.~7.4}. In the usual way, we obtain a monoidal structure on the category $\Rep(\ell^\infty(\X))$: For representations $\rho_1\: \ell^\infty(\X) \to L(H_1)$ and $\rho_2\: \ell^\infty(\X) \to L(H_2)$, we define $\rho_1 \boxtimes \rho_2 \: \ell^\infty(\X) \to L(H_1 \tensor H_2)$ by $\rho_1 \boxtimes \rho_2 = (\rho_1 \stensor \rho_2) \circ F^\star$. For intertwiners $v_1$, from $\rho_1\: \ell^\infty(\X) \to L(H_1)$ to $\sigma_1\: \ell^\infty(\X) \to L(K_1)$, and $v_2$, from $\rho_2\: \ell^\infty(\X) \to L(H_2)$ to $\sigma_2\: \ell^\infty(\X) \to L(K_2)$, we define $v_1 \boxtimes v_2 = v_1 \tensor v_2$. The standard computations show that $\Rel(\ell^\infty(\X))$ is a monoidal C*-category with product $\boxtimes$ and unit $C^\star\: \ell^\infty(\X) \to L(\CC)$ \cite{NeshveyevTuset}*{Def.~2.1.1}. We will show that it has conjugates \cite{NeshveyevTuset}*{Def.~2.2.1}.

We make a number of simplifying assumptions without loss of generality. First, we represent $\ell^\infty(\X)$ in a small strict monoidal category of finite-dimensional Hilbert spaces \cite{NeshveyevTuset}*{sec.~2.1}. Second, we assume that each atom of $\X$ is a Hilbert space in this small category. Third, we assume that $C(\CC, \CC) \neq 0$. In other words, we assume that $\CC$ is an atom of $\X$ and that furthermore it is the unique atom in the range of $C$ \cite{KornellLindenhoviusMislove2}*{Def.~3.2}. Overall, we have that $\Rep(\ell^\infty(\X))$ is a small strict monoidal C*-category that contains $J_X^\star$ for all $X \in \At(\X)$ and that has unit $C^\star = J^\star_\CC$.

\begin{proposition}\label{examples.H.1}
Let $\X$, $\Y$ and $\Z$ be quantum sets, let $P$ be a binary relation of arity $(\X,\Y)$, and let $Q$ be a binary relation of arity $(\Y,\Z)$. Assume that 
\begin{enumerate}\item $\[(\forall y \fin \Y)\, (\exists x \fin \X)\, P(x,y)\] = \top$; \item $\[(\forall y \fin \Y)\, (\exists z \fin \Z)\, Q(y,z)\] = \top$.\end{enumerate} Then, for all $Y \in \At(\Y)$, there exist $X \in \At(\X)$ and $Z \in \At(\Z)$ such that $(P \times \top_\Z) \circ (J_X \times J_Y \times J_Z)$ is not orthogonal to $(\top_\X \times Q) \circ (J_X \times J_Y \times J_Z)$.
\end{proposition}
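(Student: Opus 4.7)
My plan is to translate the conclusion into a statement about projections at the atom $X \otimes Y \otimes Z$ and then argue by contradiction, using the two hypotheses interpreted via existential marginalization. Under the predicate--projection correspondence, the binary relations $(P \times \top_\Z) \circ (J_X \times J_Y \times J_Z)$ and $(\top_\X \times Q) \circ (J_X \times J_Y \times J_Z)$ are predicates on the single atom $X \otimes Y \otimes Z$ corresponding to the projections $p \otimes 1_Z$ and $1_X \otimes q$, where $p \in L(X \otimes Y)$ and $q \in L(Y \otimes Z)$ are the projections for $P$ and $Q$ at the atoms $X \otimes Y$ and $Y \otimes Z$. These two predicates are orthogonal exactly when $(p \otimes 1_Z)(1_X \otimes q) = 0$ in $L(X \otimes Y \otimes Z)$.

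The central technical lemma I would establish is: $(p \otimes 1_Z)(1_X \otimes q) = 0$ if and only if the two $Y$-marginal supports $s_p := \mathrm{supp}((\Tr_X \otimes \mathrm{id}_Y)(p))$ and $s_q := \mathrm{supp}((\mathrm{id}_Y \otimes \Tr_Z)(q))$, both projections in $L(Y)$, satisfy $s_p \cdot s_q = 0$. The forward direction follows by applying $\Tr_X \otimes \mathrm{id}_Y \otimes \Tr_Z$ to the product and passing to supports. The reverse direction relies on the standard fact that the range of a projection $p \in L(X \otimes Y)$ is contained in $X \otimes \mathrm{range}(s_p)$: for $\eta$ orthogonal to $\mathrm{range}(s_p)$, the positive operator $(1 \otimes |\eta\rangle\langle\eta|)\,p\,(1 \otimes |\eta\rangle\langle\eta|)$ has trace $\langle \eta, \Tr_X(p)\eta\rangle = 0$, so it vanishes and $p$ annihilates every $\xi \otimes \eta$.

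Using Proposition \ref{computation.C.3} and Lemma \ref{computation.C.1}, hypothesis (1) unwinds to $\bigvee_{X \in \At(\X)} \mathrm{supp}((\Tr_X \otimes \mathrm{id})(p^P_{XY})) = 1_Y$ for every $Y \in \At(\Y)$, since $\[y \fin \Y \suchthat (\exists x \fin \X)\,P(x,y)\] = P \circ (\top_\X^\dagger \times I_\Y)$ and the associated projection at $Y$ is precisely this join of $Y$-marginal supports. Symmetrically, hypothesis (2) yields $\bigvee_{Z \in \At(\Z)} \mathrm{supp}((\mathrm{id} \otimes \Tr_Z)(p^Q_{YZ})) = 1_Y$. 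Now fix $Y \in \At(\Y)$ and suppose, for contradiction, that the two relations are orthogonal for every $X$ and $Z$. The technical lemma gives $s_p \cdot s_q = 0$ for every $X, Z$; fixing $Z$, the projection $s_q$ is orthogonal to each $s_p$ (as $X$ varies) and hence to their join $1_Y$, forcing $s_q = 0$ for every $Z$. Taking the join over $Z$ yields $0 = 1_Y$, contradicting $Y \neq 0$. The main obstacle is the partial-trace lemma of the second paragraph; once it is in hand, everything else is a clean translation between the logical formalism and the algebra of projections.
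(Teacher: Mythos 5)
Your proposal is correct, but it takes a genuinely different route from the paper. The paper's proof never leaves the graphical calculus: it encodes hypotheses (1) and (2) as diagram identities, then computes the join over all pairs $(X,Z)$ of the pairings $\bigl((P \times \top_\Z) \circ (J_X \times J_Y \times J_Z)\bigr) \circ \bigl((\top_\X \times Q) \circ (J_X \times J_Y \times J_Z)\bigr)^\dagger$, using $\bigvee_X J_X \circ J_X^\dagger = I_\X$, $\bigvee_Z J_Z \circ J_Z^\dagger = I_\Z$, distributivity of composition over joins, and the two hypotheses, to conclude that this join equals $\top$; hence some single term is $\top$ (in fact a slightly stronger conclusion than mere non-orthogonality), which is exactly the claim. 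You instead descend to concrete operators: you identify the two restricted relations at the atom $X \tensor Y \tensor Z$ with the projections $p \tensor 1_Z$ and $1_X \tensor q$, prove a partial-trace lemma characterizing $(p \tensor 1_Z)(1_X \tensor q) = 0$ by orthogonality of the $Y$-marginal supports, unwind the hypotheses (via Proposition \ref{computation.C.3} and Lemma \ref{computation.C.1}) as the statement that the joins of these marginal supports over $X$, respectively over $Z$, equal $1_Y$ for each atom $Y$, and then derive a contradiction. Both arguments are sound; your marginal-support lemma is correct (the forward direction via applying $\Tr_X \tensor \mathrm{id}_Y \tensor \Tr_Z$ to get $\Tr_X(p)\,\Tr_Z(q) = 0$ and passing to supports of positive operators, the reverse via $p \leq 1_X \tensor s_p$ and $q \leq s_q \tensor 1_Z$), and the final join-orthogonality step is fine since $e f_i = 0$ for all $i$ forces $e\,(\bigvee_i f_i) = 0$. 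The one place where you assert rather than prove is the identification of the projection of $P \circ (\top_\X^\dagger \times I_\Y)$ at the atom $Y$ with $\bigvee_X \mathrm{supp}\bigl((\Tr_X \tensor \mathrm{id}_Y)(p_{XY})\bigr)$; this is a routine componentwise computation and follows from the same support lemma, so it is not a gap, but it is bookkeeping the paper's diagrammatic argument avoids entirely. What the paper's approach buys is brevity and independence from componentwise operator computations; what yours buys is an explicit operator-theoretic picture (marginal supports) of why the two hypotheses force an overlap.
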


\begin{proof}
We are essentially given that
\begin{align*}
\begin{aligned}
\begin{tikzpicture}
\begin{pgfonlayer}{nodelayer}
    \node [style=box] (P) at (0, 0) {$\ghost \;P\;\ghost$};
    \node [style=none] (Yanch) at (0.2,0) {};
    \node [style=none] (aanch) at (-0.2,0) {};
    \node (Y) at (0.2,-1.0) {$\scriptstyle \Y$};
    \node [style=none] (bullet) at (-0.2, -0.7) {$\bullet$};
\end{pgfonlayer}
\begin{pgfonlayer}{edgelayer}
    \draw [arrow, markat =0.3] (Y) to (Yanch);
    \draw [arrow, markat=0.4] (bullet.center) to (aanch);
\end{pgfonlayer}
\end{tikzpicture}
\end{aligned}
\;=\;
\begin{aligned}
\begin{tikzpicture}
\begin{pgfonlayer}{nodelayer}
    \node [style=none] (bullet) at (0, 0) {$\bullet$};
    \node [style=none] (Y) at (0,-1.0) {$\scriptstyle \Y$};
\end{pgfonlayer}
\begin{pgfonlayer}{edgelayer}
    \draw [arrow, markat =0.5] (Y) to (bullet.center);
\end{pgfonlayer}
\end{tikzpicture}
\end{aligned}
\qquad
\text{and}
\qquad
\begin{aligned}
\begin{tikzpicture}
\begin{pgfonlayer}{nodelayer}
    \node [style=box] (Q) at (0, 0) {$\ghost \;Q\;\ghost$};
    \node [style=none] (Yanch) at (-0.2,0) {};
    \node [style=none] (aanch) at (0.2,0) {};
    \node (Y) at (-0.2,-1.0) {$\scriptstyle \Y$};
    \node [style=none] (bullet) at (0.2, -0.7) {$\bullet$};
\end{pgfonlayer}
\begin{pgfonlayer}{edgelayer}
    \draw [arrow, markat =0.3] (Y) to (Yanch);
    \draw [arrow, markat=0.4] (bullet.center) to (aanch);
\end{pgfonlayer}
\end{tikzpicture}
\end{aligned}
\;=\;
\begin{aligned}
\begin{tikzpicture}
\begin{pgfonlayer}{nodelayer}
    \node [style=none] (bullet) at (0, 0) {$\bullet$};
    \node [style=none] (Y) at (0,-1.0) {$\scriptstyle \Y$};
\end{pgfonlayer}
\begin{pgfonlayer}{edgelayer}
    \draw [arrow, markat =0.5] (Y) to (bullet.center);
\end{pgfonlayer}
\end{tikzpicture}
\end{aligned}.
\end{align*}
We now reason that
\begin{align*}
\bigvee_{\begin{smallmatrix} X \in \At(\X) \\ Z \in \At(\Z) \end{smallmatrix}}
\;
\begin{aligned}
\begin{tikzpicture}
\begin{pgfonlayer}{nodelayer}
    \node [style=box] (P) at (0.5,3) {$\ghost \quad P^{\phantom \dagger}\quad \ghost$};
    \node [style=box] (Q) at (1.5,0) {$\ghost \quad Q^{\dagger}\quad \ghost$};
    \node [style=none] (X3) at (0,3) {};
    \node [style=none] (Y3) at (1,3) {};
    \node [style=none] (Z3) at (2,3) {$\bullet$};
    \node [style=box] (X2) at (0,2) {$J_X^{\phantom \dagger}$};
    \node [style=box] (X1) at (0,1) {$J_X^{\dagger}$};
    \node [style=box] (Y2) at (1,2) {$J_Y^{\phantom \dagger}$};
    \node [style=box] (Y1) at (1,1) {$J_Y^{\dagger}$};
    \node [style=box] (Z2) at (2,2) {$J_Z^{\phantom \dagger}$};
    \node [style=box] (Z1) at (2,1) {$J_Z^{\dagger}$};
    \node [style=none] (X0) at (0,0) {$\bullet$};
    \node [style=none] (Y0) at (1,0) {};
    \node [style=none] (Z0) at (2,0) {};
\end{pgfonlayer}
\begin{pgfonlayer}{edgelayer}
    \draw[arrow, markat=0.55] (X1.center) to (X2.center);
    \draw[arrow, markat=0.55] (Y1.center) to (Y2.center);
    \draw[arrow, markat=0.55] (Z1.center) to (Z2.center);
    \draw[arrow, markat=0.5] (X0.center) to (X1.center);
    \draw[arrow, markat=0.5] (Y0.center) to (Y1.center);
    \draw[arrow, markat=0.5] (Z0.center) to (Z1.center);
    \draw[arrow, markat=0.6] (X2.center) to (X3.center);
    \draw[arrow, markat=0.6] (Y2.center) to (Y3.center);
    \draw[arrow, markat=0.7] (Z2.center) to (Z3.center);
\end{pgfonlayer}
\end{tikzpicture}
\end{aligned}
\quad
=
\quad
\begin{aligned}
\begin{tikzpicture}
\begin{pgfonlayer}{nodelayer}
    \node [style=box] (P) at (0.5,3) {$\ghost \quad P^{\phantom \dagger}\quad \ghost$};
    \node [style=box] (Q) at (1.5,0) {$\ghost \quad Q^{\dagger}\quad \ghost$};
    \node [style=none] (X3) at (0,3) {};
    \node [style=none] (Y3) at (1,3) {};
    \node [style=none] (Z3) at (2,3) {$\bullet$};
    \node [style=box] (Y2) at (1,2) {$J_Y^{\phantom \dagger}$};
    \node [style=box] (Y1) at (1,1) {$J_Y^{\dagger}$};
    \node [style=none] (X0) at (0,0) {$\bullet$};
    \node [style=none] (Y0) at (1,0) {};
    \node [style=none] (Z0) at (2,0) {};
\end{pgfonlayer}
\begin{pgfonlayer}{edgelayer}
    \draw[arrow, markat=0.55] (Y1.center) to (Y2.center);
    \draw[arrow, markat=0.5] (X0.center) to (X3.center);
    \draw[arrow, markat=0.5] (Y0.center) to (Y1.center);
    \draw[arrow, markat=0.5] (Z0.center) to (Z3.center);
    \draw[arrow, markat=0.6] (Y2.center) to (Y3.center);
\end{pgfonlayer}
\end{tikzpicture}
\end{aligned}
\quad
=
\quad
\begin{aligned}
\begin{tikzpicture}
\begin{pgfonlayer}{nodelayer}
    \node [style=none] (Y3) at (1,3) {$\bullet$};
    \node [style=box] (Y2) at (1,2) {$J_Y^{\phantom \dagger}$};
    \node [style=box] (Y1) at (1,1) {$J_Y^{\dagger}$};
    \node [style=none] (Y0) at (1,0) {$\bullet$};
\end{pgfonlayer}
\begin{pgfonlayer}{edgelayer}
    \draw[arrow, markat=0.55] (Y1.center) to (Y2.center);
    \draw[arrow, markat=0.5] (Y0.center) to (Y1.center);
    \draw[arrow, markat=0.6] (Y2.center) to (Y3.center);
\end{pgfonlayer}
\end{tikzpicture}
\end{aligned}
\quad
=
\quad
\top.
\end{align*}
We have applied the equalities $\bigvee_{X \in \At(\X)} J_X \circ J_X^\dagger = I_\X$ and $\bigvee_{Z \in \At(\Z)} J_Z \circ J_Z^\dagger = I_\Z$ \cite{KornellLindenhoviusMislove2}*{Lem.~A.4}. The last equality holds because $\top_\Y \circ J_Y = \top_Y \neq \bot_Y$ \cite{Kornell}*{Thm.~B.8}. Hence, at least one term in the join on the left is equal to $\top$. We conclude that there exist $X \in \At(\X)$ and $Z \in \At(\Z)$ such that $((P \times \top_\Z) \circ (J_X \times J_Y \times J_Z)) \circ ((\top_\X \times Q) \circ (J_X \times J_Y \times J_Z))^\dagger = \top$, and therefore, the two relations are not orthogonal.
\end{proof}

For each atom $X$ of $\X$, the inclusion function $J_X\: \Q\{X\} \to \X$ is injective \cite{Kornell}*{Prop.~8.4}, and therefore, the binary relation $J_X^\dagger\: \X \to \Q\{X\}$ is a partial function. Because $J_X(X,X)$ is spanned by the identity operator on $X$, with all other components of $J_X$ vanishing, it is easy to see that $(J_X^\dagger)^\star$ is equal to $[X]$, the minimal central projection in $\ell^\infty(\X)$ corresponding to $X$. This is also the support projection of $J_X^\star$ \cite{Kornell}*{Lem.~8.3}. Thus, the support projection of $C^\star = J_\CC^\star\: \ell^\infty(\X) \to \CC$ is $[\CC] = (C^\dagger)^\star(1)$.

\begin{lemma}\label{examples.H.2}
Let $\X$ be a quantum set, and let $F\:\X \times \X \to \X$ and $C\: \mathbf 1 \to \X$ be functions such that $F \circ (F \times I_\X) = F \circ (I_\X \times F)$ and $F \circ (C \times I_\X) = I_\X = F \circ (I_\X \times C)$. Assume that \begin{enumerate} \item $\[(\forall x_2 \fin \X)\, (\exists x_1 \fin \X)\, E_\X(F(x_1, x_2), C_*)\] = \top$; \item $\[(\forall x_2 \fin \X)\, (\exists x_3 \fin \X)\, E_\X(F(x_2, x_3), C_*)\] = \top$. \end{enumerate} Then, every simple object of the strict monoidal C*-category $\Rep(\ell^\infty(X))$ has a conjugate. In other words, for every atom $X \in \At(\X)$, there exist an atom $\bar X \in \At(X)$ and intertwiners $v_X\: C^\star \to J_{X}^\star \boxtimes J_{\bar X}^\star$ and $w_X\: C^\star \to J_{\bar X}^\star \boxtimes J_{X}^\star$ such that $(v_X^\dagger \boxtimes 1_{X})(1_{ X} \boxtimes w_X) = 1_{X}$ and $(w_X^\dagger \boxtimes 1_{\bar X})(1_{\bar X} \boxtimes v_X) = 1_{\bar X}$. Therefore, $\Rep(\ell^\infty(X))$ is rigid.
\end{lemma}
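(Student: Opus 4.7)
The plan is to reduce the lemma to Proposition \ref{examples.H.1} by encoding both invertibility hypotheses via a single relation $P \in \Rel(\X, \X)$, and then to translate the resulting nonvanishing components into intertwiners from the unit object $C^\star$ into tensor products of simples in the semisimple strict monoidal C*-category $\Rep(\ell^\infty(\X))$.

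First, I would set $P := E_\X \circ (F \times C_*) \in \Rel(\X, \X)$. By Lemma \ref{computation.F.2}, hypotheses (1) and (2) of the lemma unfold to the universal-existential conditions $\[(\forall x_2 \fin \X)(\exists x_1 \fin \X)\, P(x_1, x_2)\] = \top$ and $\[(\forall x_2 \fin \X)(\exists x_3 \fin \X)\, P(x_2, x_3)\] = \top$. Applying Proposition \ref{examples.H.1} with $P$ playing the role of both $P$ and $Q$ there, and with $\Y = \Z = \X$, for every atom $Y \in \At(\X)$ I obtain atoms $X, Z \in \At(\X)$ such that the relevant $(X, Y)$- and $(Y, Z)$-components of $P$ are nonzero and realize the nonorthogonality asserted by the proposition.

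Second, I would translate these nonvanishing components into intertwiners. The key observation is that the equation $F(x_1, x_2) = C$, expressed via $E_\X$ and read through the graphical calculus, is precisely the covariance condition defining an intertwiner from $C^\star$ into $J_X^\star \boxtimes J_Y^\star$, because $\boxtimes = (- \stensor -) \circ F^\star$ and $C^\star$ is the monoidal unit. Hence the nonzero $(X, Y)$-component yields a nonzero intertwiner $v : C^\star \to J_X^\star \boxtimes J_Y^\star$, and the nonzero $(Y, Z)$-component yields a nonzero intertwiner $w : C^\star \to J_Y^\star \boxtimes J_Z^\star$. Semisimplicity together with Schur's lemma and a Frobenius-reciprocity argument identify $J_X^\star$ and $J_Z^\star$ as mutually dual candidates for a conjugate of $J_Y^\star$, forcing $X \iso Z$; I would then set $\bar Y := Z$.

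Finally, I would normalize $v$ and $w$ and verify the snake identities $(v^\dagger \boxtimes 1_{J_Y^\star})(1_{J_Y^\star} \boxtimes w) = 1_{J_Y^\star}$ and $(w^\dagger \boxtimes 1_{J_{\bar Y}^\star})(1_{J_{\bar Y}^\star} \boxtimes v) = 1_{J_{\bar Y}^\star}$. Both compositions are endomorphisms of simple objects, hence scalars, so the content is that these scalars are nonzero. This is the main obstacle. The plan is to unfold $v$ and $w$ back into their wire-diagram form in terms of $E_\X$, $F$, and $C$, and then to use the coassociativity $F \circ (F \times I_\X) = F \circ (I_\X \times F)$ together with the unit laws $F \circ (C \times I_\X) = I_\X = F \circ (I_\X \times C)$ to collapse the composite diagrammatically to a manifestly nonzero quantity. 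Once one scalar is shown to be nonzero, a rescaling makes the first snake identity hold exactly, and the second then follows by standard duality arguments in semisimple C*-tensor categories, establishing that every simple object of $\Rep(\ell^\infty(\X))$ has a conjugate and hence that this category is rigid.
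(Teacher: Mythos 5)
Your setup is the paper's: you encode both hypotheses as $P=Q=C^\dagger\circ F$ and invoke Proposition \ref{examples.H.1}, and your reading of the nonzero components as (adjoints of) intertwiners involving $C^\star$ is right in spirit, although this is not automatic "covariance" — the paper proves it by a computation using that $C^\dagger$ is a partial function whose associated projection $[\CC]=(C^\dagger)^\star(1)$ is the support projection of $C^\star$. The first genuine gap is your identification $X\iso Z$: you appeal to Frobenius reciprocity, but Frobenius reciprocity presupposes the existence of conjugates, which is exactly what the lemma is proving, so the argument is circular. Moreover, the mere nonvanishing of the two spaces $\Mor(C^\star, J_{X}^\star\boxtimes J_{Y}^\star)$ and $\Mor(C^\star, J_{Y}^\star\boxtimes J_{Z}^\star)$ is weaker than what Proposition \ref{examples.H.1} gives and is not sufficient. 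The proposition gives nonorthogonality of two subspaces of $L(X\tensor Y\tensor Z,\CC)$, i.e., specific elements $v_{12}$ and $v_{23}$ of the two components whose composite $s:=(v_{12}\tensor 1)(1\tensor v_{23}^\dagger)$ is a \emph{nonzero} intertwiner from $J_{X}^\star$ to $J_{Z}^\star$; Schur's lemma then makes $s$ an isomorphism, and this, not Frobenius reciprocity, is what forces $X\iso Z$.

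The second, more serious gap is your plan for the conjugate equations. You propose to show the snake scalars are nonzero by unfolding $v$ and $w$ into wire diagrams and collapsing them via coassociativity and the unit laws of $(F,C)$. That cannot work: $v$ and $w$ are arbitrary nonzero vectors in intertwiner spaces, not canonical morphisms built from $F$ and $C$, so no identity satisfied by $(F,C)$ evaluates their pairing (rescaling $w$ already changes the scalar, and a priori the spaces need not be one-dimensional, so a bad choice could make the pairing vanish). Note also that coassociativity and counitality hold for any counital comultiplication, whereas rigidity genuinely requires hypotheses (1) and (2); so any argument at this step that uses only those identities is doomed. The working mechanism is the one the nonorthogonality was designed for: it guarantees $s\neq 0$; one then sets $w:=v_{12}^\dagger$ and $v:=(1\tensor s^{-1})v_{23}^\dagger$, so that one conjugate equation holds exactly by construction, and the remaining endomorphism $(v^\dagger\tensor 1)(1\tensor w)$ of the simple $J_{Y}^\star$ is forced to equal $1$ by the elementary computation $(1\tensor((v^\dagger\tensor 1)(1\tensor w)))\cdot w=w$ together with $w\neq 0$ — no normalization or appeal to "standard duality" is needed, and without this chain your nonvanishing claim is unsupported.
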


\begin{proof}
Let $X_2 \in \At(\X)$. Let $P = \[(x_1, x_2) \in \X \times  \X \suchthat E_\X(F(x_1, x_2), C_*)\] = C^\dagger \circ F$, and similarly, let $Q = \[(x_2, x_3) \in \X \times  \X \suchthat E_\X(F(x_2, x_3), C_*)\] = C^\dagger \circ F$. We apply Proposition \ref{examples.H.1} to obtain atoms $X_1$ and $X_3$ such that 
$$ ((C^\dagger \circ F) \times \top_\X) \circ (J_{X_1} \times J_{X_2} \times J_{X_3}) \not \perp  ( \top_\X \times (C^\dagger \circ F) ) \circ (J_{X_1} \times J_{X_2} \times J_{X_3})$$ as binary relations from $\Q\{X_1 \tensor X_2 \tensor X_3\}$ to $\mathbf 1$. Writing $G_{12} = C^\dagger \circ F \circ (J_{X_1} \times J_{X_2})$ and $G_{23} =C^\dagger \circ F \circ (J_{X_2} \times J_{X_3})$, we have that $G_{12} \times \top_{X_3}  \not \perp  \top_{X_1} \times G_{23}$.
In other words, $$G_{12}(X_1 \tensor X_2, \CC) \tensor L( X_3, \CC) \not \perp L(X_1, \CC) \tensor G_{23} (X_2 \tensor X_3, \CC)$$
as subspaces of $L(X_1 \tensor X_2 \tensor X_3, \CC)$. Hence, let $v_{12} \in G_{12}(X_1 \tensor X_2, \CC)$, $\xi_3 \in L(X_3, \CC)$, $\xi_1 \in L(X_1, \CC)$ and $v_{23} \in G_{23}(X_2 \tensor X_3, \CC)$ be such that $v_{12} \tensor \xi_3$ is not orthogonal to $\xi_1 \tensor v_{23}$ as elements of $L(X_1 \tensor X_2 \tensor X_3, \CC)$. It certainly follows that $(v_{12} \tensor 1) \cdot (1 \tensor v_{23}^\dagger) \neq 0$.

We now show that $v_{12}$ is an intertwiner. To do so, we recall our previous observation that $C^\dagger\: \X \to \mathbf 1$ is a partial function and that $[\CC]=(C^\dagger)^\star(1)$ is the support projection of $C^\star$. Hence, $G_{12}\: \Q\{X_1 \tensor X_2\} \to \mathbf 1$ is a partial function, and $v_{12}$ satisfies $v_{12} = 1\cdot v_{12} = v_{12}\cdot  G_{12}^\star(1) = v_{12} \cdot (J_{X_1}^\star \stensor J_{X_2}^\star)(F^\star([\CC]))$ \cite{Kornell}*{Thm.~6.3}. We now calculate that for all $a \in \ell^\infty(\X)$,
\begin{align*}&
v_{12} \cdot (J_{X_1}^\star \boxtimes J_{X_2}^\star)(a)
=
v_{12} \cdot (J_{X_1}^\star \stensor J_{X_2}^\star) (F^\star(a))
\\ &= 
v_{12} \cdot (J_{X_1}^\star \stensor J_{X_2}^\star)(F^\star([\CC])) \cdot (J_{X_1}^\star \stensor J_{X_2}^\star) (F^\star(a))
\\ & = 
v_{12} \cdot (J_{X_1}^\star \stensor J_{X_2}^\star)(F^\star([\CC]\cdot a))
 =
v_{12} \cdot (J_{X_1}^\star \stensor J_{X_2}^\star)(F^\star ([\CC] \cdot C^\star(a)))
\\ & = 
v_{12} \cdot (J_{X_1}^\star \stensor J_{X_2}^\star)(F^\star([\CC]) )\cdot C^\star(a)
=
v_{12} \cdot C^\star(a) =  C^\star(a) \cdot v_{12}.
\end{align*}
\noindent 
Therefore, $v_{12}$ is an intertwiner from $J_{X_1}^\star \boxtimes J_{X_2}^\star$ to $C^\star$. Similarly, $v_{23}$ is an intertwiner from $J_{X_2}^\star \boxtimes J_{X_3}^\star$ to $C^\star$.
Furthermore, by our choice of $v_{12}$ and $v_{23}$, we have that $(v_{12} \tensor 1) (1 \tensor v_{23}^\dagger) \neq 0$.

Altogether, we find that $s := (v_{12} \tensor 1)\cdot (1 \tensor v_{23}^\dagger)$ is a nonzero intertwiner from $J_{X_1}^\star$ to $J_{X_3}^\star$. These are irreducible representations of $\ell^\infty(\X)$, and thus, by Schur's Lemma,  $s$ is an isomorphism in $\Rep(\ell^\infty(\X))$. Defining $v:= (1 \tensor s^{-1})v_{23}^\dagger$ and $w:= v_{12}^\dagger$, we obtain intertwiners $v\:C^\star\to J_{X_2}^\star \boxtimes J_{X_1}^\star$ and $w\:C^\star\to J_{X_1}^\star \boxtimes J_{X_2}^\star$ such that $(w^\dagger \tensor 1)\cdot  (1 \tensor v) = 1$ and therefore also $(1 \tensor v^\dagger)\cdot(w \tensor 1)  = 1$.
We now calculate that
\begin{align*}&
(1 \tensor ((v^\dagger \tensor 1)\cdot (1\tensor w)))\cdot w
=
(1 \tensor v^\dagger \tensor 1)\cdot(1 \tensor 1 \tensor w)\cdot w
=
(1 \tensor v^\dagger \tensor 1)\cdot (w \tensor w)
\\ & =
(1 \tensor v^\dagger \tensor 1)\cdot (w \tensor 1 \tensor 1)\cdot w
=
(((1 \tensor v^\dagger)\cdot(w \tensor 1)) \tensor 1)\cdot w
=
(1 \tensor 1)\cdot w
=
w.
\end{align*}
Since $w$ is nonzero, we find that $(v^\dagger \tensor 1)\cdot(1\tensor w)$ is a nonzero intertwiner on $J_{X_2}^\star$. It is therefore a scalar, and it can be no scalar other than $1$. We conclude that $J_{X_1}^\star$ and $J_{X_2}^\star$ are conjugate objects in $\Rep(\ell^\infty(\X))$, and more generally, that every simple object in $\Rep(\ell^\infty(\X))$ has a conjugate. Because every object in $\Rep(\ell^\infty(\X))$ is a direct sum of simple objects, it follows that every object has a conjugate. In other words, $\Rep(\ell^\infty(\X))$ is rigid.
\end{proof}

\begin{lemma}\label{examples.H.3}
Let $\X$ be a quantum set, and let $F\:\X \times \X \to \X$ and $C\: \mathbf 1 \to \X$ be functions such that $F \circ (F \times I_\X) = F \circ (I_\X \times F)$ and $F \circ (C \times I_\X) = I_\X = F \circ (I_\X \times C)$. Assume that $\Rep(\ell^\infty(X))$ is rigid. Then, $F^\star(a)\cdot(1 \tensor b)$ and $(a \tensor 1) \cdot F^\star(b)$ are both in the algebraic tensor product $c_c(\X) \atensor c_c(\X)$ for all $a, b \in c_c(\X)$.
\end{lemma}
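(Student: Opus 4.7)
The plan is to compute the $(Y,Z)$-components of $F^\star(a)(1 \tensor b)$ in the decomposition $\ell^\infty(\X) \stensor \ell^\infty(\X) = \prod_{Y, Z \in \At(\X)} L(Y \tensor Z)$ and show, via rigidity, that only finitely many of these components are nonzero. Since $(J_Y^\star \stensor J_Z^\star) \circ F^\star = J_Y^\star \boxtimes J_Z^\star$ by the definition of the monoidal product on $\Rep(\ell^\infty(\X))$, the $(Y,Z)$-component equals $(J_Y^\star \boxtimes J_Z^\star)(a) \cdot (1_Y \tensor J_Z^\star(b))$.

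Because $b \in c_c(\X)$, the operator $J_Z^\star(b)$ vanishes for all but finitely many $Z$, so I may restrict attention to those. Fix such a $Z$. It suffices to show that $(J_Y^\star \boxtimes J_Z^\star)(a) = 0$ for all but finitely many atoms $Y$. Because $a \in c_c(\X)$, there is a finite set $\{X_1, \ldots, X_n\} \subset \At(\X)$ with $a = ap$ for $p = \sum_{i=1}^n [X_i]$, and so $(J_Y^\star \boxtimes J_Z^\star)(a)$ vanishes whenever $(J_Y^\star \boxtimes J_Z^\star)([X_i]) = 0$ for every $i$. The minimal central projection $[X_i]$ acts on any representation as the projection onto its $J_{X_i}^\star$-isotypic part, so $(J_Y^\star \boxtimes J_Z^\star)([X_i]) \neq 0$ if and only if $\Mor(J_{X_i}^\star,\, J_Y^\star \boxtimes J_Z^\star) \neq 0$ in $\Rep(\ell^\infty(\X))$.

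Here is where rigidity enters. Each simple object $J_Z^\star$ has a conjugate, which must itself be simple and is therefore isomorphic to $J_{\bar Z}^\star$ for some atom $\bar Z \in \At(\X)$. The standard Frobenius reciprocity furnishes a natural isomorphism
\[ \Mor(J_{X_i}^\star,\, J_Y^\star \boxtimes J_Z^\star) \;\iso\; \Mor(J_{X_i}^\star \boxtimes J_{\bar Z}^\star,\, J_Y^\star). \]
The representation $J_{X_i}^\star \boxtimes J_{\bar Z}^\star$ acts on the finite-dimensional Hilbert space $X_i \tensor \bar Z$, hence decomposes as a direct sum of finitely many simples, so the right-hand Hom space is nonzero for only finitely many $Y$. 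Taking the union over the finite set of indices $i$ and over the finitely many relevant $Z$ confines the nonzero components of $F^\star(a)(1 \tensor b)$ to finitely many pairs $(Y,Z)$, which is precisely the condition to lie in $c_c(\X) \atensor c_c(\X)$.

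The argument for $(a \tensor 1) F^\star(b) \in c_c(\X) \atensor c_c(\X)$ is entirely symmetric: fix one of the finitely many $Y$ with $J_Y^\star(a) \neq 0$, write $b$ in terms of a finite central support projection, and apply the opposite-side reciprocity $\Mor(J_{X_i}^\star,\, J_Y^\star \boxtimes J_Z^\star) \iso \Mor(J_{\bar Y}^\star \boxtimes J_{X_i}^\star,\, J_Z^\star)$, again using that this tensor product contains only finitely many simple summands. I expect the main obstacle to be cleanly invoking Frobenius reciprocity in exactly the form needed inside the C*-category $\Rep(\ell^\infty(\X))$; once that is in place, the counting of nonzero components is forced by finite-dimensionality of the Hilbert spaces $X_i \tensor \bar Z$ and $\bar Y \tensor X_i$.
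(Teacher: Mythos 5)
Your proposal is correct and follows essentially the same route as the paper's proof: decompose into the $(Y,Z)$-components $(J_Y^\star \boxtimes J_Z^\star)(a)\cdot(1 \tensor J_Z^\star(b))$, use rigidity via conjugates and Frobenius reciprocity to flip the varying atom to the side of a fixed finite-dimensional object, and conclude finiteness from its decomposition into finitely many simples. The only differences are cosmetic: you conjugate the second tensor factor rather than the first and route the dependence on the compactly supported element through its central support projection and the isotypic characterization of $[X_i]$, whereas the paper argues the analogous nonvanishing implication with an explicit intertwiner.
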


\begin{proof}
Let $a,b \in c_c(\X)$. Let $X_0$, $X_1$ and $X_2$ be atoms of $\X$, and assume that $J_{X_0}^\star(b) \neq 0$ and that $\Mor(J_{X_0}^\star, J_{X_1}^\star \boxtimes J_{X_2}^\star) \neq 0$. Then, there exists an operator $v \in L(X_0, X_1 \tensor X_2)$ such that
$(J_{X_1}^\star \boxtimes J_{X_2}^\star)(b) \cdot  v = v \cdot J_{X_0}^\star(b) \neq 0,$
and hence $(J_{X_1}^\star \boxtimes J_{X_2}^\star)(b) \neq 0$. Therefore, for all atoms $X_0$, $X_1$ and $X_2$, the conditions $J_{\X_0}^\star(b) \neq 0$ and $\Mor(J_{X_0}^\star, J_{X_1}^\star \boxtimes J_{X_2}^\star) \neq 0$ together imply that $(J_{X_1}^\star \boxtimes J_{X_2}^\star)(b) \neq 0$.

Let $X_0$ and $X_1$ be atoms of $\X$. For each atom $X \in \At(\X)$, let $\bar X \in \At(\X)$ be the unique atom such that the representations $J_{X}^\star$ and $J_{\bar X}^\star$ are conjugate. Applying Frobenius reciprocity \cite{NeshveyevTuset}*{Thm.~2.2.6}, we compute that for each atom $X_2$,
$\Mor(J_{X_0}^\star, J_{X_1}^\star \boxtimes J_{X_2}^\star) = \Mor( J_{X_1}^\star \boxtimes J_{X_2}^\star, J_{X_0}^\star)^\dagger = \Mor( J_{X_2}^\star, J_{ \bar X_1}^\star\boxtimes J_{X_0}^\star)^\dagger$. Because $\Rep(\ell^\infty(\X))$ is a rigid C*-category, there are only finitely many atoms $X_2$ such that $\Mor( J_{X_2}^\star, J_{\bar X_1}^\star\boxtimes J_{X_0}^\star) \neq 0$ \cite{NeshveyevTuset}*{Cor.~2.2.9}. Therefore, for all atoms $X_0$ and $X_1$, there are only finitely many atoms $X_2$ such that $\Mor(J_{X_0}^\star, J_{X_1}^\star \boxtimes J_{X_2}^\star) \neq 0$.

We now compute that for all atoms $X_1$ and $X_2$,
\begin{align*}
(J_{X_1}^\star \stensor J_{X_2}^\star)((a \tensor 1)\cdot F^\star(b))
& =
(J_{X_1}^\star \stensor J_{X_2}^\star)(a \tensor 1) \cdot (J_{X_1}^\star \stensor J_{X_2}^\star)(F^\star(b))
\\ & =
(J_{X_1}^\star(a) \tensor 1) \cdot(J_{X_1}^\star \boxtimes J_{X_2}^\star)(b).
\end{align*}
Because $a, b \in c_c(\X)$, we have that $J_{X}^\star(a) \neq 0$ and $J_{X}^\star(b) \neq 0$ for only finitely many atoms $X$. Thus, $J_{X_1}^\star(a) \tensor 1 \neq 0$ for only finite many atoms $X_1$, and for each of those atoms, $(J_{X_1}^\star \boxtimes J_{X_2}^\star)(b) \neq 0$ for only finitely many atoms $X_2$. We conclude that there are only finitely many pairs $(X_1, X_2) \in \At(\X) \times \At(\X)$ such that $(J_{X_1}^\star \stensor J_{X_2}^\star)((a \tensor 1)\cdot F^\star(b))$ is nonzero. Therefore, $(a \tensor 1)\cdot F^\star(b) \in c_c(\X) \atensor c_c(\X)$ as claimed. Similarly, $F^\star(a) \cdot (1 \tensor b) \in c_c(\X) \atensor c_c(\X)$.
\end{proof}

\begin{lemma}\label{examples.H.4}
Let $\X$ be a quantum set, and let $F\:\X \times \X \to \X$ and $C\: \mathbf 1 \to \X$ be functions such that $F \circ (F \times I_\X) = F \circ (I_\X \times F)$ and $F \circ (C \times I_\X) = I_\X = F \circ (I_\X \times C)$. Assume that $\Rep(\ell^\infty(\X))$ is rigid. Let $T_1$ and $T_2$ be the functions on the algebraic tensor product $c_c(\X) \atensor c_c(\X)$ that are defined by the equations $T_1(a \tensor b) = F^\star(a)\cdot (1 \tensor b)$ and $T_2(a \tensor b) = (a \tensor 1) \cdot F^\star(b)$, respectively. Then, $T_1$ and $T_2$ are both surjective.
\end{lemma}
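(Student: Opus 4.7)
I will prove that $T_1$ is surjective by constructing an explicit two-sided inverse; $T_2$ is handled by a symmetric argument. The rigidity of $\Rep(\ell^\infty(\X))$, established in Lemma \ref{examples.H.2}, allows me to choose, for each atom $X \in \At(\X)$, a conjugate atom $\bar X \in \At(\X)$ together with solutions $v_X \colon \CC \to X \tensor \bar X$ and $w_X \colon \CC \to \bar X \tensor X$ of the conjugate equations, normalized so that the constructions below are compatible across blocks. Using this data, I define a linear map $S \colon c_c(\X) \to c_c(\X)$ block by block by
\[ S(a) \;=\; (w_X^\dagger \tensor 1_{\bar X})(1_{\bar X} \tensor a \tensor 1_{\bar X})(1_{\bar X} \tensor v_X) \;\in\; L(\bar X), \qquad a \in L(X),\]
and extend linearly. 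This is the quantum antipode in the sense of Woronowicz and Van Daele.

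The central computation is the antipode identity: for every $c \in c_c(\X)$ and every atom $Y \in \At(\X)$,
\[ m \circ (S \stensor 1_{\ell^\infty(\X)})\bigl( F^\star(c) \cdot (1 \tensor [\bar Y]) \bigr) \;=\; C^\star(c) \cdot [\bar Y] \quad \text{in } \ell^\infty(\X),\]
where $m$ denotes operator multiplication and $[\bar Y] \in c_c(\X)$ is the central support projection of $L(\bar Y)$. This follows diagrammatically by expanding the left-hand side on $L(\bar Y) \cdot \ell^\infty(\X) \cdot L(\bar Y) = L(\bar Y)$, rewriting the contraction against $w_{\bar Y}^\dagger$ and $v_{\bar Y}$ using the intertwiner relation $(J_{\bar Y}^\star \boxtimes J_Y^\star) \circ F^\star = $ (action via $w_Y$), and collapsing the resulting diagram via the conjugate equation $(w_Y^\dagger \tensor 1_{\bar Y})(1_{\bar Y} \tensor v_Y) = 1_{\bar Y}$.

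With the antipode in hand, I define the candidate inverse $U \colon c_c(\X) \atensor c_c(\X) \to c_c(\X) \atensor c_c(\X)$ as follows: for $c \in c_c(\X)$ and $d \in L(Y)$, Lemma \ref{examples.H.3} applied to $1 \tensor [\bar Y]$ (which lies in $c_c(\X) \atensor c_c(\X)$ since $[\bar Y]$ is finitely supported) gives a finite decomposition $F^\star(c)(1 \tensor [\bar Y]) = \sum_i a_i \tensor e_i$ with $a_i, e_i \in c_c(\X)$; set
\[ U(c \tensor d) \;:=\; \sum_i a_i \tensor S(e_i) \cdot d,\]
and extend linearly. The identity $T_1 \circ U = \mathrm{id}$ is then the Sweedler-style calculation
\[ T_1\bigl(c_{(1)} \tensor S(c_{(2)}) d\bigr) \;=\; c_{(1)(1)} \tensor c_{(1)(2)} S(c_{(2)}) d \;=\; c_{(1)} \tensor c_{(2)(1)} S(c_{(2)(2)}) d \;=\; c_{(1)} \tensor C^\star(c_{(2)}) d \;=\; c \tensor d,\]
where the second equality is coassociativity $F \circ (F \times I_\X) = F \circ (I_\X \times F)$ and the third is the antipode identity (applied after inserting the central cutoff $1 \tensor [\bar Y]$, which acts as the identity on $d$). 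The identity $U \circ T_1 = \mathrm{id}$ is analogous.

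The main obstacle is that $F^\star$ does not land in the algebraic tensor product $c_c(\X) \atensor c_c(\X)$, so the Sweedler notation above cannot be read literally as a finite sum. This is resolved by always cutting down with the central projections $[\bar Y]$, which reduces every manipulation to a finite-dimensional calculation inside $L(\bar Y)$ or $L(Y)$ where Lemma \ref{examples.H.3} and its iterates guarantee that all intermediate expressions remain in the algebraic tensor product. For $T_2$, the parallel argument uses $S^{-1}$, constructed from the mirror-image conjugate equation, along with the right-handed antipode identity.
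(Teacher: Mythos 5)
Your overall architecture matches the paper's (conjugates from Lemma \ref{examples.H.2}, the cutoff $1 \tensor [\bar Y]$ together with Lemma \ref{examples.H.3} to stay inside $c_c(\X) \atensor c_c(\X)$, and explicit preimages built from an antipode-like block map), but the central identity your argument rests on is a genuine gap. First, there is a mismatch: the identity you state in step 2 applies $S$ to the \emph{first} leg, $m\circ(S \tensor 1)(F^\star(c)(1 \tensor [\bar Y])) = C^\star(c)[\bar Y]$, while your Sweedler verification of $T_1 \circ U = \mathrm{id}$ uses the \emph{second}-leg identity $\sum c_{(2)(1)} S(c_{(2)(2)}) = C^\star(c_{(2)})1$; these are different statements, and since the antipode axioms are exactly what is at stake, neither may be borrowed from the general theory. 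Second, and fatally, neither identity holds for the $S$ you defined. Writing $(J_Y^\star \boxtimes J_{\bar Y}^\star)(c) = \sum_i A_i \tensor E_i$, your formula gives $\sum_i S(A_i)E_i = (w_Y^\dagger \tensor 1_{\bar Y})\bigl((\textstyle\sum_i E_i \tensor A_i) \tensor 1_{\bar Y}\bigr)(1_{\bar Y} \tensor v_Y)$: the \emph{flipped} element $\sum_i E_i \tensor A_i$ appears, whereas the available intertwiner relations are $w_Y^\dagger\,(J_{\bar Y}^\star \boxtimes J_Y^\star)(c) = C^\star(c)\,w_Y^\dagger$ and $(J_Y^\star \boxtimes J_{\bar Y}^\star)(c)\,v_Y = C^\star(c)\,v_Y$, which require the unflipped order; since $F^\star$ is not cocommutative, the claimed diagrammatic collapse does not occur.

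The deeper problem is that attaching $w_Y^\dagger$ and $v_Y$ on the same side produces the wrong transpose. In orthonormal bases, if $V$ is the matrix of $v_Y$ then the conjugate equations force the matrix of $w_Y$ to be $\overline V^{-1}$, and your map on $L(\bar Y)$ is $e \mapsto (V^\dagger)^{-1} e^T V^\dagger$ (independently of the admissible choice of conjugate data for $\bar Y$, by irreducibility), while the map used in the paper's proof, $e \mapsto (1 \tensor w_{Y}^\dagger)(1 \tensor e \tensor 1)(v_{Y} \tensor 1)$ with the two solutions on \emph{opposite} sides, is $e \mapsto V e^T V^{-1}$. These agree only when $V^\dagger V$ is scalar, i.e.\ when $v_Y$ is a multiple of a unitary; for a non-Kac example such as the quantum set of irreducible representations of $SU_q(2)$ they differ, and a convolution-uniqueness argument then shows your $S$ cannot satisfy either antipode identity on such blocks, so the identities are false, not merely unproven. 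The repair is precisely the paper's asymmetric placement: apply to the second leg the map $e \mapsto (1 \tensor w_{X_0}^\dagger)(1 \tensor e \tensor 1)(v_{X_0} \tensor 1)$, so that after coassociativity the pair created by $v_{X_0}$ sits adjacent to, and in the same order as, the legs on which $(J_{X_1}^\star \boxtimes J_{X_0}^\star \boxtimes J_{\bar X_0}^\star)(c)$ acts; then the intertwiner property of $1 \boxtimes v_{X_0}$, the counit law and the conjugate equation $(1 \tensor w_{X_0}^\dagger)(v_{X_0} \tensor 1) = 1$ collapse the expression, which is the paper's computation. Note also that only surjectivity is required, so exhibiting preimages of the elements $c \tensor d$ suffices; the direction $U \circ T_1 = \mathrm{id}$ is not needed.
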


\begin{proof}
We use the notation of Lemma \ref{examples.H.2}.
Let $c \in c_c(\X)$, and let $X_0 \in \At(\X)$. Then, $F^\star(c) \cdot (1 \tensor [\bar X_0])$ is in $c_c(\X) \atensor c_c(\X)$, so it can be written as $F^\star(c) \cdot (1 \tensor [\bar X_0]) = \sum_{i=1}^n a_i \tensor c_i$ for some operators $a_1, \ldots, a_n$ and $c_1, \ldots, c_n$ in $c_c(\X)$. For each index $i$, let $b_i = \iota^{X_0}((1 \tensor w_{X_0}^\dagger)\cdot (1 \tensor J_{\bar X_0}^\star(c_i) \tensor 1) \cdot (v_{X_0} \tensor 1))$, where $\iota^{X_0}$ is the canonical inclusion of $L(X_0)$ into $\ell^\infty(\X)$. We now compute that for all atoms $X_1$ and $X_2$,
\begin{align*} &
(J_{X_1}^\star \stensor J_{X_2}^\star)\left(T_1\left( \sum_{i=1}^n a_i \tensor b_i\right)\right)
 = 
\sum_{i=1}^n (J_{X_1}^\star \stensor J_{X_2}^\star)(F^\star(a_i)\cdot(1 \tensor b_i))
\\ & =
\sum_{i=1}^n (J_{X_1}^\star \stensor J_{X_2}^\star)(F^\star(a_i)) \cdot (1 \tensor J_{X_2}^\star(b_i))
\\ &=
\delta_{X_0 X_2} \cdot \sum_{i=1}^n (J_{X_1}^\star \stensor J_{X_2}^\star)(F^\star(a_i)) \cdot (1 \tensor 1 \tensor w_{X_2}^\dagger)\cdot(1 \tensor 1 \tensor J_{\bar X_2}^\star(c_i) \tensor 1) \cdot (1 \tensor v_{X_2} \tensor 1)
\\ & =
\delta_{X_0 X_2} \cdot (1 \tensor 1 \tensor w_{X_2}^\dagger)\cdot\left(\sum_{i=1}^n (J_{X_1}^\star \stensor J_{X_2}^\star)(F^\star(a_i)) \tensor J_{\bar X_2}^\star(c_i) \tensor 1\right) \cdot (1 \tensor v_{X_2} \tensor 1)
\\ & =
\delta_{X_0 X_2} \cdot (1 \tensor 1 \tensor w_{X_2}^\dagger)\cdot\left(\sum_{i=1}^n (J_{X_1}^\star \stensor J_{X_2}^\star \stensor J_{\bar X_2}^\star)((F^\star \stensor \idhom)(a_i \tensor c_i)) \tensor 1\right) \cdot (1 \tensor v_{X_2} \tensor 1)
\\ & =
\delta_{X_0 X_2} \cdot (1 \tensor 1 \tensor w_{X_2}^\dagger)\cdot\left((J_{X_1}^\star \stensor J_{X_2}^\star \stensor J_{\bar X_2}^\star)((F^\star \stensor \idhom)(F^\star(c)\cdot (1 \tensor [\bar X_2]))) \tensor 1\right) \\ & \ghost \hspace{70ex} \cdot (1 \tensor v_{X_2} \tensor 1)
\\ & =
\delta_{X_0 X_2} \cdot (1 \tensor 1 \tensor w_{X_2}^\dagger)\cdot\left((J_{X_1}^\star \stensor J_{X_2}^\star \stensor J_{\bar X_2}^\star)((F^\star \stensor \idhom)(F^\star(c))) \tensor 1 \right) 
\\ & \ghost \hspace{30ex}\cdot \left((J_{X_1}^\star \stensor J_{X_2}^\star \stensor J_{\bar X_2}^\star)( (F^\star \stensor \idhom)(1 \tensor [\bar X_2])) \tensor 1\right) \cdot (1 \tensor v_{X_2} \tensor 1)
\\ & =
\delta_{X_0 X_2} \cdot (1 \tensor 1 \tensor w_{X_2}^\dagger)\cdot\left((J_{X_1}^\star \stensor J_{X_2}^\star \stensor J_{\bar X_2}^\star)((F^\star \stensor \idhom)(F^\star(c))) \tensor 1 \right) 
\cdot (1 \tensor v_{X_2} \tensor 1)
\\ & =
\delta_{X_0 X_2} \cdot (1 \tensor 1 \tensor w_{X_2}^\dagger)\cdot\left((J_{X_1}^\star \boxtimes J_{X_2}^\star \boxtimes J_{\bar X_2}^\star)(c) \tensor 1 \right) 
\cdot ((1 \boxtimes v_{X_2}) \tensor 1)
\\ & =
\delta_{X_0 X_2} \cdot (1 \tensor 1 \tensor w_{X_2}^\dagger)\cdot ((1 \boxtimes v_{X_2}) \tensor 1)
\cdot 
\left((J_{X_1}^\star \boxtimes C^\star)(c) \tensor 1 \right)
\\ & =
\delta_{X_0 X_2} \cdot (1 \tensor 1 \tensor w_{X_2}^\dagger)\cdot (1 \tensor v_{X_2} \tensor 1)
\cdot 
\left(J_{X_1}^\star (c) \tensor 1 \right) 
\\ & =
\delta_{X_0 X_2} \cdot
\left(J_{X_1}^\star (c) \tensor 1 \right)
=
J^\star_{X_1}(c) \tensor J^\star_{X_2}([X_0])
=
(J^\star_{X_1} \stensor J^\star_{X_2})(c \tensor [X_0]).
\end{align*}
Therefore, $T_1\left( \sum_{i=1}^n a_i \tensor b_i\right) = c\tensor [X_0]$. More generally, for all $c, d \in c_c(\X)$ and all atoms $X_0 \in \At(\X)$, we have that $T_1\left( (\sum_{i=1}^n a_i \tensor b_i)\cdot(1 \tensor d)\right) = (c \tensor [X_0]) \cdot (1 \tensor d) = c \tensor (d \cdot[X_0])$, so $c \tensor (d \cdot[X_0]) \in \mathrm{ran}(T_1)$. The expression $d \cdot [X_0]$ is nonzero for only finitely many atoms $X_0$, so we conclude that $c \tensor d \in \mathrm{ran}(T_1)$ for all $c, d \in c_c(\X)$. Therefore, $T_1$ is surjective. Similarly, $T_2$ is surjective.
\end{proof}

\begin{theorem}[Vaes, cf.~\cite{Vaes}]\label{examples.H.5}
Let $\X$ be a quantum set, and let $F\:\X \times \X \to \X$ and $C\: \mathbf 1 \to \X$ be functions such that $F \circ (F \times I_\X) = F \circ (I_\X \times F)$ and $F \circ (C \times I_\X) = I_\X = F \circ (I_\X \times C)$. Assume that \begin{enumerate} \item $\[(\forall x_1 \fin \X)\, (\exists x_2 \fin \X)\, E_\X(F(x_1, x_2), C_*)\] = \top$; \item $\[(\forall x_2 \fin \X)\, (\exists x_1 \fin \X)\, E_\X(F(x_1, x_2), C_*)\] = \top$. \end{enumerate} Let $\Delta\: c_c(\X) \to \Mult(c_c(\X) \atensor c_c(\X))$  be defined by $\Delta = F^\star|_{c_c(\X)}$. Then $(c_c(\X), \Delta)$ is a discrete quantum group in the sense of \cite{VanDaele}*{Def.~2.3}.
\end{theorem}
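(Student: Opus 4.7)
My plan is to verify that $(c_c(\X), \Delta)$ satisfies Van Daele's axioms for a multiplier Hopf $*$-algebra whose underlying algebra is a direct sum of full matrix algebras. First, I will observe that $c_c(\X) = \bigoplus_{X \in \At(\X)} L(X)$ is a non-degenerate associative $*$-algebra with local units, and that its multiplier algebra is naturally identified with $\ell^\infty(\X)$. Because $F^\star$ is a unital normal $*$-homomorphism $\ell^\infty(\X) \to \ell^\infty(\X) \stensor \ell^\infty(\X) \iso M(c_c(\X) \atensor c_c(\X))$, the restriction $\Delta := F^\star|_{c_c(\X)}$ is a $*$-homomorphism into the multiplier algebra. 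Coassociativity $(\Delta \stensor \iota) \circ \Delta = (\iota \stensor \Delta) \circ \Delta$ follows at once from $F \circ (F \times I_\X) = F \circ (I_\X \times F)$ by applying the contravariant functor $(-)^\star$, and analogously $\epsilon := C^\star|_{c_c(\X)}\: c_c(\X) \to \CC$ is a counit because of the unit laws $F \circ (C \times I_\X) = I_\X = F \circ (I_\X \times C)$.

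Next, I will chain together the three preparatory lemmas. The two hypotheses of the theorem are precisely the assumptions of Lemma \ref{examples.H.2}, so that lemma gives that $\Rep(\ell^\infty(\X))$ is rigid, producing, for each atom $X$, a conjugate atom $\bar X$ together with intertwiners $v_X \in \Mor(C^\star, J_X^\star \boxtimes J_{\bar X}^\star)$ and $w_X \in \Mor(C^\star, J_{\bar X}^\star \boxtimes J_X^\star)$ satisfying the snake identities $(v_X^\dagger \boxtimes 1)(1 \boxtimes w_X) = 1$ and $(w_X^\dagger \boxtimes 1)(1 \boxtimes v_X) = 1$. With rigidity in hand, Lemma \ref{examples.H.3} ensures that $\Delta(a)(1 \tensor b)$ and $(a \tensor 1)\Delta(b)$ lie in $c_c(\X) \atensor c_c(\X)$ for all $a, b \in c_c(\X)$, so the maps $T_1(a \tensor b) := \Delta(a)(1 \tensor b)$ and $T_2(a \tensor b) := (a \tensor 1)\Delta(b)$ are well-defined endomorphisms of the algebraic tensor product, and Lemma \ref{examples.H.4} shows that they are surjective.

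To conclude that $(c_c(\X), \Delta)$ is a multiplier Hopf $*$-algebra, and thus a discrete quantum group in the sense of \cite{VanDaele}*{Def.~2.3}, I must still establish injectivity of $T_1$ and $T_2$. My approach is to read off an explicit right inverse of $T_1$ from the proof of Lemma \ref{examples.H.4}: for fixed $X_0 \in \At(\X)$ and any $c \in c_c(\X)$, the formula produces elements $a_i, b_i \in c_c(\X)$ with $b_i = \iota^{X_0}\!\bigl((1 \tensor w_{X_0}^\dagger)(1 \tensor J_{\bar X_0}^\star(c_i) \tensor 1)(v_{X_0} \tensor 1)\bigr)$ whose image is $c \tensor [X_0]$. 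Assembling these locally defined sections over all $X_0$ and using the $c_c(\X)$-bilinearity of $T_1$ in the factored form, I obtain a well-defined linear map $S_1\: c_c(\X) \atensor c_c(\X) \to c_c(\X) \atensor c_c(\X)$ with $T_1 \circ S_1 = \mathrm{id}$; the snake identity $(v_X^\dagger \boxtimes 1)(1 \boxtimes w_X) = 1$ furthermore gives $S_1 \circ T_1 = \mathrm{id}$, forcing bijectivity. The symmetric construction using the other snake identity handles $T_2$.

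The main obstacle will be step three: although the preimage formula from Lemma \ref{examples.H.4} is visibly constructive and the snake identities strongly suggest an inverse, checking that the assembled map $S_1$ is both well-defined independently of the decomposition $\Delta(c)(1 \tensor [\bar X_0]) = \sum_i a_i \tensor c_i$ and a genuine two-sided inverse of $T_1$ requires a careful bookkeeping argument with the intertwiner identities of Lemma \ref{examples.H.2}. Once $T_1$ and $T_2$ are bijective, all the defining conditions of a multiplier Hopf $*$-algebra are in place, and since $c_c(\X)$ is by construction an algebraic direct sum of full matrix algebras, $(c_c(\X), \Delta)$ is a discrete quantum group.
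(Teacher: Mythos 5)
Your setup and the middle of your argument coincide with the paper's: coassociativity and the counit law come from applying $(-)^\star$ to the hypotheses on $F$ and $C$, and Lemmas \ref{examples.H.2}, \ref{examples.H.3} and \ref{examples.H.4} give rigidity of $\Rep(\ell^\infty(\X))$, the containment of $\Delta(a)\cdot(1 \tensor b)$ and $(a \tensor 1)\cdot\Delta(b)$ in $c_c(\X) \atensor c_c(\X)$, and surjectivity of $T_1$ and $T_2$. The divergence, and the gap, is your final step. Van Daele's Definition 2.3 does require $T_1$ and $T_2$ to be bijective, but the paper never proves injectivity: instead it verifies the hypotheses of the recognition theorem \cite{VanDaele}*{Thm.~3.4}, namely surjectivity of $T_1$ and $T_2$, the identity $\Delta(a)\cdot(1 \tensor [\CC]) = a \tensor [\CC]$ for all $a \in c_c(\X)$, and the nondegeneracy statement that $\Delta([\CC])\cdot(1 \tensor a) = 0$ forces $a = 0$; this last point is where hypothesis (2) of the theorem is used a second time, via the graphical argument identifying $\Delta([\CC])$ with the projection corresponding to $C^\dagger \circ F$. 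You omit both of these verifications and instead claim bijectivity outright.

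Your injectivity argument does not go through as sketched. The preimage formula extracted from Lemma \ref{examples.H.4} produces, for each atom $X_0$ and each $c$, an element mapping under $T_1$ to $c \tensor [X_0]$; assembling these and right-multiplying by $1 \tensor d$ yields at best a linear right inverse $S_1$ with $T_1 \circ S_1 = \mathrm{id}$, which is only a restatement of the surjectivity you already have, since any surjection admits a right inverse. The assertion that the snake identity forces $S_1 \circ T_1 = \mathrm{id}$ is unsubstantiated and is not a formal consequence of the conjugate equations: injectivity of $T_1$ is precisely the nontrivial content at this stage, and in Van Daele's framework establishing it amounts to constructing the counit and antipode structure, i.e., to redoing the substance of \cite{VanDaele}*{Thm.~3.4}, which the paper deliberately cites rather than reproves. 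So either you must carry out that construction in full for direct sums of matrix algebras, or you should follow the paper and check the hypotheses of Thm.~3.4, including the two items you skipped. A minor further inaccuracy: the multiplier algebra of $c_c(\X)$ is the full product $\ell(\X)$, not $\ell^\infty(\X)$ (the paper identifies $\Mult(c_c(\X) \atensor c_c(\X))$ with $\ell(\X \times \X)$); this is harmless here because $F^\star$ takes values in $\ell^\infty(\X \times \X)$, but the identification as you stated it is not correct.
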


\begin{proof}
Via the duality between quantum sets and hereditarily atomic von Neumann algebras \cite{Kornell}*{Thm.~7.4}, we obtain unital normal $*$-homomorphisms $F^\star\: \ell^\infty(\X) \stensor \ell^\infty(\X) \to \ell^\infty(\X)$ and $C^\star\: \ell^\infty(\X) \to \CC$ that satisfy $(F^\star \stensor \idhom) \circ F^\star =( \idhom \stensor F^\star) \circ F^\star$, $(C^\star \stensor \idhom ) \circ F^\star = \idhom$ and $(\idhom \stensor C^\star) \circ F^\star = \idhom$.
The hereditarily atomic von Neumann algebra $\ell^\infty(\X) \stensor \ell^\infty(\X)$ may be naturally regarded as an algebra of multipliers on $c_c(\X) \atensor c_c(\X)$, because $c_c(\X) \atensor c_c(\X)$ is a two-sided ideal in $\ell^\infty(\X) \stensor \ell^\infty(\X)$. Thus, we may define unital $*$-homomorphisms $\Delta\: c_c(\X) \to \Mult(c_c(\X) \atensor c_c(\X))$ by $\Delta = F^\star|_{c_c(\X)}$ and $\varepsilon\: c_c(\X) \to \CC$ by $\varepsilon = C^\star|_{c_c(\X)}$.

We now observe that $\Delta$ is a comultiplication on $c_c(\X)$. By Lemmas \ref{examples.H.2} and \ref{examples.H.3}, $\Delta(a)\cdot(1 \tensor b)$ and $(a \tensor 1)\cdot \Delta(b)$ are both in $c_c(\X) \atensor c_c(\X)$ for all $a, b \in c_c(\X)$. Furthermore, for all $a, b, c \in c_c(\X)$,
\begin{align*}
(a & \tensor 1 \tensor 1) \cdot (\Delta \tensor \idhom)(\Delta(b) \cdot (1 \tensor c))
 =
(a \tensor 1 \tensor 1) \cdot (F^\star \stensor \idhom)(F^\star(b) \cdot (1 \tensor c))
\\ & =
(a \tensor 1 \tensor 1) \cdot (F^\star \stensor \idhom)(F^\star(b)) \cdot (F^\star \stensor \idhom)(1 \tensor c)
\\ & =
(\idhom \stensor F^\star)( a \tensor 1)\cdot (\idhom \stensor F^\star)(F^\star(b)) \cdot (1 \tensor 1 \tensor c)
\\ & =
(\idhom \stensor F^\star)((a \tensor 1) \cdot F^\star(b)) \cdot (1 \tensor 1 \tensor c)
=
(\idhom \tensor \Delta)((a \tensor 1) \cdot \Delta(b)) \cdot (1 \tensor 1 \tensor c).
\end{align*}
Hence, $\Delta$ is indeed a comultiplication on $c_c(\X)$.

Let $a \in c_c(\X)$, and assume that $\Delta([\CC])\cdot (1 \tensor a) = 0$. It follows that $\Delta([\CC])\cdot (1 \tensor aa^\dagger) = 0$ and thus that $\Delta([\CC])\cdot( 1 \tensor [aa^\dagger]) = 0$, where $[aa^\dagger]$ is of course the support projection of the self-adjoint operator $aa^\dagger$. We now observe that the projection $\Delta([\CC]) \in \ell^\infty(\X) \stensor \ell^\infty(\X)$ corresponds to the relation $\[ E_\X(F(x_1, x_2), C_*)\] = C^\dagger \circ F \in \Rel(\X,\X)$ under the canonical correspondence \cite{Kornell}*{Thm.~B.8}. Indeed, $(C^\dagger \circ F)^\star(1) = F^\star((C^\dagger)^\star(1)) = F^\star([\CC]) = \Delta([\CC])$. Similarly, the projection $[aa^\dagger]$ corresponds to some relation $P \in \Rel(\X)$.
The projections $\Delta([\CC])$ and $( 1 \tensor [aa^\dagger])$ are orthogonal, and thus, the relations $C^\dagger \circ F$ and $\top_\X \times P$ are orthogonal. Condition (2) may be rendered as
\begin{align*}
\begin{aligned}
\begin{tikzpicture}
\begin{pgfonlayer}{nodelayer}
    \node [style=box] (CF) at (0, 0) {$\ghost C^\dagger \circ F\ghost$};
    \node [style=none] (Yanch) at (0.3,0) {};
    \node [style=none] (aanch) at (-0.3,0) {};
    \node [style=none] (X) at (0.3,-1.0) {$\scriptstyle \X$};
    \node [style=none] (bullet) at (-0.3, -0.7) {$\bullet$};
\end{pgfonlayer}
\begin{pgfonlayer}{edgelayer}
    \draw [arrow, markat =0.3] (X) to (Yanch);
    \draw [arrow, markat=0.4] (bullet.center) to (aanch);
\end{pgfonlayer}
\end{tikzpicture}
\end{aligned}
\;=\;
\begin{aligned}
\begin{tikzpicture}
\begin{pgfonlayer}{nodelayer}
    \node [style=none] (bullet) at (0, 0) {$\bullet$};
    \node [style=none] (X) at (0,-1.0) {$\scriptstyle \X$};
\end{pgfonlayer}
\begin{pgfonlayer}{edgelayer}
    \draw [arrow, markat =0.5] (X) to (bullet.center);
\end{pgfonlayer}
\end{tikzpicture}
\end{aligned}\;,
\end{align*}
so we calculate that
\begin{align*}
\begin{aligned}
\begin{tikzpicture}
\begin{pgfonlayer}{nodelayer}
    \node [style=none] (bullet) at (0, 0) {$\bullet$};
    \node [style=box] (P) at (0,-1.0) {$P^\dagger$};
\end{pgfonlayer}
\begin{pgfonlayer}{edgelayer}
    \draw [arrow, markat =0.5] (P) to (bullet.center);
\end{pgfonlayer}
\end{tikzpicture}
\end{aligned}
\; = \;
\begin{aligned}
\begin{tikzpicture}
\begin{pgfonlayer}{nodelayer}
    \node [style=box] (CF) at (0, 0) {$\ghost C^\dagger \circ F\ghost$};
    \node [style=none] (Yanch) at (0.3,0) {};
    \node [style=none] (aanch) at (-0.3,0) {};
    \node [style=box] (P) at (0.3,-1.0) {$P^\dagger$};
    \node [style=none] (bullet) at (-0.3, -1.0) {$\bullet$};
\end{pgfonlayer}
\begin{pgfonlayer}{edgelayer}
    \draw [arrow, markat =0.3] (P) to (Yanch);
    \draw [arrow, markat=0.4] (bullet.center) to (aanch);
\end{pgfonlayer}
\end{tikzpicture}
\end{aligned}
\; = \; \bot \;.
\end{align*}
We conclude that $P = \bot_\X$. Thus, $[aa^\dagger] = 0$, and therefore, $a=0$. We have shown that for all $a \in c_c(\X)$, the equation $\Delta([\CC])\cdot (1 \tensor a) = 0$ implies that $a = 0$.

Let $T_1$ and $T_2$ be the linear maps on $c_c(\X)  \atensor c_c(\X)$ defined by $T_1(a \tensor b) = \Delta(a)\cdot(1 \tensor b)$ and $T_2(a \tensor b) = (a \tensor 1)\cdot\Delta(b)$, respectively. By Lemmas \ref{examples.H.2} and \ref{examples.H.4}, both $T_1$ and $T_2$ are surjective. For all $a \in c_c(\X)$, we compute that $\Delta(a)\cdot (1 \tensor [\CC]) = F^\star(a) \cdot (1 \tensor[\CC]) = (\idhom \stensor C^\star)(F^\star(a)) \tensor [\CC] = a \tensor [\CC]$, because $[\CC]$ is the support projection of the unital normal 
$*$-homomorphism $C^*\: c_c(\X) \to \CC$. Furthermore, we have already shown that for all $a \in c_c(\X)$, we have that $\Delta([\CC])\cdot (1 \tensor a) = 0$ only if $a = 0$. We conclude by \cite{VanDaele}*{Thm.~3.4} that $(c_c(\X), \Delta)$ is a discrete quantum group.
\end{proof}

\begin{corollary}[Vaes]\label{examples.H.6}
Let $\X$ be a quantum set. Then, there is a one-to-one correspondence between $*$-homomorphisms $\Delta\: c_c(\X) \to \mathrm{Mult}(c_c(\X) \atensor c_c(\X))$ such that $(c_c(\X), \Delta)$ is a discrete quantum group in the sense of \cite{VanDaele}*{Def.~2.3} and pairs of functions, $F\: \X \times \X \to \X$ and $C: \mathbf 1 \to \X$, such that
\begin{enumerate}
\item $\[(\forall( x_1 \feq x_{1*}) \fin \X \ftimes \X^*)\,(\forall (x_2 \feq x_{2*}) \fin \X \ftimes \X^*)\,(\forall (x_3 \feq x_{3*}) \fin \X \ftimes \X^*), \\ \ghost \hfill  E_\X(F(F(x_1, x_2), x_3), F_*(x_{1*},F_*(x_{2*}, x_{3*}))\] = \top$;
\item $\[(\forall (x \feq x_*) \fin \X \ftimes \X^*)\, E_\X(F(x, C),x_*)\] = \top$;
\item $\[(\forall (x \feq x_*) \fin X \ftimes \X^*)\, E_\X(F(C,x), x_*)\] = \top$;
\item $\[(\forall x_1 \fin \X)\, (\exists x_2 \fin \X)\, E_\X(F(x_1, x_2), C_*)\]= \top$;
\item $\[(\forall x_2 \fin \X)\, (\exists x_1 \fin \X)\, E_\X(F(x_1, x_2), C_*)\] = \top$.
\end{enumerate}
This correspondence is given by $\Delta = F^\star|_{c_c(\X)}$ and $\varepsilon = C^\star|_{c_c(\X)}$, where $\varepsilon\: c_c(\X) \to \CC$ is the counit of $(c_c(\X), \Delta)$ \cite{VanDaele}*{sec.~3}.
\end{corollary}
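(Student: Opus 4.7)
The plan is to reduce the corollary to Theorem \ref{examples.H.5} by using Proposition \ref{computation.F.4} to translate the equational conditions (1)–(3) into categorical identities, handling the forward direction directly and the reverse direction via a standard extension from $c_c(\X)$ to $\ell^\infty(\X)$.

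First, apply Proposition \ref{computation.F.4} to reformulate the syntactic equations: condition (1) becomes $F \circ (F \times I_\X) = F \circ (I_\X \times F)$, condition (2) becomes $F \circ (I_\X \times C) = I_\X$, and condition (3) becomes $F \circ (C \times I_\X) = I_\X$. Under the duality \cite{Kornell}*{Thm.~7.4}, these are exactly the coassociativity and two-sided counit axioms for the comultiplication $F^\star$ on $\ell^\infty(\X)$ with counit $C^\star$.

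For the forward direction, given $(F,C)$ satisfying (1)–(5), the translated equations together with conditions (4)–(5) are precisely the hypotheses of Theorem \ref{examples.H.5}, so $(c_c(\X),\Delta)$ with $\Delta = F^\star|_{c_c(\X)}$ is a discrete quantum group, and the resulting counit is $\varepsilon = C^\star|_{c_c(\X)}$.

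For the reverse direction, given a discrete quantum group $(c_c(\X),\Delta)$ in Van Daele's sense, I first extend $\Delta$ and its counit $\varepsilon$ uniquely to unital normal $*$-homomorphisms $\hat\Delta\colon \ell^\infty(\X) \to \ell^\infty(\X) \stensor \ell^\infty(\X)$ and $\hat\varepsilon\colon \ell^\infty(\X) \to \CC$. For $\hat\varepsilon$, simply observe that $\varepsilon$ is the character on $c_c(\X)$ corresponding to the central minimal projection that supports the counital component; normal extension to $\ell^\infty(\X) = \prod_{X\in\At(\X)} L(X)$ is immediate. For $\hat\Delta$, use that $\Delta$ is nondegenerate in the multiplier-algebra sense, decompose along the central projections $[X]$, and extend componentwise; the required normality follows from the surjectivity of $T_1, T_2$ on $c_c(\X) \atensor c_c(\X)$, which guarantees that the multiplier extension of $\Delta$ lands in $\ell^\infty(\X)\stensor\ell^\infty(\X)$. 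Dualizing $\hat\Delta$ and $\hat\varepsilon$ via \cite{Kornell}*{Thm.~7.4} yields functions $F$ and $C$ with $F^\star = \hat\Delta$ and $C^\star = \hat\varepsilon$. Coassociativity and the counit identities for $(\hat\Delta, \hat\varepsilon)$ dualize to the monoid equations for $(F,C)$, recovering conditions (1)–(3) via Proposition \ref{computation.F.4}. Conditions (4) and (5) follow by reversing the graphical computation at the end of the proof of Theorem \ref{examples.H.5}: condition (4) is equivalent to $C^\dagger \circ F \circ (\top_\X^\dagger \times I_\X) = \top_\X$, which dualizes to the assertion that $(\hat\varepsilon \stensor \idhom) \circ \hat\Delta = \idhom$ composed suitably with the unit; this identity follows directly from the counit axiom for the algebraic quantum group $(c_c(\X),\Delta)$ applied componentwise. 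Condition (5) is obtained symmetrically.

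The main obstacle is the normal extension step: showing that the multiplier-algebra extension of $\Delta$ actually lands inside the spatial tensor product $\ell^\infty(\X) \stensor \ell^\infty(\X)$ rather than merely in the larger multiplier algebra. The key tool is the pair of surjections $T_1$ and $T_2$ in Lemma \ref{examples.H.4} (which hold for any discrete quantum group), together with the hereditary atomicity of $\ell^\infty(\X)$: these ensure that $\hat\Delta(a)\cdot(1\tensor b)$ and $(a\tensor 1)\cdot\hat\Delta(b)$ lie in $\ell^\infty(\X)\stensor\ell^\infty(\X)$ for all $a, b$, from which normality and well-definedness of $\hat\Delta$ on all of $\ell^\infty(\X)$ follow.
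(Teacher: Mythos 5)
Your forward direction and the overall reduction to Theorem \ref{examples.H.5} match the paper's proof, but the reverse direction has a genuine gap: you claim that conditions (4) and (5) ``follow directly from the counit axiom,'' and this is false. Coassociativity together with the counit identities only makes $(\X, F, C)$ a quantum monoid; conditions (4) and (5) are the extra cancellation properties that single out groups. Classically, a discrete monoid such as $(\NN,+)$ satisfies associativity and the unit laws while $\[(\forall x_1)(\exists x_2)\, E(F(x_1,x_2),C_*)\]=\top$ fails, so no argument deriving (4)--(5) from $(\hat\varepsilon \stensor \idhom)\circ\hat\Delta = \idhom$ can be correct. What the paper actually uses at this point is the defining regularity of Van Daele's discrete quantum groups (in effect the bijectivity of the canonical maps): from \cite{VanDaele}*{Def.~2.3} one infers that $\Delta([\CC])\cdot(1\tensor p)\neq 0$ for every nonzero projection $p$, i.e.\ $\Delta([\CC])$ lies below no $1\tensor r$ with $r\neq 1$; since $\Delta([\CC])$ is the projection corresponding to the relation $\[E_\X(F(x_1,x_2),C_*)\]$, Proposition \ref{definition.D.2}(2) and Lemma \ref{computation.C.1} then yield conditions (4) and (5). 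Your proposal is missing exactly this ingredient.

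A secondary, repairable inaccuracy concerns the extension step. The reason $\Delta(c_c(\X))$ lands in $\ell^\infty(\X\times\X)$ rather than merely in $\Mult(c_c(\X)\atensor c_c(\X))\iso\ell(\X\times\X)$ is not the surjectivity of $T_1,T_2$: it is the elementary norm estimate that each component $a\mapsto\Delta(a)(X_1\tensor X_2)$ is a $*$-homomorphism from the finite-dimensional C*-algebra generated by $a$ into $L(X_1\tensor X_2)$, hence contractive. Surjectivity of $T_1$ (equivalently $(c_c(\X)\atensor 1)\cdot\Delta(c_c(\X)) = c_c(\X)\atensor c_c(\X)$) enters later, to show that the bounded extension to $c_0(\X)$ is a nondegenerate representation, so that it extends to a unital normal $*$-homomorphism on the enveloping von Neumann algebra $\ell^\infty(\X)$; also note that Lemma \ref{examples.H.4} is proved under a rigidity hypothesis and is not the right citation for this property of an arbitrary discrete quantum group, which instead is part of \cite{VanDaele}*{Def.~2.3}. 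With the norm estimate supplied and the derivation of (4)--(5) replaced by the argument above, your outline aligns with the paper's proof.
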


\begin{proof}[Proof (cf.~\cite{Daws})]
Let $F$ be a function $\X \times \X \to \X$, and let $C$ be a function $\mathbf 1 \to \X$. By Lemma \ref{computation.F.3} and Proposition \ref{computation.F.4}, conditions (1), (2) and (3) are equivalent to $F\circ(F \times I_\X) = F \circ (I_\X \times F)$, $F\circ(I_\X \times C) = I_\X$ and $F\circ (C \times I_\X) = I_\X$, respectively. Therefore, by Theorem \ref{examples.H.5}, conditions (1)\ndash(5) imply that $(c_c(\X), F^\star|_{c_c(\X)})$ is a discrete quantum group.

Conversely, let $\Delta\: c_c(\X) \to \Mult(c_c(\X) \atensor c_c(\X))$ be a $*$-homomorphism, and assume that $(c_c(\X), \Delta)$ is a discrete quantum group. Let $a_0 \in c_c(\X)$. Let $A \subsetof c_c(\X)$ be the $*$-algebra generated by $a_0$. Because $a_0(X)=0$ for all but finitely many atoms $X$, we know that $A$ is a finite-dimensional C*-algebra. As observed in \cite{VanDaele}*{sec.~2}, the multiplier algebra $\Mult(c_c(\X) \atensor c_c(\X))$ is canonically isomorphic to $\ell(\X \times \X)$ \cite{Kornell}*{Def.~5.1}. For all atoms $X_1, X_2 \in \At(\X)$, the function $A \to L(X_1 \tensor X_2)$ that is defined by $a \mapsto \Delta(a)(X_1 \tensor X_2)$ is a $*$-homomorphism between finite-dimensional C*-algebras, and therefore $\|\Delta(a_0)(X_1 \tensor X_2)\| \leq \|a_0\|$. We conclude that $\|\Delta(a_0)\| \leq \|a_0\|$. Therefore, $\Delta(c_c(\X)) \subsetof \ell^\infty(\X \times \X)$.

The $*$-homomorphism $\Delta\: c_c(\X) \to \ell^\infty(\X \times \X)$ is bounded, and thus, it extends uniquely to a $*$-homomorphism $\Delta_0\: c_0(\X) \to \ell^\infty(\X \times \X)$. This is a nondegenerate representation of the C*-algebra $c_0(\X)$ because $(c_c(\X) \atensor 1) \cdot \Delta(c_c(\X)) = c_c(\X) \atensor c_c(\X)$ by the definition of a discrete quantum group. The enveloping von Neumann algebra of $c_0(\X)$ is of course $\ell^\infty(\X)$, and hence, $\Delta_0$ extends to a unital normal $*$-homomorphism $\Delta_1 \: \ell^\infty(\X) \to \ell^\infty(\X \times \X)$ \cite{Pedersen}*{Thm.~3.7.7}. Thus, we obtain a function $F\: \X \times \X \to \X$ such that $F^\star$ extends $\Delta$.

Let $b \in c_c(\X)$. Appealing to the definition of a comultiplication \cite{VanDaele}*{Def.~2.1}, we calculate that for all atoms $X_1$ and $X_2$,
\begin{align*}
([X_1] \tensor 1 \tensor 1)\cdot (F^\star \stensor \idhom)(F^\star(b)) \cdot (&1 \tensor 1 \tensor [X_2])
 \\ & =
([X_1]\tensor 1 \tensor 1) \cdot (F^\star \stensor \idhom)(F^\star(b)\cdot (1 \tensor [X_2]))
\\ & =
(\idhom \stensor F^\star)(([X_1] \tensor 1)\cdot F^\star(b))\cdot(1 \tensor 1 \tensor [X_2])
\\ &=
([X_1] \tensor 1 \tensor 1) \cdot (\idhom \stensor F^\star)(F^\star(b)) \cdot (1 \tensor 1 \tensor [X_2]).
\end{align*}
Therefore, for all $b \in c_c(\X)$, $(F^\star \stensor \idhom)(F^\star(b)) = (\idhom \stensor F^\star)(F^\star(b))$. Because $c_c(\X)$ is ultraweakly dense in $\ell^\infty(\X)$, we conclude that $(F \times I_\X) \circ F = (I_\X \times F) \circ F$, establishing condition (1).

The discrete quantum group has a counit $\varepsilon\: c_c(\X) \to \CC$ \cite{VanDaele}*{sec.~3}. By elementary algebra, $\varepsilon = J_{X}^\star|_{c_c(\X)}$ for some one-dimensional atom $X$, and without loss of generality, we may assume that $\varepsilon = J_\CC^\star|_{c_c(\X)}$. Let $C = J_\CC$, so that $\varepsilon = C^\star|_{c_c(\X)}$. Appealing to the definition of a counit, we calculate that for all $a \in c_c(\X)$ and all $X \in \At(\X)$,
\begin{align*}
(C^\star\stensor \idhom)(F^\star(a)) \cdot [X]
=
(C^\star \stensor \idhom)(F^\star(a) \cdot (1 \tensor [X]))
=
a \cdot [X].
\end{align*}
Therefore, for all $a \in c_c(\X)$, $(C^\star\stensor \idhom)(F^\star(a)) = a$. Because $c_c(\X)$ is ultraweakly dense in $\ell^\infty(\X)$, we conclude that $F\circ (C \times I_\X) = I_\X$. Similarly, $F \circ (I_\X \times C) = I_\X$. We have established conditions (2) and (3).

Let $p \in \ell^\infty(\X)$ be a nonzero projection. Hence, we have an atom $X$ such that $p \cdot [X]  \neq 0$. It certainly follows that $[\CC]\tensor (p \cdot [X])  \neq 0$. Appealing directly to the definition of a discrete quantum group \cite{VanDaele}*{Def.~2.3}, we infer that $\Delta([\CC])(1 \tensor (p \cdot [X]))  \neq 0$ and thus that $ \Delta([\CC]) \cdot (1 \tensor p) \neq 0$. Therefore, $\Delta([\CC])$ is not orthogonal to $1 \tensor p$ for any projection $p \neq 0$. Equivalently, $\Delta([\CC])$ is not below $1 \tensor r$ for any projection $r \neq 1$.

We have already observed in the proof of Theorem \ref{examples.H.5} that the projection $\Delta([\CC])$ corresponds to the relation $\[E_\X(F(x_1, x_2), C_*)\]$ in the sense of \cite{Kornell}*{Thm.~B.8}, so $\[ E_\X(F(x_1, x_2), C_*)\]$ is not below $\top_\X \times R$ for any $R \neq \top_\X$. Therefore, by Proposition \ref{definition.D.2}(2),
\begin{align*}
\[ x_2 \in \X \suchthat (\exists x_1 \in \X_1)\,& E_\X(F(x_1, x_2), C_*)\]
\\ & =
\inf \{R \in \Rel(\X) \suchthat \top_\X \times R \geq \[ E_\X(F(x_1, x_2), C_*)\] \}
=
\top_\X.
\end{align*}
We conclude by Lemma \ref{computation.C.1} that $\[(\forall x_2 \in \X)\, (\exists x_1 \in \X)\, E_\X(F(x_1, x_2), C_*)\] = \top$. Similarly, $\[(\forall x_1 \in \X)\, (\exists x_2 \in \X)\, E_\X(F(x_1, x_2), C_*)\] = \top$. We have established conditions (4) and (5).

The $*$-algebra $c_c(\X)$ is ultraweakly dense in $\ell^\infty(\X)$, and thus, the equation $\Delta = F^\star|_{c_c(\X)}$ defines a bijection between 
\begin{itemize}
\item $*$-homomorphisms $\Delta\: c_c(\X) \to \Mult(c_c(\X) \atensor c_c(\X))$ such that $(c_c(\X), \Delta)$ is a discrete quantum group and
\item functions $F\: \X \times \X \to \X$ for which there exists a function $C\: \mathbf 1 \to \X$ such that together $F$ and $C$ satisfy conditions (1)\ndash(5).
\end{itemize}
The function $C\: \mathbf 1 \to \X$ is easily seen to be unique because conditions (2) and (3) imply that $F \circ ( I_\X \times C) = I_\X$ and $ F \circ (C \times I_\X) = I_\X$. We constructed $C$ to satisfy $\varepsilon = C^\star|_{c_c(\X)}$, where $\varepsilon$ is the counit of the discrete quantum group $(c_c(\X), \Delta)$. Hence the theorem is proved.
\end{proof}

\appendix

\section*{appendix}

\setcounter{section}{1}

\subsection{Nondegenerate equality.}\label{appendix.D} We show that the equality relation on a von Neumann algebra is nondegenerate if and only if that von Neumann algebra is hereditarily atomic.

\begin{lemma}\label{appendix.D.1}
Let $M$ be a commutative von Neumann algebra that contains no minimal projections. There exists no normal state $\varphi$ on the spatial tensor product $M \stensor M$ such that $\varphi(p \tensor (1-p)) = 0$ for every projection $p \in M$.
\end{lemma}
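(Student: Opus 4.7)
The plan is to argue by contradiction and reduce the statement to a concrete measure-theoretic calculation on a $\sigma$-finite nonatomic probability space. Suppose such a normal state $\varphi$ exists, and define the marginal normal states $\varphi_1(a) := \varphi(a \tensor 1)$ and $\varphi_2(a) := \varphi(1 \tensor a)$ on $M$. For each projection $p \in M$, the hypothesis gives $\varphi_1(p) = \varphi(p \tensor 1) - \varphi(p \tensor (1-p)) = \varphi(p \tensor p)$, and the symmetric application of the hypothesis with $1-p$ in place of $p$ yields $\varphi_2(p) = \varphi(p \tensor p)$ likewise. By the spectral theorem and normality, $\varphi_1 = \varphi_2 =: \psi$ as normal states on $M$.

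The central and most delicate step is the reduction to the $\sigma$-finite case. Let $e \in M$ be the support projection of $\psi$; it is $\sigma$-finite in $M$. Since $\varphi((1-e) \tensor 1) = \psi(1-e) = 0$, the support of $\varphi$ in $M \stensor M$ is majorized by $e \tensor 1$, and symmetrically by $1 \tensor e$, hence by their meet $e \tensor e$. Thus $\varphi$ factors through the corner $(e \tensor e)(M \stensor M)(e \tensor e) = eM \stensor eM$; here $eM$ remains commutative and nonatomic (any minimal projection of $eM$ would already be minimal in $M$), and is $\sigma$-finite since $\psi|_{eM}$ is faithful. The hypothesis descends as well: for any projection $p \leq e$ one has $\varphi(p \tensor (e-p)) \leq \varphi(p \tensor (1-p)) = 0$. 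Replacing $M$ by $eM$, I may therefore identify $M = L^\infty(X, \mu)$ for a $\sigma$-finite nonatomic measure space and, after renormalizing within the same measure class, assume $\mu(X) < \infty$.

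With this reduction in hand, $M \stensor M \cong L^\infty(X \times X, \mu \times \mu)$ and $\varphi$ is represented by a density $h \in L^1(\mu \times \mu)$ with $h \geq 0$ and $\int h\, d(\mu \times \mu) = 1$; the hypothesis becomes $\int_{A \times A^c} h\, d(\mu \times \mu) = 0$ for every measurable $A \subseteq X$. The contradiction then comes from Sierpi\'nski's intermediate value theorem for nonatomic measures: for each $n \geq 1$, partition $X$ into $2^n$ measurable pieces $A_1^n, \ldots, A_{2^n}^n$ of equal $\mu$-measure $\mu(X)/2^n$, and set $\Delta_n := \bigsqcup_i A_i^n \times A_i^n$. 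Applying the vanishing hypothesis to each $A_i^n$ gives $\int_{A_i^n \times A_i^n} h = \int_{A_i^n \times X} h$, and summing over $i$ yields $\int_{\Delta_n} h\, d(\mu \times \mu) = 1$ for every $n$. On the other hand $(\mu \times \mu)(\Delta_n) = \mu(X)^2 / 2^n \to 0$, so absolute continuity of the Lebesgue integral forces $\int_{\Delta_n} h \to 0$, the desired contradiction.
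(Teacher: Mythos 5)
Your argument is correct, and it reaches the contradiction by a genuinely different technical route than the paper. The paper stays inside the von Neumann algebra: it takes the supports $p_1,p_2$ of the two marginal states (without observing, as you do, that the marginals coincide), notes that $\varphi_1\stensor\varphi_2$ is faithful on the corner $p_1M\stensor p_2M$, and considers the decreasing net $q_\lambda=\sum_{p\in\lambda}p\tensor p$ indexed by finite partitions of unity; the hypothesis gives $\varphi(q_\lambda)=1$, hence $\varphi(q_\infty)=1$ for the ultraweak limit $q_\infty$, while diffuseness of the marginals makes $(\varphi_1\stensor\varphi_2)\bigl((p_1\tensor p_2)q_\infty\bigr)=0$, so faithfulness forces $(p_1\tensor p_2)q_\infty=0$, a contradiction. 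You instead pass to the support corner $eM\stensor eM$, invoke the representation of a commutative von Neumann algebra with a faithful normal state as $L^\infty(X,\mu)$ of a nonatomic probability space, together with $L^\infty(X,\mu)\stensor L^\infty(X,\mu)\cong L^\infty(X\times X,\mu\times\mu)$, and then run the same partition idea concretely: dyadic equal-measure partitions make the diagonal blocks $\Delta_n$ carry full mass of the density $h$ while $(\mu\times\mu)(\Delta_n)\to 0$, contradicting absolute continuity of the integral of an $L^1$ function. The combinatorial core (full mass on partition-diagonal blocks versus arbitrarily fine partitions, available by nonatomicity) is identical; what differs is the machinery. Your route is more elementary in feel once the measure-theoretic identifications are granted, but it leans on the structure theorem for commutative von Neumann algebras in the merely $\sigma$-finite (possibly nonseparable) setting and on the identification of the spatial tensor product with the product measure space, both real inputs; the paper avoids choosing any measure-space realization and instead pays with the ultraweak limit $q_\infty$ and the cited faithfulness of the product of the marginals. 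Your observation that $\varphi_1=\varphi_2$ (both equal $p\mapsto\varphi(p\tensor p)$ on projections) is correct and slightly streamlines the reduction, though neither argument actually requires it.
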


\begin{proof}
Suppose that we have such a normal state $\varphi$ on $M \stensor M$. Let $\varphi_1$ and $\varphi_2$ be the normal states on $M$ defined by $\varphi_1(a) = \varphi(a \tensor 1)$ and $\varphi_2(a) = \varphi (1 \tensor a)$ for all $a \in M$. For both $i \in \{1,2\}$, let $p_i$ be the support projection of $\varphi_i$, in other words, the smallest projection in $M$ such that $\varphi_i(p_i) = 1$. It is easy to see that $\varphi_i$ is faithful on $p_i M$. Indeed, for any projection $q \leq p_i$, if $\varphi_i(q) = 0$, then $\varphi_i(p_i-q) = 1$, which implies that $q =0$, by the minimality of $p_i$. We conclude that $\varphi_1 \stensor \varphi_2$ is a faithful normal state on $p_1 M \stensor p_2 M$ \cite{Blackadar}*{III.2.2.31}.

Our given normal state $\varphi$ factors through $p_1 M \stensor p_2 M$, as we now show. Indeed, by our choice of $p_1$ and $p_2$, we have that $\varphi(p_1 \tensor 1) = 1 = \varphi (1 \tensor p_2)$. Writing $\varphi$ as a countable linear combination of vector states, we find that $\varphi(p_1 \tensor p_2) = 1$ and furthermore that $\varphi((p_1 \tensor p_2) b) = \varphi(b)$ for all $b \in M \stensor M$. Thus, $\varphi$ does factor through $p_1 M \stensor p_2 M$, as claimed.

Finite partitions of the identity $1 \in M$ into pairwise orthogonal projections form a directed set $\Lambda$, with finer partitions appearing higher in the order. For each such partition $\lambda \in \Lambda$, we define a projection $q_\lambda = \sum_{p \in \lambda} p \tensor p$. The net $(q_\lambda\suchthat \lambda \in \Lambda)$ is evidently decreasing, and it therefore has an ultraweak limit $q_\infty$, also a projection in $M \stensor M$. By our assumption on $\varphi$, we have that $\varphi(q_\lambda) = 1$ for each partition $\lambda$, and therefore $\varphi(q_\infty) = 1$. We conclude that $\varphi((p_1 \tensor p_2) q_\infty) = 1$.

We now obtain a contradiction by showing that $(p_1 \tensor p_2)  q_\infty = 0$.
\begin{align*}
(\varphi_1 \stensor \varphi_2)((p_1 \tensor p_2)  q_\infty) & = (\varphi_1 \stensor \varphi_2)(\lim_\lambda (p_1 \tensor p_2)  q_\lambda )
 =
\lim_\lambda  (\varphi_1 \stensor \varphi_2)((p_1 \tensor p_2)  q_\lambda )
\\ & =
\lim_\lambda  \sum_{p \in \lambda} \varphi_1 ( p_1  p)  \varphi_2(p_2 p )
=0
\end{align*}
The final equality is a consequence of the fact that for both $i \in \{1,2\}$, we can partition $p_i$ into projections $p$ that are arbitrarily small in the sense that each satisfies $\varphi_i(p) \leq \epsilon$ for arbitrarily small $\epsilon>0$. Indeed, $p_i$ is the identity of the von Neumann algebra $p_i M$, which has no atoms and on which $\varphi_i$ is a faithful normal state. Since $\varphi_1 \stensor \varphi_2$ is faithful, we conclude that $(p_1 \tensor p_2)  q_\infty = 0$, as claimed. Having obtained a contradiction, we infer that our opening supposition is false.
\end{proof}

\begin{theorem}\label{appendix.D.2}
Let $M$ be any von Neumann algebra. Let $\delta$ be the largest projection in the spatial tensor product $M \stensor M^{op}$ such that $(p \tensor (1-p))  \delta = 0$ for all projections $p$ in $M$. Then, $(p_1 \tensor p_1)\delta \neq 0$ for all nonzero projections $p_1$ in $M$ if and only if $M$ is hereditarily atomic.
\end{theorem}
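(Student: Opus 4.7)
The plan is to handle the two implications separately: for $(2)\Rightarrow(1)$ I will construct an explicit projection dominated by $\delta$ and compute with it directly, and for $(1)\Rightarrow(2)$ I will take the contrapositive and reduce to Lemma~\ref{appendix.D.1}.

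Suppose first that $M$ is hereditarily atomic, so by Proposition~5.4 of \cite{Kornell} we may write $M \cong \bigoplus_i L(H_i)$ with each $H_i$ nonzero finite-dimensional. Then $M \stensor M^{op} \cong \bigoplus_{i,j} L(H_i \tensor H_j^*)$ using the canonical identification $L(H)^{op} \cong L(H^*)$ by $b^{op} \mapsto b^*$. Let $\pi_i \in L(H_i \tensor H_i^*)$ denote the rank-one projection onto the normalized maximally entangled vector, and set $\delta^{\mathrm{can}} := \bigoplus_i \pi_i$ (zero in off-diagonal blocks). The standard maximally-entangled-vector identity $\pi_i (a \tensor b^*)\pi_i = n_i^{-1}\mathrm{tr}(ab)\pi_i$, where $n_i = \dim H_i$, yields two conclusions simultaneously. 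Setting $a = p^{(i)}$, $b = 1 - p^{(i)}$ gives $\pi_i \perp p^{(i)} \tensor (1-p^{(i)})^*$ for every projection $p^{(i)} \in L(H_i)$, so $\delta^{\mathrm{can}} \perp p \tensor (1-p)$ for every projection $p \in M$, hence $\delta^{\mathrm{can}} \leq \delta$. Setting $a = b = p_1^{(i)}$ gives $\pi_i(p_1^{(i)} \tensor (p_1^{(i)})^*)\pi_i = n_i^{-1}\mathrm{rank}(p_1^{(i)})\pi_i \neq 0$ whenever $p_1^{(i)} \neq 0$, so $(p_1 \tensor p_1)\delta^{\mathrm{can}} \neq 0$. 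Since $\mathrm{ran}(\delta^{\mathrm{can}}) \subseteq \mathrm{ran}(\delta)$, the same nondegeneracy passes to $\delta$.

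For the contrapositive, suppose $M$ is not hereditarily atomic. Proposition~5.4 of \cite{Kornell} furnishes a commutative von Neumann subalgebra $N \subseteq M$ that is not atomic, so a nonzero projection $p_0 \in N$ may be chosen for which $Np_0$ has no minimal projections. Assume for contradiction that $(p_0 \tensor p_0)\delta \neq 0$, represent $M \stensor M^{op}$ on a Hilbert space, and pick $\xi$ with $\eta := (p_0 \tensor p_0)\delta\xi$ nonzero. The normal state $\varphi := \langle \eta, (\cdot)\eta\rangle/\|\eta\|^2$ on $M \stensor M^{op}$ satisfies $\varphi(p_0 \tensor p_0) = 1$ because $(p_0 \tensor p_0)\eta = \eta$, so it restricts to a normal state on the subalgebra $Np_0 \stensor (Np_0)^{op} = Np_0 \stensor Np_0$ (using that $Np_0$ is commutative). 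For any projection $q \in Np_0$, the inequality $q \tensor (p_0 - q) \leq q \tensor (1-q)$ combined with $\delta \cdot (q \tensor (1-q)) = 0$ forces $(q \tensor (p_0 - q))\eta = 0$, hence $\varphi(q \tensor (p_0 - q)) = 0$. This contradicts Lemma~\ref{appendix.D.1} applied to the commutative von Neumann algebra $Np_0$, which has no minimal projections.

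The main obstacle is the last step of the forward direction: producing a normal state on $Np_0 \stensor Np_0$ from the global projection $\delta$ that inherits the orthogonality $\delta \perp q \tensor (1-q)$ in the shape required by Lemma~\ref{appendix.D.1}. Choosing $\eta = (p_0 \tensor p_0)\delta\xi$ is the key maneuver: it simultaneously places $\eta$ in the range of $p_0 \tensor p_0$ (so that restriction to $Np_0 \stensor Np_0$ is normalized) and transfers the orthogonality from $q \tensor (1-q)$ down to the smaller projection $q \tensor (p_0 - q)$ via a lattice inequality. That this restriction lands in a subalgebra in the first place depends on the commutativity identity $(Np_0)^{op} = Np_0$.
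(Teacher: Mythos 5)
Your proof is correct, and it rests on the same two pillars as the paper's own argument: a maximally entangled vector/projection in each finite-dimensional block for the hereditarily atomic direction, and a vector state fed into Lemma~\ref{appendix.D.1} through a diffuse commutative subalgebra for the converse. The difference is in how the converse is organized. The paper first splits $M = M_0 \oplus M_1$ along central projections ($M_0$ hereditarily atomic, $M_1$ with no finite type I summands), notes $\delta = (p_0 \tensor p_0)\delta + (p_1 \tensor p_1)\delta$, and then locates a diffuse commutative unital subalgebra of $M_1$ by structure theory (center $=$ atomic $\oplus$ diffuse, factors not of finite type I), so that the witnessing projection is the central projection $p_1$. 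You bypass the central decomposition: you take a non-atomic commutative subalgebra $N \subseteq M$, cut to a diffuse corner $Np_0$ with a generally non-central projection $p_0$, and transfer the defining orthogonality of $\delta$ to the corner via $q \tensor (p_0 - q) \leq q \tensor (1-q)$; the compression $\eta = (p_0 \tensor p_0)\delta\xi$ then plays the role of the paper's $\delta_1$, and the non-centrality of $p_0$ causes no harm precisely because of that inequality. This is a genuine, somewhat leaner variant, and your forward direction (the trace identity for the entangled projection, applied blockwise) is the paper's computation in different clothing. The one point to pin down is the citation: if Proposition~5.4 of \cite{Kornell} is stated only as the $\ell^\infty$-direct-sum (or eigenvector) characterization rather than literally as ``every commutative von Neumann subalgebra is atomic,'' you should add a line producing the non-atomic commutative $N$ --- for instance, a non-diagonalizable self-adjoint element generates one, or take a MASA of a diffuse corner of a non-atomic subalgebra --- which is exactly the work done by the structure-theoretic paragraph in the paper's proof. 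That is a referencing detail rather than a gap in the mathematics.
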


\begin{proof}
Let $H$ be the Hilbert space on which $M$ is canonically represented. The von Neumann algebra $M^{op}$ is canonically represented on the conjugate Hilbert space $\overline H$, whose vectors are the same as those of $H$ but written with a conjugation symbol so that $\overline{\alpha h} = \overline \alpha \overline h$ for all $\alpha \in \CC$ and all $h \in H$. The inner product on $\overline H$ is defined by $\langle \overline h_1 | \overline h_2 \rangle = \overline{\langle h_1 | h_2 \rangle}$. For each $a \in M^{op}$ and each $\overline h \in \overline H$, we define $a \overline h = \overline{a^\dagger h}$, where $a^\dagger$ is the Hermitian adjoint of $a$. It is routine to verify that this defines a faithful representation of $M^{op}$ on $\overline H$.

Like any von Neumann algebra, $M$ is the direct sum of a hereditarily atomic von Neumann algebra $M_0$ and a von Neumann algebra $M_1$ that has no finite type I factors as a direct summand. Let $p_0$ and $p_1$ be the central projections in $M$ corresponding to $M_0$ and $M_1$, respectively. Hence, $M_0 = p_0 M$ and $M_1 = p_1M$.

Assume that $M$ is not hereditarily atomic or, in other words, that $p_1$ is nonzero. By our assumption on $\delta$, we have that $(p_0 \tensor p_1) \delta = 0$ and $(p_1 \tensor p_0) \delta = 0$, and therefore $\delta = (p_0 \tensor p_0)\delta + (p_1 \tensor p_1) \delta$. The projection $\delta_1 := (p_1 \tensor p_1) \delta$ is in $M_1 \stensor M_1^{op}$, and it satisfies $(p \tensor (p_1 -p))  \delta_1 = 0$ for all projections $p$ in $M_1$.

Assume for contradiction that $\delta_1$ is nonzero. It follows that there is a vector $w$ in the Hilbert space $(p_1 H) \tensor (p_1 \overline H)$ such that $(p \tensor (p_1 -p)) w = 0$ for all projections $p$ in $M_1$. Thus, we have a state $\varphi$ on $M_1 \stensor M_1^{op}$ such that $\varphi(p \tensor (p_1 - p)) =0$ for all projections $p$ in $M_1$.

The algebra $M_1$ need not be commutative, but it contains a unital ultraweakly closed $*$-subalgebra that is both commutative and \emph{diffuse} in the sense that it contains no minimal projections. Indeed, the center of $M_1$ is the direct sum of an atomic von Neumann algebra and a diffuse von Neumann algebra. It follows that $M_1$ is a direct sum of von Neumann algebras, each of which is either a factor that is not finite type I or a von Neumann algebra with diffuse center. Each such direct summand has a diffuse commutative unital ultraweakly closed $*$-subalgebra, and hence, so does $M_1$.

Let $N$ be any diffuse commutative unital ultraweakly closed $*$-subalgebra of $M_1$. The normal state $\varphi$ restricts to a normal state on $N \stensor N^{op}$ such that $\varphi(p \tensor (p_1 - p)) =0$ for all projections $p$ in $N$, and $p_1$ is the multiplicative unit of $N$ because $N$ is a unital $*$-subalgebra of $M_1$. Furthermore, since $N$ is commutative, $N^{op} = N$. Therefore, we may apply Lemma \ref{appendix.D.1} to obtain a contradiction. We conclude that $\delta_1 =0$, that is, $(p_1 \tensor p_1)\delta =0$. Therefore, if $M$ is not hereditarily atomic, then there does exists a nonzero projection $p_1$ such that $(p_1 \tensor p_1) \delta =0$.

Assume now that $M$ is a hereditarily atomic von Neumann algebra, and let $p_1$ be a nonzero projection in $M$. By \cite{Kornell}*{Prop.~5.4}, there exists a set $A$ of finite-dimensional Hilbert spaces such that $M$ is isomorphic to the $\ell^\infty$-direct sum of the operator algebras $L(X)$, for $X \in A$. Without loss of generality, we may assume that $M$ is equal to such an $\ell^\infty$-direct sum. For each $X \in A$, let $[X]$ be the corresponding minimal central projection in $M$.

Let $X_1$ be such that $p_1[X_1] \neq 0$. Choose an orthonormal basis $x_1, \ldots, x_n$ for $X_1$, and let $w = \sum_{i=1}^n x_i \tensor \overline x_i \in X_1 \tensor \overline X_1 \leq H \tensor \overline H$. Let $[\CC w]$ be the corresponding projection in $M \stensor M^{op}$. Using standard linear algebra, we may directly compute $\< w | (p_1 \tensor p_1) w\>$ to show that $(p_1 \tensor p_1)w \neq 0$; thus, $(p_1 \tensor p_1) [\CC w] \neq 0$. Similarly, for each projection $p \in M$, we may directly compute $\< w | (p \tensor (1-p)) w\>$ to show that $(p \tensor (1-p))w = 0$; thus, $[\CC w] \leq \delta$. Altogether, we find that $(p_1 \tensor p_1) \delta =0$ for any nonzero projection $p_1$ in $M$.
\end{proof}

\subsection{Weaver's quantum relations}\label{appendix.E} We substantiate the observation \cite{Kornell} that binary relations between quantum sets are essentially just Weaver's quantum relations \cite{Weaver2}. A quantum relation from a von Neumann algebra $M \leq L(H)$ to a von Neumann algebra $N \leq L(K)$ is defined to be an ultraweakly closed subspace $V \leq L(H,K)$ such that $N'\cdot V \cdot M' \leq V$. 

For each atom $X$ of a quantum set $\X$, we write $\incnag_X \in L(X, \bigoplus \At(\X))$ for the corresponding inclusion isometry.

\begin{proposition}\label{appendix.E.1}
Let $\X$ and $\Y$ be quantum sets. Let the von Neumann algebras $\ell^\infty(\X)$ and $\ell^\infty(\Y)$ be canonically represented on the Hilbert spaces $\bigoplus\At(\X)$ and $\bigoplus \At (\Y)$ respectively. Quantum relations $V$ from $\ell^\infty(\X)$ to $\ell^\infty(\Y)$ are in one-to-one correspondence with binary relations $R$ from $\X$ to $\Y$. The correspondence is given by $R(X, Y) = \inc_Y^\dagger \cdot V \cdot \inc_X^{\phantom \dagger}$, for $X \in \At(\X)$ and $Y \in \At(\Y)$.
\end{proposition}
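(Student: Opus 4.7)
First I would identify the two commutants explicitly. Since $\ell^\infty(\X) = \bigoplus_{X \in \At(\X)} L(X)$ is canonically represented on $H_\X := \bigoplus \At(\X)$, its commutant on this Hilbert space is $\ell^\infty(\X)' = \prod_{X \in \At(\X)} \CC 1_X$, and in particular contains each projection $p_X := \inc_X^{\phantom\dagger} \cdot \inc_X^\dagger$. These projections are pairwise orthogonal and sum ultraweakly to the identity on $H_\X$. The same observation applies to $\Y$, yielding projections $p_Y \in \ell^\infty(\Y)'$.

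Next I would exhibit the inverse to the map $V \mapsto R_V$, where $R_V(X,Y) := \inc_Y^\dagger \cdot V \cdot \inc_X^{\phantom\dagger}$. Given a binary relation $R$ from $\X$ to $\Y$, I set
\[
V_R := \overline{\sum_{X \in \At(\X),\, Y \in \At(\Y)} \inc_Y^{\phantom\dagger} \cdot R(X,Y) \cdot \inc_X^\dagger}^{\;\textrm{uw}},
\]
the ultraweak closure of the indicated linear span in $L(H_\X, H_\Y)$. By construction $V_R$ is an ultraweakly closed subspace. To verify that it is a quantum relation, it suffices to check the bimodule condition on the generators: for $a' \in \ell^\infty(\Y)'$, $b' \in \ell^\infty(\X)'$, and $v \in R(X,Y)$, the operator $a' \cdot \inc_Y^{\phantom\dagger} \cdot v \cdot \inc_X^\dagger \cdot b'$ equals $\inc_Y^{\phantom\dagger} \cdot (\alpha v \beta) \cdot \inc_X^\dagger$, where $\alpha,\beta \in \CC$ are the scalars by which $a'$ and $b'$ act on $Y$ and $X$ respectively; hence it lies in $V_R$. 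Taking ultraweak closures preserves the bimodule condition because multiplication by a fixed bounded operator is ultraweakly continuous.

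Then I would check that $R_{V_R}(X,Y) = R(X,Y)$ for every binary relation $R$. The inclusion $\supseteq$ is immediate from the definition of $V_R$, using $\inc_Y^\dagger \cdot \inc_Y^{\phantom\dagger} = 1_Y$ and $\inc_X^\dagger \cdot \inc_X^{\phantom\dagger} = 1_X$, together with orthogonality $\inc_{Y'}^\dagger \cdot \inc_Y^{\phantom\dagger} = 0$ for $Y' \neq Y$ and similarly on the $\X$-side, which kills all cross terms. The reverse inclusion uses that $R(X,Y)$ is finite-dimensional, hence ultraweakly closed in $L(X,Y)$, together with the ultraweak continuity of $w \mapsto \inc_Y^\dagger \cdot w \cdot \inc_X^{\phantom\dagger}$, so the closure used in defining $V_R$ does not enlarge the $(X,Y)$-component.

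The main obstacle, and the crux of the argument, is verifying $V_{R_V} = V$ for every quantum relation $V$. The inclusion $V_{R_V} \subseteq V$ holds because each generator $\inc_Y^{\phantom\dagger} \cdot \inc_Y^\dagger \cdot w \cdot \inc_X^{\phantom\dagger} \cdot \inc_X^\dagger = p_Y \cdot w \cdot p_X$, for $w \in V$, lies in $V$ by the $\ell^\infty(\Y)'$-$\ell^\infty(\X)'$-bimodule property, and $V$ is ultraweakly closed. For the reverse inclusion, I would fix $w \in V$ and consider the net of partial sums $w_{F,G} := \sum_{Y \in G} \sum_{X \in F} p_Y \cdot w \cdot p_X$ indexed by finite subsets $F \subseteq \At(\X)$ and $G \subseteq \At(\Y)$. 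Since $\sum_X p_X$ and $\sum_Y p_Y$ converge ultraweakly to the respective identities and multiplication is separately ultraweakly continuous on bounded sets, $w_{F,G} \to w$ ultraweakly; and each $w_{F,G}$ lies in the span defining $V_{R_V}$. Therefore $w \in V_{R_V}$. This completes the proof that $V \mapsto R_V$ and $R \mapsto V_R$ are mutually inverse bijections.
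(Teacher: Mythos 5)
Your proposal is correct and follows essentially the same route as the paper's proof: the same inverse construction $V_R$ as the ultraweak closure of the span of $\incnag_Y \cdot R(X,Y) \cdot \incdag_X$, the same bimodule check via the fact that the commutants act by scalars on the atoms (the paper phrases this through the minimal central projections $[X]$, $[Y]$), and the same two verifications that the constructions invert each other. Your extra detail on the net of compressions $q_G\, w\, q_F \to w$ just fills in what the paper leaves implicit when it says the projections sum to the identity.
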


\begin{proof}
For each quantum relation $V$ from $\ell^\infty(\X)$ to $\ell^\infty(\Y)$, let $R_V$ be the binary relation from $\X$ to $\Y$ defined by $R_V(X,Y) = \incdag_Y \cdot V \cdot \incnag_X$, for $X \in \At(\X)$ and $Y \in \At(\Y)$. For each binary relation $R$ from $\X$ to $\Y$, let $V_R \leq L(\bigoplus \At(\X), \bigoplus \At(\Y))$ be defined by $V_R = \overline{\sum\{ \incnag_Y \cdot R(X,Y) \cdot \incdag_X \suchthat X \in \At(\X),\, Y \in \At(\Y)\}}$, where the symbol $\sum$ denotes the algebraic span of the union and the line indicates closure with respect to the ultraweak topology. This is a quantum relation from $\ell^\infty(\X)$ to $\ell^\infty(\Y)$ because $\ell^\infty(\X)'$ is the closed span of the minimal central projections $[X] := \incnag_X \cdot \incdag_X$ for $X \in \At(\X)$, and it is likewise for $\ell^\infty(\Y)'$. Indeed, for all atoms $X_0 \in \At(\X)$ and $Y_0 \in \At(\Y)$, we calculate that $[Y_0] \cdot V_R \cdot [X_0] \leq \overline{\incnag_{Y_0} \cdot R(X_0, Y_0) \cdot \incdag_{X_0}} = \incnag_{Y_0} \cdot R(X_0, Y_0) \cdot \incdag_{X_0} \leq V_R$, and therefore $\ell^\infty(\Y)' \cdot V_R \cdot \ell^\infty(\X)' \leq V_R$.

We show that the two constructions invert each other by direct calculation. For each binary relation $R$ from $\X$ to $\Y$, and all atoms $X_0 \in \At(\X)$ and $Y_0 \in \At(Y)$, we calculate that
\begin{align*}
R_{V_R}(X_0, Y_0) & = \incdag_{Y_0} \cdot V_R \cdot \incnag_{X_0} \leq \overline{ \incdag_{Y_0} \cdot \incnag_{Y_0} \cdot R(X_0, Y_0) \cdot \incdag_{X_0} \cdot \incnag_{X_0}} = R(X_0, Y_0) 
\\ &
= \incdag_{Y_0} \cdot \incnag_{Y_0} \cdot R(X_0, Y_0) \cdot \incdag_{X_0} \cdot \incnag_{X_0}
\leq \incdag_{Y_0} \cdot V_R \cdot \incnag_{X_0} = R_{V_R}(X_0, Y_0).
\end{align*}
Similarly, for each quantum relation $V$ from $\ell^\infty(\X)$ to $\ell^\infty(\Y)$, we calculate that
\begin{align*}
V_{R_V} & = \overline{\sum\{\incnag_Y \cdot R_V(X,Y) \cdot \incdag_X \suchthat X \in \At(\X),\, Y \in \At(\Y)\}}
\\ & =
\overline{\sum\{\incnag_Y \cdot \incdag_Y \cdot V \cdot \incnag_X \cdot \incdag_X \suchthat X \in \At(\X),\, Y \in \At(\Y)\}}
\\ & =
\overline{\sum\{[Y] \cdot V \cdot [X] \suchthat X \in \At(\X),\, Y \in \At(\Y)\}}
=
V.
\end{align*}
The last equality can be proved by establishing both inclusions. The inclusion of the left side into the right side holds because $V$ is a quantum relation. The inclusion of the right side into the left side holds because the projections $[X]$ for $X \in \At(\X)$ sum to the identity, as do the projections $[Y]$ for $Y \in \At(\Y)$. Therefore, the constructions $R \mapsto V_R$ and $V \mapsto R_V$ invert each other.
\end{proof}

\begin{proposition}\label{appendix.E.2}
The one-to-one correspondence of Proposition \ref{appendix.E.1} is functorial.
\end{proposition}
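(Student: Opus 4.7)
The plan is to verify directly that the bijection $R \mapsto V_R$ of Proposition \ref{appendix.E.1} preserves identities and composition. Composition in Weaver's category is the operation $V_2 \cdot V_1 := \overline{\mathrm{span}\{v_2 \cdot v_1 \suchthat v_2 \in V_2,\, v_1 \in V_1\}}^{\mathrm{uw}}$, with identity quantum relation $\ell^\infty(\X)'$ on $\ell^\infty(\X)$; composition in the category $\cat{qRel}$ of quantum sets and binary relations is the atomwise operation $(S \circ R)(X, Z) = \overline{\sum_{Y \in \At(\Y)} S(Y, Z) \cdot R(X, Y)}$, with identity binary relation $I_\X$ determined by $I_\X(X, X) = \CC \cdot \mathrm{id}_X$ and $I_\X(X_1, X_2) = 0$ for distinct $X_1, X_2$.

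For identities, I would simply substitute into the defining formula of Proposition \ref{appendix.E.1}, obtaining
\begin{equation*}
V_{I_\X} \;=\; \overline{\sum\{\incnag_X \cdot \incdag_X \suchthat X \in \At(\X)\}}^{\mathrm{uw}} \;=\; \overline{\sum\{[X] \suchthat X \in \At(\X)\}}^{\mathrm{uw}},
\end{equation*}
which is exactly the commutant $\ell^\infty(\X)'$, since the minimal central projections $[X]$ ultraweakly span the commutant of $\bigoplus_{X \in \At(\X)} L(X)$ acting on $\bigoplus_{X \in \At(\X)} X$.

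For composition, given binary relations $R$ from $\X$ to $\Y$ and $S$ from $\Y$ to $\Z$, I would expand $V_S \cdot V_R$ using the defining formulas of Proposition \ref{appendix.E.1}. The key simplification is the orthogonality identity $\incdag_{Y_1} \cdot \incnag_{Y_2} = \delta_{Y_1 Y_2}\, \mathrm{id}_{Y_1}$ for $Y_1, Y_2 \in \At(\Y)$, which collapses the product of a generator $\incnag_Z \cdot S(Y', Z) \cdot \incdag_{Y'}$ of $V_S$ with a generator $\incnag_Y \cdot R(X, Y) \cdot \incdag_X$ of $V_R$ to zero unless $Y = Y'$, in which case the product equals $\incnag_Z \cdot S(Y, Z) \cdot R(X, Y) \cdot \incdag_X$. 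Summing over all remaining indices and taking the ultraweak closure then yields $V_{S \circ R}$.

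The main obstacle I anticipate is the bookkeeping around two different closures: the atomwise closure implicit in $(S \circ R)(X, Z)$, and the outer ultraweak closure defining $V_{S \circ R}$ and $V_S \cdot V_R$. Since $L(X, Z)$ is finite-dimensional for each fixed pair of atoms, the sum over $Y$ is a subspace of a finite-dimensional space, so only finitely many $Y$ contribute nontrivially for each $(X, Z)$ and the inner closure is automatic. Once this is noted, the ultraweak closures of the two spanning sets manifestly coincide, and the bijection of Proposition \ref{appendix.E.1} is an isomorphism of categories.
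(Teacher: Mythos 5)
Your argument is correct, but it verifies functoriality in the opposite direction from the paper: you check that $R \mapsto V_R$ preserves identities and composition, whereas the paper checks that $V \mapsto R_V$ preserves composition, proving $R_{\overline{W \cdot V}} = R_W \circ R_V$ by a sandwich of inclusions: inserting $1 = \sum_Y \incnag_Y \cdot \incdag_Y$ gives $(R_W \circ R_V)(X,Z) \leq \incdag_Z \cdot \overline{W \cdot V} \cdot \incnag_X \leq \overline{\incdag_Z \cdot W \cdot V \cdot \incnag_X} \leq \overline{(R_W \circ R_V)(X,Z)} = (R_W \circ R_V)(X,Z)$, the outer closure being harmless because the compression lands in the finite-dimensional space $L(X,Z)$. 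Since Proposition \ref{appendix.E.1} makes $R \mapsto V_R$ and $V \mapsto R_V$ mutually inverse bijections on hom-sets, either verification establishes the proposition, and your route is in one respect more complete: you make the identity check explicit ($V_{I_\X}$ is the ultraweakly closed span of the projections $[X] = \incnag_X \cdot \incdag_X$, which is exactly $\ell^\infty(\X)'$), a point the paper leaves tacit. Two details deserve tightening. First, Weaver's composite is the ultraweak closure of the span of products of \emph{arbitrary} elements of $V_S$ and $V_R$, not only of your spanning generators; since left and right multiplication are separately ultraweakly continuous, the closed span of products of generators is the same space, so the reduction to generators is valid but should be stated — it is the counterpart of the paper's inclusion $\incdag_Z \cdot \overline{W \cdot V} \cdot \incnag_X \subseteq \overline{\incdag_Z \cdot W \cdot V \cdot \incnag_X}$. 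Second, your closing phrase ``isomorphism of categories'' overstates the conclusion: the object assignment $\X \mapsto \ell^\infty(\X)$ is neither injective nor surjective onto all represented hereditarily atomic von Neumann algebras; what you have proved is full faithfulness, and the paper accordingly speaks of an enriched equivalence. Neither point affects your main computation, whose key mechanism — the orthogonality $\incdag_{Y_1} \cdot \incnag_{Y_2} = \delta_{Y_1 Y_2}\, 1_{Y_1}$ collapsing the middle isometries, together with the automatic closure of subspaces of the finite-dimensional $L(X,Z)$ — is the same one the paper exploits.
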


\begin{proof}
Let $\X$, $\Y$ and $\Z$ be quantum sets, let $V$ be a quantum relation from $\ell^\infty(\X)$ to $\ell^\infty(\Y)$, and let $W$ be a quantum relation from $\ell^\infty(\Y)$ to $\ell^\infty(\Z)$. In the notation of the proof of Proposition \ref{appendix.E.1}, we are to show that $R_{\overline{W \cdot V}} = R_W \circ R_V$. For all atoms $X \in \At(\X)$ and $Z \in \At(\Z)$, we calculate that 
\begin{align*}
(R_W&\circ R_V)(X,Z)
=
\sum_{Y \in \At(\Y)} R_W(Y,Z) \cdot R_V(X,Y)
=
\sum_{Y \in \At(\Y)} \incdag_Z \cdot W \cdot \incnag_Y \cdot \incdag_Y \cdot V \cdot \incnag_X
\\ & \leq
\incdag_Z \cdot W \cdot V \cdot \incnag_X
 \leq
\incdag_Z \cdot \overline{W \cdot V} \cdot \incnag_X
 =
R_{\overline{W \cdot V}}(X, Z)
 \leq 
\overline{\incdag_Z \cdot W \cdot V \cdot \incnag_X}
\\ & =
\overline{\incdag_Z \cdot W \cdot 1_{\oplus \At(Y)} \cdot V \cdot \incnag_X}
\leq
\overline{\sum_{Y \in \At(\Y)} \incdag_Z \cdot W \cdot \incnag_Y \cdot \incdag_Y  \cdot V \cdot \incnag_X}
\\ & =
\overline{(R_W \circ R_V)(X,Z)}
=
(R_W \circ R_V)(X,Z).\qedhere
\end{align*}
\end{proof}

It is immediate from the definition of this one-to-one correspondence in Proposition \ref{appendix.E.1} that it preserves the partial order relation and the adjoint operation. Thus, we obtain an enriched equivalence of dagger categories from the category of quantum sets and binary relations to the category of hereditarily atomic von Neumann algebras and quantum relations.

\subsection{Permutation equivariance}\label{appendix.B} We prove Proposition \ref{definition.C.3}.

\begin{lemma}\label{appendix.B.1}
Let $\X$, $\Y$ and $\Z$ be quantum sets.
\begin{enumerate}
\item For every predicate $P$ on $\X$, we have $\neg(P \times \top_\Y) = (\neg P) \times \top_\Y$.
\item For all predicates $P_1$ and $P_2$ on $\X$, and all predicates $Q_1$ and $Q_2$ on $\Y$, we have $(P_1 \times Q_1) \AND (P_2 \times Q_2) = (P_1 \AND P_2) \times (Q_1 \AND Q_2)$.
\item Let $R$ be a predicate on $\X \times \Y$, let $Q$ be the largest predicate on $\Y$ such that $\top_\X \times Q \leq R$, and let $S$ be the largest predicate on $\Y \times \Z$ such that $\top_\X \times S \leq R \times \top_\Z$. Then, $S = Q \times \top_\Z$.
\end{enumerate}
\end{lemma}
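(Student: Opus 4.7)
The plan is to reduce each part to an atomwise statement about subspaces of tensor products of finite-dimensional Hilbert spaces, since all three operations in question ($\neg$, $\AND$, and $\times$) are defined atomwise. For part (1), fix atoms $X \in \At(\X)$ and $Y \in \At(\Y)$. At this atom, the identity reduces to the claim that for any subspace $V \leq L(X, \CC)$, the orthogonal complement of $V \otimes L(Y, \CC)$ inside $L(X \otimes Y, \CC) \cong L(X, \CC) \otimes L(Y, \CC)$ is $V^\perp \otimes L(Y, \CC)$. This is a standard consequence of the fact that the Hilbert–Schmidt inner product on $L(X \otimes Y, \CC)$ is the tensor product of the inner products on the two factors, so I would simply cite it.

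For part (2), again fix atoms $X$ and $Y$. The identity reduces to the claim that for subspaces $U_1, U_2 \leq L(X, \CC)$ and $V_1, V_2 \leq L(Y, \CC)$, one has $(U_1 \otimes V_1) \cap (U_2 \otimes V_2) = (U_1 \cap U_2) \otimes (V_1 \cap V_2)$. I would derive this from the basic identity $(U \otimes L(Y, \CC)) \cap (L(X, \CC) \otimes V) = U \otimes V$, by writing $U_i \otimes V_i = (U_i \otimes L(Y, \CC)) \cap (L(X, \CC) \otimes V_i)$ and intersecting over $i \in \{1, 2\}$.

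For part (3), the inclusion $Q \times \top_\Z \leq S$ is immediate: from $\top_\X \times Q \leq R$ we obtain $(\top_\X \times Q) \times \top_\Z \leq R \times \top_\Z$, which up to the obvious associativity isomorphism is $\top_\X \times (Q \times \top_\Z) \leq R \times \top_\Z$, so $Q \times \top_\Z \leq S$ by maximality of $S$. The harder inclusion $S \leq Q \times \top_\Z$ I would prove atomwise. Fix atoms $Y \in \At(\Y)$ and $Z \in \At(\Z)$, and choose a basis $e_1, \ldots, e_d$ of $L(Z, \CC)$, so that any element of $S(Y \otimes Z) \leq L(Y, \CC) \otimes L(Z, \CC)$ can be written as $\sum_i \xi_i \otimes e_i$ with $\xi_i \in L(Y, \CC)$. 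The hypothesis $\top_\X \times S \leq R \times \top_\Z$, evaluated at the atom $X \otimes Y \otimes Z$, gives
\begin{equation*}
L(X, \CC) \otimes S(Y \otimes Z) \;\leq\; R(X \otimes Y) \otimes L(Z, \CC)
\end{equation*}
for every atom $X \in \At(\X)$. Since the $e_i$ form a basis of $L(Z, \CC)$, this forces $\eta \otimes \xi_i \in R(X \otimes Y)$ for every $\eta \in L(X, \CC)$ and every index $i$. Hence $L(X, \CC) \otimes \CC \xi_i \leq R(X \otimes Y)$ for every atom $X$, and by the defining property of $Q$ this means $\xi_i \in Q(Y)$. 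Therefore $\sum_i \xi_i \otimes e_i$ lies in $Q(Y) \otimes L(Z, \CC) = (Q \times \top_\Z)(Y \otimes Z)$, which gives $S \leq Q \times \top_\Z$.

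Parts (1) and (2) are essentially routine linear algebra; the only real content is part (3), and even there the main obstacle is just arranging the atomwise bookkeeping so that the maximality property defining $Q$ can be applied coordinate by coordinate in a basis of $L(Z, \CC)$.
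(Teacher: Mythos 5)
Your proof is correct, but for the only substantive part, item (3), it takes a genuinely different route from the paper. The paper translates all three claims into the language of projections in hereditarily atomic von Neumann algebras and proves the hard inclusion $S \leq Q \times \top_\Z$ by a symmetry argument: the supremum $\tilde s$ of all projections $p$ in $\ell^\infty(\Y) \stensor L(H)$ (with $H$ the $\ell^2$-direct sum of the atoms of $\Z$) satisfying $1 \tensor p \leq r \tensor 1$ is invariant under conjugation by the unitaries $1 \tensor u$, hence lies in the commutant of $\CC \stensor L(H)$ and therefore in $\ell^\infty(\Y) \stensor \CC$; so $s$ factors as $p_1 \tensor 1$, and then $1 \tensor p_1 \tensor 1 \leq r \tensor 1$ forces $p_1 \leq q$. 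You instead work atomwise with the subspaces of functionals, expand an element of $S(Y \tensor Z)$ in a basis $e_1, \ldots, e_d$ of $L(Z,\CC)$, and show each coefficient $\xi_i$ lies in $Q(Y)$. Your argument is more elementary and exploits the finite-dimensionality of atoms directly, whereas the paper's commutant argument is operator-algebraic and would survive without atomicity in the $\Z$-factor; both are valid, and your easy inclusion $Q \times \top_\Z \leq S$ matches the paper's. One small step you elide with ``by the defining property of $Q$'': maximality of $Q$ applies to predicates, not vectors, so you should note that the predicate equal to $\CC\xi_i$ at the atom $Y$ and $0$ elsewhere satisfies $\top_\X \times (\cdot) \leq R$ exactly when $L(X,\CC) \otimes \CC\xi_i \leq R(X \tensor Y)$ for all atoms $X$, whence it is contained in $Q$ and $\xi_i \in Q(Y)$; this is a one-line check, not a gap. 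Likewise, in (2) your reduction to the identity $(U \otimes L(Y,\CC)) \cap (L(X,\CC) \otimes V) = U \otimes V$ also quietly uses $(U_1 \otimes L(Y,\CC)) \cap (U_2 \otimes L(Y,\CC)) = (U_1 \cap U_2) \otimes L(Y,\CC)$, which is equally routine (choose a basis of $L(Y,\CC)$), so no harm done; the paper dismisses (1) and (2) as elementary in the projection picture, which is the same content in different clothing.
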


\begin{proof}
All three claims are established most easily using the bijective correspondence between the predicates on any quantum set $\W$ and the projections in the corresponding von Neumann algebra $\ell^\infty(\W)$. Expressed in terms of projections, claims (1) and (2) are elementary. To prove claim (3), let $Q$, $R$, and $S$ correspond to projections $q \in \ell^\infty(\Y)$, $r \in \ell^\infty(\X) \stensor \ell^\infty(\Y)$, and $s \in \ell^\infty(\Y) \stensor \ell^\infty(\Z)$, respectively. The von Neumann algebra $\ell^\infty(\Z)$ is canonically represented on a Hilbert space $H$, the $\ell^2$-direct sum of the atoms of $\Z$. If $\Z$ has no atoms, then claim (3) holds trivially, so we may assume that $H$ is nonzero. We are essentially given that $q = \sup\{p \in \mathrm{Proj}(\ell^\infty(\Y)) \suchthat 1 \tensor p \leq r \}$, and $s= \sup\{p \in \mathrm{Proj}(\ell^\infty(\Y)\stensor \ell^\infty(\Z)) \suchthat 1 \tensor p \leq r \tensor 1\}$. In particular $1 \tensor q \leq r$, so $ 1 \tensor q \tensor 1 \leq r \tensor 1$, giving $q \tensor 1  \leq s$.

For the opposite inequality, we consider the projection $\tilde s$, defined to be the supremum of all projections $p$ in $\ell^\infty(\Y) \stensor L(H)$ satisfying the inequality $1 \tensor p \leq r \tensor 1$, where $L(H)$ is the von Neumann algebra of all bounded operators on $H$. If a projection $p$ satisfies the inequality $1 \tensor p \leq r \tensor 1$, then so does the projection $(1 \tensor u^\dagger)\cdot p\cdot(1 \tensor u)$ for every unitary operator $u \in L(H)$. It follows that $(1 \tensor u^\dagger) \cdot  \tilde s \cdot (1 \tensor u) = \tilde s$ for all unitaries $u \in L(H)$, so $\tilde s$ is in the commutant $(\CC \stensor L(H))'$. Since $\tilde s$ is also in $\ell^\infty(\Y) \stensor L(H)$, we conclude that $\tilde s$ is in $\ell^\infty(\Y) \stensor \CC$, a von Neumann subalgebra of $\ell^\infty(\Y) \stensor \ell^\infty(\Z)$. Therefore, $s = \tilde s = p_1 \tensor 1$, for some projection $p_1$ in $\ell^\infty(\Y)$. We now calculate that $1 \tensor p_1 \tensor 1 = 1 \tensor s \leq r \tensor 1$, which implies that $1 \tensor p_1 \leq r$, giving us $p_1 \leq q$, by the definition of $q$ as a supremum of projections satisfying this inequality. Finally, we obtain $s = p_1 \tensor 1 \leq q \tensor 1$.
\end{proof}

\begin{proposition}\label{appendix.B.2}
Let $\X_1, \ldots, \X_p$ be quantum sets, and let $x_1, \ldots, x_p$ be distinct variables of sorts $\X_1, \ldots, \X_p$, respectively. For each permutation $\sigma$ of $\{1, \ldots, p\}$ and each primitive formula $\phi(x_1, \ldots, x_n)$ with $n \leq p$, we have
\begin{align*}
\[ (x_{\sigma(1)}, & \ldots, x_{\sigma(p)})  \in \X_{\sigma(1)} \times \cdots \times \X_{\sigma(p)}
\suchthat
\phi (x_1, \ldots, x_n) \]
\\ & =
(\sigma \inv)_\# (
\[ (x_1, \ldots, x_n) \in \X_1 \times \cdots \times \X_n
\suchthat
\phi(x_1, \ldots, x_n) \]
\times \top_{\X_{n+1}} \times \cdots \times \top_{\X_{p}}).
\end{align*}
\end{proposition}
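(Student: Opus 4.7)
The plan is to induct on the structural complexity of the primitive formula $\phi$, with the statement read as a single claim quantified over all $p \geq n$, all permutations $\sigma$ of $\{1,\dots,p\}$ and all choices of the sorts $\X_{n+1}, \dots, \X_p$. Throughout, I will exploit that the pushforward $\pi_\#$ along any permutation is an isomorphism of orthomodular lattices (the remark after Definition \ref{definition.B.4}), so it commutes with $\neg$, $\AND$, and arbitrary suprema, and that Cartesian product of predicates distributes in the ways recorded in Lemma \ref{appendix.B.1}.

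For the base case, suppose $\phi = R(x_{\tau(1)},\dots,x_{\tau(m)})$ with $R$ of arity $(\X_{\tau(1)},\dots,\X_{\tau(m)})$ and $\tau\colon\{1,\dots,m\}\hookrightarrow\{1,\dots,n\}$. Both sides of the claimed identity unfold, via Definition \ref{definition.C.2}, to the image of $R$ under a permutation-pushforward together with $\top$-factors padding the missing indices: the left side uses an extension of $\sigma\circ\tau$ to a permutation of $\{1,\dots,p\}$, while the right side first pushes forward along an extension of $\tau$ to $\{1,\dots,n\}$, pads with $\top_{\X_{n+1}}\times\cdots\times\top_{\X_p}$, and then applies $(\sigma\inv)_\#$. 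The identity then reduces to the functoriality of pushforward (composition of permutations) together with the observation that $\top$-factors can be freely reshuffled and inserted, because pushforward carries $\top_\X$ to $\top_\X$ and commutes with Cartesian products of $\top$-factors.

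For the inductive step on connectives, the case $\phi = \neg\psi$ follows from applying the inductive hypothesis to $\psi$, invoking Lemma \ref{appendix.B.1}(1) to distribute negation through the $\top$-padding, and using that $(\sigma\inv)_\#$ preserves negation. The case $\phi = \psi_1 \AND \psi_2$ follows analogously from the inductive hypothesis, Lemma \ref{appendix.B.1}(2), and preservation of meets under pushforward.

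The main obstacle is the universal quantifier step: let $\phi = (\forall x_0 \fin \X_0)\,\psi(x_0,x_1,\dots,x_n)$. Unfolding the left side via Definition \ref{definition.C.2}(3) gives
\[
\[\phi\]_{\text{LHS}} = \sup\bigl\{R' \in \Rel(\X_{\sigma(1)},\dots,\X_{\sigma(p)}) : \top_{\X_0}\times R' \leq \[(x_0,x_{\sigma(1)},\dots,x_{\sigma(p)}) \suchthat \psi\]\bigr\}.
\]
Let $\tilde\sigma$ be the permutation of $\{0,1,\dots,p\}$ fixing $0$ and restricting to $\sigma$ on $\{1,\dots,p\}$. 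By the inductive hypothesis applied to $\psi$, the inner predicate equals $(\tilde\sigma\inv)_\#\bigl(\[\psi\]\times\top_{\X_{n+1}}\times\cdots\times\top_{\X_p}\bigr)$. Since $\tilde\sigma$ fixes the first coordinate, $\tilde\sigma_\#(\top_{\X_0}\times R') = \top_{\X_0}\times\sigma_\#(R')$, so the constraint rewrites as $\top_{\X_0}\times\sigma_\#(R') \leq \[\psi\]\times\top_{\X_{n+1}}\times\cdots\times\top_{\X_p}$. Substituting $R'' = \sigma_\#(R')$ (a bijection between the indexing sets) and pulling $(\sigma\inv)_\#$ out of the supremum yields
\[
\[\phi\]_{\text{LHS}} = (\sigma\inv)_\#\sup\bigl\{R''\in\Rel(\X_1,\dots,\X_p) : \top_{\X_0}\times R'' \leq \[\psi\]\times\top_{\X_{n+1}}\times\cdots\times\top_{\X_p}\bigr\}.
\]
The supremum is precisely the setting of Lemma \ref{appendix.B.1}(3), which identifies it with $Q \times \top_{\X_{n+1}} \times \cdots \times \top_{\X_p}$, where $Q = \sup\{R\in\Rel(\X_1,\dots,\X_n) : \top_{\X_0}\times R \leq \[\psi\]\} = \[(x_1,\dots,x_n) \suchthat (\forall x_0)\,\psi\]$ by Definition \ref{definition.C.2}(3). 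Reassembling gives the right-hand side of the proposition, completing the induction.
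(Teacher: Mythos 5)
Your proposal is correct and follows essentially the same route as the paper's own proof: structural induction with the atomic case handled by functoriality of the pushforward plus reshuffling of $\top$-padding, the $\neg$ and $\AND$ cases by Lemma \ref{appendix.B.1}(1),(2) together with the fact that $(\sigma\inv)_\#$ is an orthomodular-lattice isomorphism, and the quantifier case by extending $\sigma$ to $\tilde\sigma$ fixing the new index, transporting the constraint through $\tilde\sigma_\#$, reindexing the supremum, and invoking Lemma \ref{appendix.B.1}(3). The only blemish is a direction slip in the base case: the left-hand side unfolds along an extension of $\sigma\inv\circ\tau$ (not $\sigma\circ\tau$), which is harmless since your reduction to functoriality and $\top$-padding is exactly the paper's computation.
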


\begin{proof}
The proof proceeds by structural induction. To clarify the calculations, we introduce the notation $\Y_i = \X_{\sigma(i)}$ and $y_i = x_{\sigma(i)}$ for $i \in \{1, \ldots, p\}$.

Suppose that $\phi(x_1, \ldots, x_n)$ is atomic. In that case, $\phi(x_1, \ldots, x_n)$ is necessarily of the form $R(x_{\pi(1)}, \ldots, x_{\pi(m)})$, for some natural $m \leq n$, for some permutation $\pi$ of the set $\{1, \ldots, n\}$, and for some relation $R$ of arity $(\X_{\pi(1)}, \ldots, \X_{\pi(m)})$. We may extend $\pi$ to a permutation $\tilde \pi$ of the set $\{1, \ldots, p\}$ by defining $\tilde \pi(k) = k$ for all $k$ in $\{n+1, \ldots, p\}$.
\begin{align*}
\[ (x_{\sigma(1)}, & \ldots, x_{\sigma(p)}) \in \X_{\sigma(1)} \times \cdots \times \X_{\sigma(p)}
\suchthat R(x_{\pi(1)}, \ldots, x_{\pi(m)} ) \]
\\ & =
\[ (x_{\sigma(1)}, \ldots, x_{\sigma(p)}) \in \X_{\sigma(1)} \times \cdots \times \X_{\sigma(p)}
\suchthat R(x_{\tilde \pi(1)}, \ldots, x_{\tilde \pi(m)}) \]
\\ & =
\[ (y_1, \ldots, y_p) \in \Y_1 \times \cdots \times \Y_p \suchthat
R(y_{(\sigma \inv \circ \tilde \pi)(1)}, \ldots, y_{(\sigma \inv \circ \tilde \pi)(m)}) \]
\\ & = 
(\sigma \inv \circ \tilde \pi)_\# (R \times 
\top_{\Y_{(\sigma \inv \circ \tilde \pi)(m+1)}} \times \cdots \times \top_{\Y_{(\sigma \inv \circ \tilde \pi)(p)}} )
\\ & =
(\sigma \inv)_\# ( \tilde \pi_\#( R \times 
\top_{\X_{\tilde \pi(m+1)}} \times \cdots \times \top_{\X_{\tilde \pi (p)}} ))
\\ &=
(\sigma \inv)_\# ( \tilde \pi_\#( R \times 
\top_{\X_{\pi(m+1)}} \times \cdots \times \top_{\X_{\pi(n)}}
\times \top_{\X_{n+1}} \times \cdots \times \top_{\X_p} ))
\\ & =
(\sigma \inv)_\# ( \pi_\#( R \times
\top_{\X_{\pi(m+1)}} \times \cdots \times \top_{\X_{\pi(n)}})
\times \top_{\X_{n+1}} \times \cdots \times \top_{\X_p})
\\ & =
(\sigma \inv)_\# (
\[ (x_1, \ldots, x_n) \in \X_1 \times \cdots \times \X_n
\suchthat
R(x_{\pi(1)}, \ldots, x_{\pi(m)} ) \]
\times \top_{\X_{n+1}} \times \cdots \times \top_{\X_{p}}).
\end{align*}

Suppose that $\phi(x_1, \ldots, x_n)$ is of the form $\NOT \psi(x_1, \ldots, x_n)$ for some nonduplicating formula $\psi(x_1, \ldots, x_n)$.
\begin{align*}
\[ (x_{\sigma(1)}, & \ldots, x_{\sigma(p)}) \in \X_{\sigma(1)} \times \cdots \times \X_{\sigma(p)}
\suchthat \NOT \psi(x_1, \ldots, x_n) \]
\\ & =
\[ (y_1, \ldots, y_p) \in \Y_1 \times \cdots \times \Y_p \suchthat
\NOT \psi(y_{\sigma\inv(1)}, \ldots, y_{\sigma\inv(n)}) \]
\\ & =
\NOT \[ (y_1, \ldots, y_p) \in \Y_1 \times \cdots \times \Y_p \suchthat
 \psi(y_{\sigma\inv(1)}, \ldots, y_{\sigma\inv(n)}) \]
\\ & = 
\NOT \[ (x_{\sigma(1)}, \ldots, x_{\sigma(p)}) \in \X_{\sigma(1)} \times \cdots \times \X_{\sigma(p)}
\suchthat \psi(x_1, \ldots, x_n) \]
\\ & = 
\NOT (\sigma \inv)_\# (
\[ (x_1, \ldots, x_n) \in \X_1 \times \cdots \times \X_n
\suchthat
\psi(x_1, \ldots, x_n) \]
\times \top_{\X_{n+1}} \times \cdots \times \top_{\X_{p}})
\\ & = 
(\sigma \inv)_\# (\NOT(
\[ (x_1, \ldots, x_n) \in \X_1 \times \cdots \times \X_n
\suchthat
\psi(x_1, \ldots, x_n) \]
\times \top_{\X_{n+1}} \times \cdots \times \top_{\X_{p}}))
\\ & = 
(\sigma \inv)_\# ( \NOT
\[ (x_1, \ldots, x_n) \in \X_1 \times \cdots \times \X_n
\suchthat
\psi(x_1, \ldots, x_n) \]
\times \top_{\X_{n+1}} \times \cdots \times \top_{\X_{p}})
\\ & = 
(\sigma \inv)_\# ( 
\[ (x_1, \ldots, x_n) \in \X_1 \times \cdots \times \X_n
\suchthat
\NOT \psi(x_1, \ldots, x_n) \]
\times \top_{\X_{n+1}} \times \cdots \times \top_{\X_{p}}).
\end{align*}
\noindent We apply Lemma \ref{appendix.B.1}(1) in the second-to-last equality. The case in which $\phi(x_1, \ldots, x_n)$ is of the form $\psi_1(x_1, \ldots, x_n) \AND \psi_2(x_1, \ldots, x_n)$ is entirely similar; there, we apply Lemma \ref{appendix.B.1}(2).

Suppose that $\phi(x_1, \ldots, x_n)$ is of the form $(\forall x_0 \fin \X_0)\, \psi(x_0, \ldots, x_n)$ for some quantum set $\X_0$ and some variable $x_0$ that is distinct from the variables $x_1, \ldots, x_p$, and that has sort $\X_0$. We extend the permutation $\sigma$ to a permutation $\tilde \sigma$ of $\{0, \ldots, p\}$ by setting $\tilde \sigma (0) = 0$, and we write $\Y_0 = \X_0$ and $y_0 = x_0$.
\begin{align*}
& \[ (x_{\sigma(1)}, \ldots, x_{\sigma(p)}) \in \X_{\sigma(1)} \times \cdots \times \X_{\sigma(p)}
\suchthat (\forall x_0 \fin \X_0)\, \psi(x_0, \ldots, x_n) \]
\\ & =
\[ (y_1, \ldots, y_p) \in \Y_1 \times \cdots \times \Y_p \suchthat
(\forall y_0 \fin \Y_0)\, \psi(y_0, y_{\sigma\inv(1)}, \ldots, y_{\sigma\inv(n)}) \]
\\ & =
\sup
\{ R \in \Rel\{ \Y_1, \ldots, \Y_p \} \suchthat 
\\ & \quad
\top_{\Y_0} \times R \leq
\[(y_0, \ldots, y_p) \in \Y_0 \times \cdots \times \Y_p \suchthat \psi(y_0, y_{\sigma\inv(1)}, \ldots, y_{\sigma\inv(n)})
\]
\}
\\ & =
\sup
\{ R \in \Rel\{ \X_{\sigma(1)}, \ldots, \X_{\sigma(p)} \} \suchthat 
\\ & \quad \top_{\X_0} \times R \leq
\[(x_{\tilde \sigma (0)}, x_{\tilde \sigma(1)}, \ldots, x_{\tilde \sigma(p)}) \in \X_{\tilde \sigma(0)} \times \X_{\tilde \sigma(1)} \cdots \times \X_{\tilde \sigma (p)} \suchthat \psi(x_0, x_1, \ldots, x_n)
\]
\}
\\ & =
\sup
\{ R \in \Rel\{ \X_{\sigma(1)}, \ldots, \X_{\sigma(p)} \}\suchthat
\\ & \quad \top_{\X_0} \times R \leq
(\tilde \sigma \inv)_\#(\[(x_0, \ldots, x_n) \in \X_0  \times \cdots \times \X_n \suchthat \psi(x_0,  \ldots, x_n)\]
\times \top_{\X_{n+1}} \times \cdots \times \top_{\X_{p}}
)
\}
\\ & =
\sup
\{ R \in \Rel\{ \X_{\sigma(1)}, \ldots, \X_{\sigma(p)} \} \suchthat
\\ & \quad (\tilde \sigma )_\#(\top_{\X_0} \times R )\leq
\[(x_0, \ldots, x_n) \in \X_0  \times \cdots \times \X_n \suchthat \psi(x_0,  \ldots, x_n) \]
\times \top_{\X_{n+1}} \times \cdots \times \top_{\X_{p}}
\}
\\ & =
\sup
\{ R \in \Rel\{ \X_{\sigma(1)}, \ldots, \X_{\sigma(p)} \} \suchthat
\\ & \quad \top_{\X_0} \times \sigma_\#(R )\leq
\[(x_0, \ldots, x_n) \in \X_0  \times \cdots \times \X_n \suchthat \psi(x_0,  \ldots, x_n)\] \times \top_{\X_{n+1}} \times \cdots \times \top_{\X_{p}}
\}
\\ & =
( \sigma \inv)_\# (\sup
\{ R' \in \Rel\{ \X_1, \ldots, \X_{p} \} \suchthat
\\ & \quad \top_{\X_0} \times R' \leq
\[(x_0, \ldots, x_n) \in \X_0  \times \cdots \times \X_n \suchthat \psi(x_0,  \ldots, x_n)\] \times \top_{\X_{n+1}} \times \cdots \times \top_{\X_{p}}
\})
\\ & =
(\sigma \inv)_\# (\sup
\{ R'' \in \Rel\{ \X_1, \ldots, \X_n \} \suchthat
\\ & \quad \top_{\X_0} \times R'' \leq
\[(x_0, \ldots, x_n) \in \X_0  \times \cdots \times \X_n \suchthat \psi(x_0,  \ldots, x_n)\] 
\} \times \top_{\X_{n+1}} \times \cdots \times \top_{\X_{p}} )
\\ & =
(\sigma \inv)_\# (
\[ (x_1, \ldots, x_n) \in \X_1 \times \cdots \times \X_n
\suchthat
(\forall x_0 \fin \X_0 )\,\psi(x_0, \ldots, x_n) \]
\times \top_{\X_{n+1}} \times \cdots \times \top_{\X_{p}}).
\end{align*}
We apply Lemma \ref{appendix.B.1}(3) in the second-to-last equality.
\end{proof}

\subsection{Relating $E_\X$ and $\delta_M$}\label{appendix.H}
Let $\X$ be a quantum set. The von Neumann algebra $\ell^\infty(\X) \stensor \ell^\infty(\X)^{op}$ is canonically isomorphic to $\ell^\infty(\X \times \X^*)$ via the ultraweakly continuous map $\phi$ that is defined by $\phi(a_1 \tensor a_2)(X_1 \tensor X_2^*) = a_1(X_1) \tensor a_2(X_2)^*$, for $a_1, a_2 \in \ell^\infty(\X)$ and $X_1, X_2 \in \At(\X)$. The map $\phi$ exists and is a unital normal $*$-homomorphism because the spatial tensor product of hereditarily atomic von Neumann algebras is also their categorical tensor product \cite{Guichardet}. Clearly $\phi$ maps no minimal central projections to $0$, so it is injective on its entire domain. It is also clearly surjective onto each factor of $\ell^\infty(\X \times \X^*)$, so it is surjective onto its entire codomain. Thus, $\phi$ is an isomorphism.

Hence we regard the projection $\delta_{\ell^\infty(\X)}$ that was defined in subsection \ref{introduction.A} as an element of $\ell^\infty(\X \times \X^*)$. We show that this projection corresponds to the predicate $E_\X$ in the sense that $E_\X(X_1 \tensor X_2^*) = L(X_1 \tensor X_2^*, \CC) \cdot \delta_{\ell^\infty(\X)}(X_1 \tensor X_2^*)$
for all $X_1, X_2 \in \At(\X)$.

\begin{lemma}\label{appendix.H.1}
Let $X$ be a finite-dimensional Hilbert space, and let $\zeta\:X \tensor X^* \to \CC$ be a bounded functional. Then, $\zeta\in \CC \counit_X$ if and only if $\zeta \cdot (p \tensor (1-p)) = 0$ for all projections $p \in L(X)$.
\end{lemma}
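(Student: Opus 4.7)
My plan is to prove the two directions separately, working with the concrete action of $p \tensor (1-p)$ on $X \tensor X^*$. Here the second factor $1-p \in L(X)^{op}$ acts on $X^*$ as the transpose $(1-p)^*$, so $(p \tensor (1-p))(x \tensor \xi) = p(x) \tensor ((1-p)^* \xi)$.

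For the forward direction, I take $\zeta = \lambda \counit_X$ and compute directly: for any $x \in X$ and $\xi \in X^*$,
\[
(\counit_X \cdot (p \tensor (1-p)))(x \tensor \xi) = \counit_X(p(x) \tensor (1-p)^* \xi) = ((1-p)^* \xi)(p(x)) = \xi((1-p) p(x)) = 0,
\]
since $(1-p)p = 0$. This disposes of one direction immediately.

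For the reverse direction, the strategy is to fix an orthonormal basis $e_1, \ldots, e_n$ of $X$ with dual basis $e_1^*, \ldots, e_n^* \in X^*$, and to recover the ``matrix entries'' $\zeta(e_i \tensor e_j^*)$. First I would plug in the rank-one projections $p_k = |e_k\rangle\langle e_k|$. Since $p_k e_i = \delta_{ki} e_i$ and $p_k^* e_j^* = \delta_{kj} e_j^*$, the hypothesis $\zeta(p_k \tensor (1-p_k))(e_i \tensor e_j^*) = 0$ reduces to $\delta_{ki}(1-\delta_{kj}) \zeta(e_i \tensor e_j^*) = 0$, which forces $\zeta(e_i \tensor e_j^*) = 0$ whenever $i \neq j$. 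Thus $\zeta$ is ``diagonal'' with respect to this basis.

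Next I would show all diagonal entries $\zeta(e_i \tensor e_i^*)$ are equal. For any distinct pair of indices $i, j$, I apply the off-diagonal vanishing to the rotated orthonormal basis obtained by replacing $e_i, e_j$ with $f_i = (e_i + e_j)/\sqrt{2}$, $f_j = (e_i - e_j)/\sqrt{2}$ (keeping the other basis vectors). The dual basis transforms the same way, and expanding $\zeta(f_i \tensor f_j^*) = 0$ via bilinearity yields $\zeta(e_i \tensor e_i^*) - \zeta(e_j \tensor e_j^*) = 0$ after the already-established off-diagonal terms cancel. So all diagonal entries agree, call the common value $\lambda$. Since $\counit_X(e_i \tensor e_j^*) = e_j^*(e_i) = \delta_{ij}$, the functionals $\zeta$ and $\lambda \counit_X$ agree on the basis $\{e_i \tensor e_j^*\}$ of $X \tensor X^*$, hence coincide.

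The only real subtlety is getting the action of $1-p$ on $X^*$ right via the transpose — i.e., being careful that $p \tensor (1-p)$ is viewed inside $L(X) \stensor L(X)^{op} = L(X) \stensor L(X^*)$. Once that is pinned down, the computation is elementary linear algebra in a chosen basis; no further ingredients are needed.
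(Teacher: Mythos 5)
Your proposal is correct and follows essentially the same route as the paper: both directions come down to observing that the hypothesis forces the "off-diagonal matrix entries" $\zeta(x_1 \tensor x_2^*)$ to vanish for orthogonal $x_1, x_2$, which forces the associated operator to be scalar, i.e.\ $\zeta \in \CC\counit_X$. The only difference is cosmetic: where the paper defines $a \in L(X)$ by $\langle x_2 | a x_1\rangle = \zeta(x_1 \tensor x_2^*)$ and simply asserts $a \in \CC 1_X$, you carry out that step explicitly in a fixed orthonormal basis via rank-one projections and a rotated basis, which is a perfectly valid (and self-contained) way to justify the same fact.
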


\begin{proof}
Assume that $\zeta = \alpha \counit_X$ for some $\alpha \in \CC$, and let $p \in L(X)$ be a projection. Let $x_1 \in pX$ and let $x_2^* \in (1-p)^*X^*$. Then, $\zeta(x_1 \tensor x_2^*) = \alpha \langle x_2 | x_1 \rangle = 0$. We conclude that $\zeta$ vanishes on $pX \tensor (1-p)^*X^* = (p \tensor (1-p)^*) (X \tensor X^*)$, so $\zeta \cdot (p \tensor (1-p)^*)) = 0$.

Assume that $\zeta \cdot (p \tensor (1-p)^*) = 0$ for all projections $p \in L(X)$. Define $a \in L(X)$ by $\langle x_2 | a x_1\rangle = \zeta  (x_1 \tensor x_2^*)$ for all $x_1, x_2 \in X$. If $x_1$ and $x_2$ are orthogonal, then there is a projection $p \in L(X)$ such that $px_1 = x_1$ and $px_2 = 0$, so $\langle x_2 | a x_1\rangle = \zeta (x_1 \tensor x_2^*) = (\zeta \cdot (p \tensor (1-p)^*)) (x_1 \tensor x_2^*) = 0$. It follows that $a \in \CC 1_X$ and therefore that $\zeta \in \CC \counit_X$.
\end{proof}

\begin{lemma}\label{appendix.H.2}
Let $\X$ be a quantum set, let $r$ be a projection in $\ell^\infty(\X \times \X^*)$, and let $R$ be the corresponding relation of arity $(\X, \X^*)$, i.e., the relation defined by $R(X_1 \tensor X_2^*) = L(X_1 \tensor X_2^*, \CC) \cdot r(X_1 \tensor X_2^*)$, for $X_1, X_2 \in \At(\X)$. Then, $r$ is orthogonal to $p \tensor (1-p)^*$ for every projection $p \in \ell^\infty(\X)$ if and only if $R(X \tensor X^*) \leq \CC \counit_X$ for all $X \in \At(\X)$ and $R(X_1 \tensor X^*_2) = 0$ for all distinct $X_1, X_2 \in \At(\X)$.
\end{lemma}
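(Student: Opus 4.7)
The plan is to reduce everything to a componentwise statement indexed by pairs of atoms and then invoke Lemma \ref{appendix.H.1}. Since $\ell^\infty(\X \times \X^*)$ is the $\ell^\infty$-direct sum of the factors $L(X_1 \tensor X_2^*)$ over $X_1, X_2 \in \At(\X)$, each projection $r$ is a family $(r(X_1 \tensor X_2^*))$, and each projection $p \in \ell^\infty(\X)$ is a family $(p(X))_{X \in \At(\X)}$. Under the identification $\phi$ recalled at the start of the subsection, the element $p \tensor (1-p)^* \in \ell^\infty(\X \times \X^*)$ has component $p(X_1) \tensor (1-p(X_2))^*$ at the atom $X_1 \tensor X_2^*$. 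Thus $r \perp p \tensor (1-p)^*$ for every projection $p \in \ell^\infty(\X)$ if and only if $r(X_1 \tensor X_2^*) \cdot (p(X_1) \tensor (1-p(X_2))^*) = 0$ for all pairs $(X_1, X_2)$ and every family $(p(X))$ of projections.

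For the forward direction, first I will handle the off-diagonal case $X_1 \neq X_2$ by taking $p$ to be the minimal central projection $[X_1] \in \ell^\infty(\X)$. Then $p(X_1) = 1_{X_1}$ and $(1-p)(X_2) = 1_{X_2}$, so orthogonality at $(X_1, X_2)$ collapses to $r(X_1 \tensor X_2^*) = 0$ and hence $R(X_1 \tensor X_2^*) = 0$. For the diagonal case $X_1 = X_2 = X$, given any projection $q \in L(X)$, I will extend it to a projection in $\ell^\infty(\X)$ by setting its other components to $0$; orthogonality then yields $r(X \tensor X^*) \cdot (q \tensor (1-q)^*) = 0$ for every projection $q \in L(X)$. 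Each $\zeta \in R(X \tensor X^*)$ is of the form $\eta \cdot r(X \tensor X^*)$ for some $\eta \in L(X \tensor X^*, \CC)$, so $\zeta \cdot (q \tensor (1-q)^*) = 0$ for every projection $q$. Lemma \ref{appendix.H.1} then gives $\zeta \in \CC \counit_X$, so $R(X \tensor X^*) \leq \CC \counit_X$.

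For the converse, assume the two conditions on $R$ hold. The off-diagonal case is immediate: $R(X_1 \tensor X_2^*) = 0$ forces $r(X_1 \tensor X_2^*) = 0$ because $L(X_1 \tensor X_2^*, \CC)$ separates points of $L(X_1 \tensor X_2^*)$, so the component of $r \cdot (p \tensor (1-p)^*)$ at $(X_1, X_2)$ vanishes for any $p$. For the diagonal case, fix a projection $p \in \ell^\infty(\X)$ and set $q = p(X)$. Every $\zeta \in R(X \tensor X^*)$ lies in $\CC \counit_X$, so by Lemma \ref{appendix.H.1} it annihilates $q \tensor (1-q)^*$. Since this holds for every $\zeta$ in $R(X \tensor X^*) = L(X \tensor X^*, \CC) \cdot r(X \tensor X^*)$, I conclude $L(X \tensor X^*, \CC) \cdot r(X \tensor X^*) \cdot (q \tensor (1-q)^*) = 0$, which forces $r(X \tensor X^*) \cdot (q \tensor (1-q)^*) = 0$ again by the separation of points. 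Assembling the components, $r \cdot (p \tensor (1-p)^*) = 0$, as required.

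The proof is essentially bookkeeping: the only mildly delicate point is the translation between the abstract projection $r$ and the predicate $R$ of subspaces of functionals, together with the observation that specialising $p$ to $[X_1]$ (off-diagonal) or to a projection supported on a single atom (diagonal) reduces the global orthogonality condition to the local hypothesis of Lemma \ref{appendix.H.1}. No genuine obstacle arises; the main thing to keep straight is that $\ell^\infty$-direct-sum orthogonality is checked factor by factor, so the argument decouples cleanly across pairs of atoms.
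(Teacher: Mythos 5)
Your proof is correct and follows essentially the same route as the paper: reduce the orthogonality condition componentwise over pairs of atoms, dispose of the off-diagonal components by choosing a projection that is $1$ on $X_1$ and $0$ on $X_2$, and settle the diagonal components by extending an arbitrary projection $q \in L(X)$ to $\ell^\infty(\X)$ and invoking Lemma \ref{appendix.H.1}, using that functionals separate points to pass between $r$ and $R$. No gaps.
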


\begin{proof}
Fix $X_1, X_2 \in \At(\X)$. For all projections $p$ in $\ell^\infty(\X)$, the condition $r(X_1 \tensor X_2^*) \cdot (p \tensor (1-p)^*)(X_1 \tensor X^*_2) = 0$ is clearly equivalent to $R(X_1 \tensor X_2^*) \cdot (p(X_1) \tensor (1-p(X_2)^*)) = 0$. If $X_1$ and $X_2$ are distinct, then the latter condition holds for all projections $p$ if and only if $R(X_1 \tensor X_2^*) = 0$ because there is a projection $p$ such that $p(X_1) = 1 $ and $p(X_2) = 0$. If $X_1$ and $X_2$ are identical, then by Lemma \ref{appendix.H.1}, the latter condition holds for all projections $p$ if and only if the elements of $R(X_1 \tensor X_2^*)$ are all scalar multiples of $\counit_{X_1}$. We vary $X_1, X_2 \in \At(\X)$ to conclude the statement of the proposition.
\end{proof}

\begin{proposition}\label{appendix.H.3}
Let $\X$ be a quantum set. Then, $E_\X(X_1 \tensor X_2) = L(X_1 \tensor X_2, \CC) \cdot \delta_{\ell^\infty(\X)}(X_1 \tensor X_2)$ for all $X_1, X_2 \in \At(\X)$.
\end{proposition}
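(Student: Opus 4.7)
The plan is to identify $\delta_{\ell^\infty(\X)}$ as the largest projection satisfying a collection of local conditions on each summand $L(X_1 \tensor X_2^*)$, then to observe that under the canonical correspondence between projections and predicates those conditions translate into precisely the conditions characterizing $E_\X$ as the largest relation of arity $(\X, \X^*)$ of a specific atomwise form.

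First I would transport the definition of $\delta_{\ell^\infty(\X)}$ across the canonical isomorphism $\phi \colon \ell^\infty(\X) \stensor \ell^\infty(\X)^{op} \to \ell^\infty(\X \times \X^*)$ discussed just before Lemma \ref{appendix.H.1}. Under $\phi$, the element $p \tensor (1-p) \in \ell^\infty(\X) \stensor \ell^\infty(\X)^{op}$ corresponds to $p \tensor (1-p)^* \in \ell^\infty(\X \times \X^*)$, via $a \mapsto a^*$ on the second factor. Hence $\delta_{\ell^\infty(\X)}$ is characterized as the largest projection $r \in \ell^\infty(\X \times \X^*)$ that is orthogonal to $p \tensor (1-p)^*$ for every projection $p \in \ell^\infty(\X)$.

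Next, I would let $R$ be the relation of arity $(\X, \X^*)$ corresponding to $r = \delta_{\ell^\infty(\X)}$ under the standard bijection $R(X_1 \tensor X_2^*) = L(X_1 \tensor X_2^*, \CC) \cdot r(X_1 \tensor X_2^*)$ recalled in subsection \ref{definition.B}. This bijection is an isomorphism of complete orthomodular lattices \cite{Kornell}*{App.~B}, so in particular it is order-preserving, and the largest projection satisfying a given hereditary condition corresponds to the largest relation satisfying the translated condition. By Lemma \ref{appendix.H.2}, the orthogonality condition on $r$ translates into the condition on $R$ that $R(X \tensor X^*) \leq \CC \counit_X$ for every $X \in \At(\X)$ and $R(X_1 \tensor X_2^*) = 0$ for distinct $X_1, X_2 \in \At(\X)$.

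Finally, I would observe that the largest relation of arity $(\X, \X^*)$ satisfying these atomwise inequalities and vanishing conditions is precisely the relation $E_\X$ of Definition \ref{definition.D.4}: one simply takes equality $R(X \tensor X^*) = \CC \counit_X$ in the first condition and $R(X_1 \tensor X_2^*) = 0$ in the second. Hence $R = E_\X$, and unpacking the definition of $R$ yields the displayed equation. No step here is delicate; the only thing to be careful about is the bookkeeping around the isomorphism $\phi$ and the direction of the correspondence between $\ell^\infty(\X^*)$ and $\ell^\infty(\X)^{op}$, but this is routine given Lemmas \ref{appendix.H.1} and \ref{appendix.H.2}, which do the real work.
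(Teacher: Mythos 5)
Your proposal is correct and follows essentially the same route as the paper's proof: transport $\delta_{\ell^\infty(\X)}$ into $\ell^\infty(\X\times\X^*)$, invoke the orthomodular-lattice isomorphism between projections and relations together with Lemma \ref{appendix.H.2}, and conclude that the largest projection orthogonal to all $p \tensor (1-p)^*$ corresponds to the largest relation bounded atomwise by $\CC\counit_X$ (and vanishing on mixed atoms), which is $E_\X$. No gaps.
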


\begin{proof}
The canonical one-to-one correspondence between projections $r$ in $\ell^\infty(\X \times \X^*)$ and relations $R$ of arity $(\X, \X^*)$ is an isomorphism of orthomodular lattices. Thus, by Lemma \ref{appendix.H.2}, the largest projection $r$ that is orthogonal to $p \tensor (1-p)^*$ for all projections $p \in \ell^\infty(\X)$, namely $r= \delta_{\ell^\infty(\X)}$, corresponds to the largest relation $R$ that is less than or equal to $E_\X$, namely $R = E_\X$. In other words, $E_\X(X_1 \tensor X_2) = L(X_1 \tensor X_2, \CC) \cdot \delta_{\ell^\infty(\X)}(X_1 \tensor X_2)$ for all $X_1, X_2 \in \At(\X)$, as claimed.
\end{proof}

\subsection{Canonical isomorphisms}\label{appendix.J}

We prove Proposition \ref{computation.A.1}, which concerns Definition \ref{definition.B.4}.

\begin{proposition}\label{appendix.J.1}
Let $\X_1, \ldots, \X_n$ be quantum sets, and let $\pi$ be a permutation of $\{1, \ldots, n\}$. Let $U_\pi$ be the canonical isomorphism \cite{MacLane}*{Thm.~XI.1.1} from $\X_1\times \cdots \times \X_n$ to $\X_{\pi(1)} \times \cdots \times \X_{\pi(n)}$ in the symmetric monoidal category of quantum sets and binary relations \cite{Kornell}*{sec.~3}. Then, $\pi_\#(R) = R \circ U_\pi$ for all relations $R$ of arity $(\X_{\pi(1)}, \ldots, \X_{\pi(n)})$.
\end{proposition}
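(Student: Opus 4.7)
The plan is to prove this by induction on the length of $\pi$ as a word in the adjacent transpositions $s_i = (i, i+1)$, using the standard generators and relations for the symmetric group. First I would verify the base case of an adjacent transposition, and then I would use functoriality of both constructions under composition of permutations.

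For the base case $\pi = s_i$, MacLane's coherence theorem identifies $U_{s_i}$ with the morphism $I_{\X_1} \times \cdots \times I_{\X_{i-1}} \times B_{\X_i, \X_{i+1}} \times I_{\X_{i+2}} \times \cdots \times I_{\X_n}$, where $B_{\X,\Y} \colon \X \times \Y \to \Y \times \X$ is the braiding in the symmetric monoidal dagger compact category $\cat{qRel}$ \cite{Kornell}*{sec.~3}. Atomically, $B_{\X,\Y}(X \tensor Y, Y' \tensor X')$ equals $\CC \cdot u_{X,Y}$ when $(X',Y') = (X,Y)$ (with $u_{X,Y} \colon X \tensor Y \to Y \tensor X$ the swap unitary) and vanishes otherwise. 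Unpacking the formula for composition of binary relations in $\cat{qRel}$, namely $(S \circ R)(X,Z) = \sum_{Y \in \At(\Y)} S(Y,Z) \cdot R(X,Y)$, I would compute directly that
\[
(R \circ U_{s_i})(X_1 \tensor \cdots \tensor X_n) = R(X_1 \tensor \cdots \tensor X_{i+1} \tensor X_i \tensor \cdots \tensor X_n) \cdot u_{s_i},
\]
where $u_{s_i} = 1 \tensor \cdots \tensor u_{X_i, X_{i+1}} \tensor \cdots \tensor 1$. This matches $(s_i)_\#(R)$ by Definition \ref{definition.B.4}.

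For the inductive step, suppose the proposition is known for permutations $\pi$ and $\sigma$, and set $\tau = \sigma \circ \pi$. From the explicit formula in Definition \ref{definition.B.4} together with the fact that permutation unitaries compose as $u_\tau = u_\sigma' \cdot u_\pi$ (where $u_\sigma'$ is the permutation unitary on $X_{\pi(1)} \tensor \cdots \tensor X_{\pi(n)}$ associated to $\sigma$), one checks that $\tau_\#(R) = \pi_\#(\sigma_\#(R))$, with $\sigma_\#$ applied in the ambient arity $(\X_{\pi(1)}, \ldots, \X_{\pi(n)})$. In parallel, the coherence of the symmetric monoidal structure on $\cat{qRel}$ yields $U_\tau = U_\sigma \circ U_\pi$ with $U_\sigma$ the canonical isomorphism from $\X_{\pi(1)} \times \cdots \times \X_{\pi(n)}$ to $\X_{\tau(1)} \times \cdots \times \X_{\tau(n)}$. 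Combining these identities with the inductive hypothesis and associativity of composition gives $\tau_\#(R) = \pi_\#(\sigma_\#(R)) = \pi_\#(R \circ U_\sigma) = (R \circ U_\sigma) \circ U_\pi = R \circ U_\tau$.

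The main obstacle is purely bookkeeping: the second permutation $\sigma$ in the inductive step acts on a reindexed family of sorts, so one must be careful that Definition \ref{definition.B.4} and the coherence theorem are being applied to the right ambient context, and in particular that $u_\sigma'$ (defined on atoms $X_{\pi(1)} \tensor \cdots \tensor X_{\pi(n)}$) composes correctly with $u_\pi$ to yield $u_\tau$. An alternative is to bypass induction by expressing $U_\pi$ once and for all as a fixed composition of elementary braidings and identities, and directly checking that its atomic components are spanned by the permutation unitaries $u_\pi$; the proposition then reduces to the elementary observation that composing a relation $R$ with a (single-valued, single-nonvanishing-component) relation of this form amounts to precomposing the functionals in the image of $R$ with $u_\pi$.
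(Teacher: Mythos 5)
Your proposal follows essentially the same route as the paper: the base case is the identical computation (identify $U_{s_i}$ with the braiding tensored with identities, observe that its only nonvanishing atomic components are spanned by the swap unitaries, and compose with $R$ to recover Definition \ref{definition.B.4}), and the general case is obtained by composing over a decomposition into adjacent transpositions, which the paper handles by invoking the naturality of $U_\pi$ and $u_\pi$ together with MacLane's coherence theorem rather than an explicit induction on a reduced word. Your closing alternative, computing the atomic components of $U_\pi$ directly for a general $\pi$, is also sound and is really just the base-case computation performed once and for all.

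The one point that needs repair is the composition law in your inductive step. Under the usual convention $(\sigma \circ \pi)(i) = \sigma(\pi(i))$, a relation $R$ of arity $(\X_{\tau(1)}, \ldots, \X_{\tau(n)})$ with $\tau = \sigma \circ \pi$ is \emph{not} in the domain of $\sigma_\#$ taken relative to the ambient sorts $(\X_{\pi(1)}, \ldots, \X_{\pi(n)})$: that operation expects arity $(\X_{\pi(\sigma(1))}, \ldots, \X_{\pi(\sigma(n))})$, and the canonical isomorphism out of $\X_{\pi(1)} \times \cdots \times \X_{\pi(n)}$ landing in $\X_{\tau(1)} \times \cdots \times \X_{\tau(n)}$ is the one associated to the conjugate $\pi^{-1}\sigma\pi$, not to $\sigma$. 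So the identities $\tau_\#(R) = \pi_\#(\sigma_\#(R))$ and $U_\tau = U_\sigma \circ U_\pi$, as written, hold for $\tau = \pi \circ \sigma$ rather than $\tau = \sigma \circ \pi$. The fix is either to peel the generator off on the other side --- the identity that actually holds is $(\sigma \circ \pi)_\#(R) = \sigma_\#(\pi_\#(R))$ with $\pi_\#$ computed relative to the sorts $(\X_{\sigma(1)}, \ldots, \X_{\sigma(n)})$, and correspondingly $U_{\sigma \circ \pi} = U_\pi' \circ U_\sigma$ with $U_\pi'$ the canonical isomorphism for $\pi$ at the permuted sorts --- or simply to run your induction with the factorization $\tau = \pi \circ \sigma$. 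The paper sidesteps exactly this bookkeeping by treating $U_\pi$ and $u_\pi$ as natural transformations of functors $\cat{qRel}^n \to \cat{qRel}$ and $\cat{FdHilb}^n \to \cat{FdHilb}$, so that the reindexing is absorbed into the (horizontal) composition; with that adjustment your argument coincides with the paper's.
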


\begin{proof}
We first consider the special case that $\pi$ exchanges $m, m+1 \in \{1, \ldots, n\}$, leaving the other elements fixed. Then, $U_\pi = I_{\X_1} \times \cdots \times I_{\X_{m-1}} \times B_{\X_m, \X_{m+1}} \times I_{\X_{m+2}} \times \cdots \times I_{\X_n}$, where $B_{\X_m, \X_{m+1}}$ is the braiding from $\X_m \times \X_{m+1}$ to $\X_{m+1} \times \X_m$. By the definition of this braiding, $B_{\X_m, \X_{m+1}}(X_m \tensor X_{m+1},  X_{m+1} \tensor X_m) = \CC b_{X_m, X_{m+1}}$ for all $X_m \in \At(\X_m)$ and $X_{m+1} \in \At(\X_{m+1})$, with the other components of $B_{\X_m, \X_{m+1}}$ vanishing. Here, $b_{X_m, X_{m+1}}$ is the braiding from $X_m \tensor X_{m+1}$ to $X_{m+1} \tensor X_m$ in the symmetric monoidal category of finite-dimensional Hilbert spaces and linear operators. In other words, $b_{X_m, X_{m+1}} = u_\pi$ in the notation of Definition \ref{definition.B.4}.

Let $\X_\pi = \X_{\pi(1)} \times \cdots \times \X_{\pi(n)}$. We compute that for all $X_1 \in \At(\X_1)$, $\X_2 \in \At(\X_2)$, etc., 
\begin{align*}
(R \circ U_\pi)(X_1 \tensor \cdots \tensor X_n, \CC)
& =
\sum_{X_\pi \in \At(\X_\pi)} R(X_\pi, \CC) \cdot U_\pi(X_1 \tensor \cdots \tensor X_n, X_\pi)
\\ & =
R(X_{\pi(1)} \tensor \cdots \tensor X_{\pi(n)}, \CC) \cdot \CC u_\pi
=
\pi_\#(R)(X_1 \tensor \cdots \tensor X_n).
\end{align*}
Therefore, $\pi_\#(R) = R \circ U_\pi$ whenever $\pi$ exchanges two consecutive elements of $\{1, \ldots, n\}$, leaving the other elements fixed.

The general case follows from the fact that any permutation of $\{0,\ldots,n\}$ is a product of such $2$-cycles. Let $\cat{qRel}$ be the category of quantum sets and binary relations, and let $\cat{FdHilb}$ be the category of finite-dimensional Hilbert spaces and linear operators. For each permutation $\pi$ of $\{1, \ldots, n\}$, we may regard $U_\pi$ as a natural transformation of functors $\cat{qRel}^n \to \cat{qRel}$, and similarly, we may regard $u_\pi$ as a natural transformation of functors $\cat{FdHilb}^n \to \cat{FdHilb}$. By category theory \cite{MacLane}*{Thm.~XI.1.1}, we have that $U_{\pi_2 \circ \pi_1} = U_{\pi_1} \circ U_{\pi_2}$ and $u_{\pi_2 \circ \pi_1} = u_{\pi_1} \circ u_{\pi_2}$ for all permutations $\pi_1$ and $\pi_2$ of $\{1, \ldots, n\}$, where the composition symbol denotes the ``horizontal'' composition of natural transformations \cite{MacLane}*{sec.~II.5}. It follows that for all relations $R$ of arity $(\X_{(\pi_2 \circ \pi_1)(1)}, \ldots, \X_{(\pi_2 \circ \pi_1)(n)})$, we have that $R \circ U_{\pi_2 \circ \pi_1} = R \circ U_{\pi_1} \circ U_{\pi_2}$ and similarly that $(\pi_2 \circ \pi_1)_\#(R) = \pi_{2\#}(\pi_{1\#}(R))$.

Overall, we find that the set of all permutations $\pi$ of $\{1, \ldots,n\}$ such that $\pi_\#(R) = R \circ U_\pi$ for all $n$-ary relations $R$ contains all the permutations that exchange two consecutive elements of $\{1,\ldots, n\}$ and is closed under composition. We conclude that this set consists of all the permutations of $\{1, \ldots,n\}$, and the proposition is proved.
\end{proof}

\subsection{Quantifying over ordinary sets.}\label{appendix.C} Let $A$ be a set. We show that existential quantification over $`A$ reduces to disjunction in the expected way. As usual, we view each element $a \in A$ also as a function $\{*\} \to A$, and we identify $`\{\ast\}$ with the monoidal unit $\mathbf 1$ of the dagger compact category of quantum sets and binary relations.

\begin{lemma}\label{appendix.C.1}
Let $A$ be a set, and let $\Y_1, \ldots, \Y_n$ be quantum sets.
If $\phi(x, y_1, \ldots, y_n)$ is a nonduplicating formula with $x$ being a variable of sort $`A$ and $y_1, \ldots, y_n$ being variables of sorts $\Y_1, \ldots, \Y_n$, respectively, then
\begin{align*}
\[(y_1, \ldots, y_n) \in \Y_1\times \cdots & \times \Y_n \suchthat (\exists x \fin `A)\,\phi(x,y_1, \ldots, y_n)\]
\\ &=
\bigvee_{a \in A}
\[(y_1, \ldots, y_n) \in \Y_1\times \cdots \times \Y_n \suchthat \phi(`a,y_1, \ldots, y_n)\].
\end{align*}
Similarly, if $\psi(x_*, y_1, \ldots, y_n)$ is a nonduplicating formula with $x_*$ being a variable of sort $`A^*$ and $y_1, \ldots, y_n$ being variables of sorts $\Y_1, \ldots, \Y_n$, respectively, then
\begin{align*}
\[(y_1, \ldots, y_n) \in \Y_1\times \cdots & \times \Y_n \suchthat (\exists x_* \fin `A^*)\,\phi(x_*,y_1, \ldots, y_n)\]
\\ &=
\bigvee_{a \in A}
\[(y_1, \ldots, y_n) \in \Y_1\times \cdots \times \Y_n \suchthat \phi(`a_*,y_1, \ldots, y_n)\].
\end{align*}
\end{lemma}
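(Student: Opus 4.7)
The plan is to invoke Proposition \ref{computation.C.3} to rewrite the existential quantifier as precomposition with $\top_{`A}^\dagger \times I_{\Y_1} \times \cdots \times I_{\Y_n}$, decompose $\top_{`A}^\dagger$ as a join indexed by $a \in A$, and then identify each summand with the interpretation of $\phi$ after substituting $`a$ for $x$. First I would verify that $\top_{`A}^\dagger = \bigvee_{a \in A} `a$ as binary relations $\mathbf{1} \to `A$, by checking atomwise: the $(\CC, \CC_{a'})$-component of $\top_{`A}^\dagger$ is all of $L(\CC,\CC_{a'})$, while the $(\CC, \CC_{a'})$-component of $`a$ is $L(\CC,\CC_{a'})$ when $a=a'$ and zero otherwise, so the two sides agree. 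Since composition of binary relations between quantum sets distributes over arbitrary joins in either argument, applying Proposition \ref{computation.C.3} and pulling the join outside the composition shows that the left-hand side of the claimed identity equals $\bigvee_{a \in A}\[\phi(x, y_1, \ldots, y_n)\] \circ (`a \times I_{\Y_1} \times \cdots \times I_{\Y_n})$.

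It then remains to establish the substitution identity $\[\phi(`a, y_1, \ldots, y_n)\] = \[\phi(x, y_1, \ldots, y_n)\] \circ (`a \times I_{\Y_1} \times \cdots \times I_{\Y_n})$ for each $a \in A$. I would prove this by structural induction on the nonduplicating formula $\phi$, after translating to a primitive formula via Definition \ref{definition.G.2}. The base case for primitive atomic formulas follows from Lemma \ref{computation.F.2}, since the interpretation of the constant term $`a$ in the empty context is the function $`a\: \mathbf{1} \to `A$ itself. The $\neg$ step uses that precomposition with a function preserves orthocomplements (as invoked in the proof of Lemma \ref{examples.F.1}), and the $\wedge$ step is immediate since joins and meets of predicates are computed atomwise. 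The $\forall$ step is the main technical point: by Lemma \ref{computation.C.1}, both sides are suprema of families of candidate relations, and the inequality $F \circ F^\dagger \leq I$ enjoyed by the function $F := `a \times I_{\Y_1} \times \cdots \times I_{\Y_n}$ allows each candidate $R'$ from the left-hand supremum to be lifted to the valid candidate $R' \circ F^\dagger$ on the right, while the reverse containment follows directly by postcomposition with $F$.

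The second claim of the lemma, concerning quantification over $`A^*$, is proved identically, resting on the dual identity $\top_{`A^*}^\dagger = \bigvee_{a \in A} `a_*$, where $`a_*\: \mathbf{1} \to `A^*$ is the conjugate of the function $`a$. The main obstacle is the $\forall$ step of the substitution induction, which hinges on the functional property $F \circ F^\dagger \leq I$; once this is in hand, the other inductive cases follow mechanically from the structure of the semantics.
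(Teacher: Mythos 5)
Your proposal is correct and follows essentially the same route as the paper's proof: Proposition \ref{computation.C.3} turns the existential quantifier into precomposition with $\top_{`A}^\dagger \times I_{\Y_1} \times \cdots \times I_{\Y_n}$, the identity $\top_{`A}^\dagger = \bigvee_{a \in A} `a$ (and dually $\top_{`A^*}^\dagger = \bigvee_{a \in A} `a_*$) is checked atomwise, and the join is pulled through the composition by distributivity. The only difference is that the paper identifies $\[\phi(x,y_1,\ldots,y_n)\] \circ (`a \times I_{\Y_1} \times \cdots \times I_{\Y_n})$ with $\[\phi(`a,y_1,\ldots,y_n)\]$ in a single graphical step, whereas you prove this substitution identity by structural induction on the translated primitive formula; that is a harmless, indeed more explicit, elaboration. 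One small point in your $\forall$ case: besides $F \circ F^\dagger \leq I$ you also need the totality $I \leq F^\dagger \circ F$ of $F = `a \times I_{\Y_1} \times \cdots \times I_{\Y_n}$, since after showing that each candidate $S$ lifts to the valid candidate $S \circ F^\dagger$ you must still pass from $S \circ F^\dagger \leq \sup R$ to $S \leq (\sup R) \circ F$ via $S \leq S \circ F^\dagger \circ F$; both inequalities hold because $`a$ is a function, so the argument goes through.
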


\begin{proof}
This lemma follows from Proposition \ref{computation.C.3}. The binary relation $\top_{`A}^\dagger$ can be written as a disjunction
$$
\begin{aligned}
      \begin{tikzpicture}[scale=1]
	\begin{pgfonlayer}{nodelayer}
		\node (1) at (0,0) {$\bullet$};
		\node [style = none] (2) at (0,1.2) {};
	\end{pgfonlayer}
	\begin{pgfonlayer}{edgelayer}
	    \draw [arrow] (1.center) to (2);
	\end{pgfonlayer}\end{tikzpicture}
\end{aligned}
\quad = 
\quad
\bigvee_{a \in A}
\left(
\begin{aligned}
      \begin{tikzpicture}[scale=1]
	\begin{pgfonlayer}{nodelayer}
		\node [style = none] (2) at (0,1.2) {};
		\node [style = box] (4) at (0,0){$`a$};
	\end{pgfonlayer}
	\begin{pgfonlayer}{edgelayer}
	    \draw [arrow] (4) to (2);
	\end{pgfonlayer}\end{tikzpicture}
\end{aligned}
\right).
$$
Therefore,
\begin{align*}
\[ (\exists x \fin `A)\,\phi(x,y_1, \ldots, y_n)\]
\quad  & = \quad
\bigvee_{a \in A}
\left( \;
\begin{aligned}
\begin{tikzpicture}[scale=1]
	\begin{pgfonlayer}{nodelayer}
        \node [style=box] (0) at (0,0) {\,$\[\phi(x, y_1, \ldots, y_n)\]$\,};
        \node [style=none] (A) at (-1.2,-0.2) {};
        \node [style=box]  (Abox) at (-1.2,-1) {$`a_{\phantom{*}}$};
        \node [style=none] (C) at (-0.4,-0.2) {};
        \node [style=none] (E) at (1.4,-0.2) {};
        \node [style=none] (C1) at (-0.4,-1.4) {};
        \node [style=none] (E1) at (1.4,-1.4) {};
        \node [style=none] (D) at (0.5,-0.85){$\cdots\cdots$};
	\end{pgfonlayer}
	\begin{pgfonlayer}{edgelayer}
	    \draw [arrow] (Abox.north) to (A);
	    \draw [arrow] (C1) to (C);
	    \draw [arrow] (E1) to (E);
	\end{pgfonlayer}
   \end{tikzpicture}
\end{aligned}
\; \right)
\\ & = \quad
\bigvee_{a \in A}\;
\[\phi(`a, y_1, \ldots, y_n)\].
\end{align*}
In the second case, the proof is entirely similar because $\top^\dagger_{`A^*} = \bigvee_{a \in A} `a_*$.
\end{proof}

\begin{lemma}\label{appendix.C.2}
Let $A$ be a set, and let $\Y_1, \ldots, \Y_n$ be quantum sets. Let $\phi(x, x_*, y_1, \ldots, y_n)$ be a nonduplicating formula with $x$ being a variable of sort $`A$, $x_*$ being a variable of sort $`A^*$ and $y_1, \ldots, y_n$ being variables of sorts $\Y_1, \ldots \Y_n$, respectively. Then,
\begin{align*}
\[(y_1, \ldots, y_n) \in \Y_1\times \cdots & \times \Y_n \suchthat (\exists (x \feq x_*) \fin `A \ftimes `A^*)\,\phi(x,x_*,y_1, \ldots, y_n)\]
\\ &=
\bigvee_{s \in A}
\[(y_1, \ldots, y_n) \in \Y_1\times \cdots \times \Y_n \suchthat \phi(`a,`a_*,y_1, \ldots, y_n)\].
\end{align*}
\end{lemma}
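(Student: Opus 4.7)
The plan is to mimic the proof of Lemma \ref{appendix.C.1}, using Theorem \ref{computation.D.2} in place of Proposition \ref{computation.C.3} and decomposing the equality cup $E_{`A}^\dagger$ in place of $\top_{`A}^\dagger$.

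First I would apply Theorem \ref{computation.D.2} to rewrite the left-hand side as the composition
$$\[\phi(x,x_*,y_1,\ldots,y_n)\] \circ \bigl(E_{`A}^\dagger \times I_{\Y_1} \times \cdots \times I_{\Y_n}\bigr).$$
The key ingredient, analogous to the identity $\top_{`A}^\dagger = \bigvee_{a \in A} `a$ used in the proof of Lemma \ref{appendix.C.1}, is the graphical identity
$$E_{`A}^\dagger \;=\; \bigvee_{a \in A} (`a \times `a_*).$$
I would verify this by an atom-by-atom comparison against Definition \ref{definition.D.4}: the only nonzero components of $E_{`A}^\dagger$ are those indexed by diagonal pairs $(\CC_a,\CC_a^*)$, on which the component is $\CC\,\counit_{\CC_a}^\dagger = L(\CC,\CC_a \tensor \CC_a^*)$; since the latter space is one-dimensional, this matches the corresponding component of $`a \times `a_*$ exactly, and all other components on both sides vanish.

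Next I would use the fact that composition of binary relations between quantum sets distributes over arbitrary joins in either argument to distribute the above join through the composition, obtaining
$$\bigvee_{a \in A} \[\phi(x,x_*,y_1,\ldots,y_n)\] \circ \bigl((`a \times `a_*) \times I_{\Y_1} \times \cdots \times I_{\Y_n}\bigr).$$
The remaining step is to identify each summand with $\[\phi(`a, `a_*, y_1, \ldots, y_n)\]$: precomposition with the constant functions $`a$ and $`a_*$ at the wires labelled $x$ and $x_*$ corresponds to substituting these constants for the free variables $x$ and $x_*$ in $\phi$.

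The main obstacle is precisely this last substitution step. For a nonduplicating atomic $\phi$ it follows directly from Lemma \ref{computation.F.2}, but for a general nonduplicating $\phi$ one must argue by induction on its structure, checking compatibility with $\neg$ and $\wedge$ (immediate from Definition \ref{definition.C.2}) and with universal quantification over the remaining variables $y_i$ (which follows from Lemma \ref{computation.C.1}, since quantification over variables disjoint from $x, x_*$ commutes with the substitution). This is exactly the substitution lemma tacitly invoked at the analogous step in the proof of Lemma \ref{appendix.C.1}, so the inductive argument there carries over essentially unchanged and no genuinely new difficulty arises.
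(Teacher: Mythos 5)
Your proposal is correct and follows essentially the same route as the paper's own proof: the paper likewise applies Theorem \ref{computation.D.2} and then decomposes the identity as $I_{`A} = \bigvee_{a \in A} {`a} \circ {`a}^\dagger$, which after bending the wire is exactly your identity $E_{`A}^\dagger = \bigvee_{a \in A}({`a} \times {`a}_*)$, before distributing the join and passing to $\bigvee_{a\in A}\[\phi({`a},{`a}_*,y_1,\ldots,y_n)\]$. The only difference is that you make explicit the substitution step (identifying each summand with the interpretation of the substituted formula), which the paper, as in Lemma \ref{appendix.C.1}, leaves tacit in its graphical computation.
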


\begin{proof}
This lemma follows from Theorem \ref{computation.D.2}. The identity on $`A$ can be written as a disjunction
$$
\begin{aligned}
      \begin{tikzpicture}[scale=1]
	\begin{pgfonlayer}{nodelayer}
		\node (1) at (0,0) {$\scriptstyle `A$};
		\node [style = none] (2) at (0,1) {};
		\node [style = none] (3) at (0,-1) {};
	\end{pgfonlayer}
	\begin{pgfonlayer}{edgelayer}
	    \draw [arrow] (1) to (2);
	    \draw [arrow] (3) to (1);
	\end{pgfonlayer}\end{tikzpicture}
\end{aligned}
\quad = 
\quad
\bigvee_{s \in A}
\left(
\begin{aligned}
      \begin{tikzpicture}[scale=1]
	\begin{pgfonlayer}{nodelayer}
		\node [style = none] (2) at (0,1) {};
		\node [style = none] (3) at (0,-1) {};
		\node [style = box] (4) at (0,0.3){$`a^{\phantom \dagger}$};
		\node [style = box] (5) at (0,-0.3){$`a^\dagger$};
	\end{pgfonlayer}
	\begin{pgfonlayer}{edgelayer}
	    \draw [arrow] (4) to (2);
	    \draw [arrow] (3) to (5);
	\end{pgfonlayer}\end{tikzpicture}
\end{aligned}
\right).
$$
Therefore,
\begin{align*}
\[ (\exists (x \feq x_*) \fin `A \ftimes `A^*)\,\phi(x,x_*,y_1, \ldots, y_n)\]
\quad  & = \quad
\bigvee_{a \in A}
\left( \;
\begin{aligned}
\begin{tikzpicture}[scale=1]
	\begin{pgfonlayer}{nodelayer}
        \node [style=box] (0) at (0,0) {\,$\[\phi(x,x_*, y_1, \ldots, y_n)\]$\,};
        \node [style=none] (A) at (-1.6,-0.2) {};
        \node [style=box]  (Abox) at (-1.6,-1) {$`a_{\phantom{*}}$};
        \node [style=none] (B) at (-0.7,-0.2) {};
        \node [style=box]  (Bbox) at (-0.7,-1) {$`a_*$};
        \node [style=none] (C) at (0,-0.2) {};
        \node [style=none] (E) at (1.8,-0.2) {};
        \node [style=none] (C1) at (0,-1.4) {};
        \node [style=none] (E1) at (1.8,-1.4) {};
        \node [style=none] (D) at (0.9,-0.85){$\cdots\cdots$};
	\end{pgfonlayer}
	\begin{pgfonlayer}{edgelayer}
	    \draw [arrow] (Abox.north) to (A);
	    \draw [arrow,markat=0.7] (B) to (Bbox.north);
	    \draw [arrow] (C1) to (C);
	    \draw [arrow] (E1) to (E);
	\end{pgfonlayer}
   \end{tikzpicture}
\end{aligned}
\; \right)
\\ & = \quad
\bigvee_{a \in A}\;
\[\phi(`a,`a_*,y_1, \ldots, y_n)\]. \qedhere
\end{align*}
\end{proof}

\subsection{Quantifier laws}\label{appendix.G} We show that the existential quantifier distributes over disjunction, just as it does classically, and consequently, the universal quantifier distributes over conjunction, just as it does classically. We also show that the existential quantifier commutes with conjunction by a second formula, subject to the restriction that the two formulas have no free variables in common at all, and consequently, the universal quantifier commutes with disjunction by a second formulas, subject to the same restriction.

\begin{proposition}\label{appendix.G.1}
Let $\X_1, \ldots, \X_n$ and $\Y_1, \ldots, \Y_m$ be quantum sets. Let $x_1, \ldots, x_n$ and $y_1, \ldots, y_m$ be distinct variables of sorts $\X_1, \ldots, \X_n$ and $\Y_1, \ldots, \Y_m$, respectively. Let $\phi(x_1, \ldots, x_n)$ and $\psi(y_1, \ldots, y_m)$ be nonduplicating formulas. If $\X_2 = \X_1^*$, then 
\begin{align*}&
\[(x_3, \ldots, x_n, y_1, \ldots y_m) \in \X_3 \times \cdots \times \X_n \times \Y_1 \times \cdots \times \Y_m 
\\ & \hspace{35ex} \suchthat 
(\exists (x_1 \feq x_2) \fin \X_1 \ftimes \X_1^*)\, (\phi(x_1\ldots, x_n) \AND \psi(y_1, \ldots, y_m))\] 
\\ &=
\[(x_3, \ldots, x_n) \in \X_3 \times \cdots \times \X_n \suchthat (\exists (x_1 \feq x_2) \fin \X_1 \ftimes \X_1^*)\, \phi(x_1\ldots, x_n)\] \\ & \hspace{10ex} \AND \[(y_1, \ldots y_m) \in \Y_1 \times \cdots \times \Y_m \suchthat  \psi(y_1, \ldots, y_m)\].
\end{align*}
\end{proposition}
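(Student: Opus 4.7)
The plan is to reduce the identity to the monoidal interchange law in $\cat{qRel}$, using Theorem \ref{computation.D.2} to eliminate both diagonal existential quantifiers in favor of composition with $E_{\X_1}^\dagger$. First I would apply Theorem \ref{computation.D.2} to the left-hand side in the context $(x_3,\ldots,x_n,y_1,\ldots,y_m)$; this rewrites it as
\[
\[\phi(x_1,\ldots,x_n) \AND \psi(y_1,\ldots,y_m)\] \circ \bigl(E_{\X_1}^\dagger \times I_{\X_3}\times\cdots\times I_{\X_n}\times I_{\Y_1}\times\cdots\times I_{\Y_m}\bigr),
\]
where the interpretation of $\phi \AND \psi$ is taken in the combined context $(x_1,x_2,x_3,\ldots,x_n,y_1,\ldots,y_m)$.

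Second, since the formulas $\phi$ and $\psi$ share no free variables, I would use Proposition \ref{appendix.B.2} to expand each of their interpretations into the combined context by padding with $\top$ factors, and then invoke Lemma \ref{appendix.B.1}(2) to obtain
\[
\[\phi\AND\psi\]_{\mathrm{combined}}
\;=\;
\[(x_1,\ldots,x_n)\in\X_1\times\cdots\times\X_n\suchthat \phi\] \,\times\, \[(y_1,\ldots,y_m)\in\Y_1\times\cdots\times\Y_m\suchthat \psi\].
\]
Write $\Phi$ and $\Psi$ for the two factors. Substituting and applying the interchange law for the monoidal product in $\cat{qRel}$, the previous expression becomes
\[
\bigl(\Phi \circ (E_{\X_1}^\dagger \times I_{\X_3}\times\cdots\times I_{\X_n})\bigr) \,\times\, \bigl(\Psi \circ (I_{\Y_1}\times\cdots\times I_{\Y_m})\bigr),
\]
which simplifies to $\bigl(\Phi \circ (E_{\X_1}^\dagger \times I_{\X_3}\times\cdots\times I_{\X_n})\bigr) \times \Psi$.

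Third, I would apply Theorem \ref{computation.D.2} in the reverse direction to the first factor to recognize it as $\[(x_3,\ldots,x_n)\in\X_3\times\cdots\times\X_n\suchthat (\exists (x_1\feq x_2)\fin\X_1\ftimes\X_1^*)\,\phi\]$. Finally, applying Proposition \ref{appendix.B.2} and Lemma \ref{appendix.B.1}(2) once more in the opposite direction to fold the monoidal product of predicates over disjoint contexts back into a conjunction in the context $(x_3,\ldots,x_n,y_1,\ldots,y_m)$ yields the right-hand side of the claimed equality.

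The only genuine difficulty is bureaucratic: keeping track of which variables lie in which context, and justifying each expansion by $\top$ factors. The monoidal interchange step itself is immediate from the dagger compact structure on $\cat{qRel}$, and no new tool beyond Theorem \ref{computation.D.2}, Lemma \ref{appendix.B.1}, and Proposition \ref{appendix.B.2} is required. The main obstacle is therefore the careful permutation bookkeeping needed to line up the combined context so that the interchange law applies cleanly; this is routine given Proposition \ref{computation.A.1}.
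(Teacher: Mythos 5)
Your proposal is correct and is essentially the paper's own argument: the paper applies Theorem \ref{computation.D.2} and then reads off, in a single wire diagram, that the cap $E_{\X_1}^\dagger$ attaches only to the $\[\phi\]$ box while $\[\psi\]$ stands alone as a monoidal factor, which is exactly your interchange/refolding step made graphical. Your explicit appeals to Proposition \ref{appendix.B.2} and Lemma \ref{appendix.B.1}(2) merely spell out the bookkeeping the paper leaves to the graphical calculus, so there is no substantive difference.
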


\begin{proof}
We reason graphically, applying Theorem \ref{computation.D.2}:
\begin{align*}
&\[(\exists (x_1 \feq x_2) \fin \X_1 \ftimes \X_1^*)\, (\phi(x_1\ldots, x_n) \AND \psi(y_1, \ldots, y_m))\]
\; = \quad
\begin{aligned}
\begin{tikzpicture}[scale=1.1]
\begin{pgfonlayer}{nodelayer}
    \node [style=box] (Phi) at (0,0) {$\[\phi(x_1, \ldots, x_n)\]$};
    \node [style=box] (Psi) at (3,0) {$\[\psi(y_1, \ldots, y_m)\]$};
    \node [style=none] (x1anch) at (-1,-0.2) {};
    \node [style=none] (x2anch) at (-0.5,-0.2) {};
    \node [style=none] (x3anch) at (0,-0.2) {};
    \node [style=none] (xnanch) at (1,-0.2) {};
    \node [style=none] (x3) at (0,-1) {$\scriptstyle x_3$};
    \node [style=none] (xdots) at (0.5,-0.6) {$\cdots$};
    \node [style=none] (xn) at (1,-1) {$\scriptstyle x_n$};
    \node [style=none] (y1anch) at (2,-0.2) {};
    \node [style=none] (ymanch) at (4,-0.2) {};
    \node [style=none] (y1) at (2,-1) {$\scriptstyle y_1$};
    \node [style=none] (ydots0) at (3,-0.6) {$\cdots \cdots$};
    \node [style=none] (ym) at (4,-1) {$\scriptstyle y_m$};
\end{pgfonlayer}
\begin{pgfonlayer}{edgelayer}
    \draw [arrow] (x3) to (x3anch);
    \draw [arrow] (xn) to (xnanch);
    \draw [arrow, bend left = 90, looseness=3] (x2anch) to (x1anch);
    \draw [arrow] (y1) to (y1anch);
    \draw [arrow] (ym) to (ymanch);
\end{pgfonlayer}
\end{tikzpicture}
\end{aligned}
\\ & = \; \[(\exists (x_1 \feq x_2) \fin \X_1 \ftimes \X_1^*)\, \phi(x_1\ldots, x_n)\] \AND \[\psi(y_1, \ldots, y_m)\].
\end{align*}
\end{proof}

\begin{proposition}\label{appendix.G.2}
Let $\X_1, \ldots, \X_n$ be quantum sets, and let $x_1, \ldots, x_n$ be distinct variables of sorts $\X_1, \ldots, \X_n$, respectively. Let $\phi(x_1, \ldots, x_n)$ and $\psi(x_1, \ldots, x_n)$ be nonduplicating formulas. If $\X_2 = \X_1^*$, then
\begin{align*}
&\[(x_3, \ldots, x_n) \in \X_3 \times \cdots \times \X_n \suchthat (\exists (x_1 \feq x_2) \fin \X_1 \ftimes \X_1^*)\, (\phi(x_1, \ldots, x_n) \OR \psi(x_1, \ldots, x_n))\]
\\ & =
\[(x_3, \ldots, x_n) \in \X_3 \times \cdots \times \X_n \suchthat (\exists (x_1 \feq x_2) \fin \X_1 \ftimes \X_1^*)\, \phi(x_1, \ldots, x_n)\] 
\\ & \hspace{10ex} \OR
\[(x_3, \ldots, x_n) \in \X_3 \times \cdots \times \X_n \suchthat (\exists (x_1 \feq x_2) \fin \X_1 \ftimes \X_1^*)\, \psi(x_1, \ldots, x_n)\].
\end{align*}
\end{proposition}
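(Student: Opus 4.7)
The plan is to reduce both sides of the claimed identity to compositions of binary relations in $\cat{qRel}$ via Theorem \ref{computation.D.2}, and then to invoke distributivity of composition over joins. Write $M := E_{\X_1}^\dagger \times I_{\X_3} \times \cdots \times I_{\X_n}$, and abbreviate $P := \[(x_1, \ldots, x_n) \in \X_1 \times \cdots \times \X_n \suchthat \phi(x_1, \ldots, x_n)\]$ and $Q := \[(x_1, \ldots, x_n) \in \X_1 \times \cdots \times \X_n \suchthat \psi(x_1, \ldots, x_n)\]$. First I would verify that $\[(x_1,\ldots,x_n)\in\X_1\times\cdots\times\X_n\suchthat \phi\OR\psi\] = P \OR Q$ in the lattice of relations of arity $(\X_1, \X_1^*, \X_3, \ldots, \X_n)$: this is exactly Proposition \ref{definition.D.2}(1) when $\phi$ and $\psi$ are primitive, and it extends to all nonduplicating formulas because the translation into primitive formulas described after Definition \ref{definition.G.3} commutes with $\OR$ on the nose.

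Applying Theorem \ref{computation.D.2} to each of the three nonduplicating formulas $\phi \OR \psi$, $\phi$, and $\psi$ then reduces the stated equality to
\begin{equation*}
(P \OR Q) \circ M \;=\; (P \circ M) \OR (Q \circ M).
\end{equation*}
Thus the remaining task is to check that composition in $\cat{qRel}$ distributes over binary joins on the left: for any binary relation $N \: \U \to \V$ and any relations $R, S \: \V \to \mathbf 1$, one has $(R \OR S) \circ N = (R \circ N) \OR (S \circ N)$. Working atomwise from the definition of composition as a sum of subspace products, and using that multiplication of subspaces distributes over their sum, one computes, for each atom $U \in \At(\U)$,
\begin{equation*}
\sum_{V \in \At(\V)} (R(V, \CC) + S(V, \CC)) \cdot N(U, V) \;=\; \sum_{V} R(V, \CC) \cdot N(U, V) \;+\; \sum_{V} S(V, \CC) \cdot N(U, V),
\end{equation*}
which is the required identity in the component at $U$. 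This distributivity fact is the same one already invoked in the proof of Lemma \ref{examples.F.1}.

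No step in this argument poses a substantive obstacle; the only nontrivial bookkeeping is to confirm that the connective $\OR$ in the language of nonduplicating formulas is semantically the orthomodular join of the interpretations, which reduces via the translation to Proposition \ref{definition.D.2}(1). The rest is a routine application of Theorem \ref{computation.D.2} combined with distributivity of composition over joins in $\cat{qRel}$. Indeed, an essentially identical proof pattern would yield the dual statement that the universal diagonal quantifier distributes over $\AND$, by applying the same reduction to $\NOT (\exists (x_1 \feq x_2))\, \NOT(\phi \AND \psi)$.
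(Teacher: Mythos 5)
Your proposal is correct and follows essentially the same route as the paper's proof: apply Theorem \ref{computation.D.2} to convert each diagonal existential into composition with $E_{\X_1}^\dagger \times I_{\X_3} \times \cdots \times I_{\X_n}$, use $\[\phi \OR \psi\] = \[\phi\] \OR \[\psi\]$, and conclude by distributivity of composition over joins. The only difference is that you spell out the two facts the paper invokes silently (the compatibility of the translation with $\OR$ and the atomwise verification of distributivity), which is harmless.
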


\begin{proof} We appeal to Theorem \ref{computation.D.2} and to the fact that composition distributes over the join of binary relations between quantum sets:
\begin{align*}
&\[(x_3, \ldots, x_n) \in \X_3 \times \cdots \times \X_n \suchthat (\exists (x_1 \feq x_2) \fin \X_1 \ftimes \X_1^*)\, (\phi(x_1, \ldots, x_n) \OR \psi(x_1, \ldots, x_n))\]
\\ &=
(
\[(x_1, \ldots, x_n) \in \X_1 \times \cdots \times \X_n \suchthat \phi(x_1, \ldots, x_n)\]
\OR
\[(x_1, \ldots, x_n) \in \X_1 \times \cdots \times \X_n \suchthat \psi(x_1, \ldots, x_n)\]
)
\\ & \hspace{10ex} \circ
(E_{\X_1}^\dagger \times I_{\X_3} \times \cdots \times I_{\X_n})
\\ & =
(\[(x_1, \ldots, x_n) \in \X_1 \times \cdots \times \X_n \suchthat \phi(x_1, \ldots, x_n)\] \circ 
(E_{\X_1}^\dagger \times I_{\X_3} \times \cdots \times I_{\X_n}))
\\ & \hspace{10ex} \OR
(\[(x_1, \ldots, x_n) \in \X_1 \times \cdots \times \X_n \suchthat \psi(x_1, \ldots, x_n)\] \circ 
(E_{\X_1}^\dagger \times I_{\X_3} \times \cdots \times I_{\X_n}))
\\ & =
\[(x_3, \ldots, x_n) \in \X_3 \times \cdots \times \X_n \suchthat (\exists (x_1 \feq x_2) \fin \X_1 \ftimes \X_1^*)\, \phi(x_1, \ldots, x_n)\]
\\ & \hspace{10ex}\OR
\[(x_3, \ldots, x_n) \in \X_3 \times \cdots \times \X_n \suchthat (\exists (x_1 \feq x_2) \fin \X_1 \ftimes \X_1^*)\, \psi(x_1, \ldots, x_n)\].
\end{align*}
\end{proof}

\begin{lemma}\label{appendix.G.3}
Let $\X_1, \ldots, \X_n$ be quantum sets, and let $x_1, \ldots, x_n$ be distinct variables of sorts $\X_1, \ldots, \X_n$, respectively. Let $\psi(x_3, \ldots, x_n)$ be a nonduplicating formula. If $\X_2 = \X_1^*$ and $\X_1 \neq `\emptyset$, then
\begin{align*}
&\[(x_3, \ldots, x_n) \in \X_3 \times \cdots \times \X_n \suchthat (\exists (x_1 \feq x_2) \fin \X_1 \ftimes \X_1^*)\, \psi(x_3, \ldots, x_n)\]
\\ & =
\[(x_3, \ldots, x_n) \in \X_3 \times \cdots \times \X_n \suchthat \psi(x_3, \ldots, x_n)\].
\end{align*}
\end{lemma}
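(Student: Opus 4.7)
The strategy is to unfold the quantifier via Theorem \ref{computation.D.2}, use permutation equivariance to observe that $\psi$ contributes only a tensor factor, and then recognize the resulting composition as a ``trace of $\top_{\X_1}$'', which evaluates to $\top$ precisely when $\X_1$ has an atom.

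\textbf{Step 1.} Treating $\psi$ as a nonduplicating formula whose free variables lie in the extended context $x_1, x_2, x_3, \ldots, x_n$, apply Theorem \ref{computation.D.2} (with $n-2$ in place of $n$ and the appropriate relabeling) to rewrite
\begin{align*}
\[(x_3, \ldots, x_n) \in \X_3 \times \cdots \times \X_n \suchthat &\,(\exists (x_1 \feq x_2) \fin \X_1 \ftimes \X_1^*)\, \psi(x_3, \ldots, x_n)\]
\\ & = \[ \psi(x_3, \ldots, x_n)\]_{\text{ext}} \circ (E_{\X_1}^\dagger \times I_{\X_3} \times \cdots \times I_{\X_n}),
\end{align*}
where $\[\psi\]_{\text{ext}}$ denotes the interpretation of $\psi$ in the context $x_1 \in \X_1, x_2 \in \X_1^*, x_3 \in \X_3, \ldots, x_n \in \X_n$.

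\textbf{Step 2.} By Proposition \ref{appendix.B.2} (the permutation equivariance of interpretation, applied with $\sigma$ the identity so that only the ``unused variable'' clause operates), we have
$$\[\psi(x_3, \ldots, x_n)\]_{\text{ext}} = \top_{\X_1} \times \top_{\X_1^*} \times \[(x_3, \ldots, x_n) \in \X_3 \times \cdots \times \X_n \suchthat \psi(x_3, \ldots, x_n)\].$$
Because composition in the monoidal category $\cat{qRel}$ distributes over the Cartesian product, the expression from Step 1 becomes
$$\bigl((\top_{\X_1} \times \top_{\X_1^*}) \circ E_{\X_1}^\dagger\bigr) \times \[(x_3, \ldots, x_n) \in \X_3 \times \cdots \times \X_n \suchthat \psi(x_3, \ldots, x_n)\].$$

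\textbf{Step 3.} It remains to verify the scalar identity $(\top_{\X_1} \times \top_{\X_1^*}) \circ E_{\X_1}^\dagger = \top$, where $\top$ denotes the maximum relation of arity $()$. Graphically, the left-hand side is a cup (the unit $E_{\X_1}^\dagger$) both of whose wires terminate in bullets; straightening one wire through the cup converts $\top_{\X_1^*}$ on the downward $\X_1^*$-wire into $\top_{\X_1}^\dagger$ at the top of an $\X_1$-wire, so the expression equals $\top_{\X_1} \circ \top_{\X_1}^\dagger$. Computing atomwise,
$$(\top_{\X_1} \circ \top_{\X_1}^\dagger)(\CC, \CC) = \sum_{X \in \At(\X_1)} L(X,\CC)\cdot L(\CC, X) = \sum_{X \in \At(\X_1)} L(\CC,\CC),$$
which equals $\CC = \top(\CC,\CC)$ provided the sum is nonempty, that is, provided $\At(\X_1) \neq \emptyset$. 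This is exactly the hypothesis $\X_1 \neq `\emptyset$.

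\textbf{Step 4 (conclusion).} Substituting the identity from Step 3 into the expression from Step 2 and using $\top \times R = R$ yields the desired equation. The main (mild) obstacle is just the bookkeeping in Step 1, where one must apply Theorem \ref{computation.D.2} to $\psi$ regarded in the extended context; once this reinterpretation is made, the rest is a short graphical computation together with the ``nonempty trace'' calculation of Step 3, which is where the hypothesis $\X_1 \neq `\emptyset$ is essential.
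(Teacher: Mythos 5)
Your argument is correct and is essentially the paper's own proof: the paper likewise applies Theorem \ref{computation.D.2} to $\psi$ read in the extended context, observes (graphically, i.e.\ via Proposition \ref{appendix.B.2}) that the $x_1,x_2$ wires contribute only factors $\top_{\X_1}\times\top_{\X_1^*}$, and then erases the resulting closed cup-with-bullets component, which is where $\X_1 \neq `\emptyset$ is used. The only nitpick is in Step 2: since the unused variables $x_1,x_2$ stand \emph{first} in the extended context, Proposition \ref{appendix.B.2} must be invoked with a (trivial bookkeeping) permutation $\sigma$ moving them past $x_3,\ldots,x_n$, not with $\sigma$ the identity; this does not affect the substance of the proof.
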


\begin{proof} We reason graphically:
\begin{align*}
\[(x_1, \ldots, x_n) \in \X_1 \times \cdots \times \X_n \suchthat \psi(x_3, \ldots, x_n)\]
\; = \quad
\begin{aligned}
\begin{tikzpicture}
\begin{pgfonlayer}{nodelayer}[scale=1.1]
    \node [style=box] (Psi) at (3,0) {$\[\psi(x_3, \ldots, x_n)\]$};
    \node [style=none] (x3anch) at (2,-0.2) {};
    \node [style=none] (xnanch) at (4,-0.2) {};
    \node [style=none] (x3) at (2,-1) {$\scriptstyle \X_3$};
    \node [style=none] (xdots0) at (3,-0.6) {$\cdots \cdots$};
    \node [style=none] (xn) at (4,-1) {$\scriptstyle \X_n$};
    \node [style=none] (x2) at (1.25,-1) {$\scriptstyle \X_1$};
    \node [style=none] (x1) at (0.5,-1) {$\scriptstyle \X_1$};
    \node [style=none] (x2anch) at (1.25,0) {$\bullet$};
    \node [style=none] (x1anch) at (0.5,0) {$\bullet$};
\end{pgfonlayer}
\begin{pgfonlayer}{edgelayer}
    \draw [arrow] (x3) to (x3anch);
    \draw [arrow] (xn) to (xnanch);
    \draw [arrow, markat = 0.35] (x1) to (x1anch.center);
    \draw [arrow, markat =0.8] (x2anch.center) to (x2);
\end{pgfonlayer}
\end{tikzpicture}
\end{aligned}\quad ;
\end{align*} 

\begin{align*} &
\[(x_3, \ldots, x_n) \in \X_3 \times \cdots \times \X_n \suchthat (\exists (x_1 \feq x_2) \fin \X_1 \ftimes \X_1^*)\, \psi(x_3, \ldots, x_n)\]
\\ & = \;
\[(x_1, \ldots, x_n) \in \X_1 \times \cdots \times \X_n \suchthat \psi(x_3, \ldots, x_n)\] \circ 
(E_{\X_1}^\dagger \times I_{\X_3} \times \cdots \times I_{\X_n})
 \\ & = \quad
\begin{aligned}
\begin{tikzpicture}[scale=1.1]
\begin{pgfonlayer}{nodelayer}
    \node [style=box] (Psi) at (3,0) {$\[\psi(x_3, \ldots, x_n)\]$};
    \node [style=none] (x3anch) at (2,-0.2) {};
    \node [style=none] (xnanch) at (4,-0.2) {};
    \node [style=none] (x3) at (2,-1) {$\scriptstyle \X_3$};
    \node [style=none] (xdots0) at (3,-0.6) {$\cdots \cdots$};
    \node [style=none] (xn) at (4,-1) {$\scriptstyle \X_n$};
    \node [style=none] (x2anch) at (1.25,0) {$\bullet$};
    \node [style=none] (x1anch) at (0.5,0) {$\bullet$};
\end{pgfonlayer}
\begin{pgfonlayer}{edgelayer}
    \draw [arrow] (x3) to (x3anch);
    \draw [arrow] (xn) to (xnanch);
    \draw [arrow, bend left = 90, looseness=3] (x2anch.center) to (x1anch.center);
\end{pgfonlayer}
\end{tikzpicture}
\end{aligned}
\quad = \quad
\begin{aligned}
\begin{tikzpicture}[scale=1.1]
\begin{pgfonlayer}{nodelayer}
    \node [style=box] (Psi) at (3,0) {$\[\psi(x_3, \ldots, x_n)\]$};
    \node [style=none] (x3anch) at (2,-0.2) {};
    \node [style=none] (xnanch) at (4,-0.2) {};
    \node [style=none] (x3) at (2,-1) {$\scriptstyle \X_3$};
    \node [style=none] (xdots0) at (3,-0.6) {$\cdots \cdots$};
    \node [style=none] (xn) at (4,-1) {$\scriptstyle \X_n$};
\end{pgfonlayer}
\begin{pgfonlayer}{edgelayer}
    \draw [arrow] (x3) to (x3anch);
    \draw [arrow] (xn) to (xnanch);
\end{pgfonlayer}
\end{tikzpicture}
\end{aligned}
\\ & = \;
\[(x_3, \ldots, x_n) \in \X_3 \times \cdots \times \X_n \suchthat \psi(x_3, \ldots, x_n)\].
\end{align*}
The assumption that $\X_1 \neq `\emptyset$ is used in the second-to-last equality.
\end{proof}

\begin{proposition}\label{appendix.G.4}
Let $\X_1, \ldots, \X_n$ be quantum sets, and let $x_1, \ldots, x_n$ be distinct variables of sorts $\X_1, \ldots, \X_n$, respectively. Let $\phi(x_1, \ldots, x_n)$ and $\psi(x_3, \ldots, x_n)$ be nonduplicating formulas. If $\X_2 = \X_1^*$ and $\X_1 \neq `\emptyset$, then
\begin{align*}
&\[(x_3, \ldots, x_n) \in \X_3 \times \cdots \times \X_n \suchthat (\exists (x_1 \feq x_2) \fin \X_1 \ftimes \X_1^*)\, (\phi(x_1, \ldots, x_n) \OR \psi(x_3, \ldots, x_n))\]
\\ & =
\[(x_3, \ldots, x_n) \in \X_3 \times \cdots \times \X_n \suchthat (\exists (x_1 \feq x_2) \fin \X_1 \ftimes \X_1^*)\, \phi(x_1, \ldots, x_n)\] 
\\ & \hspace{10ex} \OR
\[(x_3, \ldots, x_n) \in \X_3 \times \cdots \times \X_n \suchthat \psi(x_3, \ldots, x_n)\].
\end{align*}
\end{proposition}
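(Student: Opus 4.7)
The plan is to reduce to the two ingredients already established in this appendix: Proposition \ref{appendix.G.2} for distributing the diagonal existential over a disjunction, and Lemma \ref{appendix.G.3} for absorbing a vacuous diagonal quantifier into the surrounding formula under the nondegeneracy hypothesis $\X_1 \neq {`}\emptyset$.

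First I would observe that although $\psi$ is written as $\psi(x_3, \ldots, x_n)$, it is of course also a nonduplicating formula whose free variables are among $x_1, \ldots, x_n$ (with $x_1$ and $x_2$ simply not appearing). Hence Proposition \ref{appendix.G.2} applies directly to the pair $\phi(x_1, \ldots, x_n)$ and $\psi(x_3, \ldots, x_n)$, giving
\begin{align*}
&\[(x_3, \ldots, x_n) \in \X_3 \times \cdots \times \X_n \suchthat (\exists (x_1 \feq x_2) \fin \X_1 \ftimes \X_1^*)\, (\phi(x_1, \ldots, x_n) \OR \psi(x_3, \ldots, x_n))\] \\
&\quad = \[(x_3, \ldots, x_n) \in \X_3 \times \cdots \times \X_n \suchthat (\exists (x_1 \feq x_2) \fin \X_1 \ftimes \X_1^*)\, \phi(x_1, \ldots, x_n)\] \\
&\qquad \OR \[(x_3, \ldots, x_n) \in \X_3 \times \cdots \times \X_n \suchthat (\exists (x_1 \feq x_2) \fin \X_1 \ftimes \X_1^*)\, \psi(x_3, \ldots, x_n)\].
\end{align*}

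The second step is to eliminate the vacuous diagonal quantifier in the second disjunct. Because $\psi$ has no free occurrence of $x_1$ or $x_2$ and $\X_1 \neq {`}\emptyset$, Lemma \ref{appendix.G.3} yields
\begin{align*}
& \[(x_3, \ldots, x_n) \in \X_3 \times \cdots \times \X_n \suchthat (\exists (x_1 \feq x_2) \fin \X_1 \ftimes \X_1^*)\, \psi(x_3, \ldots, x_n)\] \\
& \quad = \[(x_3, \ldots, x_n) \in \X_3 \times \cdots \times \X_n \suchthat \psi(x_3, \ldots, x_n)\].
\end{align*}
Substituting this equality into the previous display gives the desired identity. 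There is no real obstacle here: the only subtlety is noting that the appeal to Lemma \ref{appendix.G.3} genuinely needs $\X_1 \neq {`}\emptyset$, since otherwise the ``loose end'' wire representing $\top_{\X_1}$ in the diagrammatic proof of that lemma collapses to $\bot$ rather than to a trivial factor.
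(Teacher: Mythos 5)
Your proof is correct and follows exactly the route the paper takes: it combines Proposition \ref{appendix.G.2} (distributing the diagonal existential over the disjunction) with Lemma \ref{appendix.G.3} (eliminating the vacuous quantifier from the $\psi$-disjunct, using $\X_1 \neq {`}\emptyset$). The paper's own proof is just the one-line instruction to combine these two results, so your write-up is simply a more detailed version of the same argument.
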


\begin{proof} 
We combine Proposition \ref{appendix.G.2} with Lemma \ref{appendix.G.3}.
\end{proof}

\section*{Acknowledgments}

I thank Matthew Daws for his reply to my question about discrete quantum groups \cite{Daws}. I thank Chris Heunen for suggesting a number of references and for his guidance toward producing the wire diagrams. I thank Bert Lindenhovius for valuable discussion. I thank Piotr So\l tan for his help in understanding quantum groups. I thank Stefaan Vaes for generously contributing the example of discrete quantum groups. Finally, I thank Dominic Verdon and Andreas Winter for the opportunity to share an early version of these results with the Focused Research Group on Noncommutative Mathematics and Quantum Information.


\begin{thebibliography}{99}

\bibitem{AbramskyCoecke}
S. Abramsky \& B. Coecke,
\textit{Categorical quantum mechanics},
\texttt{arXiv:0808:1023}.

\bibitem{AtseriasMancinskaRobersonSamalSeveriniVarvitsiotis}
A. Atserias,
L. Man\v{c}inska,
D. E. Roberson,
R. \v{S}\'{a}mal,
S. Severini \&
A. Varvitsiotis,
\textit{Quantum and non-signalling graph isomorphisms},
J. Combin. Theory. Ser. B
\textbf{136}
(2019).

\bibitem{Banica}
T. Banica,
\textit{Quantum automorphism groups of homogeneous graphs},
J. Funct. Anal.
\textbf{224}
(2005),
no. 2.

\bibitem{BanicaBichonCollins}
T. Banica,
J. Bichon \&
B. Collins,
\textit{Quantum permutation groups: a survey},
Banach Center Publ.
\textbf{78}
(2007).

\bibitem{BarnumCavesFuchsJozsaSchumacher}
H. Barnum,
C. M. Caves,
C. A. Fuchs,
R. Jozsa \&
B. Schumacher,
\textit{Noncommuting Mixed States Cannot Be Broadcast},
Phys. Rev. Lett.
\textbf{76}
(1996).

\bibitem{Bichon}
J. Bichon,
\textit{Quantum automorphism groups of finite graphs},
Proc. Amer. Math. Soc.
\textbf{131}
(2003).

\bibitem{BirkhoffVonNeumann}
G. Birkhoff \& J. von Neumann,
\textit{The Logic of Quantum Mechanics},
Ann. Math.
\textbf{37}
(1936).

\bibitem{Blackadar}
B. Blackadar,
\textit{Operator Algebras: Theory of C*-Algebras and von Neumann Algebras},
Springer,
2006.

\bibitem{BrannanChirvasituEiflerHarrisPaulsenSuWasilewski}
M. Brannan,
A. Chirvasitu,
K. Eifler,
S. Harris,
V. Paulsen,
X. Su \&
M. Wasilewski,
\textit{Bigalois Extensions and the Graph Isomorphism Game},
Comm. Math. Phys.
\textbf{375}
(2020).

\bibitem{CameronMontanaroNewmanSeveriniWinter}
P. J. Cameron,
A. Montanaro,
M. W. Newman,
S. Severini \&
A. Winter,
\textit{On the quantum chromatic number of a graph},
Electron. J. Combin.
\textbf{14}
(2007),
no. 1.

\bibitem{Daws}
M. Daws,
\textit{Characterizing discrete quantum groups},
\texttt{URL:mathoverflow.net/q/362273}.

\bibitem{Drell}
S. D. Drell,
\textit{Direct Decay $\pi^0 \rightarrow e^+ + e^-$},
Nuovo Cimento Soc. Ital. Fis.
\textbf{11}
(1959).

\bibitem{DuanSeveriniWinter}
R. Duan,
S. Severini \&
A. Winter,
\textit{Zero-error communication via quantum channels, noncommutative graphs, and a quantum Lov\'asz number},
IEEE Trans. Inform. Theory
\textbf{59}
(2013),
no. 2.

\bibitem{EffrosRuan}
E. G. Effros \&
Z.-J. Ruan,
\textit{Discrete quantum groups I: The Haar measure},
Internat. J. Math.
\textbf{5}
(1994),
no. 5.

\bibitem{Feynman}
R. P. Feynman,
\textit{The Theory of Positrons},
Phys. Rev.
\textbf{76}
(1949),
no. 6.

\bibitem{Finch}
P. D. Finch,
\textit{Quantum logic as an implication algebra},
Bull. Aust. Math. Soc.
\textbf{2}
(1970),
no. 1.

\bibitem{GalliardWolf}
V. Galliard \&
S. Wolf, 
\textit{Pseudo-telepathy, entanglement, and graph colorings},
Proceedings of the ISIT 2002,
2002.

\bibitem{GelfandNaimark}
I. M. Gelfand \&
M. A. Na\u{\i}mark,
\textit{On the embedding of normed rings into the ring of operators in Hilbert space},
Mat. Sb.
\textbf{12}
(1943).

\bibitem{Giles}
R. Giles,
\textit{Foundations for Quantum Mechanics},
J. Math. Phys.
\textbf{11}
(1970).

\bibitem{GnutzmannSmilansky}
S. Gnutzmann \&
U. Smilansky,
\textit{Quantum graphs: Applications to quantum chaos and universal spectral statistics},
Adv. Phys.
\textbf{55}
(2006).

\bibitem{Guichardet}
A. Guichardet,
\textit{Sur la Cat\'{e}gorie des Alg\`{e}bres de Von Neumann},
Bull. Sci. Math.
\textbf{90}
(1966).

\bibitem{Hardegree}
G. M. Hardegree,
\textit{Material Implication in Orthomodular (and Boolean) Lattices},
Notre Dame J. Form. Log.
\textbf{22}
(1981),
no. 2.

\bibitem{HeywoodRedhead}
P. Heywood \&
M. L. G. Redhead,
\textit{Nonlocality and the Kochen-Specker paradox},
Found. Phys.
\textbf{13}
(1983),
no. 5.

\bibitem{Hodges}
W. Hodges,
\textit{A Shorter Model Theory},
Cambridge University Press,
1997.

\bibitem{Holland}
S. S. Holland, Jr.,
\textit{Orthomodularity in infinite dimensions; a theorem of M. Sol\`{e}r},
Bull. Amer. Math. Soc.
\textbf{32}
(1995),
no. 2.

\bibitem{Kac}
G. I. Kac,
\textit{\foreignlanguage{russian}{Кольцевые группы и принцип двойственности I}},
Tr. Mosk. Mat. Obs.
\textbf{12}
(1963).

\bibitem{Kac2}
G. I. Kac,
\textit{\foreignlanguage{russian}{Кольцевые группы и принцип двойственности II}},
Tr. Mosk. Mat. Obs.
\textbf{13}
(1965).

\bibitem{KochenSpecker}
S. Kochen \&
E. P. Specker,
\textit{The problem of hidden variables in quantum mechanics},
J. Math. Mech.
\textbf{17}
(1967),
no 1.

\bibitem{Kornell}
A. Kornell,
\textit{Quantum sets},
J. Math. Phys.
\textbf{61}
(2020).

\bibitem{Kornell2}
A. Kornell,
\textit{Quantum Functions},
\texttt{arXiv:1101.1694}.

\bibitem{KornellLindenhoviusMislove}
A. Kornell,
B. Lindenhovius \&
M. Mislove,
\textit{Quantum CPOs}, to appear in
Proceedings 17th International Conference on Quantum Physics and Logic.

\bibitem{KornellLindenhoviusMislove2}
A. Kornell,
B. Lindenhovius \&
M. Mislove,
\textit{A category of quantum posets},
\texttt{arXiv:2101.11184}.

\bibitem{Kraus}
K. Kraus,
\textit{States, Effects and Operations: Fundamental Notions of Quantum Theory},
Springer-Verlag,
1983.

\bibitem{KuperbergWeaver}
G. Kuperberg \& N. Weaver,
\textit{A von Neumann algebra approach to quantum metrics},
Mem. Amer. Math. Soc.
\textbf{215}
(2012).

\bibitem{Kunsemuller}
H. Kunsem\"uller,
\textit{Zur Axiomatik der Quantenlogik},
Philos. Natur.
\textbf{8}
(1964).

\bibitem{KustermansVaes}
J. Kustermans \&
S. Vaes,
\textit{Locally compact quantum groups},
Ann. Sci. \'Ecole Norm. Sup.
\textbf{33}
(2000),
no. 6.

\bibitem{KustermansVaes2}
J. Kustermans \&
S. Vaes,
\textit{Locally compact quantum groups in the von Neumann algebraic setting},
Math. Scand.
\textbf{92}
(2003),
no. 1.

\bibitem{Latremoliere}
F. Latr\'{e}moli\`{e}re,
\textit{Quantum locally compact metric spaces},
J. Funct. Anal.
\textbf{264}
(2013),
no. 1.

\bibitem{LupiniMancinskaRoberson}
M. Lupini,
L. Man\v{c}inska \&
D. E. Roberson,
\textit{Nonlocal games and quantum permutation groups},
J. Funct. Anal.
\textbf{279}
(2020),
no. 5.

\bibitem{MacLane}
S. Mac Lane,
\textit{Categories for the Working Mathematician},
2nd ed.,
Grad. Texts in Math.
\textbf{5},
Springer,
1998.

\bibitem{MancinskaRoberson}
L. Man\v{c}inska \&
D. E. Roberson,
\textit{Quantum homomorphisms},
J. Combin. Theory. Ser. B
\textbf{118}
(2016).

\bibitem{MancinskaRoberson2}
L. Man\v{c}inska \&
D. E. Roberson,
\textit{Quantum isomorphism is equivalent to equality of homomorphism counts from planar graphs},
2020 IEEE 61st Annual Symposium on Foundations of Computer Science,
2020.

\bibitem{MancinskaRobersonVaritsiotis}
L. Man\v{c}inska,
D. E. Roberson \&
A. Varvitsiotis,
\textit{On deciding the existence of perfect entangled strategies for nonlocal games},
Chic. J. Theoret. Comput. Sci.
\textbf{2016}
(2016).

\bibitem{MustoReutterVerdon}
B. Musto,
D. J. Reutter \&
D. Verdon,
\textit{A compositional approach to quantum functions},
J. Math. Phys.
\textbf{59}
(2018).

\bibitem{MustoReutterVerdon2}
B. Musto,
D. J. Reutter \&
D. Verdon,
\textit{A Morita Theory of Quantum Graph Isomorphisms},
Comm. Math. Phys.
\textbf{365}
(2019).

\bibitem{MustoVicary}
B. Musto \&
J. Vicary,
\textit{Quantum Latin squares and unitary error bases},
Quantum Inf. Comput.
\textbf{16}
(2016),
no. 15-16.

\bibitem{NeshveyevTuset}
S. Neshveyev \&
L. Tuset,
\textit{Compact Quantum Groups and Their Representation Categories},
Cours Sp\'ecialis\'es 20,
Soci\'et\'e Math\'ematique de France, 2013.

\bibitem{OritzPaulsen}
C. M. Oritz \&
V. I. Paulsen,
\textit{Quantum graph homomorphisms via operator systems},
Linear Algebra Appl.
\textbf{497}
(2016).

\bibitem{Pauling}
L. Pauling,
\textit{The diamagnetic anisotropy of aromatic molecules},
J. Chem. Phys.
\textbf{4}
(1936),
no. 10.

\bibitem{PaulsenSeveriniStahlkeTodorovWinter}
V. I. Paulsen,
S. Severini,
D. Stahlke,
I. G. Todorov \&
A. Winter,
\textit{Estimating quantum chromatic numbers},
J. Funct. Anal.
\textbf{270}
(2016),
no. 6.

\bibitem{PaulsenTodorov}
V. I. Paulsen \&
I. G. Todorov,
\textit{Quantum chromatic numbers via operator systems},
Q. J. Math.
\textbf{66}
(2015),
no. 2.

\bibitem{Pedersen}
G. K. Pedersen,
\textit{C*-algebras and Their Automorphism Groups},
Academic Press,
1989.

\bibitem{Penrose}
R. Penrose,
\textit{Applications of negative dimensional tensors},
Combinatorial Mathematics and its Applications,
Academic Press,
1971.

\bibitem{PodlesWoronowicz}
P. Podle\'s \&
S. L. Woronowicz,
\textit{Quantum deformation of Lorentz group},
Comm. Math. Phys.
\textbf{130}
(1990).

\bibitem{Rieffel}
M. A. Rieffel,
\textit{Gromov-Hausdorff Distance for Quantum Metric Spaces},
Mem. Amer. Math. Soc.
\textbf{168}
(2004).


\bibitem{Rump}
W. Rump,
\textit{Symmetric Quantum Sets and $L$-Algebras},
Int. Math. Res. Not. IMRN
(2020).


\bibitem{Sasaki}
U. Sasaki,
\textit{Orthocomplemented Lattices Satisfying the Exchange Axiom},
J. Sci. Hiroshima Univ.,
Ser. A
\textbf{17}
(1954),
no. 3.

\bibitem{Schlesinger}
K.-G. Schlesinger,
\textit{Toward quantum mathematics I: From quantum set theory to universal quantum mechanics},
J. Math. Phys.
\textbf{40}
(1999).

\bibitem{Schmidt}
A. Schmidt,
\textit{\"{U}ber deduktive Theorien mit mehreren Sorten von Grunddingen},
Math. Ann.
\textbf{115}
(1938).

\bibitem{Stueckelberg}
E. C. G. Stueckelberg,
\textit{Remarque \`{a} propos de la cr\'{e}ation de paires de particules en th\'{e}orie de la relativit\'{e}},
Helv. Phys. Acta
\textbf{14}
(1941),
no. 7.

\bibitem{Takesaki}
M. Takesaki,
\textit{Duality and von Neumann algebras},
Lectures on Operator Algebras,
Lecture Notes in Math.
\textbf{247}
(1972).

\bibitem{Takeuti}
G. Takeuti,
\textit{Quantum Set Theory},
Current Issues in Quantum Logic,
Plenum Press,
1981.

\bibitem{Tarski}
A. Tarski,
\textit{Poj\k{e}cie prawdy w j\k{e}zykach nauk dedukcyjnych},
Nak\l{}. Tow. Naukowego Warszawskiego
(1933).

\bibitem{TarskiVaught}
A. Tarski \&
R. L. Vaught,
\textit{Arithmetical extensions of relational systems},
Compos. Math.
\textbf{13}
(1957),
no. 2.

\bibitem{Vaes}
S. Vaes,
\textit{Characterizing discrete quantum groups},
\texttt{URL:mathoverflow.net/q/362309}.

\bibitem{VanDaele}
A. Van Daele,
\textit{Discrete Quantum Groups},
J. Algebra
\textbf{180}
(1996).

\bibitem{Vallin}
J. M. Vallin,
\textit{$C^*$-alg\`{e}bres de Hopf et $C^*$-alg\`{e}bres de Kac},
Proc. Lond. Math. Soc. (3)
\textbf{50}
(1985).

\bibitem{Vicary}
J. Vicary,
\textit{Categorical formulation of finite-dimensional quantum algebras},
Comm. Math. Phys.
\textbf{304}
(2010),
no. 3.


\bibitem{Wang}
S. Wang,
\textit{Quantum symmetry groups of finite spaces},
Comm. Math. Phys.
\textbf{195}
(1998),
no. 1.

\bibitem{Weaver}
N. Weaver,
\textit{Mathematical Quantization}, 
Studies in Advanced Mathematics,
Chapman \& Hall/CRC, 2001.

\bibitem{Weaver2}
N. Weaver,
\textit{Quantum relations},
Mem. Amer. Math. Soc.
\textbf{215}
(2012).

\bibitem{Weaver3}
N. Weaver,
\textit{Quantum graphs as quantum relations},
\texttt{arXiv:1506.03892}.

\bibitem{Weaver4}
N. Weaver,
\textit{Hereditarily antisymmetric operator algebras},
J. Inst. Math. Jussieu
(2019).

\bibitem{Weaver5}
N. Weaver,
\textit{Lipschitz algebras and derivations of von Neumann algebras},
J. Funct. Anal.
\textbf{139}
(1996).

\bibitem{Woronowicz}
S. L. Woronowicz,
\textit{Pseudospaces, pseudogroups and Pontryagin duality},
Proceedings of the  International Conference on Mathematics and Physics, Lausanne 1979,
Lecture Notes in Phys.
\textbf{116}
(1980).

\bibitem{Woronowicz2}
S. L. Woronowicz,
\textit{Compact matrix pseudogroups},
Comm. Math. Phys.
\textbf{111}
(1987).

\bibitem{Woronowicz3}
S. L. Woronowicz,
\textit{Compact quantum groups},
Sym\'{e}tries Quantiques (Les Houches, 1995), North-Holland, 1998.

\end{thebibliography}
\end{document}